\newcommand{\norm}[1]{\left\lVert#1\right\rVert}
\newtheorem{theorem}{Theorem}
\newtheorem{lemma}[theorem]{Lemma}
\newtheorem{prop}[theorem]{Proposition}
\newtheorem{remark}[theorem]{Remark}
\newtheorem*{claim*}{Claim}
\newtheorem{claim}[theorem]{Claim}
\newtheorem{defn}[theorem]{Definition}
\newtheorem{definition}[theorem]{Definition}
\newtheorem{condition}[theorem]{Condition}
\numberwithin{equation}{section}
\numberwithin{theorem}{section}
\let\plainqed\qedsymbol
\newcommand{\claimqed}{$\lrcorner$}
\newenvironment{claimproof}{\begin{proof}\renewcommand{\qedsymbol}{\claimqed}}{\end{proof}\renewcommand{\qedsymbol}{\plainqed}}
\providecommand{\customgenericname}{}
\newcommand{\newcustomtheorem}[2]{%
  \newenvironment{#1}[1]
  {%
   \renewcommand\customgenericname{#2}%
   \renewcommand\theinnercustomgeneric{##1}%
   \innercustomgeneric
  }
  {\endinnercustomgeneric}
}
\newcommand{\pto}{\ensuremath{\xrightarrow{\ \mathbb{P}\ }}}  
\newcommand{\dto}{\ensuremath{\ \xrightarrow{d} \ }}  
\newcommand{\R}{\mathbb{R}}                 
\newcommand{\N}{\mathbb{N}}                 
\newcommand{\PP}{\mathbb{P}}  				
\newcommand{\E}{\mathbb{E}}                 
\newcommand{\irg}{\textsc{irg}}                 
\newcommand{\sel}{\texttt{select}}
\newcommand{\set}{\texttt{set}}
\newcommand{\sett}{\texttt{sett}}
\DeclareMathOperator*{\argmin}{arg\,min}
\newcommand\blfootnote[1]{%
  \begingroup
  \renewcommand\thefootnote{}\footnote{#1}%
  \addtocounter{footnote}{-1}%
  \endgroup
}  
\title{Exploiting Data Locality to Improve Performance of Heterogeneous Server Clusters}
\begin{document}
\author{
  Zhisheng Zhao$^1$, Debankur Mukherjee$^2$, Ruoyu Wu$^3$
}
\renewcommand\Authands{, }
\maketitle
\begin{abstract}
We consider load balancing in large-scale heterogeneous server systems in the presence of data locality that imposes constraints on which tasks can be assigned to which servers.
The constraints are naturally captured by a bipartite graph between the servers and the dispatchers handling assignments of various arrival flows.
When a task arrives, the corresponding dispatcher assigns it to a server with the shortest queue among $d\geq 2$ randomly selected servers obeying the above constraints. 
Server processing speeds are heterogeneous and they depend on the server-type.
For a broad class of bipartite graphs, we characterize the limit of the appropriately scaled occupancy process, both on the process-level and in steady state, as the system size becomes large. 
Using such a characterization, we show that data locality constraints can be used to significantly improve the performance of heterogeneous systems.
This is in stark contrast to either heterogeneous servers in a full flexible system or data locality constraints in systems with homogeneous servers, both of which have been observed to degrade the system performance. Extensive numerical experiments corroborate the theoretical results.

   \blfootnote{$^1$Georgia Institute of Technology, \emph{Email:} \href{mailto:zhisheng@gatech.edu}{zhisheng@gatech.edu}}
\blfootnote{$^2$Georgia Institute of Technology, \emph{Email:}  \href{mailto:debankur.mukherjee@isye.gatech.edu}{debankur.mukherjee@isye.gatech.edu}}
\blfootnote{$^3$Iowa State University, \emph{Email:} \href{mailto:ruoyu@iastate.edu}{ruoyu@iastate.edu}}
\blfootnote{\emph{Keywords and phrases}. Heterogeneous load balancing system, data locality, compatibility constraint, Power-of-Two, Mean-field, McKean-Vlasov Process, Stochastic Coupling}
\blfootnote{\emph{Acknowledgements.} The work was partially supported by the NSF grant CIF-2113027.}
\end{abstract}

\section{Introduction}
Over the last two decades, large-scale load balancing has emerged as a fundamental research problem. In simple terms, the goal is to investigate \emph{how to efficiently allocate tasks in large-scale service systems, such as data centers and cloud networks?}
As modern data centers continue to process massive amounts of data with increasingly stringent processing time requirements, the need for more efficient and scalable, dynamic load balancing algorithms is greater than ever.
The study of scalable load balancing algorithms started with the seminal works of Mitzenmacher~\cite{Mitzenmacher96, ACMR95, M96focs} and Vvedenskaya et al.~\cite{VDK96}, where the popular `power-of-$d$ choices' or the JSQ($d$) algorithm was introduced. Here a canonical model was considered that consists of $N$ identical parallel servers, each serving a dedicated queue of tasks. 
Arriving tasks are routed to the shortest of $d\geq 2$ randomly selected queues by a centralized dispatcher, 
irrevocably and instantaneously, at the time of arrival.
Since then, this model has received significant attention from the research community and we have seen tremendous progress in our understanding of performance of various algorithms; see~\cite{BBLM21} for a recent survey.

Despite this phenomenal progress, when it comes to modern large-scale systems, many of the existing wisdoms can be observed to be false. This is primarily due to the fact that the above classical model fails to capture two of the most significant factors that impact the performance of these systems:
(a)~\textbf{Data locality constraints:} In simple terms, it means that tasks of a particular type can only be routed to a small subset of servers that are equipped with the appropriate resources to execute them~\cite{WZS20, RM22, TBDH20, TX17}.
For example, an image classification request must be routed to a server that is trained with appropriate machine learning models such as, deep convolutional neural network. 
Also, in online video services like Netflix and YouTube,  users' requests may only be routed to servers that are equipped with the required data (e.g., movies, music). 
The classical model ignores this effect and assumes \emph{full flexibility}, that is, that any task can be assigned to any server in the system.
In the presence of data locality constraints, the delay performance of the system may degrade drastically as compared to fully flexible systems. 
(b)~\textbf{Heterogeneity in service rates:}
Servers in any modern large-scale server clusters do not process tasks at equal speeds. 
This heterogeneity of the service rates is a major bottleneck in implementing the existing heuristics of the classical model. 
For example, if there are two groups of servers in the system, one faster and the other slower, then popular dynamic algorithms like JSQ($d$) that has a provably excellent delay performance when all server speeds are identical, can be observed to be unstable (i.e., their queue lengths blow up)~\cite{GAWD20, HLM21, MKM16, MM16}.
In other words, heterogeneity shrinks the stability region as formally established in~\cite{HLM21}. 
This happens simply because if all the servers are treated equally, then the slower server pool may receive a higher flow of arrivals than what it can process.\\

\noindent
\textbf{Takeaway.}
In summary, both data locality and heterogeneity of server speeds may significantly degrade the system performance. 
The main contribution of the current work is to establish that when these two aspects are considered together, then the performance can in fact be drastically improved. 
That is, if servers are heterogeneous, then efficiently designing the data locality constraints (by appropriately placing the resource files in the server network) can regain the full stability region, which was shrunk for fully flexible systems.
Moreover, we also establish that a carefully designed data locality constraints can ensure the celebrated double-exponential decay of tail probability of the steady-state queue length distribution even for the heterogeneous systems.

\subsection{Our Contributions}
Motivated by this, in the current paper, we consider a bipartite graph model for large-scale load balancing systems, which has recently gained popularity in the research community. 
In this model, a bipartite graph between the servers and task types describes the \emph{compatibility} between the two, where an edge represents the server's ability to process the corresponding task type.
This encompasses the classical \emph{full-flexibility} models as those having a complete bipartite compatibility graph. 
An immediate difficulty of the new model is that when the graph is non-trivial (i.e., not a collection of isolated pairs or a complete bipartite graph), the mean-field techniques break down. This is because, 
the queues no longer remain exchangeable, making the aggregate processes such as the vector of number of servers with queue length $i$ with $i = 0, 1,2,\ldots$ non-Markovian.
In addition, we also consider that each dispatcher handles the arrival flow of one of $K$ possible task types and that there are $M$ server types.
The rate of service at a server depends on its type.
Throughout the paper, the key quantity of interest will be the \emph{global occupancy process} $\mathbf{q}^N(t)=(q^N_{m,l}(t),m=1,\ldots,M,l\geq 1)$, where $q^N_{m,l}(t)$ represents the fraction of servers of type $m$ with queue-length at least $l$ at time~$t$ in the $N$-th system with $N$ servers, and we will look at the large-system asymptotic regime: $N\to\infty$.

Due to the compatibility constraints, the servers become non-exchangeable, even if they belong to the same type. 
This causes most of the existing frameworks~\cite{Dai95,Mitzenmacher96,AS15} to break down.
To characterize the process-level limit of the queue length process, we take resort to the theory of weakly interacting particle systems and asymptotically couple the evolution of the $N$-dimensional vector of queue lengths with an appropriately defined infinite system of independent \emph{McKean-Vlasov processes}~\cite{SA91,Meleard1996asymptotic}.
We also show the asymptotic independence of any finite number of queue length processes, also know as the \emph{propagation of chaos} property.
This convergence of the queue length processes (in $L_2$ sense) is then used to establish the transient convergence of the occupancy process.
One downside of the above convergence is that it depends on the assumption that the initial queue lengths within each set of servers of the same type are independent and identically distributed (i.i.d.) and are independent across the the set of servers of different types.
Due to this assumption, this convergence result cannot be used to establish the interchange of $t\to\infty$ and $N\to\infty$ limits, which is crucial in studying the limit of steady states.

To overcome this issue, we use the framework of~\cite{RM22}, recently introduced in the context of homogeneous systems. Here, a notion called \emph{proportional sparsity} for graph sequences was introduced, which ensures that the empirical queue length distribution within the set of compatible servers of any dispatcher is close to the empirical queue length distribution of the entire system. 
This was used in~\cite{RM22} to construct conditions on graphs that match the performance of a fully flexible system. 
In the current setup, however, this notion is inadequate, since our goal is not to match the performance of the fully flexible system (which is usually poor under heterogeneity).
That is why, we extend this notion to what we call the \emph{clustered proportional sparsity} for a sequence of graphs with increasing size, to accommodate the heterogeneous systems. 
The clustered proportional sparsity property allows us to construct a stochastic coupling between the system and another intermediate system whose task allocation is done by a carefully constructed algorithm called GWSQ($d$) (Algorithm~\ref{alg:GWQD-JSQ(d)}).
This coupling with the intermediate system, along with clustered proportional sparsity, helps us establish that if the initial occupancy of two systems are close, then 
the distance (in the $\ell_1$-norm) between their global occupancy remains small uniformly over any finite time interval. 
In turn, it implies that their limits of the global occupancy systems are the same.
As a consequence, we can remove the i.i.d.~assumption of the initial queue lengths, since the above guarantees that under clustered proportional sparsity, the convergence of the occupancy process depends only on the initial occupancy and \emph{not} on how the individual queues are distributed.

The above process-level limit result shows that the transient limit of the occupancy process can be described as a system of ODEs that depend on various graph parameters. Next, we also show that the interchange of limits holds and that the sequence of occupancy states in stationarity, converges weakly to the unique fixed point of the ODE.
One celebrated feature of the classical JSQ($d$) policy for homogeneous systems under full flexibility is that the steady-state queue length decays doubly exponentially as $\lambda^{(d^i-1)/(d-1)}$, where $\lambda\in (0,1)$ is the load per server~\cite{Mitzenmacher96, VDK96}.
We establish this double-exponential decay property for the heterogeneous system. 

It is worthwhile to note that the strength of the above results lie in that they hold for arbitrary deterministic sequence of graphs satisfying certain properties. 
However, we show that all these properties are satisfied almost surely by a sequence of inhomogeneous random graphs with parameters prescribed by the theorems. 
This makes it easy to design graphs with the desired favorable properties.

\subsection{Related Works}
The research on task allocation systems with limited flexibility can be traced back to the works of Turner~\cite{Turner98} and Foss and Chernova~\cite{FC98}.
Of particular importance to the current work, Foss and Chernova~\cite{FC98} considered stability properties of the system using the fluid model.
Later, Bramson~\cite{Bramson11a} generalized some parts of results in \cite{FC98} to a broad class of Join-Shortest-Queue (JSQ)-type systems, including the JSQ($d$) policy, via the Lyapunov function approach.
Stolyar~\cite{AS05} considered optimal routing in output-queued flexible server system, which is essentially the bipartite graph model for the load balancing system. 
Here the author considered a system with a fixed number of servers and dispatchers in the conventional heavy traffic regime and proposed a routing policy that is optimal in terms of server workload.
Recently, Cruise et al.~\cite{CJS20} considered load balancing problems on hypergraphs and proved its stability conditions. 
The above works, however, did not aim to precisely characterize the system performance in the large-scale scenario.

The analysis in the large-scale scenario became prominent in the last decade, with the emergence of its applications to load balancing in data centers and cloud networks.
In the full-flexibility setup, the analysis of heterogeneous-server systems gained some attention. 
In this case, Stolyar~\cite{AS15, Stolyar17} studied the zero-queueing property of the Join-Idle-Queue (JIQ) policy, Mukhopadhyay et al.~\cite{MKM16} and Mukhopadhyay and Mazumdar~\cite{MM16}
analyzed the JSQ($d$) policy in heterogeneous systems with processor-sharing service discipline, Hurtado-Lange and Maguluri~\cite{HLM21} studied the throughput and delay optimality properties of JSQ($d$), and Bhambay and Mukhopadhyay~\cite{BM22} studied a speed-aware JSQ policy.
The above works on the JSQ($d$) policy observe that the stability region shrinks if the dispatcher applies the JSQ($d$) policy blindly.
One way to mitigate this performance degradation is to take the server speeds into consideration while sampling servers or while assigning tasks to the sampled servers.
Such a `hybrid JSQ($d$)' scheme is able to recover the stability region.
The current work can be contrasted with this approach. 
First, in the presence of data locality, both the server speeds and the underlying compatibility constraints need to be taken into account during the sampling procedure, and the approach becomes significantly more complicated.
Second, we show how exploiting the data locality, the blind JSQ($d$) policy can recover the stability region and even achieve the double-exponential decay of tail probabilities of the stead-state queue length distribution. 
One advantage of the latter approach is that the dispatchers can be oblivious to the server speeds, which reduces the implementation complexity and also, makes it robust against changes to the servers (e.g., when servers are add/removed).

Recently, Allmeier and Gast~\cite{AG22} studied the application of (refined) mean-field approximations for heterogeneous systems. Their method is using an ODE to approximate the evolution of each server, and the error vanishes as the system scales. However, this method cannot be directly used in our case. Due to the bipartite compatibility graph structure, it is hard to capture the interactions between two servers, which means that we cannot write the transition rates of the underlying Markov chain as~\cite{AG22} does. Also, one important assumption in their work is the finite buffer, but we consider the infinite buffer case here.

The aspect of task-server compatibility constraints in large-scale load balancing and scheduling gained popularity only recently, as the data locality became prominent in data centers and cloud networks.
This led to many works in this area~\cite{NG15,MBL17,BMW17,RM22,WZS20,TX17, TX13}.
All these works consider homogeneous processing speeds at the servers.
The initial works~\cite{Turner98,NG15} focused on certain fixed-degree graphs and showed that the flexibility to forward tasks to even a few neighbors with possibly shorter queues may significantly improve the waiting time performance as compared to dedicated arrival streams or a collection of independent M/M/1 queues.
Tsitsiklis and Xu~\cite{TX13, TX17} considered asymptotic optimality properties of the bipartite graph topology in an input-queued, dynamic scheduling framework.
Later, in the (output-queued) load balancing setup, Mukherjee et al.~\cite{MBL17} considered the JSQ policy and Budhiraja et al.~\cite{BMW17} considered the transient analysis of the JSQ($d$) policy on non-bipartite graphs. 
The goal in these papers was to provide sufficient condition on the graph sequence to asymptotically match the performance of a complete graph.
Here we should mention that the non-bipartite graph model cannot be used to capture the data locality constraints.
In the presence of data locality constraints, the analysis of the JSQ($d$) policy for homogeneous systems, including both transient and interchange of limits, was performed by Rutten and Mukherjee~\cite{RM22}. 
Weng et al.~\cite{WZS20} is the first to consider the large-scale heterogeneous-server model under data locality. 
They showed that the Join-the-Fastest-Shortest-Queue (JFSQ) and Join-the-Fastest-Idle-Queue (JFIQ) policies achieve asymptotic optimality for minimizing mean steady-state waiting time when the bipartite graph is sufficiently well connected.
However, these results fall in the category of JSQ-type policies where the asymptotic behavior is degenerate in the sense that the queue lengths at servers can be either 0 or 1. 
Naturally, the results and their analysis are very different from the JSQ($d$)-type policies where queues of any length is possible.

\subsection{Notations}
Let $\N_0=\N\cup\{0\}$. For a set $S$, its cardinality is denoted as $|S|$. For a polish space $\mathcal{S}$, the space of right
continuous functions with left limits from $[0,\infty)$ to $\mathcal{S}$ is denoted as $\mathbb{D}([0,\infty),\mathcal{S})$, endowed with the Skorokhod topology.
The distribution of $\mathcal{S}$-valued random variable $X$ will be denoted as $\mathcal{L}(X)$.
For a function $f:\ [0,\infty)\rightarrow\mathbb{R}$, let $\norm{f}_{*,t}\coloneqq\sup_{0\leq s\leq t}|f(s)|$. The distribution of $\mathcal{S}$-valued random variable $X$ will be denoted as $\mathcal{L}(X)$. For $x\in\mathcal{S}$, the Dirac measure at the point $x$ is denoted as $\delta_x$. 
$\norm{\cdot}_p$ represents the $\ell_p$-norm. Define ${X\choose Y}=\frac{X(X-1)\cdots(X-Y+1)}{Y!}$ if $X\geq Y$ and is 0, otherwise.
RHS is the acronym of Right Hand side. 

\section{Model Description}
\label{sec:model-description}
The model below for large-scale systems with limited flexibility was considered by Tsitsiklis and Xu~\cite{TX13, TX17} in the context of scheduling algorithms for input-queued systems. 
Subsequently, it was considered in~\cite{MBL17, BMW17, RM22, WZS20} for output-queued load balancing systems.
Let $G^N=(W^N,V^N,E^N)$ be a system with $N$ single servers, each serving its own queue, and $W(N)$ dispatchers, where $W^N = \{1,...,W(N)\}$ and $V^N=\{1,...,N\}$ denote the sets of dispatchers and servers, respectively.
Similar to~\cite{TX13, TX17}, we assume that $\lim_{N\rightarrow\infty}W(N)/N=\xi$ where $\xi>0$ is a constant.
The set $E^N\subseteq W^N\times V^N$ of edges represents hard compatibility between the dispatchers and  servers in the $N$-th system. In other words, tasks of type $i$ can be assigned to a server $j$ if and only if $(i,j)\in E^N$.
Tasks arriving at a dispatcher must be assigned  instantaneously and irrevocably to one of the \emph{compatible} servers.
\begin{itemize}
    \item \textbf{Task types:} 
    A task can be of one of $K$ possible types labelled in $\mathcal{K}=\{1,...,K\}$ and each dispatcher handles arrivals of exactly one task type.
    Thus, we will interchangeably use the terms task-type and dispatcher-type throughout the article.
    Let $W^N_k$ denote the set of all dispatchers handling type-$k$ tasks.
    As $N\to\infty$, assume that $|W^N_k|/W(N) \to w_k\in (0,1)$ for $k\in \mathcal{K}$ with $\sum_{k=1}^K w_k = 1$. 
    Tasks arrive at each dispatcher as an independent Poisson process with rate~$\lambda$.
    \item \textbf{Server types:} Each server belongs to one of $M$ possible types labelled in $\mathcal{M}=\{1,...,M\}$. Let $V^N_m$ denote the set of type-$m$ servers, and
    as $N\to\infty$, $|V^N_m|/N\to v_m\in (0,1)$ for $m \in \mathcal{M}$ with $\sum_{m=1}^M v_m = 1$.
    \item \textbf{Service times:} The processing time at a type-$m$ server is exponentially distributed with mean $1/u_m$, where $u_m$ is a positive constant. 
    Throughout, we will assume that asymptotically, the system has sufficient service capacity in the sense that
    \begin{equation}\label{eq:capacity}
    \lambda\xi<\sum_{m\in\mathcal{M}}u_mv_m.    
    \end{equation}
    Note that the left and right hand side above represents the scaled total arrival rate and scaled maximum departure rate, respectively.
    \end{itemize}

For all the asymptotic results, we consider a general class of systems where the compatibility graph satisfies certain asymptotic criteria as specified in Condition~\ref{cond-1} below. 
Define 
\begin{align*}
    \deg^N_w(i,m) & =|\{j\in V^N_m:(i,j)\in E^N\}|,\quad i\in W^N,m\in \mathcal{M}, \\
    \deg^N_v(k,j) & =|\{i\in W^N_k:(i,j)\in E^N\}|,\quad j\in V^N,k\in\mathcal{K}.
\end{align*}
Namely, $\deg^N_w(i,m)$ is the number of the dispatcher $i$'s neighboring servers whose type is $m\in\mathcal{M}$. Similarly, $\deg^N_v(k,j)$ is the number of the server $j$'s neighboring dispatchers whose type is $k\in\mathcal{K}$.

\begin{condition}\label{cond-1}
    The sequence $\{G^N\}_{N\geq 1}$ satisfies the following:
    \begin{enumerate}[{\normalfont (a)}]
        \item For each $k\in\mathcal{K}$ and $m\in \mathcal{M}$, let $E^N(k,m)=\{(i,j)\in W^N_k\times V^N_m:(i,j)\in E^N\}$ 
        \begin{equation}
            \lim_{N\rightarrow\infty} \frac{|E^N(k,m)| }{|W^N_k|\times|V^N_m|}=p_{k,m}\in [0,1].
        \end{equation}
        We call the matrix $\mathbf{p}=(p_{k,m},k\in\mathcal{K},m\in\mathcal{M})$ as the compatibility matrix.
        \item For each $k\in \mathcal{K}$ and $m\in \mathcal{M}$, 
        \begin{equation*}
            \lim_{N\rightarrow \infty}\frac{\max_{i\in W^N_k}\deg^N_w(i,m)}{\min_{i\in W^N_k}\deg^N_w(i,m)}=1, \quad
            \lim_{N\rightarrow \infty}\frac{\max_{j\in V^N_m}\deg^N_v(k,j)}{\min_{j\in V^N_m}\deg^N_v(k,j)}=1.
        \end{equation*}
    \end{enumerate}
\end{condition}
Intuitively, the condition implies that the `asymptotic density' of edges between type-$k$ dispatchers and type-$m$ servers is given by $p_{k,m}$ and for each task-type-server-type pair, the servers have similar levels of flexibility. 
The classical, well-studied setup where any task can be processed by any server, corresponds to the complete bipartite graph with $p^N_{k,m}=1$, $\forall k\in\mathcal{K},m\in\mathcal{M}$. 
In Section~\ref{sec:rand}, we show that for any given $\mathbf{p}:=(p_{k,m},k\in\mathcal{K},m\in\mathcal{M})$, a sequence of graphs satisfying Condition~\ref{cond-1} can be obtained simply by putting edges suitably randomly. 
This is a certain class of \emph{inhomogeneous random graphs}, which we call \irg($\mathbf{p}$); see Definition~\ref{def:irg} for details.
In fact, the \irg($\mathbf{p}$) sequence of graphs will be proved to satisfy the required conditions for all the results of this article to hold.
\\

\noindent
\textbf{State Space.}
In the $N$-th system, let $X^N_j(t)$ be the number of tasks (including those in service) in the queue of server $j\in V^N$ at time $t$. 
Let $q^N_{m,l}(t)$ be the proportion of servers of type $m$ with queue length at least $l$ at time $t$, namely,
\begin{equation}\label{eq:defn-qN-mj}
    q^N_{m,l}(t)\coloneqq \frac{1}{|V^N_m|}\sum_{j\in V^N_m}\mathds{1}_{\big(X^N_j(t)\geq l\big)},\quad t\geq 0, m\in \mathcal{M},l\in\N_0.
\end{equation}
Let $\mathbf{q}^N(t)=\big(q^N_{m,l}(t),m\in\mathcal{M},l\in\N_0\big)$. Then $\mathbf{q}^N\coloneqq\big\{\mathbf{q}^N(t)\big\}_{0\leq t<\infty}$ is a process with sample paths in $\mathbb{D}([0,\infty),\mathcal{S})$ where
$$\mathcal{S}\coloneqq\Big\{\mathbf{q}\in[0,1]^{M\times \N_0}:q_{m,0}=1,q_{m,l}\ge q_{m,l+1}, \text{ and }\sum_{l\in\N_0}q_{m,l}<\infty,\forall m\in\mathcal{M}, l\in\N_0\Big\}$$
is equipped with the $\ell_1$-topology. 
Note that the space $\mathcal{S}$ is a complete metric space.\\

\noindent
\textbf{Local JSQ($d$) Policy.} 
For any fixed $d\geq 2$, each dispatcher uses the JSQ($d$) policy~\cite{Mitzenmacher96, VDK96} to assign the incoming tasks to servers.
To describe the policy, define the \emph{neighborhood of dispatcher $i\in W^N$}, $\mathcal{N}^N_w(i):=\{j\in V^N:(i,j)\in E^N\}$ with $\delta^N_i=|\mathcal{N}^N_w(i)|$. 
When a new task arrives at the dispatcher $i\in W^N$ with $\delta^N_i\geq d$, it is immediately assigned to the server with the shortest queue among $d$ servers selected uniformly at random from $\mathcal{N}^N_w(i)$. Ties are broken uniformly at random. 
If $\delta^N_i<d$, then the task is assigned to one server selected from $\mathcal{N}^N_w(i)$ uniformly at random.
This $\delta^N_i<d$ scenario is asymptotically not relevant for us since all the graphs that we will consider have diverging degrees as $N\to\infty$.

\section{Main Results}

\subsection{Mitigating the Stability Issue}\label{sec:stability-issue}
As discussed earlier, when the server speeds are heterogeneous, the fully flexible systems (with the complete bipartite compatibility graph) may not be stable under the JSQ($d$) policy, even if we assume that the sufficient service capacity in~\eqref{eq:capacity} is satisfied. 
The next lemma provides a necessary and sufficient condition for ergodicity of the queue length process. 
Recall $\delta^N_i=|\mathcal{N}^N_w(i)|$. For any fixed $N$, define 
    \begin{equation}\label{eq:stability-cond}
    \rho^N\coloneqq\max_{\substack{U\subseteq V^N\\ U\neq \emptyset}}\Big\{\Big(\sum_{j\in U}\sum_{m\in\mathcal{M}}\mathds{1}_{(j\in V^N_m)}u_m\Big)^{-1}\sum_{i\in W^N}\Big(\mathds{1}_{(\delta^N_i\geq d)}\sum_{\substack{S\subseteq(U\cap \mathcal{N}^N_w(i)):\\|S|=d}}\frac{\lambda}{{\delta^N_i\choose d}}+\mathds{1}_{(\delta^N_i< d)}\frac{|U\cap \mathcal{N}^N_w(i)|}{\delta^N_i}\Big)\Big\}.
    \end{equation}
\begin{lemma}\label{lem:stability}
The queue length process $\big(X_j^N(t)\big)_{j\in V^N}$ under the local JSQ($d$) policy is ergodic if and only if $\rho^N<1$.
\end{lemma}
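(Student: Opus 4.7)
The plan is to identify $\rho^N$ as a worst-case traffic intensity over subsets of servers and then invoke standard Foster--Lyapunov and fluid-model criteria in each direction.

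\textbf{Interpretation.} First I would observe that, for any non-empty $U\subseteq V^N$, the numerator in $\rho^N$ is the \emph{minimum} arrival rate into $U$ under JSQ($d$), taken over all queue-length configurations. For a dispatcher $i$ with $\delta^N_i\geq d$, a task is forced into $U$ exactly when all $d$ sampled servers lie in $U\cap\mathcal{N}^N_w(i)$, contributing $\lambda\binom{|U\cap\mathcal{N}^N_w(i)|}{d}/\binom{\delta^N_i}{d}$; for $\delta^N_i<d$ the uniformly-sampled server lands in $U$ with probability $|U\cap\mathcal{N}^N_w(i)|/\delta^N_i$. Denote this minimum rate by $\Lambda(U)$ and write $\mu(U):=\sum_{j\in U}u_{m(j)}$ for the full service rate of $U$; then $\rho^N=\max_{U\neq\emptyset}\Lambda(U)/\mu(U)$.

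\textbf{Necessity.} Assume $\rho^N\geq 1$ and let $U^\ast$ attain the maximum. The aggregate process $Y(t):=\sum_{j\in U^\ast}X^N_j(t)$ has instantaneous drift at least $\Lambda(U^\ast)-\mu(U^\ast)\geq 0$: arrivals into $U^\ast$ occur at rate at least $\Lambda(U^\ast)$ for every configuration (by the interpretation above), while departures occur at rate at most $\mu(U^\ast)$. A Foster-converse / Meyn--Tweedie non-ergodicity criterion then rules out positive recurrence. In the critical case $\rho^N=1$ the drift is only non-negative, but it is strictly positive whenever some queue in $U^\ast$ is empty (since the departure rate then drops below $\mu(U^\ast)$), which still precludes positive recurrence via a standard recurrent-set compactness argument.

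\textbf{Sufficiency.} Assume $\rho^N<1$ and follow the fluid-model approach of Dai~\cite{Dai95} and Bramson~\cite{Bramson11a}. For any fluid-limit trajectory $\bar X(\cdot)$ with support $U^\circ(t):=\{j:\bar X_j(t)>0\}$, I would show that at every regular point with $\bar X(t)\neq 0$,
\[\sum_{j\in U^\circ(t)}\dot{\bar X}_j(t)\;=\;\Lambda(U^\circ(t))-\mu(U^\circ(t))\;\leq\;-\min_{\emptyset\neq U\subseteq V^N}\bigl(\mu(U)-\Lambda(U)\bigr)\;<\;0.\]
The equality in the display uses that JSQ($d$) preferentially routes arrivals to empty servers, so the arrival rate into the positive-queue set $U^\circ$ coincides with the combinatorial minimum $\Lambda(U^\circ)$; the last inequality uses finiteness of the collection of subsets together with $\rho^N<1$. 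Since $\sum_j\bar X_j(t)=\sum_{j\in U^\circ(t)}\bar X_j(t)$, the total fluid mass drains to zero in uniformly bounded time, and Dai's theorem then yields positive recurrence of the prelimit Markov chain.

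\textbf{Main obstacle.} The key technical step is verifying the fluid-model drift identity at every regular point: the JSQ($d$) routing is a non-linear function of the joint queue vector, and one must justify that in the fluid regime the arrival rate into the support $U^\circ$ equals \emph{exactly} the combinatorial minimum $\Lambda(U^\circ)$ rather than some intermediate quantity. Intuitively this holds because all queues in $V^N\setminus U^\circ$ sit at zero while those in $U^\circ$ are strictly positive, so a task from dispatcher $i$ enters $U^\circ$ only when all $d$ sampled servers lie in $U^\circ\cap\mathcal{N}^N_w(i)$; making this rigorous requires a careful functional-law-of-large-numbers argument combined with the non-idling property of the fluid dynamics, and an analogous care is needed for the critical boundary $\rho^N=1$ in the necessity direction.
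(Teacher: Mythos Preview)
Your proposal is correct in its essential structure, and it coincides with the approach of the works the paper cites. In fact, the paper does not prove Lemma~\ref{lem:stability} at all: it simply states that the result ``is an immediate consequence of \cite[Theorem~2.5]{FC98}; see also~\cite{Bramson11a}'' and omits the proof. Your sketch---interpreting the numerator as the minimal arrival rate into $U$, using a submartingale/drift argument for necessity, and a Dai--Bramson fluid-stability argument for sufficiency---is precisely the content of those references specialized to the present JSQ($d$)-on-a-graph model. You have also correctly identified the one nontrivial verification, namely that in any fluid limit the arrival rate into the support set $U^\circ(t)$ equals the combinatorial minimum $\Lambda(U^\circ(t))$; this holds because servers in $U^\circ$ have $\Theta(n)$ queues while those outside have $o(n)$ queues along the fluid scaling, so JSQ($d$) routes into $U^\circ$ only when all $d$ samples land there. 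The critical case $\rho^N=1$ in your necessity direction is handled slightly informally, but the standard stationarity argument (positive recurrence would force zero expected drift of $\sum_{j\in U^\ast}X_j^N$, contradicting the strictly positive drift on the positive-probability event that some server in $U^\ast$ is idle) closes it.
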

The above lemma is an immediate consequence of \cite[Theorem~2.5]{FC98}; see also~\cite{Bramson11a}.
We omit its proof.
Intuitively, $\rho^N<1$ means that in the $N$-th system, for any subset $U$ of servers with possibly long
queues (compared to the rest servers), the total rate at which tasks are assigned to some server in this set must be less than the rate of departure from this set. 

Since we are interested in large-$N$ behavior, we will assume certain asymptotic version of the above stability criterion.
This is fairly standard in the large-system analysis, as one would want to avoid the `heavy-traffic' regime when $\rho^N\uparrow 1$ as $N\to\infty$.
The behavior in the latter scenario is typically qualitatively different from the so-called `subcritical' regime as defined below.

\begin{defn}[Subcritical Regime]
\label{def:subcrit}
The sequence $\{G^N\}_N$ of systems defined as above is said to be in the \textit{subcritical regime} with asymptotic load $\rho<1$ if
$\rho^N\to \rho<1$, as $N\to \infty$.
\end{defn}
Throughout this paper we will assume that the sequence of systems under consideration is in subcritical regime.
From Lemma~\ref{lem:stability}, it is immediate that if a sequence of systems is in subcritical regime, then its queue length process is ergodic for all large enough $N$.
The potential non-ergodicity of fully flexible, heterogeneous server clusters brings us to the question that when the sufficient service capacity in~\eqref{eq:capacity} is satisfied, whether we can design the underlying compatibility structure carefully so that the queue length process is ergodic.
In other words, \emph{can we regain the stability region?}
Proposition~\ref{prop:p-matrix-existence} below shows that this is indeed the case.
In some sense, this highlights the first-order improvements (i.e., in terms of stability properties) of a careful compatibility structure design in contrast to a fully flexible system.
\begin{prop}\label{prop:p-matrix-existence}
Let the parameters $\lambda,\xi,d$ and $w_k,v_m,u_m$, $k\in\mathcal{K}$, $m\in\mathcal{M}$, be such that~\eqref{eq:capacity} is satisfied.
Then there exists $(p_{k,m})_{k\in\mathcal{K}, m\in\mathcal{M}}\in [0,1]^{K\times M}$ such that, for any sequence of systems $\{G^N\}_{N\geq 1}$ satisfying Condition~\ref{cond-1}, the queue length process $\big(X_j^N(t)\big)_{j\in V^N}$ is ergodic for all $N$ large-enough. 
Moreover, such a $(p_{k,m})_{k\in\mathcal{K}, m\in\mathcal{M}}$ can be obtained explicitly by solving a set of inequalities.
\end{prop}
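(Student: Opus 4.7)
The plan is to give an explicit choice of the compatibility matrix $\mathbf{p}=(p_{k,m})_{k\in\mathcal{K},m\in\mathcal{M}}$ and verify, through a crude but sharp-enough upper bound on $\rho^N$, that the criterion of Lemma~\ref{lem:stability} is satisfied for all sufficiently large $N$. A natural candidate is to take $p_{k,m}$ proportional to the service rate $u_m$ and independent of the dispatcher type $k$; for concreteness we set $p_{k,m}=u_m/u^*$ with $u^*\coloneqq\max_{m'\in\mathcal{M}}u_{m'}$, so that $p_{k,m}\in(0,1]$. The intuition is that this biases every dispatcher's neighborhood toward fast servers in the proportion $u_mv_m/\sum_{m'}u_{m'}v_{m'}$, so the asymptotic arrival rate per server of each type matches its service rate up to the slack in~\eqref{eq:capacity}.

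The first step is the elementary inequality $\binom{n}{d}/\binom{D}{d}\leq n/D$ for $0\leq n\leq D$ and $d\geq 1$, which unifies the two indicator branches in~\eqref{eq:stability-cond} (the $\delta^N_i<d$ branch being vacuous for large $N$, since our choice forces $\delta^N_i\to\infty$) and yields
$$\rho^N\leq \max_{\emptyset\neq U\subseteq V^N}\,\frac{\lambda\sum_{i\in W^N}|U\cap\mathcal{N}^N_w(i)|/\delta^N_i}{\sum_{m\in\mathcal{M}}u_m\,|U\cap V^N_m|}.$$
Swapping the order of summation in the numerator rewrites it as $\sum_{j\in U}\sum_{i\in W^N}\mathds{1}_{\{(i,j)\in E^N\}}/\delta^N_i$, which is ideally suited for the degree estimates. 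By Condition~\ref{cond-1}, $\delta^N_i=(1+o(1))\sum_{m}p_{k,m}|V^N_m|$ uniformly over $i\in W^N_k$, and $\deg^N_v(k,j)=(1+o(1))p_{k,m}|W^N_k|$ uniformly over $j\in V^N_m$. Splitting the inner sum by dispatcher type and using $|W^N_k|/N\to w_k\xi$ then collapses it to $(1+o(1))\,\xi\sum_{k\in\mathcal{K}}w_kp_{k,m}/\sum_{m'}p_{k,m'}v_{m'}$ for each $j\in V^N_m$.

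Substituting back, the bound on $\rho^N(U)$ becomes a weighted average (over $m$, with nonnegative weights $u_m|U\cap V^N_m|$) of the $M$ quantities $\lambda\xi\,u_m^{-1}\sum_kw_kp_{k,m}/\sum_{m'}p_{k,m'}v_{m'}$. Maximizing over $U$ corresponds to concentrating all the mass on the largest coordinate, giving
$$\rho^N\leq (1+o(1))\,\lambda\xi\,\max_{m\in\mathcal{M}}\frac{1}{u_m}\sum_{k\in\mathcal{K}}\frac{w_k\,p_{k,m}}{\sum_{m'\in\mathcal{M}}p_{k,m'}v_{m'}}.$$
With $p_{k,m}=u_m/u^*$ the $k$-dependence cancels and every term inside the max equals $1/\sum_{m'}u_{m'}v_{m'}$, so $\rho^N\leq (1+o(1))\,\lambda\xi/\sum_{m'}u_{m'}v_{m'}<1$ by~\eqref{eq:capacity}, and Lemma~\ref{lem:stability} delivers the required ergodicity. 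The explicit solvable system promised in the statement is the finite set of $M$ inequalities $\lambda\xi\sum_kw_kp_{k,m}/\sum_{m'}p_{k,m'}v_{m'}<u_m$, $m\in\mathcal{M}$, together with the box constraints $p_{k,m}\in[0,1]$; our $p_{k,m}=u_m/u^*$ is one particular solution.

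The main technical point — rather than a genuine obstacle — is that the $o(1)$ corrections in the double-counting step must be uniform in $U$, and this is exactly what the max/min degree regularity in Condition~\ref{cond-1}(b) guarantees. The bound $\binom{n}{d}/\binom{D}{d}\leq n/D$ is quite generous, effectively reducing the JSQ($d$) analysis to random ($d=1$) routing and discarding the power-of-$d$ benefit; but this loss is harmless here because regaining stability is a first-order property that does not require the full strength of JSQ($d$).
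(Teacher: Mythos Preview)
Your proof is correct and is a genuinely different — and considerably cleaner — route than the one in the paper. The paper proceeds in two stages: it first proves Lemma~\ref{lem:relationship-v-u-pkm} (the $M$ inequalities $\lambda\xi\,u_m^{-1}\sum_k w_k p_{k,m}/\delta_k<1$ imply subcriticality) via a fairly involved argument using an auxiliary combinatorial optimization lemma and a case split on $|U|$, and then exhibits a feasible $\mathbf{p}$ by a greedy ``water-filling'' construction that allocates the dispatcher types $k=1,2,\ldots$ to server types one at a time. You instead collapse the first stage to a single line by exploiting $\binom{n}{d}/\binom{D}{d}\le n/D$, which reduces the JSQ($d$) assignment probability to the $d=1$ random-routing rate and makes the bound on $\rho^N$ linear in the $|U\cap V^N_m|$; the ratio then becomes a weighted average of the $M$ quantities in Lemma~\ref{lem:relationship-v-u-pkm} with no further work. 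Your explicit choice $p_{k,m}=u_m/\max_{m'}u_{m'}$ is also simpler and more transparent than the paper's construction, and in fact saturates all $M$ inequalities with the same value $\lambda\xi/\sum_{m'}u_{m'}v_{m'}$.

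What the paper's organization buys is modularity: Lemma~\ref{lem:relationship-v-u-pkm} is stated as a stand-alone sufficient criterion and reused later (e.g., in the random-graph result), and its water-filling solution shows how one could satisfy the inequalities even under additional constraints on the $p_{k,m}$. Your argument is more self-contained and gives the same set of $M$ inequalities as the ``solvable system,'' so nothing is lost at the level of Proposition~\ref{prop:p-matrix-existence} itself; your final paragraph correctly identifies that the uniformity of the $o(1)$ in $U$ is exactly what Condition~\ref{cond-1}(b) provides, which is the one point that needs care.
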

In the following sections, we will demonstrate, in addition to the first-order improvements, how asymptotic queue length distribution can be improved as well, for example, in terms of having a double-exponential decay of tail probabilities.

The proof of Proposition~\ref{prop:p-matrix-existence} is provided in Appendix~\ref{app:proof-lem-stability}. 
It relies on first building a simple criteria involving the system parameters, which, for sequence of systems satisfying Condition~\ref{cond-1}, ensures stability for all large-enough $N$ (Lemma~\ref{lem:relationship-v-u-pkm}). 
Then we show that given other parameters, a value of $(p_{k,m})_{k\in\mathcal{K}, m\in\mathcal{M}}$ satisfying this criteria can be found by checking the feasibility region defined by $M$ inequalities.
\vspace{2mm}

We end this subsection by presenting the above-mentioned simple asymptotic criteria for subcriticality. The proof is given in Appendix~\ref{app:proof-lem-stability}.
Denote $\delta_k\coloneqq\sum_{m\in\mathcal{M}}p_{k,m}v_m$ for each $k\in\mathcal{K}$.
\begin{lemma}
\label{lem:relationship-v-u-pkm}
Let $\{G^N\}_N$ be a sequence satisfying Condition~\ref{cond-1}. 
The sequence of systems is in subcritical regime if
\begin{equation}\label{eq:max-rho-set}
    \frac{\lambda\xi}{u_m}\sum_{k\in\mathcal{K}}\frac{w_k p_{k,m}}{\delta_k} <1,\quad \text{ for all } m\in\mathcal{M}.
\end{equation}
\end{lemma}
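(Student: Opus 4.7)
The plan is to obtain a universal (in $U$ and in $N$) upper bound on the arrival term in the definition of $\rho^N$ and then show that, under Condition~\ref{cond-1}, this bound converges to a weighted average of the quantities $C_m\coloneqq \frac{\lambda\xi}{u_m}\sum_{k}\frac{w_kp_{k,m}}{\delta_k}$, which is bounded by $\max_m C_m$. The condition~\eqref{eq:max-rho-set} then yields $\rho^N\to \rho\leq \max_m C_m<1$. The key elementary inequality I will use is that $\binom{a}{d}/\binom{b}{d}\leq a/b$ whenever $0\leq a\leq b$, which in particular gives, for any dispatcher $i$ with $\delta^N_i\geq d$,
\begin{equation*}
\sum_{\substack{S\subseteq(U\cap\mathcal{N}^N_w(i))\\|S|=d}}\frac{\lambda}{\binom{\delta^N_i}{d}}
=\lambda\,\frac{\binom{|U\cap\mathcal{N}^N_w(i)|}{d}}{\binom{\delta^N_i}{d}}\leq \lambda\,\frac{|U\cap\mathcal{N}^N_w(i)|}{\delta^N_i}.
\end{equation*}
The small-degree case $\delta^N_i<d$ has the same form, so in either case the contribution of dispatcher $i$ to the numerator in~\eqref{eq:stability-cond} is at most $\lambda\,|U\cap\mathcal{N}^N_w(i)|/\delta^N_i$.

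Next I will invoke Condition~\ref{cond-1}. Part (b) applied on the $W^N$-side gives, uniformly over $i\in W^N_k$, $\delta^N_i=\sum_m \deg^N_w(i,m)=N\delta_k(1+o(1))$, where $\delta_k=\sum_m p_{k,m}v_m>0$ (we work under the implicit assumption $\delta_k>0$, otherwise~\eqref{eq:max-rho-set} is vacuous for such $k$). Part (b) on the $V^N$-side similarly yields $\deg^N_v(k,j)=p_{k,m}w_k\xi N(1+o(1))$ uniformly for $j\in V^N_m$, using part (a) together with $|W^N_k|/W(N)\to w_k$, $|V^N_m|/N\to v_m$, and $W(N)/N\to\xi$. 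The crucial feature is that both $o(1)$ factors are uniform over vertices, so they pass through any maximum over $U$. Swapping the order of summation then gives, for $U_m\coloneqq U\cap V^N_m$,
\begin{equation*}
\sum_{i\in W^N}\frac{|U\cap\mathcal{N}^N_w(i)|}{\delta^N_i}
=\sum_k\frac{1+o(1)}{N\delta_k}\sum_{j\in U}\deg^N_v(k,j)
=(1+o(1))\,\xi\sum_{m}|U_m|\sum_k\frac{w_k p_{k,m}}{\delta_k},
\end{equation*}
with the $o(1)$ terms independent of $U$.

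Combining these estimates and dividing by the service rate $\sum_m u_m|U_m|$, I obtain uniformly in $U$,
\begin{equation*}
\frac{\text{arrival bound}}{\text{service rate}}
\leq (1+o(1))\,\frac{\sum_m u_m|U_m|\,C_m}{\sum_m u_m|U_m|}
\leq (1+o(1))\,\max_{m\in\mathcal{M}} C_m,
\end{equation*}
since the middle expression is a convex combination of the $C_m$ with weights proportional to $u_m|U_m|$. Taking the supremum over $U$ and letting $N\to\infty$ yields $\limsup_N\rho^N\leq \max_m C_m<1$ under~\eqref{eq:max-rho-set}, hence the subcritical regime in the sense of Definition~\ref{def:subcrit}. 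The main obstacle will be ensuring that the uniform-in-$i$ and uniform-in-$j$ approximations from Condition~\ref{cond-1}(b) really survive the supremum over subsets $U\subseteq V^N$; this is handled by sandwiching $\delta^N_i$ and $\deg^N_v(k,j)$ between their per-type max and min (both ratios tending to $1$) \emph{before} plugging into the double sum, so that the error factors depend only on $N,k,m$ and factor out of the sum over $U$.
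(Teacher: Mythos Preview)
Your proof is correct and takes a cleaner route than the paper's. The key difference is the elementary inequality you use at the outset: $\binom{a}{d}/\binom{b}{d}\leq a/b$ for $0\leq a\leq b$, which immediately linearizes the numerator in~\eqref{eq:stability-cond} and allows a direct swap of summation over $i\in W^N$ and $j\in U$. The paper instead appeals to an auxiliary optimization lemma (Lemma~\ref{lem:opt-prob}) bounding $\sum_i\binom{x_i}{d}$ subject to a sum and a box constraint, which produces a bound involving a floor function and forces a case split according to whether $\lfloor \cdot\rfloor\geq 1$; a further case split on small versus large $|U|$ is also needed. Your approach avoids all of this machinery: once the contribution of each dispatcher is bounded by $\lambda\,|U\cap\mathcal{N}^N_w(i)|/\delta^N_i$, the double sum becomes $\sum_{j\in U}\sum_k \deg^N_v(k,j)/\delta^N_i$, and the uniform degree estimates from Condition~\ref{cond-1}(b) (Lemma~\ref{lem:uniformity-degree}) factor out cleanly, leaving a convex combination of the $C_m$. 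You were also right to flag the uniformity issue and resolve it by sandwiching with per-type extrema before summing; this is exactly what makes the $o(1)$ factors independent of $U$.

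Two minor remarks. First, for all large enough $N$ Condition~\ref{cond-1} guarantees $\delta^N_i\geq d$ for every $i$, so the small-degree branch in~\eqref{eq:stability-cond} is eventually empty; you could simply discard it rather than arguing it has the same form. Second, both your argument and the paper's establish $\limsup_N\rho^N\leq \max_m C_m<1$ rather than convergence of $\rho^N$; Definition~\ref{def:subcrit} literally asks for $\rho^N\to\rho$, but only the $\limsup$ bound is ever used downstream (e.g.\ in Lemma~\ref{lem:tail-bound-expectation-steady}), so this is a cosmetic discrepancy shared with the paper's own proof.
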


\subsection{Process-level Limit: IID Case}\label{sec:iid-transient}
Our first main result characterizes the process-level limit of the queue-length process $\big(X^N_j$, $j\in V\big)$, as $N\to \infty$, when the starting states $\big\{X^N_{j}(0):j\in V^N_m\big\}$ are i.i.d.~for all $m\in\mathcal{M}$ and independent across different $m$-values. 
When the sequence of graphs $\{G^N\}_N$ satisfies a stronger condition, called \emph{clustered proportional sparsity} (Definition~\ref{defn:prop-sparse}), the i.i.d.~condition can be removed. This is the content of Section~\ref{ssec:process-gen}.

Now, note that for a fixed $N\geq 1$, $\{X^N_j : j\in V_N\}$ is a system of $N$ stochastic processes with mean-field type interactions. 
Exploiting tools from the theory of weakly interacting particles, we show in Theorem~\ref{thm:weak-conv-xj} below that as the system size becomes large, queue-length processes converge weakly to those of an infinite system of independent \emph{McKean-Vlasov processes} $\{X_j : j\in \mathbb{N}\}$ (see e.g.\ \cite{SA91,Meleard1996asymptotic}). 
In fact, using a suitable coupling to be described in more details in Section~\ref{ssec:aux}, the convergence holds in $L_2$.
For ease of describing such processes and coupling, although we only assumed that certain fractions of servers are of certain task types in the model description, it will be convenient to fix the type of each server $j\in\N$ in this subsection, by defining a \emph{membership map} $\mathbf{M}: \N\rightarrow \mathcal{M}$, so that $V^N_m=\{j\in V^N:\mathbf{M}(j)=m\}$ with $\lim_{N\rightarrow\infty}\frac{|V^N_m|}{N}=v_m$ and $V_m=\lim_{N\rightarrow\infty}V^N_m$ for each $m\in\mathcal{M}$.
With such fixed server types and $X_j^N(0) \equiv X_j(0)$, let
\begin{align}
    X_j(t) & =X_j(0)-\int_0^t\mathds{1}_{\big(X_j(s-)>0\big)}D_j(ds)+\int_{[0,t]\times \R_+}\mathds{1}_{\big(0\leq y\leq C_j(s-)\big)}A_j(dsdy), \label{eq:thm-1-1} \\
    C_j(t) & =d\xi\sum_{k\in \mathcal{K}}\frac{p_{k,m}w_k}{\delta_k}\sum_{(M_2,...,M_d)\in \mathcal{M}^{d-1}} h_t(j,M_2,...,M_d), \label{eq:thm-1-2}
\end{align}
where $\mathbf{M}(j)=m$ and
\begin{align}
    h_t(j,M_2,...,M_d) & =\prod_{h=2}^d \frac{v_{M_h}p_{k,M_h}}{\delta_k}\int_{\N^{d-1}}b\big(X_j(t),x_{j_2},...,x_{j_d}\big)\mu^{M_2}_t(dx_{j_2})\cdots\mu^{M_d}_t(dx_{j_d}), \notag \\
    b(\mathbf{x}) & =b(x_1,...,x_d)\coloneqq\sum_{r=1}^d \frac{1}{r}\mathds{1}_{\big(x_1=\min_{j\in[d]}\mathbf{x},|\argmin \mathbf{x}|=r\big)}, \quad \mathbf{x}=(x_1,...,x_d)\in \N_0^d, \label{eq:b} \\
    \mu^{m}_t & =\mathcal{L}\big(X_{i}(t)\big), \quad \forall\, i\in V_{m}, m \in \mathcal{M}, t\geq 0. \notag
\end{align}
Here, $\{D_j : j \in V_m\}$ are i.i.d.\ Poisson processes with rate $u_m$ for each $m \in \mathcal{M}$, $\{A_j : j \in \mathbb{N}\}$ are i.i.d.\ Poisson random measures on $[0,\infty)\times \R_+$ with intensity $\lambda dsdy$, and all $D_j$'s and $A_j$'s are independent.
Loosely speaking, $A_j$ corresponds to the arrival processes and $D_j$ corresponds to the departure processes at servers. 
We note that the existence and uniqueness of solutions to \eqref{eq:thm-1-1} and \eqref{eq:thm-1-2} can be proved by standard arguments (see e.g., \cite{SA91,Meleard1996asymptotic}) using the boundedness and Lipschitz property of the functions $b$ and $x \mapsto \mathds{1}_{(x>0)}$ on $\mathbb{N}_0$.

\begin{theorem}[Convergence to McKean-Vlasov process and propagation of chaos]\label{thm:weak-conv-xj}
Consider any fixed $\mathbf{q}^{\infty}=\big(q^{\infty}_{m,l},m\in\mathcal{M},l\in\N_0\big)\in \mathcal{S}$. 
Assume that all $X^N_j(0)$'s are independent, and for each $m\in\mathcal{M}$, $\big\{X^N_{j}(0):j\in V^N_m\big\}$ is i.i.d.~with $\PP\big(X^N_j(0)\geq l\big)=q^{\infty}_{m,l}$, $l\in\N_0$. 
On any finite time interval $[0,T]$, $T>0$, for any $m\in \mathcal{M}$ and $j\in V_{m}$, the queue length process $X^N_j(\cdot)$ at server $j$ weakly converges to the process $X_j(\cdot)$ in \eqref{eq:thm-1-1}. 
In fact, one can suitably couple $X^N_j$ with $X_j$ such that
\begin{equation}\label{eq:thm1-coupled-cvg}
    \max_{j\in V^N}\E\norm{X^N_j-X_j}^2_{*,T}\xrightarrow{N\rightarrow\infty}0
\end{equation}
and hence the propagation of chaos property holds, that is, for any $n \in \mathbb{N}$ and distinct $j_h \in V_{M_h}$, $h=1,\dotsc,n$,
\begin{equation}\label{eq:thm1-POC}
    \mathcal{L}(X_{j_1}^N,\dotsc,X_{j_n}^N) \xrightarrow{N\rightarrow\infty} \mathcal{L}(X_{j_1},\dotsc,X_{j_n}) = \mu^{M_1}\otimes\dotsb\otimes\mu^{M_n}.
\end{equation}
\end{theorem}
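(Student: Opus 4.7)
The plan is to carry out a synchronous coupling between the finite system $\{X_j^N\}_{j \in V^N}$ and the infinite family of independent McKean--Vlasov processes $\{X_j\}_{j \in \N}$ on a common probability space. Under the i.i.d.\ initial-condition hypothesis I match $X_j^N(0) = X_j(0)$ for every $j$, share the departure Poisson clocks $D_j$ (both systems have rate $u_{\mathbf{M}(j)}$ at server $j$), and link the dispatcher-level Poisson clocks driving $X^N$ with the per-server Poisson measures $A_j$ driving the limit through a common dominating Poisson construction. Writing both processes in semimartingale form and differencing, the pointwise error $X_j^N(t) - X_j(t)$ decomposes into a departure term controlled by $\int_0^t \mathds{1}(X_j^N(s-) \neq X_j(s-))\,D_j(ds)$ and an arrival term governed by the mismatch between $r_j^N(s)$, the actual arrival rate at $j$ under JSQ($d$) sampling, and $\lambda C_j(s)$. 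The proof then reduces to bounding this rate mismatch uniformly in $j$.

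The technical heart lies in controlling $r_j^N(t) - \lambda C_j(t)$. Writing
\[
r_j^N(t) = \lambda \sum_{k \in \mathcal{K}} \sum_{\substack{i \in W^N_k \\ (i,j) \in E^N}} \frac{d}{\delta_i^N}\, \E_i\!\left[b\bigl(X_j^N(t), X_{j_2}^N(t), \ldots, X_{j_d}^N(t)\bigr)\right],
\]
where $\E_i$ averages over the uniform sampling of the remaining $d-1$ servers from $\mathcal{N}^N_w(i) \setminus \{j\}$, I would compare it to $\lambda C_j(t)$ in three layers. First, the combinatorial prefactor $\frac{d}{\delta_i^N}\cdot|\{i \in W^N_k : (i,j) \in E^N\}|$ is forced to its asymptotic value $d\xi p_{k,m} w_k/\delta_k$ by Condition~\ref{cond-1}(a)--(b), which drives the degree ratios uniformly to $1$. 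Second, conditioning on the types $(M_2, \ldots, M_d)$ of the sampled neighbors yields the weights $\prod_{h=2}^d v_{M_h} p_{k,M_h}/\delta_k$ up to a vanishing error (the sampling-without-replacement artefact is absorbed into a lower-order term because compatible neighborhoods have diverging size). Third, and most delicately, the remaining conditional expectation of $b$ must be matched to $\int b(X_j(t), x_2, \ldots, x_d)\, \mu^{M_2}_t(dx_2) \cdots \mu^{M_d}_t(dx_d)$; since $b$ is bounded and changes its value only when the argmin set changes, the comparison can be carried through via an $L_1$-type bound on the difference between the queue lengths of the sampled neighbors and independent samples from $\mu^{M_h}_t$.

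The third layer is the main obstacle because it is self-referential: the rate comparison requires the sampled neighbors to be asymptotically $\mu^{M_h}_t$-distributed and independent, which is precisely the propagation of chaos we are after. I would break this circularity with a Gronwall bootstrap. Define $\phi_N(t) := \max_{j \in V^N} \E\,\norm{X_j^N - X_j}_{*,t}^2$; exchangeability within each server-type set, together with Condition~\ref{cond-1} and the i.i.d.\ initial distribution (which pins down $\mu^{M_h}_0$), allows one to bound the conditional expectation difference above by $C_1 \phi_N(s)$ plus an error $\epsilon_N(T) \to 0$ coming from finite-$N$ graph regularity. Combining with the departure term, I obtain
\[
\phi_N(t) \le C \int_0^t \phi_N(s)\, ds + \epsilon_N(T), \qquad t \in [0,T],
\]
and Gronwall's lemma gives $\phi_N(T) \to 0$, which is \eqref{eq:thm1-coupled-cvg}. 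Weak convergence of each $X_j^N$ to $X_j$ is then immediate, and \eqref{eq:thm1-POC} follows because the $L_2$-coupling forces the joint law $\mathcal{L}(X_{j_1}^N, \ldots, X_{j_n}^N)$ to converge to $\mathcal{L}(X_{j_1}, \ldots, X_{j_n})$, which equals $\mu^{M_1} \otimes \cdots \otimes \mu^{M_n}$ by the mutually independent driving noises in the McKean--Vlasov construction.
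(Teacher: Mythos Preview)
Your approach is essentially the paper's: synchronous coupling via shared $D_j$, $A_j$ and identical initial data, then Gronwall on $\phi_N(t)=\max_{j}\E\norm{X_j^N-X_j}^2_{*,t}$. The paper telescopes $C_j^N \to C_j^{N,1} \to C_j^{N,2} \to C_j$ (Lipschitz replacement, then a law-of-large-numbers step, then graph regularity); your three layers are the same pieces with the graph step taken first rather than last.

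Your third layer, however, blurs two distinct errors that the paper separates. After replacing $X^N_\cdot$ by $X_\cdot$ inside $b$ via Lipschitz continuity---this is what delivers your $C_1\phi_N(s)$ term---you are still left with a weighted sum $\sum_{(j_2,\ldots,j_d)} w_{j_2,\ldots,j_d}\,b(X_j,X_{j_2},\ldots,X_{j_d})$ over \emph{particular} McKean--Vlasov particles, not yet the integral $\int b(X_j,\cdot)\,d\mu^{M_2}\cdots d\mu^{M_d}$. Passing from that sum to the integral is a genuine law-of-large-numbers step, not an $L_1$ comparison: conditionally on $X_j$, the summands over disjoint $(d-1)$-tuples are independent and centered (precisely because the limiting particles $\{X_{j'}\}$ are mutually independent with law $\mu^{\mathbf{M}(j')}$), so a second-moment calculation reduces the squared error to a count of \emph{overlapping} tuples, which is $O(1/N)$ by the combinatorial estimate of Lemma~\ref{lem:bound-sett}. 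Your residual $\epsilon_N(T)$ has to absorb this variance contribution; attributing it solely to ``finite-$N$ graph regularity'' leaves that part of the argument unaccounted for.
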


Theorem~\ref{thm:weak-conv-xj} gives us the limit law of all individual queues.
Next, in Theorem~\ref{thm:conv-global-occup}, we will show how such a server-level convergence can be used to obtain a convergence result for the global occupancy process $\mathbf{q}^N(\cdot)$ to a deterministic dynamical system, which was our primary goal. 
The proofs of Theorem~\ref{thm:weak-conv-xj} and Theorem~\ref{thm:conv-global-occup} are provided in Section~\ref{sec:transient-limit}.

\begin{theorem}[Process-level convergence for i.i.d.~starting state]\label{thm:conv-global-occup}
Assume that all $X^N_j(0)$'s are independent, and for each $m\in\mathcal{M}$, $\{X^N_{j}(0):j\in V^N_m\}$ is i.i.d. with $\PP(X^N_j(0)\geq l)=q^{\infty}_{m,l}$, $l\in \N_0$ for some $\mathbf{q}^{\infty}=(q^{\infty}_{m,l},m\in\mathcal{M},l\in\N_0)\in \mathcal{S}$. 
Then on any finite time interval, the occupancy process $\mathbf{q}^N(\cdot)$ converges weakly with respect to Skorokhod $J_1$ topology to the deterministic limit $\mathbf{q}(\cdot)\coloneqq(q_{m,l}(\cdot),m\in\mathcal{M},l\in\N_0)$ given by the unique solution to the following system of ODEs: 
For all $m\in\mathcal{M}$, $q_{m,0}(t)=1$, $q_{m,l}(0)=q^{\infty}_{m,l}$, and
\begin{align}
    \frac{dq_{m,l}(t)}{dt}&=-u_m\big(q_{m,l}(t)-q_{m,l+1}(t)\big)\nonumber\\
    &\quad +\lambda\xi\big(q_{m,l-1}(t)-q_{m,l}(t)\big)\sum_{k\in\mathcal{K}}\frac{p_{k,m}w_k}{\delta_k}\frac{(\tilde{q}_{k,l-1}(t))^d-(\tilde{q}_{k,l}(t))^d}{\tilde{q}_{k,l-1}(t)-\tilde{q}_{k,l}(t)},\quad \forall l\in \N.\label{eq:ode-sys-2}
\end{align} 
where $\tilde{q}_{k,l}(t)=\sum_{m\in\mathcal{M}}\frac{v_mp_{k,m}}{\delta_k}q_{m,l}(t)$ for all $k\in\mathcal{K}$.
\end{theorem}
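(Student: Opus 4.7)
The plan is to leverage Theorem~\ref{thm:weak-conv-xj} as the driving input and pass from server-level convergence $X^N_j\to X_j$ to convergence of the global occupancy $\mathbf{q}^N$ by averaging. For each $m\in\mathcal{M}$ and $l\in\N$, decompose
\begin{equation*}
q^N_{m,l}(t) = \frac{1}{|V^N_m|}\sum_{j\in V^N_m}\mathds{1}(X_j(t)\geq l) + R^N_{m,l}(t).
\end{equation*}
Since the processes are $\N_0$-valued, $\sup_{t\leq T}\lvert\mathds{1}(X^N_j(t)\geq l)-\mathds{1}(X_j(t)\geq l)\rvert\leq \norm{X^N_j-X_j}_{*,T}$, so Jensen's inequality combined with \eqref{eq:thm1-coupled-cvg} gives $\E\sup_{t\leq T}\lvert R^N_{m,l}(t)\rvert\to 0$. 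For the main term, the propagation of chaos \eqref{eq:thm1-POC} makes $\{X_j(\cdot):j\in V^N_m\}$ i.i.d.\ with one-dimensional marginal $\mu^m_t$, so a Chebyshev estimate on the variance of the bounded indicator sum gives pointwise-in-$t$ convergence in $L^2$ to $\mu^m_t(\{\geq l\})=:q_{m,l}(t)$.

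To promote pointwise convergence to convergence of sample paths in $\mathbb{D}([0,T],\mathcal{S})$, I would first establish coordinate-wise tightness of $\{q^N_{m,l}\}_N$ in $\mathbb{D}([0,T],[0,1])$ via an Aldous-type criterion: jumps have size $1/|V^N_m|$ and the total jump intensity is uniformly bounded in $N$. Combined with the finite-dimensional limits identified above, this yields coordinate-wise $J_1$-convergence. The upgrade to the $\ell_1$-topology on $\mathcal{S}$ then requires a uniform tail bound $\lim_{L\to\infty}\sup_N\sup_{t\leq T}\E\sum_{l>L} q^N_{m,l}(t)=0$; I would obtain this by stochastically dominating the total task count $\sum_{j\in V^N} X^N_j(t)$ using the subcritical regime of Definition~\ref{def:subcrit} to produce a positive net drift, yielding a uniform bound on the expected total number of tasks per server and hence the required tail control.

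To identify the limit with the ODE \eqref{eq:ode-sys-2}, I would apply Dynkin's formula to the test functions $f_l(x)=\mathds{1}(x\geq l)$ along the McKean-Vlasov SDE \eqref{eq:thm-1-1}-\eqref{eq:thm-1-2}. The departure compensator contributes $-u_m[q_{m,l}(t)-q_{m,l+1}(t)]$ and the arrival compensator contributes $\lambda\E[C_j(t)\mathds{1}(X_j(t)=l-1)]$. Conditioning on $X_j(t)=l-1$ and averaging over $(M_2,\dotsc,M_d)$ with weights $\prod_h v_{M_h}p_{k,M_h}/\delta_k$, the mixture in \eqref{eq:thm-1-2} makes the auxiliary $X_2,\dotsc,X_d$ i.i.d.\ with tail $\tilde{q}_{k,\cdot}(t)$; expanding the tie-breaking function $b$ and applying the elementary identity $\sum_{s=0}^{d-1}\binom{d-1}{s}\frac{a^sb^{d-1-s}}{s+1}=\frac{(a+b)^d-b^d}{da}$ with $a=\tilde{q}_{k,l-1}-\tilde{q}_{k,l}$, $b=\tilde{q}_{k,l}$ produces precisely the factor appearing in \eqref{eq:ode-sys-2}. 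Summing over $k$ and using $\PP(X_j(t)=l-1)=q_{m,l-1}(t)-q_{m,l}(t)$ closes the identification; uniqueness of the ODE follows from the Lipschitz property of its RHS on bounded sets in the $\ell_1$-norm, since the telescoping ratio is a bounded polynomial in the $\tilde{q}_{k,\cdot}$. The main obstacle is the $\ell_1$-tightness step, which hinges on a uniform-in-$(N,t)$ first-moment control on queue lengths compatible with the bipartite graph structure; the combinatorial simplification of the arrival drift, while technical, is routine once the correct mixture interpretation is identified.
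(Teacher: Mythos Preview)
Your proposal is sound in substance and matches the paper's proof in its two main ingredients: (i) leveraging the coupled convergence \eqref{eq:thm1-coupled-cvg} of Theorem~\ref{thm:weak-conv-xj} to pass from $X^N_j$ to $X_j$, and (ii) identifying the limit via Dynkin's formula applied to $f_l(x)=\mathds{1}(x\ge l)$ along the McKean--Vlasov process, followed by exactly the combinatorial simplification you describe. For step (i), the paper takes a slicker route than your coordinate-wise pointwise convergence plus Aldous tightness: it works directly with the empirical measures $\mu^N_m=\frac{1}{|V^N_m|}\sum_{j\in V^N_m}\delta_{X^N_j(\cdot)}$ on the path space $\mathbb{D}([0,T],\N_0)$, bounds the bounded-Lipschitz distance $d_{BL}(\mu^N_m,\bar\mu^N_m)$ by the coupling estimate, and applies the law of large numbers to the i.i.d.\ paths $\{X_j\}_{j\in V^N_m}$. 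This avoids the separate tightness argument at the cost of handling convergence of measures on path space rather than paths of measures. Your approach is more hands-on but equally valid.

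There is one genuine issue in your $\ell_1$-tightness step. You propose to control $\sup_N\sup_{t\le T}\E\sum_{l>L}q^N_{m,l}(t)$ by invoking the subcritical regime of Definition~\ref{def:subcrit} to obtain a negative drift. But Theorem~\ref{thm:conv-global-occup} does \emph{not} assume subcriticality; it is a finite-horizon process-level result that must hold regardless of stability. The correct argument---and the one the paper uses when it writes $\sup_N\E\big[\sup_{t\le T}\norm{\mathbf{q}^N(t)}_{\ell_1}^2\big]<\infty$---is purely finite-horizon: $\sum_{l\ge 1}q^N_{m,l}(t)$ equals the average queue length in pool $m$, which is bounded by its initial value (finite in expectation since $\mathbf{q}^\infty\in\mathcal{S}$) plus $|V^N_m|^{-1}$ times the total number of arrivals to pool $m$ on $[0,T]$; the latter is dominated by a Poisson variable with mean $O(N)$ thanks to the uniform bound on $C^N_j$ in Lemma~\ref{lem:bound-C^N}. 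No drift or ergodicity is needed. The same device underlies Lemma~\ref{lem:sup-q-ml-t}. With this correction your argument goes through.
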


\begin{remark}\normalfont
    Using the propagation of chaos property \eqref{eq:thm1-POC} and the fact that $\{X_j(t) : j \in \mathbb{N}\}$ is independent and $\{X_j(t) : j \in V_m\}$ is i.i.d.\ for each $m \in \mathcal{M}$, it follows that the limit of the global occupancy process at any time instant $t$, in fact, corresponds to the laws of $X_j(t)$ for each type of servers $j$ in \eqref{eq:thm-1-1}, that is,
    \begin{equation*}
        \mu_t^m[l,\infty) = \mathbb{P}(X_j(t) \ge l) = q_{m,l}(t), \quad j \in V_m, m \in \mathcal{M}, l \in \mathbb{N}_0, t \ge 0.
    \end{equation*}
\end{remark}

\subsection{Process-level Limit: General Case}\label{sec:general-transient}
\label{ssec:process-gen}

Theorem~\ref{thm:conv-global-occup} requires the strong assumption that for each $m\in\mathcal{M}$, $X^N_j(0)$, $j\in V^N_m$, are i.i.d.. 
In order to argue the interchange of limits, we need to relax this assumption on initial states. 
This is because the arguments for the interchange of limits involves initiating the prelimit system at the steady state and then showing that as $N\to\infty$, the system must converge to the unique fixed point of the limiting ODE. 
The above requires us to characterize the (process-level) limiting trajectory of the system starting from arbitrary occupancy state. We achieve this in this section.

Intuitively, the assumption of i.i.d.~in Theorems~\ref{thm:weak-conv-xj} and~\ref{thm:conv-global-occup} ensures that the local occupancy observed by any dispatcher $i\in W^N_k$, $k\in\mathcal{K}$ is `close', in suitable sense, to the average occupancy at the entire system. 
This phenomenon can be ensured asymptotically, even without the i.i.d.~assumption if the graph sequence satisfies a property we call the \emph{clustered proportional sparsity}.
This notion was first introduced for the homogeneous systems in~\cite{RM22}. 
The definition below is a modified notion that is suitable for the current heterogeneous setting.

\begin{defn}[Clustered Proportional Sparsity]\label{defn:prop-sparse}
Recall $\mathcal{N}^N_w(i)=\{j\in V^N:(i,j)\in E^N\}$.
The sequence $\{G^N\}_N$ is called \textbf{clustered proportionally sparse} if for any $\varepsilon>0$, 
\begin{equation}
    \sup_{k\in\mathcal{K}}\sup_{U\subseteq V^N}\Big|\Big\{i\in W^N_k:\Big|\frac{|\mathcal{N}^N_w(i)\cap U|}{|\mathcal{N}^N_w(i)|}-\frac{|E^N_k(U)|}{|E^N_k(V^N)|}\Big|\geq \varepsilon\Big\}\Big|/|W^N_k|\xrightarrow{N\rightarrow\infty}0,
\end{equation}
where $E^N_k(U)\coloneqq\{(i,j)\in W^N_k\times U: (i,j)\in E^N\}$.
\end{defn}

\begin{remark}\normalfont
We can view the subset $U$ in the definition as a test set, say $U=\mathcal{Q}^N_{m,l}(t)$, where $\mathcal{Q}^N_{m,l}(t)$ is the set of type $m\in\mathcal{M}$ servers with queue length at least $l\in\N_0$ at time $t$. 
Hence, Definition~\ref{defn:prop-sparse} ensures that for all but $o(N)$ dispatchers, the observed empirical queue length distribution within its neighborhood, is close to the global weighted empirical queue length distribution (Definition~\ref{defn:global-queue-length}) of its corresponding type. Then, the global occupancy process evolves similarly to (and converges to the same limit as) the case when the initial states are i.i.d.. 
\end{remark}
\begin{theorem}[Process-level convergence]\label{thm:general-conv-global-occup}
Let $\{G^N\}_N$ be a clustered proportionally sparse sequence of graphs. Assume that $\mathbf{q}^N(0)$ weakly converges to $\mathbf{q}^{\infty}\in\mathcal{S}$. Then on any finite time interval, the occupancy process $\mathbf{q}^N(\cdot)$ converges weakly with respect to the Skorokhod $J_1$ topology to the deterministic limit $\mathbf{q}(\cdot)\coloneqq(q_{m,l}(\cdot),m\in\mathcal{M},l\in\N_0)$ given by the unique solution to the system of ODEs defined by \eqref{eq:ode-sys-2} with initial state $\mathbf{q}(0)=\big(q^{\infty}_{m,l},m\in\mathcal{M},l\in\N_0\big)$.
\end{theorem}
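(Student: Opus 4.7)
The plan is to sidestep the i.i.d.\ assumption of Theorem~\ref{thm:conv-global-occup} by introducing an intermediate system whose dispatcher-side policy depends only on the \emph{global} weighted occupancy rather than on the local neighborhood structure. Specifically, I would work with the GWSQ($d$) system in which, whenever a task arrives at a type-$k$ dispatcher, the $d$ candidates are drawn from the entire server pool with the type-$m$ weight $v_m p_{k,m}/\delta_k$, and the task is routed to the shortest of them. Because candidate selection depends on the state only through $\mathbf{q}^N$, the GWSQ($d$) occupancy process $\tilde{\mathbf{q}}^N(\cdot)$ is itself Markov; by coupling it with a copy started from i.i.d.\ queue lengths with the correct marginal and then invoking Theorem~\ref{thm:conv-global-occup} (or repeating its mean-field argument directly), one obtains that $\tilde{\mathbf{q}}^N(\cdot)$ converges weakly to the solution $\mathbf{q}(\cdot)$ of~\eqref{eq:ode-sys-2} whenever $\tilde{\mathbf{q}}^N(0)$ converges weakly to $\mathbf{q}^{\infty}$.

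Next, I would couple the original local JSQ($d$) system with the GWSQ($d$) system on a single probability space using a shared pool of arrival and departure Poisson clocks, initialized with $\mathbf{q}^N(0)=\tilde{\mathbf{q}}^N(0)$. At each arrival epoch both systems rank their sampled candidates using the same uniform randomness, so they can only disagree when the \emph{selection} of the $d$ candidates differs. That disagreement is driven by the gap between a dispatcher's neighborhood-empirical occupancy and the corresponding global weighted occupancy. Applying clustered proportional sparsity (Definition~\ref{defn:prop-sparse}) level-by-level to the test sets $\mathcal{Q}^N_{m,l}(t):=\{j\in V^N_m: X^N_j(t)\geq l\}$ shows that for every $\varepsilon>0$ and every type $k$, all but an $o(|W^N_k|)$ subset of dispatchers have local densities within $\varepsilon$ of the global ones. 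Feeding this estimate into a Gronwall-type bound for $\norm{\mathbf{q}^N(t)-\tilde{\mathbf{q}}^N(t)}_1$ yields $\E\norm{\mathbf{q}^N-\tilde{\mathbf{q}}^N}_{*,T}\to 0$ on every bounded interval $[0,T]$.

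Combining these two steps via the triangle inequality in the $\ell_1$-topology on $\mathcal{S}$, together with continuity of the limiting ODE trajectory (which upgrades pointwise-in-$t$, $\ell_1$-closeness to weak convergence in the Skorokhod $J_1$ topology), delivers the desired $\mathbf{q}^N(\cdot)\Rightarrow\mathbf{q}(\cdot)$. The main obstacle I expect is executing the coupling bound uniformly across the infinite-dimensional index $l\in\N$: clustered proportional sparsity controls one level set at a time, whereas the cumulative $\ell_1$-error must be summed across all queue lengths and propagated through the nonlinear factor $\bigl((\tilde{q}_{k,l-1})^d-(\tilde{q}_{k,l})^d\bigr)/(\tilde{q}_{k,l-1}-\tilde{q}_{k,l})$ appearing in~\eqref{eq:ode-sys-2}. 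Handling this will likely require truncating at a level $L=L(\varepsilon)$ chosen so that the $\ell_1$-mass above $L$ is uniformly small in $N$ and $t$; the subcriticality assumption together with a priori moment bounds on the aggregate queue lengths should suffice to control this tail.
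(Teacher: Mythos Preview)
Your proposal is essentially the paper's proof: introduce the GWSQ($d$) intermediate system whose occupancy is Markov (so the i.i.d.\ assumption on initial states is immaterial for it), couple it optimally with the local JSQ($d$) system, use clustered proportional sparsity to show all but $o(|W^N_k|)$ dispatchers are $\varepsilon$-good and thereby bound the cumulative mismatch via a Gronwall argument, and then pull the ODE limit across via Theorem~\ref{thm:conv-global-occup}. One small correction on the tail step: subcriticality is not assumed in this theorem, and the paper does not use it here --- the needed control $\sup_{t\in[0,T]}\sup_{m}q^N_{m,\ell}(t)<\varepsilon$ comes instead from the fact that $\mathbf{q}^\infty\in\mathcal{S}$ gives a small initial tail and only Poisson$(\lambda W(N)T)$ arrivals occur on $[0,T]$, so a concentration bound suffices.
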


The proof of Theorem~\ref{thm:general-conv-global-occup} is given in Section~\ref{sec:general-conv}

\subsection{Convergence of Steady States}\label{sec:ode-ana-interchange-limits}\label{sec:interchange-limits}
In the last section, we showed the process-level convergence of global occupancy process $\mathbf{q}^N(\cdot)$ to a mean-field limit $\mathbf{q}(\cdot)$. 
In this section, we will establish the convergence of the sequence of stationary distributions to the unique fixed point of the mean-field limit by establishing the interchange of large-$N$ and large-$t$ limits: $\lim_{t\rightarrow\infty}\lim_{N\rightarrow\infty}\mathbf{q}^N(t)=\lim_{N\rightarrow\infty}\lim_{t\rightarrow\infty}\mathbf{q}^N(t)$.
Throughout this section, we will assume that the sequence of systems is in subcritical regime (recall Definition~\ref{def:subcrit}).
The first result below states that the limiting system of ODEs have a unique fixed point $\mathbf{q}^*$ and it satisfies the global stability property, i.e., for any initial point $\mathbf{q}(0)\in \mathcal{S}$, $\lim_{t\rightarrow\infty}\mathbf{q}(t)=\mathbf{q}^*$.
\begin{theorem}[Global stability]\label{thm: exists-unique}
Let $\bar{\mathbf{q}}(t,\mathbf{q}_0)$ be the solution to the system of ODEs in~\eqref{eq:ode-sys-2} with the initial point $\mathbf{q}(0)=\mathbf{q}_0\in \mathcal{S}$. 
Then there exists a unique fixed point $\mathbf{q}^*=\big(q^*_{m,l},m\in\mathcal{M},l\in\N_0\big)\in\mathcal{S}$ such that
$$\lim_{t\rightarrow\infty}\bar{\mathbf{q}}(t,\mathbf{q}_0)=\mathbf{q}^*.$$
\end{theorem}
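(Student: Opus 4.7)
My plan is to treat~\eqref{eq:ode-sys-2} as a cooperative (monotone) dynamical system on the Polish space $\mathcal{S}$ endowed with the coordinatewise partial order, combine a monotone sandwich with a direct analysis of the fixed-point equations to prove existence and uniqueness of $\mathbf{q}^*$, and conclude global convergence via an $\omega$-limit argument. First I would verify the Kamke cross-monotonicity condition: for $(m',l')\neq(m,l)$, the cross partial $\partial\dot q_{m,l}/\partial q_{m',l'}$ is non-negative. Indeed, $q_{m,l}$ is the only coordinate entering $\dot q_{m,l}$ with a negative coefficient (through $-u_m q_{m,l}$ and the factor $-q_{m,l}$ inside $(q_{m,l-1}-q_{m,l})$ multiplying $G_{m,l}$), while $q_{m,l\pm 1}$ and every $q_{m',l-1},q_{m',l}$ enter monotonically, the latter two only through the coefficient $G_{m,l}(\tilde{\mathbf q})=\sum_{k}\tfrac{p_{k,m}w_k}{\delta_k}\sum_{r=0}^{d-1}\tilde q_{k,l-1}^{\,r}\tilde q_{k,l}^{\,d-1-r}$, which is non-decreasing in each $\tilde q_{k,\cdot}$ and hence in each $q_{m',\cdot}$. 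So the semigroup $\{\Phi_t\}_{t\ge 0}$ generated by~\eqref{eq:ode-sys-2} is order-preserving on $\mathcal{S}$.

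Using this I would construct two extremal trajectories. For the lower bound, take the ``empty'' state $\mathbf{q}_\bot\in\mathcal{S}$ with $q_{m,0}=1$ and $q_{m,l}=0$ for $l\ge 1$; at this state $\dot q_{m,l}(\mathbf{q}_\bot)\ge 0$ (strictly positive only at $l=1$), so $t\mapsto\Phi_t\mathbf{q}_\bot$ is non-decreasing, and subcriticality (Lemma~\ref{lem:relationship-v-u-pkm}) furnishes a uniform bound on $\sum_l q_{m,l}(t)$, whence $\Phi_t\mathbf{q}_\bot$ converges in $\mathcal{S}$ to a fixed point $\underline{\mathbf q}^*$. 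For the upper bound, take the doubly exponential profile $\mathbf{q}_\top$ with $q_{m,l}(0)=\gamma^{(d^l-1)/(d-1)}$, $l\ge 0$, and $\gamma\in(0,1)$; a direct calculation using $\tilde q_{k,l}=\gamma^{(d^l-1)/(d-1)}$ reduces the RHS to
\[
\dot q_{m,l}(\mathbf{q}_\top)=\gamma^{(d^l-d)/(d-1)}\bigl(1-\gamma^{d^l}\bigr)\Big[\lambda\xi\sum_{k\in\mathcal{K}}\tfrac{p_{k,m}w_k}{\delta_k}-u_m\gamma\Big],
\]
which is non-positive for all $(m,l)$ as soon as $\gamma\ge\max_{m}\lambda\xi\sum_{k}p_{k,m}w_k/(\delta_k u_m)$, and by Lemma~\ref{lem:relationship-v-u-pkm} this threshold is strictly less than~$1$. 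Hence $\Phi_t\mathbf{q}_\top$ is non-increasing and converges to a fixed point $\overline{\mathbf q}^*\ge\underline{\mathbf q}^*$, and every fixed point of~\eqref{eq:ode-sys-2} lies between these two extremes.

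Proving $\underline{\mathbf q}^*=\overline{\mathbf q}^*=:\mathbf{q}^*$ is the heart of the argument. Setting $\dot q_{m,l}=0$ gives the recursion
\[
q^*_{m,l+1}=q^*_{m,l}-\frac{\lambda\xi}{u_m}\bigl(q^*_{m,l-1}-q^*_{m,l}\bigr)G_{m,l}(\tilde{\mathbf q}^*),
\]
so, using $q^*_{m,0}=1$, any fixed point is determined by its first-layer seed $(q^*_{m,1})_{m\in\mathcal{M}}$, and the recursion is jointly monotone in this seed. I would argue that only one seed produces a sequence that is non-negative, non-increasing in $l$, and summable: any strictly larger seed forces the amplified $d$th-power term inside $G_{m,l}$ to dominate the departure term, which eventually breaks summability, while any strictly smaller seed forces the gaps $q^*_{m,l-1}-q^*_{m,l}$ to turn negative. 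The subcriticality margin in Lemma~\ref{lem:relationship-v-u-pkm} gives the quantitative slack needed to make this tail comparison rigorous. Once uniqueness is in hand, for arbitrary $\mathbf{q}_0\in\mathcal{S}$ the orbit $\{\Phi_t\mathbf{q}_0\}_{t\ge 0}$ is bounded in $\mathcal{S}$ (lower-bounded by $\Phi_t\mathbf{q}_\bot\ge 0$, and with $\sum_l q_{m,l}(t)$ controlled by subcriticality), hence precompact in the $\ell_1$-topology; any subsequential limit is in the $\omega$-limit set and thus a fixed point, and by uniqueness it equals $\mathbf{q}^*$, giving $\Phi_t\mathbf{q}_0\to\mathbf{q}^*$.

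The main obstacle is the uniqueness step. In the classical homogeneous, fully flexible case, the fixed-point equation unfolds into the explicit product formula $q^*_l=\lambda^{(d^l-1)/(d-1)}$, which trivializes seed selection; here the coupling across the $M$ server types through the auxiliary variables $\tilde q^*_{k,l}=\sum_{m}(v_m p_{k,m}/\delta_k)q^*_{m,l}$ and the compatibility matrix $\mathbf{p}$ precludes any such closed form, and uniqueness has to be extracted from the joint monotonicity of the $M$-dimensional recursion in its seed together with the subcriticality margin. A subsidiary technical issue is making the ``non-positive derivative'' calculation at the upper envelope work for the entire family of initial conditions simultaneously, which is where the double-exponential ansatz and the specific combinatorial cancellation above (producing the clean factor $\lambda\xi\sum_k p_{k,m}w_k/\delta_k - u_m\gamma$) are essential.
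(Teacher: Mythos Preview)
Your monotonicity via the Kamke condition and your existence argument via the monotone envelopes $\Phi_t\mathbf{q}_\bot$ and $\Phi_t\mathbf{q}_\top$ are both sound, and the double-exponential upper-envelope calculation producing the clean factor $\lambda\xi\sum_k p_{k,m}w_k/\delta_k - u_m\gamma$ is correct and elegant. Where the proposal falls short is in the two steps you yourself flag as delicate.

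First, the uniqueness argument is not a proof as written: the claim that ``any strictly larger seed eventually breaks summability, while any strictly smaller seed forces the gaps to turn negative'' would have to be made precise, and the $M$-dimensional seed space $(q^*_{m,1})_{m\in\mathcal M}$ rules out a one-dimensional shooting argument. You never actually establish $\underline{\mathbf q}^*=\overline{\mathbf q}^*$. Second, the global-convergence step has a genuine error: $\omega$-limit sets of autonomous flows are invariant, but their points are \emph{not} fixed points in general, so ``any subsequential limit \dots\ is thus a fixed point'' is false without an additional Lyapunov or strong-monotonicity ingredient. Nor can you fall back on the sandwich, because your envelope $\mathbf{q}_\top$ does not dominate an arbitrary $\mathbf{q}_0\in\mathcal S$ (take $q_{0,m,1}=1$, or let $q_{0,m,l}$ decay only polynomially in $l$; the double-exponential profile $\gamma^{(d^l-1)/(d-1)}$ is then eventually smaller).

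The paper bypasses both problems at once with a different sandwich. After constructing \emph{one} fixed point $\mathbf{q}^*$ (by a nested-polyhedra argument in the seeds $(q^*_{m,1})_{m<M}$, using $\sum_m v_m u_m q^*_{m,1}=\lambda\xi$) and establishing monotonicity (via coupling of prelimit systems rather than Kamke), it sandwiches an arbitrary $\mathbf{q}_0$ between $\min(\mathbf{q}_0,\mathbf{q}^*)$ and $\max(\mathbf{q}_0,\mathbf{q}^*)$, both of which lie in $\mathcal S$ and are \emph{comparable to $\mathbf{q}^*$}. It therefore suffices to prove $\bar{\mathbf q}(t,\mathbf{q}_0)\to\mathbf{q}^*$ when $\mathbf{q}_0\le\mathbf{q}^*$ (the case $\ge$ being symmetric). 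For that ordered case one multiplies~\eqref{eq:ode-sys-2} by $v_m$ and sums over $m$ and $l\ge 1$; the arrival side telescopes via $\sum_m v_m p_{k,m}(q_{m,l-1}-q_{m,l})=\delta_k(\tilde q_{k,l-1}-\tilde q_{k,l})$ to the constant $\lambda\xi$, yielding a scalar Lyapunov-type identity whose drift, together with the componentwise constraint $q_{m,1}(t)\le q^*_{m,1}$ and the fixed-point relation $\sum_m v_m u_m q^*_{m,1}=\lambda\xi$, forces $q_{m,1}(t)\to q^*_{m,1}$ for every $m$. The recursion~\eqref{eq:fixed-point} then pins down all higher levels. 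This identity is the ingredient your approach is missing; once you have it, uniqueness and global convergence come out together, with no appeal to $\omega$-limit sets.
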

The proof of Theorem~\ref{thm: exists-unique} is given in Section~\ref{sec:interchange-of-limits}. 
It relies on a monotonicity property of the system, which ensures that for two processes $\mathbf{q}^1(\cdot)$ and $\mathbf{q}^2(\cdot)$, if $\mathbf{q}^1(0)\leq \mathbf{q}^2(0)$, then $\mathbf{q}^1(t)\leq \mathbf{q}^2(t)$ for all $t\geq 0$ (see ref.~\cite{AS15,MS99}).

The last ingredient that we need in order to prove the interchange of limits is to establish tightness of the sequence of random variables $\{\mathbf{q}^N(\infty)\}_{N\geq 1}$ under suitable metric, where $\mathbf{q}^N(\infty) := \lim_{t \to \infty}\mathbf{q}^N(t)$. 
Here, as before, we should note that the process $(\mathbf{q}^N(t))_{t\geq 0}$ is not Markovian. 
That is why, the random variable $\mathbf{q}^N(\infty)$ should be interpreted as the functional applied to the steady-state system.
The tightness result is stated in the next theorem.
\begin{theorem}[Tightness]\label{thm:tightness}
For any $\varepsilon>0$, there exists a compact subset $\bar{K}(\varepsilon)\subseteq \mathcal{S}$, when $\mathcal{S}$ is equipped with the $\ell_1$-topology, such that 
    $$\PP(\mathbf{q}^N(\infty)\notin \bar{K}(\varepsilon))<\varepsilon,\quad \forall N\geq 1.$$
\end{theorem}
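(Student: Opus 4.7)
The plan is to first establish a uniform-in-$N$ moment bound
$\sup_N \frac{1}{N}\E\big[\sum_{j\in V^N}(X_j^N(\infty))^2\big]<\infty$
via a Foster--Lyapunov argument, and then convert this into $\ell_1$-tightness by a Markov-type tail estimate together with a diagonal construction of the compact set $\bar K(\varepsilon)$.

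For the moment bound, I would take the Lyapunov function $V(\mathbf{X})=\sum_{j\in V^N} X_j^3$. Writing $\Lambda_j(\mathbf{X})=\sum_{i:(i,j)\in E^N}\lambda P^{(i)}(j\mid\mathbf{X})$ for the total JSQ$(d)$ arrival rate at server $j$, the generator is
\begin{equation*}
\mathcal{L} V(\mathbf{X}) = \sum_j \Lambda_j(\mathbf{X})\big[(X_j+1)^3-X_j^3\big] - \sum_j u_{\mathbf{M}(j)}\mathds{1}(X_j\geq 1)\big[X_j^3-(X_j-1)^3\big].
\end{equation*}
The key observation is that under JSQ$(d)$ the server chosen by dispatcher $i$ from a uniform $d$-subset $S\subseteq\mathcal{N}^N_w(i)$ attains $\min_{j'\in S}X_{j'}$, so for any non-decreasing $g:\N_0\to\R_+$,
\begin{equation*}
\sum_j P^{(i)}(j\mid\mathbf{X})\,g(X_j) = \E_S\!\big[g\big(\min_{j'\in S}X_{j'}\big)\big]\leq \frac{1}{|\mathcal{N}^N_w(i)|}\sum_{j'\in\mathcal{N}^N_w(i)} g(X_{j'}),
\end{equation*}
since $g(\min_{j'\in S}X_{j'})\leq g(X_{j'})$ for each $j'\in S$ and then averaging over the uniform $d$-subset. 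Applying this with $g(x)=(x+1)^3-x^3=3x^2+3x+1$ and exchanging the order of summation yields
\begin{equation*}
\sum_j \Lambda_j(\mathbf{X})\,g(X_j)\leq \sum_{j'} g(X_{j'})\sum_{i:\,j'\in\mathcal{N}^N_w(i)}\frac{\lambda}{|\mathcal{N}^N_w(i)|}.
\end{equation*}
By Condition~\ref{cond-1} the inner sum converges, uniformly in $j'\in V^N_m$, to $\lambda\xi\sum_k p_{k,m}w_k/\delta_k$, which by Lemma~\ref{lem:relationship-v-u-pkm} is strictly less than $u_m$ in the subcritical regime. Absorbing the lower-order linear and constant terms via Young's inequality, I would obtain $\mathcal{L} V(\mathbf{X})\leq -c\sum_{j\in V^N}X_j^2 + CN$ for some $c,C>0$ and all large $N$; Foster--Lyapunov (together with ergodicity from Lemma~\ref{lem:stability}) then delivers $\E\big[\sum_{j\in V^N}(X_j^N(\infty))^2\big]\leq (C/c)N$.

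To convert this into tightness, note that $\sum_{l\geq L}q^N_{m,l}(\infty)=\frac{1}{|V^N_m|}\sum_{j\in V^N_m}(X_j^N(\infty)-L+1)^+$ and $\E[(X-L+1)^+]=\sum_{l\geq L}\PP(X\geq l)\leq \E[X^2]/(L-1)$ (Markov on $X^2$), so $\sup_N \E[\sum_{l\geq L}q^N_{m,l}(\infty)]=O(1/L)$ for each $m\in\mathcal{M}$. Given $\varepsilon>0$, I would choose thresholds $L_n^{(m)}$ so that $\sup_N \E[\sum_{l\geq L_n^{(m)}}q^N_{m,l}(\infty)]\leq \varepsilon/(M\cdot 4^n)$ and define
\begin{equation*}
\bar K(\varepsilon)\coloneqq\Big\{\mathbf{q}\in\mathcal{S}:\,\sum_{l\geq L_n^{(m)}}q_{m,l}\leq 2^{-n},\ \forall\, n\in\N,\,m\in\mathcal{M}\Big\}.
\end{equation*}
This set is closed in $\mathcal{S}$ with equi-summable tails, hence $\ell_1$-compact. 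Markov's inequality combined with a union bound over $(n,m)$ gives $\PP(\mathbf{q}^N(\infty)\notin\bar K(\varepsilon))\leq\sum_{n,m}2^n\cdot\varepsilon/(M\cdot 4^n)=\varepsilon$. Finitely many small-$N$ systems not yet in the asymptotic regime can be absorbed by enlarging $\bar K(\varepsilon)$ on those coordinates.

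The main obstacle will be the first step, specifically verifying that the $o(1)$ error in the convergence $\sum_{i:\,j'\in\mathcal{N}^N_w(i)}\lambda/|\mathcal{N}^N_w(i)|\to\lambda\xi\sum_k p_{k,m}w_k/\delta_k$ is both uniform in $j'\in V^N_m$ and strictly smaller than the subcritical gap $u_m-\lambda\xi\sum_k p_{k,m}w_k/\delta_k>0$ for all large $N$. This relies crucially on Condition~\ref{cond-1}(b), which forces dispatcher and server degrees within each type to be asymptotically equal. A secondary subtlety is that controlling the $O(X_j)$ lower-order term left behind by Young's inequality requires first establishing $\E[\sum_{j\in V^N}X_j^N(\infty)]=O(N)$, which one obtains by rerunning the same argument with $V(\mathbf{X})=\sum_j X_j^2$ and the first-moment analogue of the min-average inequality.
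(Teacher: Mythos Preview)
There is a genuine gap in your drift argument. You write that $\lambda\xi\sum_k p_{k,m}w_k/\delta_k<u_m$ holds ``by Lemma~\ref{lem:relationship-v-u-pkm} \ldots\ in the subcritical regime,'' but that lemma gives only the forward implication: condition~\eqref{eq:max-rho-set} \emph{implies} subcriticality, not the converse. Your min--average inequality
\[
\E_S\bigl[g(\min\nolimits_{j'\in S}X_{j'})\bigr]\le\frac{1}{|\mathcal{N}^N_w(i)|}\sum_{j'\in\mathcal{N}^N_w(i)}g(X_{j'})
\]
is precisely the comparison of JSQ$(d)$ with random routing; consequently your drift is negative only when random routing is itself stable on each pool, i.e.\ exactly when~\eqref{eq:max-rho-set} holds. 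For $d\ge2$ the subcritical regime of Definition~\ref{def:subcrit} is strictly larger. A concrete obstruction: take $K=1$, $M=2$, complete bipartite, $v_1=v_2=\tfrac12$, $u_1=0.1$, $u_2=10$, $\xi=1$, $\lambda=0.15$, $d=2$. Then $\lambda\xi\sum_kp_{k,1}w_k/\delta_k=0.15>0.1=u_1$, so~\eqref{eq:max-rho-set} fails and your cubic Lyapunov function has nonnegative drift on the slow pool; yet the limiting ratio in~\eqref{eq:stability-cond} equals $\sup_{\alpha_1,\alpha_2\in[0,1]}\lambda(\alpha_1+\alpha_2)^2\big/\bigl(2(0.1\alpha_1+10\alpha_2)\bigr)=0.75<1$, so the sequence \emph{is} subcritical.

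The paper avoids this by never comparing to $d=1$: it works directly with the definition of $\rho^N$ in~\eqref{eq:stability-cond}, which already asserts that for \emph{every} subset $U\subseteq V^N$ the rate at which all $d$ samples land in $U$ is at most $\rho^N$ times the service rate of $U$. Applying this with $U=Q^N_{m,\ell}$ inside the family of Lyapunov functions $L^N_{m,\ell}(X)=\sum_{i\ge\ell}\sum_{l\ge i}|Q^N_{m,l}(X)|$ yields the geometric tail $\E[\bar q^N_\ell(\infty)]\le\bigl((1+\rho)/2\bigr)^\ell$ of Lemma~\ref{lem:tail-bound-expectation-steady}, from which $\ell_1$-tightness follows via the criterion of Lemma~\ref{lem:criterion-tight}. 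If you want to salvage a moment-based approach, the averaging step must be replaced by a bound that retains the power-of-$d$ improvement encoded in $\rho^N$; the naive average throws away exactly the mechanism that makes the system stable when~\eqref{eq:max-rho-set} fails.
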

Theorem~\ref{thm:tightness} is proved in Section~\ref{sec:interchange-of-limits}.
The key idea is to use Lyapunov function approach to bound the expected sum of tails $q^N_{m,l}(\infty)$. 
Combining Theorems~\ref{thm:general-conv-global-occup},~\ref{thm: exists-unique}, and~\ref{thm:tightness} we can prove the following interchange of limits result.
\begin{theorem}[Convergence of steady states]\label{thm:interchange of limits}
Let $\{G^N\}_{N\geq 1}$ be a clustered proportionally sparse sequence of graphs satisfying Condition~\ref{cond-1}. Then the sequence of random variables $\{\mathbf{q}^N(\infty)\}_{N\geq 1}$ converges weakly to $\mathbf{q}^*$, the unique fixed point of the system of ODEs in~\eqref{eq:ode-sys-2}. 
\end{theorem}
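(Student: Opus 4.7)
The plan is to execute the standard interchange-of-limits argument, assembling the three ingredients established earlier: tightness of $\{\mathbf{q}^N(\infty)\}_{N\ge 1}$ in the $\ell_1$-topology on $\mathcal{S}$ (Theorem~\ref{thm:tightness}), process-level convergence of $\mathbf{q}^N(\cdot)$ from an arbitrary initial occupancy under clustered proportional sparsity (Theorem~\ref{thm:general-conv-global-occup}), and global stability of the limiting ODE~\eqref{eq:ode-sys-2} with unique fixed point $\mathbf{q}^*$ (Theorem~\ref{thm: exists-unique}). The subcritical assumption, combined with Lemma~\ref{lem:stability}, guarantees ergodicity of the $N$-th system for all $N$ large enough, so $\mathbf{q}^N(\infty)$ is well-defined as the functional applied to the stationary process. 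Tightness then reduces the task to showing that every subsequential weak limit coincides with the Dirac mass at $\mathbf{q}^*$.

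Concretely, I would fix a subsequence along which $\mathbf{q}^{N_k}(\infty) \Rightarrow \mathbf{q}^{\infty}$ for some (a priori random) $\mathbf{q}^{\infty}\in\mathcal{S}$, and initialize the $N_k$-th system in its stationary distribution, so that $\mathcal{L}(\mathbf{q}^{N_k}(t))=\mathcal{L}(\mathbf{q}^{N_k}(\infty))$ for every $t\ge 0$. By the Skorokhod representation theorem on the Polish space $(\mathcal{S},\ell_1)$, one may pass to a common probability space on which $\mathbf{q}^{N_k}(0)\to\mathbf{q}^{\infty}$ almost surely. Conditioning on $\mathbf{q}^{\infty}$ and applying Theorem~\ref{thm:general-conv-global-occup} realization-wise yields weak convergence $\mathbf{q}^{N_k}(\cdot)\Rightarrow\bar{\mathbf{q}}(\cdot,\mathbf{q}^{\infty})$ in the Skorokhod $J_1$ topology, where $\bar{\mathbf{q}}(\cdot,\mathbf{q}_0)$ denotes the solution to~\eqref{eq:ode-sys-2} started from $\mathbf{q}_0$. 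Since the limit trajectory is continuous in $t$, convergence of one-dimensional marginals follows immediately, so that $\mathbf{q}^{N_k}(t)\Rightarrow\bar{\mathbf{q}}(t,\mathbf{q}^{\infty})$ for every fixed $t>0$. Combined with stationarity, which supplies $\mathbf{q}^{N_k}(t)\stackrel{d}{=}\mathbf{q}^{N_k}(\infty)\Rightarrow\mathbf{q}^{\infty}$, uniqueness of weak limits forces the distributional identity
\[
\mathbf{q}^{\infty}\stackrel{d}{=}\bar{\mathbf{q}}(t,\mathbf{q}^{\infty})\qquad\text{for every }t\ge 0.
\]
Passing $t\to\infty$ and invoking the global-stability conclusion $\bar{\mathbf{q}}(t,\mathbf{q}_0)\to\mathbf{q}^*$ of Theorem~\ref{thm: exists-unique} pointwise in $\mathbf{q}_0$, the right-hand side converges almost surely to the deterministic point $\mathbf{q}^*$; hence $\mathbf{q}^{\infty}=\mathbf{q}^*$ almost surely. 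Since the subsequence was arbitrary, $\mathbf{q}^N(\infty)\Rightarrow\mathbf{q}^*$.

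The main obstacle I anticipate is the conditioning step: Theorem~\ref{thm:general-conv-global-occup} as stated has a deterministic limiting initial state, so to apply it with a random $\mathbf{q}^{\infty}$ one must disintegrate along the conditional law of the initial condition and verify that the ODE solution map $\mathbf{q}_0\mapsto\bar{\mathbf{q}}(\cdot,\mathbf{q}_0)$ is measurable, and ideally continuous, as a map from $(\mathcal{S},\ell_1)$ into $\mathbb{D}([0,\infty),\mathcal{S})$. Continuity in the initial data should follow from a Gronwall-type estimate on the vector field in~\eqref{eq:ode-sys-2}, using that its right-hand side is locally Lipschitz in $\mathbf{q}$ under the $\ell_1$-norm, after which the disintegration is routine. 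Beyond this bookkeeping, the theorem reduces to a clean abstract synthesis of Theorems~\ref{thm:tightness},~\ref{thm: exists-unique}, and~\ref{thm:general-conv-global-occup} — the genuinely hard analytic work has already been absorbed into those earlier results.
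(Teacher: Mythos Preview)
Your proposal is correct and follows essentially the same route as the paper: tightness (Theorem~\ref{thm:tightness}) gives convergent subsequences, stationarity plus process-level convergence (Theorem~\ref{thm:general-conv-global-occup}) forces the subsequential limit to be invariant under the ODE flow, and global stability (Theorem~\ref{thm: exists-unique}) then pins that invariant law to the Dirac mass at $\mathbf{q}^*$. You are in fact more scrupulous than the paper about the step where Theorem~\ref{thm:general-conv-global-occup} is invoked with a possibly random limiting initial state; the paper's proof simply writes $\mathbf{q}^{N_n}(t)\Rightarrow\mathbf{q}(t)$ and $\mathbf{q}(t)\sim\mathbf{q}^*$ for all $t$ without commenting on the disintegration, whereas your remark about continuity of $\mathbf{q}_0\mapsto\bar{\mathbf{q}}(\cdot,\mathbf{q}_0)$ via the Lipschitz/Gronwall estimate in Lemma~\ref{lem:unique-sol-ode} is exactly the right way to close that gap.
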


One major discovery about the JSQ($d$) policy for the classical, homogeneous, fully flexible system is that the limit of the stationary distribution (which, in our case, is given by $\mathbf{q}^*$) has a double-exponential decay of tail~\cite{Mitzenmacher96,VDK96} for any $d\geq 2$.
This is in sharp contrast with the (single) exponential decay of the corresponding tail for random routing or $d=1$.
In fact, in this case, for any $d\geq 2$, $\mathbf{q}^*$ can be characterized explicitly as: $q^*_l=\lambda^{\frac{d^l-1}{d-1}}$, where 
$q_l^*$ is the (limiting) steady-state fraction of servers with queue length at least $l= 1, 2, \ldots$.
In the current case of heterogeneous systems, it is intractable to characterize the fixed point $\mathbf{q}^*$ explicitly. 
However, as stated in the next theorem, we can still prove that the doubly exponential decay of the tails $q^*_{m,l}$ for each $m\in\mathcal{M}$ holds.
\begin{theorem}[Double-exponential tail decay]\label{thm:double-expo}
Let $\mathbf{q}^*=\big(q^*_{m,l},m\in\mathcal{M},l\in\N_0\big)$ be the unique fixed point of the system of ODEs in~\eqref{eq:ode-sys-2}. 
Then, for all $m\in\mathcal{M}$, the sequence $\big\{q^*_{m,l},l\in\N_0\big\}$ decreases doubly exponentially, i.e., there exist positive constant $l_m\in\N_0$, $a_m\in(0,1)$ and $b_m>0$ such that for all $l\geq l_m$, 
    \begin{equation}
        q^*_{m,l}\leq b_m a_m^{d^l}.
    \end{equation}
\end{theorem}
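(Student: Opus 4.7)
The plan is to reduce the coupled system of fixed-point equations to a single scalar recursion on $r_l := \max_{m\in\mathcal{M}} q^*_{m,l}$ of the form $r_l\le \alpha^* r_{l-1}^d$, and then iterate once $r_l$ becomes small enough. The constants $l_m$, $a_m$, $b_m$ in the statement can in fact be chosen independently of $m\in\mathcal{M}$.

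\textbf{Reducing to a scalar recursion.} Setting the RHS of~\eqref{eq:ode-sys-2} to zero, the fixed point satisfies, for every $m\in\mathcal{M}$ and $l\in\N$,
$$u_m\bigl(q^*_{m,l}-q^*_{m,l+1}\bigr)=\lambda\xi\bigl(q^*_{m,l-1}-q^*_{m,l}\bigr)\sum_{k\in\mathcal{K}}\frac{p_{k,m}w_k}{\delta_k}\,\rho_{k,l},$$
where $\rho_{k,l}:=\frac{(\tilde q_{k,l-1})^d-(\tilde q_{k,l})^d}{\tilde q_{k,l-1}-\tilde q_{k,l}}=\sum_{j=0}^{d-1}(\tilde q_{k,l-1})^{d-1-j}(\tilde q_{k,l})^{j}$. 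Since $\delta_k=\sum_m p_{k,m}v_m$, the quantity $\tilde q_{k,l}=\sum_m \frac{v_m p_{k,m}}{\delta_k}\,q^*_{m,l}$ is a convex combination of $\{q^*_{m,l}\}_m$; hence $\tilde q_{k,l}\le r_l$ and $\rho_{k,l}\le d\,r_{l-1}^{d-1}$. Because each $(q^*_{m,l})_l$ is non-increasing (by membership in $\mathcal{S}$), so is $r_l$.

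\textbf{Telescoping and iteration.} For every $l'\ge l$, the identity above together with $\rho_{k,l'}\le d\,r_{l'-1}^{d-1}\le d\,r_{l-1}^{d-1}$ gives
$$u_m\bigl(q^*_{m,l'}-q^*_{m,l'+1}\bigr)\le \lambda\xi d\,r_{l-1}^{d-1}\sum_{k}\frac{p_{k,m}w_k}{\delta_k}\bigl(q^*_{m,l'-1}-q^*_{m,l'}\bigr).$$
Summing over $l'\ge l$ and using $q^*_{m,l'}\to 0$ as $l'\to\infty$ (which holds because $\mathbf{q}^*\in\mathcal{S}$ forces $\sum_l q^*_{m,l}<\infty$), the telescoping sums evaluate to $u_m q^*_{m,l}$ on the left and $q^*_{m,l-1}$ on the right, yielding
$$u_m q^*_{m,l}\le \lambda\xi d\,r_{l-1}^{d-1}\,q^*_{m,l-1}\sum_{k}\frac{p_{k,m}w_k}{\delta_k}\le \alpha_m\,r_{l-1}^d, \qquad \alpha_m:=\frac{\lambda\xi d}{u_m}\sum_{k}\frac{p_{k,m}w_k}{\delta_k}.$$
With $\alpha^*:=\max_m \alpha_m$, maximizing over $m$ produces the scalar recursion $r_l\le \alpha^* r_{l-1}^d$. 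Since $r_l\to 0$, choose $l^*\in\N_0$ with $r_{l^*}<(\alpha^*)^{-1/(d-1)}$; the substitution $\sigma_l:=(\alpha^*)^{1/(d-1)}r_l$ converts the recursion into $\sigma_l\le \sigma_{l-1}^d$, so $\sigma_l\le \sigma_{l^*}^{d^{l-l^*}}$ for all $l\ge l^*$. Setting $a:=\bigl((\alpha^*)^{1/(d-1)}r_{l^*}\bigr)^{d^{-l^*}}\in(0,1)$ and $b:=(\alpha^*)^{-1/(d-1)}$ yields $q^*_{m,l}\le r_l\le b\,a^{d^l}$ for all $l\ge l^*$ and every $m\in\mathcal{M}$.

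\textbf{Main obstacle.} The crux lies in the first step: because the server types are coupled through the cross-type averages $\tilde q_{k,l}$, there is no closed form for $r_l$ as in the homogeneous case. The decoupling trick is the convex-combination bound $\tilde q_{k,l}\le r_l$, which collapses the coupled fixed-point system into a single scalar recursion, after which the usual Mitzenmacher-type iteration closes the argument. It is worth noting that the argument requires no strengthened stability condition such as $\alpha^*<1$; only $r_l\to 0$ is used, which is automatic from $\mathbf{q}^*\in\mathcal{S}$.
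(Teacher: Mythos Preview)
Your proof is correct and follows essentially the same strategy as the paper's (Proposition~\ref{prop:dbly-decay}): collapse the coupled fixed-point equations to a scalar recursion via the convex-combination bound on $\tilde q_{k,l}$, then iterate. The one tactical difference is that the paper first multiplies the fixed-point equation by $v_m$ and sums over $m$, which makes the ratio $\rho_{k,l}$ cancel exactly against $\tilde q_{k,l-1}-\tilde q_{k,l}$ and yields the clean identity $\sum_m v_m u_m q^*_{m,\ell}=\lambda\xi\sum_k w_k(\tilde q_{k,\ell-1})^d$; you instead bound $\rho_{k,l}\le d\,r_{l-1}^{d-1}$ and telescope within each $m$, picking up a harmless extra factor of $d$ in the constant $\alpha^*$ but nothing else.
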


\subsection{Simple Data Locality Design using Randomization}\label{sec:rand}
Sections~\ref{sec:stability-issue}--\ref{sec:interchange-limits} characterize the performance of the occupancy process for arbitrary deterministic sequence of systems where the underlying graph sequence satisfies certain properties. 
In particular, Condition~\ref{cond-1} and Definition~\ref{defn:prop-sparse} provide a sufficient criteria under which both the process-level convergence (Theorem~\ref{thm:general-conv-global-occup}) and interchange of limits (Theorem~\ref{thm:interchange of limits}) hold. 
In this section, we show that graphs satisfying the above required criteria can be obtained easily if the compatibility graph is designed suitably randomly.
Given the asymptotic edge-density parameters in Condition~\ref{cond-1}, we define a certain sequence of  \emph{inhomogeneous random graphs} or \irg\ as follows.

\begin{definition}[\irg($\mathbf{p}$)]\label{def:irg}
Given $\mathbf{p}\coloneqq (p_{k,m},k\in\mathcal{K},m\in\mathcal{M})$, the $N$-th system of \irg($\mathbf{p}$) is constructed as follows: For any $k\in\mathcal{K}$ and $m\in\mathcal{M}$, dispatcher $i$ and server $j$ shares an edge with probability $p_{k,m}$ for all $i\in W^N_k$ and $j\in V^N_m$, independently of each other.
\end{definition}

For any $\mathbf{p}$ for which the asymptotic stability criterion holds, we have the following result for the sequence of \irg($\mathbf{p}$).

\begin{theorem}\label{prop:random-graph}
Let $\mathbf{p} = (p_{k,m},k\in\mathcal{K},m\in\mathcal{M})$ be such that the stability criterion in \eqref{eq:max-rho-set} holds and $\{G_N\}_{N\geq 1}$ be a sequence of \irg($\mathbf{p}$) with increasing $N$.
Then the conclusions of Theorem~\ref{thm:general-conv-global-occup} and Theorem~\ref{thm:interchange of limits} hold for $\{G_N\}_{N\geq 1}$.
\end{theorem}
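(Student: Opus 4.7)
The plan is to show that, almost surely on the probability space where the random graphs live, the deterministic sequence of realizations $\{G_N\}_{N\ge 1}$ satisfies all of the hypotheses invoked by Theorem~\ref{thm:general-conv-global-occup} and Theorem~\ref{thm:interchange of limits}, namely Condition~\ref{cond-1}, subcriticality, and clustered proportional sparsity (Definition~\ref{defn:prop-sparse}). Once these three properties are established on a probability-one event, both earlier theorems apply realization by realization, which yields the claim.

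Condition~\ref{cond-1}(a) is immediate, since $|E^N(k,m)|$ is $\mathrm{Bin}(|W^N_k||V^N_m|, p_{k,m})$ with $\Theta(N^2)$ trials; a Hoeffding bound together with Borel--Cantelli gives $|E^N(k,m)|/(|W^N_k||V^N_m|)\to p_{k,m}$ almost surely. For~(b), the degrees $\{\deg^N_w(i,m)\}_{i\in W^N_k}$ are i.i.d.\ $\mathrm{Bin}(|V^N_m|, p_{k,m})$; using Hoeffding at deviation $N^{3/4}$ and a union bound over the $O(N)$ dispatchers yields summable failure probabilities, so the max-to-min ratio tends to $1$ almost surely. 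The same argument applies to $\deg^N_v(k,j)$. Subcriticality then follows from Lemma~\ref{lem:relationship-v-u-pkm} and the standing hypothesis~\eqref{eq:max-rho-set}.

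The technical heart is the almost-sure verification of clustered proportional sparsity. Fix $k\in\mathcal{K}$ and introduce the deterministic proxy
\[
Z_k(U) \;=\; \frac{\sum_m p_{k,m}|U\cap V^N_m|}{\sum_m p_{k,m}|V^N_m|},
\]
writing $Y_i(U)$ and $Z^N_k(U)$ for the two quantities appearing in Definition~\ref{defn:prop-sparse}. By the triangle inequality it suffices to show that, almost surely, (i) $\sup_U |Z^N_k(U)-Z_k(U)|\to 0$, and (ii) for each $\varepsilon>0$, $\sup_U |\{i\in W^N_k:|Y_i(U)-Z_k(U)|>\varepsilon/2\}|/|W^N_k|\to 0$. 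Step~(i) is handled by splitting on the size of $U$: when $|U|\ge\eta N$, $|E^N_k(U)|$ is a sum of $\Theta(N^2)$ independent Bernoullis, so Hoeffding yields an $e^{-cN^2}$ deviation bound that comfortably absorbs a union bound over the $2^N$ subsets; when $|U|<\eta N$, both $Z^N_k(U)$ and $Z_k(U)$ are $O(\eta)$ (using Condition~\ref{cond-1}(a) for the denominator), so choosing $\eta$ small suffices. For step~(ii), the crucial structural input is that the indicators $B_i(U):=\mathds{1}\{|Y_i(U)-Z_k(U)|>\varepsilon/2\}$ are \emph{independent} across $i\in W^N_k$, since they involve disjoint rows of the adjacency matrix. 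For each fixed $U$, Chernoff on the Binomial numerator and denominator of $Y_i(U)$ gives $\mathbb{E}[B_i(U)]\le 2e^{-c\varepsilon^2 N}$, and a further Chernoff bound for the independent Bernoulli sum yields $\mathbb{P}(\sum_i B_i(U)\ge \varepsilon |W^N_k|)\le e^{-c'N^2}$ for some $\varepsilon$-dependent $c'>0$. A union bound over the $2^N$ subsets together with Borel--Cantelli delivers~(ii) a.s., and a final union bound over the finitely many $k\in\mathcal{K}$ closes the argument.

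The main obstacle is precisely the uniform control over all subsets $U\subseteq V^N$ in Definition~\ref{defn:prop-sparse}: a naive per-$U$ concentration of order $e^{-cN}$ would be defeated by the $2^N$ cost of the union bound. The argument succeeds because row independence in the \irg\ construction forces the exponent in the Chernoff bound for $\sum_i B_i(U)$ to be \emph{quadratic} in $N$ (degree of each dispatcher times number of dispatchers), which easily dominates $N\log 2$. Once this observation is in hand, the remaining steps are routine concentration manipulations, and the desired deterministic properties are inherited on a probability-one event.
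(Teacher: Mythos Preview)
Your proposal is correct and follows the same high-level template as the paper---verify Condition~\ref{cond-1} and clustered proportional sparsity almost surely via Chernoff/Hoeffding bounds plus Borel--Cantelli, then invoke Lemma~\ref{lem:relationship-v-u-pkm} for subcriticality---but your handling of clustered proportional sparsity is genuinely different and sharper. The paper bounds $\PP(B_i(U))$ by a single Chernoff estimate of order $e^{-cN}$, takes a union bound over the $O(N)$ dispatchers, and then asserts that a further union bound over all $2^N$ subsets $U$ still leaves a summable tail; this last step is delicate because the exponent constant $c$ depends on the tolerance $\varepsilon$ and must compete with $\ln 2$. You instead introduce the deterministic proxy $Z_k(U)$, split via the triangle inequality, and---crucially---exploit that the indicators $B_i(U)$ are independent across $i$ (disjoint rows of the adjacency matrix), so a second Chernoff layer on $\sum_i B_i(U)$ yields a bound of order $e^{-c'N^2}$. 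This quadratic exponent defeats the $2^N$ union bound with room to spare for every fixed $\varepsilon>0$, making that step entirely uncontroversial. The paper's route is shorter; yours is more robust and makes explicit why the supremum over all subsets is harmless.
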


The proof of Theorem~\ref{prop:random-graph} is provided in Appendix~\ref{app:random-graph}. 
It relies on verifying that the sequence of \irg($\mathbf{p}$) graphs satisfies Condition~\ref{cond-1} and the property of clustered proportional sparsity almost surely.
The verification involves using concentration of measure arguments to establish structural properties of the compatibility graphs.

\section{Proof of Transient Limit Results}\label{sec:transient-limit}
In this section, we will prove the results of transient limit results, Theorems~\ref{thm:weak-conv-xj}, ~\ref{thm:conv-global-occup} and~\ref{thm:general-conv-global-occup} in Sections~\ref{ssec:thm-iid-1},~\ref{sec:thm3.5}, and~\ref{sec:general-conv}, respectively. 
We start by proving a few auxiliary results in Section~\ref{ssec:aux}.

\subsection{Auxiliary Results}\label{ssec:aux}

First, we will need a characterization of the evolution of the queue length process at each server.
To describe this evolution, let us introduce the following notations:
\begin{align}
     \set^N(j)&\coloneqq\Big\{(j_2,...,j_d)\in [N]^{d-1}:(j,j_2,\ldots,j_d)\text{ are distinct}\Big\},\label{defn:set-j}\\
     \sett^N(j)&\coloneqq\Big\{(j_2,\ldots,j_d,j'_2,\ldots,j'_d)\in[N]^{2d-2}:
        (j_2,...,j_d)\in \set^N{(j)},(j'_2,...,j'_d)\in\set^N{(j)},  \nonumber\\
        &\hspace{7cm}(j_2,...,j_d)\cap(j'_2,...,j'_d)\neq\emptyset\Big\}.\label{defn:sett-j}
\end{align}
To represent the graph, define the edge occupancy $\xi^N_{i,j}$ to be the binary variable: 
$$\xi^N_{i,j}=\begin{cases}
      1, & \text{if}\ (i,j)\in E^N, \\
      0, & \text{otherwise},
    \end{cases} \quad \mbox{for all } i\in W^N, j\in V^N.$$
Recall the function $b$, Poisson processes $\{D_j\}$ and Poisson random measures $\{A_j\}$ in and below \eqref{eq:b}.
By Condition~\ref{cond-1}, for all large enough $N$, all dispatchers in the $N$-th system have at least $d$ neighbors. Hence, WLOG, in the rest of this section, we will only consider the case $\delta^N_i\geq d$, $\forall i\in W^N$.
In that case, due to the Poisson thinning property, note that we can write $X^N_j(t)$ as follows:
\begin{equation}\label{eq:represent-X^N}
    X^N_j(t)=X^N_j(0)-\int_0^t \mathds{1}_{\big(X^N_j(s-)>0\big)}D_j(ds)+\int_{[0,\infty)\times \R_+}\mathds{1}_{\big(0\leq y\leq C^N_j(s-)\big)}A_j(dsdy),
\end{equation}
where 
\begin{align}\label{eq:represent-C^N}
    C^N_j(s)&=\sum_{i\in W^N}\xi^N_{i,j}\sum_{(j_2,...,j_d)\in\set^N(j)}\frac{\xi^N_{i,j_2}\times\cdots\times \xi^N_{i,j_d}}{{\delta^N_i\choose d}(d-1)!}b\big(X^N_j(s),X^N_{j_2}(s),...,X^N_{j_d}(s)\big)\\
        &=\sum_{k\in\mathcal{K}}\sum_{i\in W^N_k}\xi^N_{i,j}\sum_{(j_2,...,j_d)\in\set^N(j)}\frac{\xi^N_{i,j_2}\times\cdots\times \xi^N_{i,j_d}}{{\delta^N_i\choose d}(d-1)!}b\big(X^N_j(s),X^N_{j_2}(s),...,X^N_{j_d}(s)\big).\nonumber
\end{align}
The RHS of the first summation in \eqref{eq:represent-C^N} represents the probability that a job arriving at the dispatcher $i\in W^N$ will be assigned to the server $j\in V^N$ given the state $\big(X^N_j, j\in V^N\big)$.
Moreover, by Condition~\ref{cond-1}, the term $C^N_j$ for all $j\in V^N$ can be upper bounded, uniformly for all $t$, by a constant for all large enough $N$, which is stated in Lemma~\ref{lem:bound-C^N} below.

When we do some estimation, like bounding the term $C^N_j$, we need to uniformly bound the number of the neighbors of servers or dispatchers. Such uniformity is stated in Lemma~\ref{lem:uniformity-degree} and is a direct result of Condition~\ref{cond-1}.
Recall $\delta^N_i=|\mathcal{N}^N_w(i)|$ and $\delta_k=\sum_{m\in\mathcal{M}}p_{k,m}v_m$.

\begin{lemma}\label{lem:uniformity-degree}
For each $k\in\mathcal{K}$,
\begin{equation}\label{eq:rem-4}
    \lim_{N\rightarrow\infty}\max_{i\in W^N_k}\frac{\deg^N_w(i,m)}{|V^N_m|}=
    \lim_{N\rightarrow\infty}\min_{i\in W^N_k}\frac{\deg^N_w(i,m)}{|V^N_m|}=p_{k,m},\quad m\in\mathcal{M},
\end{equation}
and  \begin{equation}\label{eq:rem-4-1}
    \lim_{N\rightarrow\infty}\max_{i\in W^N_k}\frac{\delta^N_i}{N}=
    \lim_{N\rightarrow\infty}\min_{i\in W^N_k}\frac{\delta^N_i}{N}=\delta_k.
\end{equation}
Also, for each $m\in\mathcal{M}$,
\begin{equation}\label{eq:rem-5}
    \lim_{N\rightarrow\infty}\max_{j\in V^N_m}\frac{\deg^N_v(k,j)}{|W^N_k|}=\lim_{N\rightarrow\infty}\min_{j\in V^N_m}\frac{\deg^N_v(k,j)}{|W^N_k|}=p_{k,m},\quad k\in\mathcal{K}.
\end{equation}
\end{lemma}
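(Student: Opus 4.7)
The plan is to derive all three displayed limits directly from Condition~\ref{cond-1} via a double-counting identity combined with a squeeze argument. The proof is essentially bookkeeping, with no substantial obstacles beyond careful use of the two parts of the condition.

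The key observation underlying \eqref{eq:rem-4} is the double-counting identity
\begin{equation*}
    \sum_{i \in W^N_k} \deg^N_w(i,m) \;=\; |E^N(k,m)|,
\end{equation*}
which holds because both sides count edges inside $W^N_k \times V^N_m$. Dividing by $|W^N_k|\cdot|V^N_m|$, Condition~\ref{cond-1}(a) gives that the average of $\deg^N_w(i,m)/|V^N_m|$ over $i\in W^N_k$ converges to $p_{k,m}$. Since min $\le$ average $\le$ max, this yields the sandwich
\begin{equation*}
    \min_{i \in W^N_k}\frac{\deg^N_w(i,m)}{|V^N_m|} \;\le\; p_{k,m}+o(1) \;\le\; \max_{i \in W^N_k}\frac{\deg^N_w(i,m)}{|V^N_m|}.
\end{equation*}
Condition~\ref{cond-1}(b) asserts $\max_i \deg^N_w(i,m) / \min_i \deg^N_w(i,m) \to 1$, and the ratios $\deg^N_w(i,m)/|V^N_m|$ are uniformly bounded in $[0,1]$; hence the gap between max and min in the display above tends to zero. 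Combining the two observations shows that both min and max converge to $p_{k,m}$, proving \eqref{eq:rem-4}.

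For \eqref{eq:rem-4-1}, I will use the decomposition $\delta^N_i = \sum_{m\in\mathcal{M}} \deg^N_w(i,m)$. Since $\mathcal{M}$ is a finite set and $|V^N_m|/N \to v_m$, the uniform bounds from \eqref{eq:rem-4} give
\begin{equation*}
    \frac{\delta^N_i}{N} \;=\; \sum_{m\in\mathcal{M}}\frac{\deg^N_w(i,m)}{|V^N_m|}\cdot\frac{|V^N_m|}{N} \;=\; \sum_{m\in\mathcal{M}} p_{k,m} v_m + o(1) \;=\; \delta_k + o(1),
\end{equation*}
uniformly over $i \in W^N_k$, which implies the claim. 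Finally, \eqref{eq:rem-5} is obtained by exactly symmetric reasoning: apply the dual identity $\sum_{j\in V^N_m}\deg^N_v(k,j) = |E^N(k,m)|$ together with the second half of Condition~\ref{cond-1}(b).

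The only minor subtlety I anticipate is the degenerate case $p_{k,m} = 0$, where $\min_i \deg^N_w(i,m)$ could equal zero and the ratio in Condition~\ref{cond-1}(b) is ill-defined. In that regime the average alone already forces the maximum (and hence the minimum) to be $o(|V^N_m|)$, so the statement holds trivially; I will remark on this case but not treat it in detail.
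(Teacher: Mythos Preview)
Your proposal is correct and matches the paper's approach: the paper simply states that the lemma ``is a direct result of Condition~\ref{cond-1}'' without further details, and your double-counting-plus-squeeze argument is precisely the natural unpacking of that claim.

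One small correction on your side remark about the degenerate case $p_{k,m}=0$: the average converging to zero does \emph{not} by itself force $\max_i \deg^N_w(i,m)/|V^N_m|\to 0$ (a single dispatcher with $\Theta(|V^N_m|)$ type-$m$ neighbors while all others have none would keep the average $o(1)$ but the max bounded away from zero). Condition~\ref{cond-1}(b) is still what closes the gap: if $\min_i \deg^N_w(i,m)>0$ for large $N$, your main squeeze argument applies verbatim even when $p_{k,m}=0$; and if $\max=\min=0$ eventually, the conclusion is trivial. So the degenerate case is handled, just not for the reason you stated.
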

\begin{lemma}\label{lem:bound-C^N}
For all large enough $N$, we have that for any $m\in\mathcal{M}$, $j\in V^N_m$, and $t\geq 0$, 
\begin{equation}\label{eq:cj-2d}
    C^N_j(t)\leq 2\xi d\sum_{k\in\mathcal{K}} \frac{p_{k,m}w_k}{\delta_k}.
\end{equation}
\end{lemma}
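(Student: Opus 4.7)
The plan is to reduce $C^N_j(t)$ to a simple expression involving only the degree $\delta^N_i$ and the incidences $\xi^N_{i,j}$, and then apply Lemma~\ref{lem:uniformity-degree} to pass to the asymptotic bound. The key observation is that the function $b$ defined in \eqref{eq:b} satisfies $0 \le b(\mathbf{x}) \le 1$ for all $\mathbf{x} \in \mathbb{N}_0^d$ (since the indicator events in the defining sum are disjoint and $1/r \le 1$). Therefore we can drop $b$ from \eqref{eq:represent-C^N} and obtain
\begin{equation*}
    C^N_j(s) \le \sum_{k\in\mathcal{K}}\sum_{i\in W^N_k} \xi^N_{i,j} \sum_{(j_2,\ldots,j_d)\in \set^N(j)} \frac{\xi^N_{i,j_2}\cdots\xi^N_{i,j_d}}{\binom{\delta^N_i}{d}(d-1)!}.
\end{equation*}

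Next I would carry out the inner combinatorial simplification. For fixed $i$ with $\xi^N_{i,j}=1$, the inner sum counts ordered $(d-1)$-tuples of distinct neighbors of $i$, all different from $j$; since $i$ has $\delta^N_i - 1$ such neighbors, this equals $(\delta^N_i-1)(\delta^N_i-2)\cdots(\delta^N_i-d+1)$. Dividing by $\binom{\delta^N_i}{d}(d-1)!$ collapses to $d/\delta^N_i$, so
\begin{equation*}
    C^N_j(s) \le \sum_{k\in\mathcal{K}} \sum_{i\in W^N_k} \xi^N_{i,j} \,\frac{d}{\delta^N_i} \le \sum_{k\in\mathcal{K}} \frac{d\cdot \deg^N_v(k,j)}{\min_{i\in W^N_k}\delta^N_i},
\end{equation*}
where the last inequality uses $\sum_{i\in W^N_k}\xi^N_{i,j} = \deg^N_v(k,j)$ and bounds $1/\delta^N_i$ by the minimum over $W^N_k$.

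Finally, I would invoke the uniformity from Lemma~\ref{lem:uniformity-degree}: for any $\varepsilon>0$ and all sufficiently large $N$,
\begin{equation*}
    \min_{i\in W^N_k}\delta^N_i \ge (1-\varepsilon)\delta_k N, \qquad \max_{j\in V^N_m} \deg^N_v(k,j) \le (1+\varepsilon) p_{k,m}|W^N_k|,
\end{equation*}
together with $|W^N_k|/N \to \xi w_k$, which gives $|W^N_k| \le (1+\varepsilon) \xi w_k N$ for large $N$. Substituting these bounds yields, for $j\in V^N_m$,
\begin{equation*}
    C^N_j(s) \le \frac{(1+\varepsilon)^2}{1-\varepsilon}\,\xi d\sum_{k\in\mathcal{K}} \frac{p_{k,m} w_k}{\delta_k}.
\end{equation*}
Choosing $\varepsilon$ small enough so that $(1+\varepsilon)^2/(1-\varepsilon) \le 2$ (e.g., $\varepsilon = 1/10$) concludes the proof. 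There is no real obstacle here: the argument is an exact combinatorial identity followed by three uniform-in-$i,j$ degree estimates, and the constant $2$ in the statement absorbs the slack in the $(1\pm\varepsilon)$ factors, so no finer analysis is needed.
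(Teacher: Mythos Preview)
Your proof is correct and follows essentially the same approach as the paper: bound $b(\cdot)\le 1$, collapse the inner sum over $\set^N(j)$ to $\binom{\delta^N_i-1}{d-1}/\binom{\delta^N_i}{d}=d/\delta^N_i$, and then invoke Lemma~\ref{lem:uniformity-degree} to absorb the degree ratios into the constant $2$. Your write-up is simply more explicit about the $(1\pm\varepsilon)$ bookkeeping than the paper's one-line appeal to the lemma.
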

\begin{proof}
By the definition of $C^N_j(t)$, for any $t\geq 0$ and large enough $N$,
\begin{equation*}
        C^N_j(t) \leq  \sum_{k\in\mathcal{K}}\sum_{i\in W^N_k}\xi^N_{i,j}\sum_{(j_2,...,j_d)\in\set^N(j)}\frac{\xi^N_{i,j_2}\times\cdots\times \xi^N_{i,j_d}}{{\delta^N_i\choose d}(d-1)!}= \sum_{k\in\mathcal{K}} \sum_{i\in W^N_k}\xi^N_{i,j}\frac{{\delta^N_i-1\choose d-1}}{{\delta^N_i \choose d}}
        \leq  2\xi d\sum_{k\in\mathcal{K}} \frac{p_{k,m}w_k}{\delta_k},
\end{equation*}
where the first inequality is due to $b(\cdot)\leq 1$ and the last inequality comes from Lemma~\ref{lem:uniformity-degree}.
\end{proof}
By Lemma~\ref{lem:uniformity-degree}, we know that the neighborhoods of dispatchers of the same type are almost the same. With the scale of the system size, the local graph structure for each dispatcher of the same type will converge to the average one. 
The following two lemmas give necessary approximation of the graph structures for large-$N$ systems. Their proofs are combinatorial and are based on Condition~\ref{cond-1} and Lemma~\ref{lem:uniformity-degree}.
They are provided in Appendix~\ref{app:approx-graph}.

\begin{lemma} \label{lem: LLN-xi}
Consider a sequence $\{G^N\}_N$ satisfying Condition~\ref{cond-1}.
For each $m\in \mathcal{M}$,
\begin{equation}\label{eq:ave-deg}
   \begin{split}
        \max_{j\in V^N_m}\max_{k\in\mathcal{K}}\max_{(M_2,...,M_d)\in \mathcal{M}^{d-1}}\Big|&\sum_{i\in W^N_k}\xi^N_{i,j}\sum_{\substack{(j_2,...,j_d)\in \set^N (j)\\ s.t. \quad j_2\in V^N_{M_2},...,j_d\in V^N_{M_d}}}\frac{\xi^N_{i,j_2}\times \cdots \times \xi^N_{i,j_d}}{{\delta^N_i\choose d}(d-1)!}\\
        &\hspace{4cm} -\xi d\frac{p_{k,m}w_k}{\delta_k}\prod_{h=2}^d\frac{v_{M_h}p_{k,M_h}}{\delta_k}\Big|\xrightarrow{N\rightarrow\infty}0.
   \end{split}
\end{equation}
\end{lemma}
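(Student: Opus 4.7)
The plan is to carry out a direct combinatorial computation of the inner double sum and then sharpen it using the uniform degree approximations in Lemma~\ref{lem:uniformity-degree}. Since $\mathcal{K}$ and $\mathcal{M}^{d-1}$ are fixed finite sets, the outer maxima over $k$ and $(M_2,\ldots,M_d)$ are finite maxima, so it is enough to prove the convergence, uniformly in $j\in V^N_m$, for each fixed choice of $k\in\mathcal{K}$ and types $(M_2,\ldots,M_d)\in\mathcal{M}^{d-1}$.

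First I would interpret, for each $i\in W^N_k$ with $\xi^N_{i,j}=1$, the inner sum $\sum_{(j_2,\ldots,j_d)}\xi^N_{i,j_2}\cdots\xi^N_{i,j_d}$ (restricted to $j_h\in V^N_{M_h}$) as the number of ordered $(d-1)$-tuples of \emph{distinct} elements of $\mathcal{N}^N_w(i)\setminus\{j\}$ with $j_h\in V^N_{M_h}$. Grouping the indices $h$ by required type, this count equals a product of falling factorials of the quantities $n^*_h\coloneqq \deg^N_w(i,M_h)-\mathds{1}_{(M_h=m)}$, and therefore differs from the naive product $\prod_{h=2}^d \deg^N_w(i,M_h)$ only by a multiplicative factor $1+O(1/N)$, uniformly in $i\in W^N_k$, since each $\deg^N_w(i,M_h)=\Theta(N)$ uniformly in $i\in W^N_k$ by \eqref{eq:rem-4}.

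Next, using $\binom{\delta^N_i}{d}(d-1)!=\delta^N_i(\delta^N_i-1)\cdots(\delta^N_i-d+1)/d=(\delta^N_i)^d(1+O(1/N))/d$, the per-$i$ term rewrites as
\begin{equation*}
\xi^N_{i,j}\cdot\frac{d}{\delta^N_i}\prod_{h=2}^d\frac{\deg^N_w(i,M_h)}{\delta^N_i}\cdot\bigl(1+O(1/N)\bigr),
\end{equation*}
with an error uniform in $i\in W^N_k$. Writing $\deg^N_w(i,M_h)/\delta^N_i=\bigl(\deg^N_w(i,M_h)/|V^N_{M_h}|\bigr)\cdot\bigl(|V^N_{M_h}|/N\bigr)\big/\bigl(\delta^N_i/N\bigr)$ and applying \eqref{eq:rem-4}--\eqref{eq:rem-4-1} shows $\deg^N_w(i,M_h)/\delta^N_i\to v_{M_h}p_{k,M_h}/\delta_k$ uniformly in $i\in W^N_k$. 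Hence the per-$i$ contribution equals $\xi^N_{i,j}\cdot(d/(N\delta_k))\prod_{h=2}^d(v_{M_h}p_{k,M_h}/\delta_k)\cdot(1+o(1))$, again with uniform $o(1)$.

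Finally I would sum over $i\in W^N_k$: the prefactor becomes $\sum_{i\in W^N_k}\xi^N_{i,j}=\deg^N_v(k,j)$, and by \eqref{eq:rem-5} of Lemma~\ref{lem:uniformity-degree} combined with $|W^N_k|/N\to\xi w_k$ one has $\deg^N_v(k,j)/N\to \xi w_k p_{k,m}$ uniformly in $j\in V^N_m$. Collecting all the pieces, the full expression converges uniformly in $j\in V^N_m$ to $\xi w_k p_{k,m}\cdot(d/\delta_k)\prod_{h=2}^d(v_{M_h}p_{k,M_h}/\delta_k)$, which is the claimed limit. The principal subtlety, and precisely what the strong uniform-ratio assumption in Condition~\ref{cond-1}(b) (as repackaged by Lemma~\ref{lem:uniformity-degree}) is designed to handle, is that every $o(1)$ and $O(1/N)$ above must be uniform in $i\in W^N_k$ and in $j\in V^N_m$; otherwise the per-$i$ errors could compound to a $\Theta(1)$ quantity when summed over the $\deg^N_v(k,j)=\Theta(N)$ dispatchers.
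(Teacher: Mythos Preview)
Your proposal is correct and follows essentially the same approach as the paper's proof: fix $k$ and $(M_2,\ldots,M_d)$, identify the inner combinatorial sum as a product of type-restricted degrees up to a uniform $O(1/N)$ correction, apply the uniform degree limits in Lemma~\ref{lem:uniformity-degree} to replace each $\deg^N_w(i,M_h)/\delta^N_i$ by $v_{M_h}p_{k,M_h}/\delta_k$, and then sum over $i\in W^N_k$ using $\deg^N_v(k,j)/N\to\xi w_k p_{k,m}$ uniformly in $j$. The paper carries out exactly these steps via explicit triangle-inequality splits (equations \eqref{eq:avg-deg-1-1}--\eqref{eq:c1-m-k}), whereas you package the same estimates multiplicatively as $(1+o(1))$ factors; the substance is identical.
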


\begin{lemma}\label{lem:bound-sett}
Consider any $m\in \mathcal{M}$ and $j\in V_{m}$. For large enough $N$,
\begin{equation}\label{eq:ave-deg-sett-1}
    \begin{split}
        &\sum_{i\in W^N}\sum_{\sett^N{(j)}}\frac{\xi^N_{i,j}\times\xi^N_{i,j_2}\times\cdots\times \xi^N_{i,j_d}}{{\delta^N_{i}\choose d}(d-1)!} \frac{\xi^N_{i,j}\times\xi^N_{i,j'_2}\times\cdots\times \xi^N_{i,j'_d}}{{\delta^N_{i}\choose d}(d-1)!}
        \leq  \frac{C_1}{N^2}
    \end{split}
\end{equation}
where $C_1$ is a positive constant. Similarly,
\begin{equation}\label{eq:ave-deg-sett-2}
    \begin{split}
        &\sum_{\substack{i_1,i_2\in W^N,\\ i_1\neq i_2}}\sum_{\sett^N{(j)}} \frac{\xi^N_{i_1,j}\times\xi^N_{i_1,j_2}\times\cdots\times \xi^N_{i_1,j_d}}{{\delta^N_{i_1}\choose d}(d-1)!} \frac{\xi^N_{i_2,j}\times\xi^N_{i_2,j'_2}\times\cdots\times \xi^N_{i_2,j'_d}}{{\delta^N_{i_2}\choose d}(d-1)!}
    \leq  \frac{C_2}{N}
    \end{split}
\end{equation}
where $C_2$ is a positive constant.
\end{lemma}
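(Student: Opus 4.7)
Both estimates will be proved by purely combinatorial counting, using only the uniform degree control provided by Lemma~\ref{lem:uniformity-degree} and the defining intersection constraint of $\sett^N(j)$. The key observation is that each factor $\xi^N_{i,v}$ forces $v\in\mathcal{N}^N_w(i)$, and the requirement that $(j_2,\dots,j_d)$ and $(j'_2,\dots,j'_d)$ share at least one coordinate supplies precisely the extra factor of $1/N$ that lifts a naive bound to the claimed rate.

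To prove~\eqref{eq:ave-deg-sett-1}, I will fix a dispatcher $i$ with $\xi^N_{i,j}=1$ (otherwise the summand vanishes), write $\delta=\delta^N_i$, and count tuples in $\sett^N(j)$ with every coordinate lying in $\mathcal{N}^N_w(i)\setminus\{j\}$. This count will be handled by overcounting: choose a pair of positions $(h,h')\in\{2,\dots,d\}^2$ where the two tuples coincide, choose the common vertex from $\mathcal{N}^N_w(i)\setminus\{j\}$ (at most $\delta-1$ options), then fill the remaining $d-2$ entries of each tuple from $\mathcal{N}^N_w(i)$, giving at most $(d-1)^2(\delta-1)(\delta-2)^{2(d-2)}$ admissible tuples. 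Combined with the elementary estimate ${\delta\choose d}(d-1)!\ge (\delta-d+1)^d/d$, the inner sum for fixed $i$ becomes $O(1/\delta^3)$. Lemma~\ref{lem:uniformity-degree} gives $\delta^N_i=\Theta(N)$ uniformly in $i$, and the number of dispatchers adjacent to $j$ is also $\Theta(N)$, so summing over $i$ delivers the desired $O(1/N^2)$ estimate.

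For~\eqref{eq:ave-deg-sett-2} the plan is analogous but applied to an ordered pair $(i_1,i_2)$. After restricting to pairs with $\xi^N_{i_1,j}=\xi^N_{i_2,j}=1$, I will bound the number of admissible tuples by $(d-1)^2\,|\mathcal{N}^N_w(i_1)\cap\mathcal{N}^N_w(i_2)|\,(\delta^N_{i_1}\delta^N_{i_2})^{d-2}$, by choosing the position of the shared coordinate, selecting its common value from $\mathcal{N}^N_w(i_1)\cap\mathcal{N}^N_w(i_2)$, and then filling the remaining entries freely within each neighborhood. Dividing by the product of denominators, which is of order $N^{2d}$ by Lemma~\ref{lem:uniformity-degree}, yields a per-pair bound of order $|\mathcal{N}^N_w(i_1)\cap\mathcal{N}^N_w(i_2)|/N^4$. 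Using the trivial bound $|\mathcal{N}^N_w(i_1)\cap\mathcal{N}^N_w(i_2)|\le\delta^N_{i_1}=O(N)$ together with the fact that $j$ has $\Theta(N)$ dispatcher neighbors, so there are at most $O(N^2)$ admissible ordered pairs, the outer sum is $O(N^2)\cdot O(N)/N^4=O(1/N)$.

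The conceptually subtle point is that the intersection condition must be \emph{used}; discarding it and summing over the Cartesian product $\set^N(j)\times\set^N(j)$ would deliver only $O(1/N)$ in the first estimate, which would be insufficient for the $L_2$ coupling arguments downstream. The main obstacle, therefore, is making the overcounting tight enough in the exponent of $N$ — i.e., explicitly spending the savings of one vertex provided by the intersection — while keeping the combinatorics elementary enough that Lemma~\ref{lem:uniformity-degree} suffices to convert vertex counts into powers of $N$.
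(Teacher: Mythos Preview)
Your proposal is correct and complete. Both bounds follow by exactly the mechanism you identify: the intersection constraint in $\sett^N(j)$ fixes one shared vertex, costing only $O(\delta)$ choices instead of $O(\delta^2)$, and this single saved factor of $\delta\asymp N$ is precisely what upgrades the naive estimate to the claimed rate.

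The paper's proof of~\eqref{eq:ave-deg-sett-1} takes a slightly different route. Rather than overcounting, it computes the inner sum over $\sett^N(j)$ \emph{exactly} for each fixed $i$ with $\xi^N_{i,j}=1$: writing it as (all ordered pairs in $\set^N(j)^2$) minus (pairs with disjoint coordinate sets), one gets
\[
\frac{\bigl[(d-1)!\binom{\delta^N_i-1}{d-1}\bigr]^2-(2d-2)!\binom{\delta^N_i-1}{2d-2}}{\binom{\delta^N_i}{d}^2\bigl((d-1)!\bigr)^2},
\]
and then observes that the numerator is a polynomial in $\delta^N_i$ whose leading $x^{2d-2}$ terms cancel, leaving $O\bigl((\delta^N_i)^{2d-3}\bigr)$. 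From there the argument concludes as yours does, summing over the $\Theta(N)$ dispatchers adjacent to~$j$. For~\eqref{eq:ave-deg-sett-2} the paper only says ``in a similar way,'' whereas you spell out the argument, correctly noting that the shared vertex must lie in $\mathcal{N}^N_w(i_1)\cap\mathcal{N}^N_w(i_2)$ and then bounding this intersection trivially by $\delta^N_{i_1}$. Your direct overcounting is arguably more transparent---it exposes the mechanism (one saved vertex) without needing to recognize a cancellation of leading terms---while the paper's identity gives the exact count but requires that extra algebraic observation. Both rely only on Lemma~\ref{lem:uniformity-degree}, so neither is materially stronger.
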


\subsection{Convergence to McKean-Vlasov Process: IID Case}
\label{ssec:thm-iid-1}

\begin{proof}[Proof of Theorem~\ref{thm:weak-conv-xj}]
It suffices to prove \eqref{eq:thm1-coupled-cvg}.
Fix any $m\in \mathcal{M}$, $j\in V_{m}$ and $T>0$. We have that for any fixed $t\in[0,T]$ and any $N$ s.t. $j\in V^N$,
\allowdisplaybreaks
\begin{align}
        \E\norm{X^N_j-X_j}^2_{*,t}& \leq  c_0 \E \norm{X^N_j(t)-X_j(t)}^2 \nonumber\\
        &\leq  c_1 \E \int_0^t|\mathds{1}_{(X^N_j(s)>0)}-\mathds{1}_{(X_j(s)>0)}|^2ds+c_1 \E\Big(\int_0^t |\mathds{1}_{(X^N_j(s)>0)}-\mathds{1}_{(X_j(s)>0)}|ds\Big)^2 \nonumber\\
        &\quad +c_1 \E\int_{[0,t]\times \R_+}|\mathds{1}_{(0\leq y\leq C^N_j(s))}-\mathds{1}_{(0\leq y\leq C_j(s))}|^2dsdy\nonumber\\
        &\quad +c_1 \E\Big(\int_{[0,t]\times \R_+}|\mathds{1}_{(0\leq y\leq C^N_j(s))}-\mathds{1}_{(0\leq y\leq C_j(s))}|dsdy\Big)^2\nonumber\\
        & \leq  c_1 \E \int_0^t|X^N_j(s)-X_j(s)|^2ds+c_1 \E\Big(\int_0^t |X^N_j(s)-X_j(s)|ds\Big)^2\nonumber\\
        &\hspace{2cm}+c_1 \E \int_0^t|C^N_j(s)-C_j(s)|^2ds+c_1 \E\Big(\int_0^t |C^N_j(s)-C_j(s)|ds\Big)^2\nonumber\\
        & \leq  c_2\int_0^t \E |X^N_j(s)-X_j(s)|^2ds + c_2\int_0^t \E |C^N_j(s)-C_j(s)|ds,\label{eq:xNj-xN-sup}
\end{align}
where $c_0$, $c_1$ and $c_2$ are positive constants. The first two inequalities are by Doob's inequalities and Cauchy-Schwarz, respectively. 
The last inequality is due to Lemma~\ref{lem:uniformity-degree}.
By adding and subtracting terms, we have 
\begin{equation}\label{eq:cN-c}
    \begin{split}
        |C^N_j&(s)-C_j(s)|
        \leq |C^N_j(s)-C^{N,1}_j(s)|+|C^{N,1}_j(s)-C^{N,2}_j(s)|+|C^{N,2}_j(s)-C_j(s)|,
    \end{split}
\end{equation}
where \begin{align*}
    C^{N,1}_j & =\sum_{k\in \mathcal{K}}\sum_{i\in W^N_k}\Big[\xi^N_{i,j}\sum_{(j_2,...,j_d)\in \set^N (j)}\frac{\xi^N_{i,j_2}\times \cdots \times \xi^N_{i,j_d}}{{\delta^N_i\choose d}(d-1)!}b(X_j(s),X_{j_2}(s),...,X_{j_d}(s))\Big], \\
    C^{N,2}_j&=\sum_{k\in \mathcal{K}}\sum_{i\in W^N_k}\bigg[\xi^N_{i,j}\sum_{(j_2,...,j_d)\in \set^N (j)}\frac{\xi^N_{i,j_2}\times \cdots \times \xi^N_{i,j_d}}{{\delta^N_i\choose d}(d-1)!}\int_{\N^{d-1}}b(X_j(t),x_{j_2},...,x_{j_d})\\
    &\hspace{9cm}\mu^{\mathbf{M}(j_2)}_t(dx_{j_2})\cdots\mu^{\mathbf{M}(j_d)}_t(dx_{j_d})\bigg].
\end{align*}

\noindent
First, consider $|C^N_j(s)-C^{N,1}_j(s)|$. For large enough $N$,
\allowdisplaybreaks
\begin{align}\label{eq:cN-cN1}
    &\E|C^N_j(s)-C^{N,1}_j(s)|\nonumber\\
        &= \E\bigg|\sum_{k\in \mathcal{K}}\sum_{i\in W^N_k}\Big[\xi^N_{i,j}\sum_{(j_2,...,j_d)\in \set^N (j)}\frac{\xi^N_{i,j_2}\times \cdots \times \xi^N_{i,j_d}}{{\delta^N_i\choose d}(d-1)!}\big(b(X^N_j(s),X^N_{j_2}(s),...,X^N_{j_d}(s))\nonumber\\
        &\hspace{9cm}-b(X_j(s),X_{j_2}(s),...,X_{j_d}(s))\big)\Big]\bigg|\nonumber\\
        &\leq  \E\sum_{k\in \mathcal{K}}\sum_{i\in W^N_k}\Big[\xi^N_{i,j}\sum_{(j_2,...,j_d)\in \set^N (j)}\frac{\xi^N_{i,j_2}\times \cdots \times \xi^N_{i,j_d}}{{\delta^N_i\choose d}(d-1)!}\big(|X^N_j(s)-X_j(s)|+\cdots+|X^N_{j_d}(s)-X_{j_d}(s)|\big)\Big]\nonumber\\
        &\leq  d\times  \max_{j\in V^N}\E[|X^N_j(s)-X_j(s)|]\times \sum_{k\in \mathcal{K}}\sum_{i\in W^N_k}\xi^N_{i,j}\sum_{(j_2,...,j_d)\in \set^N (j)}\frac{\xi^N_{i,j_2}\times \cdots \times \xi^N_{i,j_d}}{{\delta^N_i\choose d}(d-1)!}\nonumber\\
        &\leq  c_3 \max_{j\in V^N}\E[|X^N_j(s)-X_j(s)|],
\end{align}
where $c_3$ is constant. The first inequality is from the that $b(\cdot)$ is Lipschitz continuous with Lipschitz constant 1 and the last inequality is from \eqref{eq:ave-deg}. 
\vspace{3mm}

\noindent
Second, consider $|C^{N,1}_j(s)-C^{N,2}_j(s)|$.
\allowdisplaybreaks
\begin{align}
        &\E\big[|C^{N,1}_j(s)-C^{N,2}_j(s)|^2\big]\nonumber\\
        &= \E\Big|\sum_{k\in \mathcal{K}}\sum_{i\in W^N_k}\Big[\xi^N_{i,j}\sum_{(j_2,...,j_d)\in \set^N (j)}\frac{\xi^N_{i,j_2}\times \cdots \times \xi^N_{i,j_d}}{{\delta^N_i\choose d}(d-1)!}b(X_j(s),X_{j_2}(s),...,X_{j_d}(s))\Big]\nonumber\\
        &\hspace{1.5cm}-\sum_{k\in \mathcal{K}}\sum_{i\in W^N_k}\Big[\xi^N_{i,j}\sum_{(j_2,...,j_d)\in \set^N (j)}\frac{\xi^N_{i,j_2}\times \cdots \times \xi^N_{i,j_d}}{{\delta^N_i\choose d}(d-1)!}\int_{\N^{d-1}_0}b(X_j(s),x_{j_2},...,x_{j_d})\\
        &\hspace{9cm}\mu^{\mathbf{M}(j_2)}_s(dx_{j_2})\cdots\mu^{\mathbf{M}(j_d)}_s(dx_{j_d})\Big]\Big|^2\nonumber\\
        & \leq  \E\Big[\sum_{i_1,i_2\in W^N}\sum_{\sett^N(j)}\frac{\xi^N_{i_1,j}\times\xi^N_{i_1,j_2}\times\cdots\times \xi^N_{i_1,j_d}}{{\delta^N_{i_1}\choose d}(d-1)!} \frac{\xi^N_{i_2,j}\times\xi^N_{i_2,j'_2}\times\cdots\times \xi^N_{i_2,j'_d}}{{\delta^N_{i_2}\choose d}(d-1)!}\Big]\nonumber\\
       &\overset{(a)}{\leq}  \E\Big[\sum_{i\in W^N}\sum_{\sett^N{(j)}}\frac{\xi^N_{i,j}\times\xi^N_{i,j_2}\times\cdots\times \xi^N_{i,j_d}}{{\delta^N_{i}\choose d}(d-1)!} \frac{\xi^N_{i,j}\times\xi^N_{i,j'_2}\times\cdots\times \xi^N_{i,j'_d}}{{\delta^N_{i}\choose d}(d-1)!}\nonumber\\
       &\hspace{1.5cm}+\sum_{i_1,i_2\in W^N,i_1\neq i_2}\sum_{\sett^N{(j)}} \frac{\xi^N_{i_1,j}\times\xi^N_{i_1,j_2}\times\cdots\times \xi^N_{i_1,j_d}}{{\delta^N_{i_1}\choose d}(d-1)!} \frac{\xi^N_{i_2,j}\times\xi^N_{i_2,j'_2}\times\cdots\times \xi^N_{i_2,j'_d}}{{\delta^N_{i_2}\choose d}(d-1)!}\Big],\nonumber\\
       & \leq  c_4 N^{-2}+ c_5 N^{-1}\label{eq:cN1-cN2}
\end{align}
where the first inequality is due to the fact that $X_j(0)$ is i.i.d.~for $j\in V_m$ and independent for different $m$, so for each $m\in\mathcal{M}$, $\{X_j(s),j\in V_m\}$ are also i.i.d., and the independence across the server pools holds for any fixed $s>0$. 
Hence, if $(j,j_2,...,j_d,j'_2,...,j'_d)$ are distinct, then
\begin{equation*}
    \begin{split}
        \E\Big[&\big(b(X_j(t),X_{j_2}(t),...,X_{j_d}(t))-\int_{\N^{d-1}}b(X_j(t),x_{j_2},...,x_{j_d})\mu^{\mathbf{M}(j_2)}_t(dx_{j_2})\cdots\mu^{\mathbf{M}(j_d)}_t(dx_{j_d})\big)\\
        &\big(b(X_j(t),X_{j'_2}(t),...,X_{j'_d}(t))-\int_{\N^{d-1}}b(X_j(t),x_{j'_2},...,x_{j'_d})\mu^{\mathbf{M}(j'_2)}_t(dx_{j'_2})\cdots\mu^{\mathbf{M}(j'_d)}_t(dx_{j'_d})\big)\Big]=0,
    \end{split}
\end{equation*}
and $b(\cdot)$ and $\int b(\cdot)\mu(d\cdot)$ are both in $[0,1]$. The last inequality of \eqref{eq:cN1-cN2} is by \eqref{eq:ave-deg-sett-1} and \eqref{eq:ave-deg-sett-2}.

\noindent
Third, consider $|C^{N,2}_j(s)-C_j(s)|$.
\begin{align}
        &\E \big[|C^{N,2}_j(s)-C_j(s)|\big] \notag \\
        &=\E\Big[\Big|\sum_{k\in \mathcal{K}}\sum_{i\in W^N_k}\Big[\xi^N_{i,j}\sum_{(j_2,...,j_d)\in \set^N (j)}\frac{\xi^N_{i,j_2}\times \cdots \times \xi^N_{i,j_d}}{{\delta^N_i\choose d}(d-1)!}\int_{\N^{d-1}}b(X_j(t),x_{j_2},...,x_{j_d}) \notag \\
        &\hspace{9cm}\mu^{\mathbf{M}(j_2)}_t(dx_{j_2})\cdots\mu^{\mathbf{M}(j_d)}_t(dx_{j_d})\Big] \notag \\
        &-d\xi\sum_{k\in \mathcal{K}}\frac{p_{k,m}w_k}{\delta_k}\sum_{(M_2,...,M_d)\in \mathcal{M}^{d-1}} \prod_{h=2}^d \frac{v_{M_h}p_{k,M_h}}{\delta_k}\int_{\N^{d-1}}b(X_j(t),x_{j_2},...,x_{j_d}) \notag \\
        &\hspace{9cm}\mu^{\mathbf{M}(j_2)}_t(dx_{j_2})\cdots\mu^{\mathbf{M}(j_d)}_t(dx_{j_d})\Big|\Big] \notag \\
        &\leq c_6(N), \label{eq:cN2-c}
\end{align}
where $c_6(N)$ only depends on $N$ and goes to $0$ as $N\rightarrow\infty$ and the inequality comes from \eqref{eq:ave-deg} and the fact that $\int b(\cdot)\mu(d\cdot)\in[0,1]$.
Now, by \eqref{eq:xNj-xN-sup}, \eqref{eq:cN-c}, \eqref{eq:cN-cN1}, \eqref{eq:cN1-cN2} and \eqref{eq:cN2-c}, we have that for large enough $N$,
$$\max_{j\in V^N}\E\norm{X^N_j-X_j}^2_{*,t}\leq c_{10}\int_0^t\max_{j\in V^N}\E\norm{X^N_j-X_j}^2_{*,t}ds+f(N),$$
where $c_{10}$ is a constant and $f(N)$ is a function which goes to 0 as $N\rightarrow\infty$. Last by Gronwall's inequality, we have 
\eqref{eq:thm1-coupled-cvg} and this completes the proof.
\end{proof}

\subsection{Convergence of the Occupancy Process: IID Case}
\label{sec:thm3.5}
In this section, we want to show the convergence of the occupancy process $\mathbf{q}^N(\cdot)$ to the limit process $\mathbf{q}$ represented by the ODE~\eqref{eq:ode-sys-2}. 
The first step is to investigate the existence and uniqueness of the solution of the ODE~\eqref{eq:ode-sys-2}.
Define $$\bar{\mathcal{S}}\coloneqq\Big\{\mathbf{q}\in[0,1]^{M\times \N_0}:q_{m,0}=1,q_{m,l}\ge q_{m,l+1}, \forall m\in\mathcal{M}, l\in\N_0\Big\}$$
and clearly, $\mathcal{S}\subseteq\bar{\mathcal{S}}$.

\begin{lemma}\label{lem:unique-sol-ode}
If $\mathbf{q}(0)=\mathbf{q}_0 \in \bar{\mathcal{S}}$, then the ODE system \eqref{eq:ode-sys-2} has a unique solution denoted as $\bar{\mathbf{q}}(t,\mathbf{q}_0)$, $t\geq 0$ in $\bar{\mathcal{S}}$.
\end{lemma}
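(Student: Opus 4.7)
The plan is to cast \eqref{eq:ode-sys-2} as an infinite-dimensional ODE on the Banach space $X = \ell_\infty(\mathcal{M} \times \N_0)$, check that the drift is bounded and globally Lipschitz on $\bar{\mathcal{S}}$, appeal to Picard--Lindelöf for global existence and uniqueness on $X$, and then verify separately that $\bar{\mathcal{S}}$ is forward-invariant, so that restriction gives the desired solution.

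The only non-polynomial ingredient in the right-hand side of \eqref{eq:ode-sys-2} is the fraction
$$\frac{\tilde{q}_{k,l-1}^{\,d} - \tilde{q}_{k,l}^{\,d}}{\tilde{q}_{k,l-1} - \tilde{q}_{k,l}} \;=\; \sum_{i=0}^{d-1} \tilde{q}_{k,l-1}^{\,i}\, \tilde{q}_{k,l}^{\,d-1-i},$$
which in the polynomial form on the right extends continuously to the diagonal (with value $d\,\tilde{q}_{k,l}^{\,d-1}$), is bounded by $d$ on $[0,1]^2$, and is Lipschitz in $(\tilde{q}_{k,l-1}, \tilde{q}_{k,l})$ with constant depending only on $d$. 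Since each $\tilde{q}_{k,l}$ is a convex combination of $(q_{m,l})_{m\in\mathcal{M}}$, the $(m,l)$-coordinate of the drift $F(\mathbf{q})$ depends on only finitely many coordinates of $\mathbf{q}$ (namely $q_{m,l-1}, q_{m,l}, q_{m,l+1}$ and $\{q_{m',l-1}, q_{m',l}\}_{m' \in \mathcal{M}}$), with pointwise bound and partial-derivative bounds depending only on the model parameters $\lambda,\xi,d,u_m,v_m,w_k,p_{k,m}$ and uniform in $l$. Hence $F : \bar{\mathcal{S}} \to X$ is bounded and $\|\cdot\|_\infty$-Lipschitz; extending $F$ to a bounded Lipschitz map on a neighborhood of $\bar{\mathcal{S}}$ in $X$ by coordinatewise truncation to $[0,1]$ and applying Picard--Lindelöf on $X$ yields a unique global-in-time solution $\bar{\mathbf{q}}(\cdot, \mathbf{q}_0)$ for every $\mathbf{q}_0 \in X$.

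It remains to verify that $\bar{\mathcal{S}}$ is forward-invariant, for then uniqueness on $X$ transfers to uniqueness within $\bar{\mathcal{S}}$. The identity $q_{m,0}(t) \equiv 1$ holds by convention. For the box and monotonicity constraints I would use a Nagumo tangent-cone check on the closed convex set $\bar{\mathcal{S}}$: at a point with $q_{m,l} = 0$, monotonicity forces $q_{m,l+1} = 0$ and the drift reduces to $\lambda\xi\, q_{m,l-1}\sum_k \tfrac{p_{k,m}w_k}{\delta_k}\,(\text{non-negative ratio}) \geq 0$; at a point with $q_{m,l} = 1$, monotonicity forces $q_{m,l-1} = 1$ and the drift reduces to $-u_m(1 - q_{m,l+1}) \leq 0$; and at a point with $q_{m,l} = q_{m,l+1}$, the factor $q_{m,l} - q_{m,l+1}$ multiplying the level-$(l+1)$ arrival term vanishes, giving
$$\tfrac{d}{dt}\bigl(q_{m,l} - q_{m,l+1}\bigr) \;=\; u_m\bigl(q_{m,l+1} - q_{m,l+2}\bigr) + \lambda\xi\bigl(q_{m,l-1} - q_{m,l}\bigr)\, B_l \;\geq\; 0,$$
where $B_l \geq 0$ denotes the summation $\sum_{k} \tfrac{p_{k,m}w_k}{\delta_k}\tfrac{\tilde q_{k,l-1}^d-\tilde q_{k,l}^d}{\tilde q_{k,l-1}-\tilde q_{k,l}}$ appearing in \eqref{eq:ode-sys-2}. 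These sign conditions place the vector field in the Bouligand tangent cone at every boundary point, so $\bar{\mathcal{S}}$ is forward invariant.

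The main subtlety is the last sign check: a priori, $q_{m,l} = q_{m,l+1}$ does not force $\tilde{q}_{k,l} = \tilde{q}_{k,l+1}$, so the two arrival-ratios $B_l$ and $B_{l+1}$ cannot be compared directly. The computation above circumvents this by observing that the $B_{l+1}$ term is multiplied by $q_{m,l} - q_{m,l+1}$, which vanishes at the boundary, so $B_{l+1}$ drops out entirely and only non-negativity of $B_l$ is used. With this observation in place, the proof reduces to a clean application of Picard--Lindelöf on a Banach space plus an elementary Nagumo check on a convex closed subset.
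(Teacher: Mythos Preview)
Your proposal is correct and follows essentially the same route as the paper: both arguments establish boundedness and a global Lipschitz estimate for the drift, invoke Picard iteration for existence and uniqueness, and verify forward invariance of $\bar{\mathcal{S}}$ via the same boundary sign checks (the $q_{m,l}=0$, $q_{m,l}=1$, and $q_{m,l}=q_{m,l+1}$ cases). The only cosmetic difference is that the paper works in the weighted sup-norm $\|\mathbf{q}\|=\sup_{m,l}|q_{m,l}|/(l+1)$ (inherited from the cited reference) rather than $\ell_\infty$, and states the sign checks informally rather than in Nagumo/tangent-cone language; your explicit extension by coordinatewise truncation is in fact a bit cleaner than the paper's treatment of the Picard iterates possibly leaving $\bar{\mathcal{S}}$.
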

The proof of Lemma~\ref{lem:unique-sol-ode} is based on the Picard successive approximation method (\cite[Theorem 1(i)]{MS99}) and is provided in Appendix~\ref{app:unique-sol-ode}.

\begin{proof}[Proof of Theorem~\ref{thm:conv-global-occup}]
Fix any $T\in (0,\infty)$. For each $m\in\mathcal{M}$, consider random measures $\mu^N_m=\frac{1}{|V^N_m|}\sum_{j\in V^N_m}\delta_{X^N_j(\cdot)}$ and $\Bar{\mu}^N_m=\frac{1}{|V^N_m|}\sum_{j\in V^N_m}\delta_{X_j(\cdot)}$ on $\mathbb{S}\coloneqq\mathbb{D}([0,T],\N_0)$, where $X_j(\cdot)$ is defined in \eqref{eq:thm-1-1}. Denote the joint measures $\mu^N=(\mu^N_1,...,\mu^N_M)$ and $\Bar{\mu}^N=(\Bar{\mu}^N_1,...,\Bar{\mu}^N_M)$. Denote by $d_{BL}(\cdot,\cdot)$ the bounded-Lipschitz metric for probability measures on $\mathbb{S}$:
\begin{equation*}
        d_{BL}(\mu_1,\mu_2)\coloneqq\sup_{\norm{f}_{BL}\leq 1}\Big|\int_{\mathbb{S}}fd\mu_1-\int_{\mathbb{S}}fd\mu_2\Big|, \quad \norm{f}_{BL}\coloneqq\max\Big\{\norm{f}_{\infty},\sup_{x\neq y}\frac{f(x)-f(y)}{d(x,y)}\Big\}.
\end{equation*}
From \eqref{eq:thm1-coupled-cvg}
we have
\begin{equation*}
    \begin{split}
        \E d_{BL}(\mu^N_m,\Bar{\mu}^N_m)&\leq \E\sup_{\norm{f}_{BL}\leq 1}\frac{1}{|V^N_m|} \sum_{j\in V^N_m}|f(X^N_j)-f(X_j)|
        \leq \frac{1}{|V_m^N|}\sum_{j\in V^N}\E \norm{X^N_j-X_j}_{*,T}\xrightarrow{N\rightarrow\infty}0
    \end{split}
\end{equation*}
which implies that $d_{BL}(\mu^N_m,\Bar{\mu}^N_m)\pto 0$ for each $m \in \mathcal{M}$.
Since $\Bar{\mu}^N_m \pto \mu_m$ by LLN, we have $\mu^N=(\mu^N_1,...,\mu^N_M) \pto (\mu_1,...,\mu_M)$ by Slutsky's theorem.
Also, it is easy to check that $$\sup_{N}\E\Big[ \sup_{0\leq t\leq T}\norm{\mathbf{q}^N(t)}^2_{\ell_1}\Big]<\infty.$$ Thus, we have $\mathbf{q}^N\pto \mathbf{q}$.
Next, we need to show that $\mathbf{q}$ satisfies \eqref{eq:ode-sys-2}. Define $f_l(x)=\mathds{1}_{\{x\geq l\}}$, $l\in\N_0$. By \eqref{eq:thm-1-1}, we have that for any $m\in\mathcal{M}$ and $j\in V_m$,
\allowdisplaybreaks
\begin{align*}
    \E f_l(X_j(t))&=\E f_l(X_j(0))+\int_0^t u_m \E \mathds{1}_{\{X_j(s)>0\}}\big(f_l(X_j(s)-1)-f_l(X_j(s))\big)ds\\
    &\quad+\int_0^t\int_{\N^{d-1}}   \lambda \xi d \sum_{k\in \mathcal{K}}\frac{p_{k,m}w_k}{\delta_k}\sum_{(M_2,...,M_d)\in \mathcal{M}^{d-1}}\prod_{h=2}^d \frac{v_{M_h}p_{k,M_h}}{\delta_k}\\
    & \qquad \times \E \big[b(X_j(s),x_{j_2},...,x_{j_d})\big(f_l(X_j(s)+1)-f_l(X_j(s))\big)\big]\mu^{M_2}_s(dx_{j_2})\cdots\mu^{M_d}_s(dx_{j_d})ds\\
    &=\E f_l(X_j(0))+\int_0^t u_m \E \mathds{1}_{\{X_j(s)>0\}}(f_{l+1}(X_j(s))-f_l(X_j(s)))ds\\
    &\quad+\int_0^t\int_{\N^{d-1}}   \lambda \xi d \sum_{k\in \mathcal{K}}\frac{p_{k,m}w_k}{\delta_k}\sum_{(M_2,...,M_d)\in \mathcal{M}^{d-1}}\prod_{h=2}^d \frac{v_{M_h}p_{k,M_h}}{\delta_k}\\
    &\qquad\times\E [b(l-1,x_{j_2},...,x_{j_d})(f_{l-1}(X_j(s)) -f_l(X_j(s)))]\mu^{M_2}_s(dx_{j_2})\cdots\mu^{M_d}_s(dx_{j_d})ds.
\end{align*}
For any $m\in\mathcal{M}$, if $j\in V_m$, then $\E f_l(X_j(t))=q_{m,l}(t)=\mu^m_t[l,\infty)$ for $l=1,2,...$. Hence, 
\begin{align}\label{eq:ode-1}
    q_{m,l}(t)&=q_{m,l}(0)-\int_0^t u_m(q_{m,l}(s)-q_{m,l+1}(s))ds + \int_0^t \lambda \xi d \sum_{k\in \mathcal{K}}\frac{p_{k,m}w_k}{\delta_k}(q_{m,l-1}(s)-q_{m,l}(s))\nonumber\\
    &\times\sum_{(M_2,...,M_d)\in \mathcal{M}^{d-1}}\prod_{h=2}^d \frac{v_{M_h}p_{k,M_h}}{\delta_k} \int_{\N^{d-1}}b(l-1,x_{j_2},...,x_{j_d})\mu^{M_2}_s(dx_{j_2})\cdots\mu^{M_d}_s(dx_{j_d})ds
\end{align}
Also, 
\begin{align}\label{eq:ave-k}
    &\sum_{(M_2,...,M_d)\in \mathcal{M}^{d-1}}\prod_{h=2}^d \frac{v_{M_h}p_{k,M_h}}{\delta_k} \int_{\N^{d-1}}b(l-1,x_{j_2},...,x_{j_d})\mu^{M_2}_s(dx_{j_2})\cdots\mu^{M_d}_s(dx_{j_d})\nonumber\\
    &=\sum_{\bar{r}\in\bar{\mathcal{R}}}\sum_{\bar{r}'\in\bar{\mathcal{R}}'(\bar{r})}\frac{1}{1+|\bar{r}'|}\prod_{m\in\mathcal{M}}{r_m\choose r'_m}\Big(\frac{v_mp_{k,m}}{\delta_k}\Big)^{r_m}(q_{m,l-1}(s)-q_{m,l}(s))^{r'_m}(q_{m,l}(s))^{r_m-r'_m}\nonumber\\
    &=\sum_{r=0}^{d-1}\frac{1}{1+r}{d-1\choose r}\Big(\sum_{m\in\mathcal{M}}\frac{v_mp_{k,m}}{\delta_k}q_{m,l-1}(s)-\sum_{m\in\mathcal{M}}\frac{v_mp_{k,m}}{\delta_k}q_{m,l}(s)\Big)^r\Big(\sum_{m\in\mathcal{M}}\frac{v_mp_{k,m}}{\delta_k}q_{m,l}(s)\Big)^{d-1-r}\nonumber\\
    &=\sum_{r=1}^{d}\frac{1}{r}{d-1\choose r-1}\big(\tilde{q}_{k,l-1}(s)-\tilde{q}_{k,l}(s)\big)^{r-1}(\tilde{q}_{k,l}(s))^{d-r}\quad (\text{Let } \tilde{q}_{k,l}(s)=\sum_{m\in\mathcal{M}}\frac{v_mp_{k,m}}{\delta_k}q_{m,l}(s))\nonumber\\
    &=\frac{(\tilde{q}_{k,l-1}(s))^d-(\tilde{q}_{k,l}(s))^d}{d(\tilde{q}_{k,l-1}(s)-\tilde{q}_{k,l}(s))}
\end{align}
where $\bar{\mathcal{R}}=\{\bar{r}=(r_1,...,r_M)\in \N_0^M:\sum_{m\in\mathcal{M}}r_m=d-1\}$ and $\bar{\mathcal{R}}'(\bar{r})=\{\bar{r}'=(r'_1,...,r'_M)\in\N_0^M: r'_m\leq r_m,\forall m\in\mathcal{M}\}$ given $\bar{r}\in\bar{\mathcal{R}}$.
Plugging \eqref{eq:ave-k} into \eqref{eq:ode-1}, we get the desired result.
\end{proof}

\subsection{Convergence of the Occupancy Process: General Case}
\label{sec:general-conv}

In this section, we will discuss the case that the sequence $\{G^N\}_N$ is clustered proportionally sparse, which helps us remove the i.i.d.~assumption in Theorem~\ref{thm:general-conv-global-occup}. Intuitively, if $\{G^N\}_N$ is clustered proportionally sparse, then for each $k\in\mathcal{K}$ and each dispatcher $i\in W^N_k$, the queue-length distribution of its neighborhood will always be close (in appropriate sense) to the corresponding global weighted queue-length distribution. 
Clustered proportional sparsity ensures that this statement holds uniformly for all occupancy states.
Loosely speaking, this statement enables us to make sure that the evolution of the occupancy process happens in the same way for any initial state as in the case of i.i.d.~initial state.
For the case of homogeneous systems, the notion of proportional sparsity was introduced in~\cite{RM22}. 
Here, proportional sparsity was defined in a way that for most dispatcher $i$, the fraction of its neighbors within any subset $U$ of servers is proportional to the size of the subset $U$. 
However, due to the heterogeneous compatibility between dispatchers and servers, such fraction, in the current setup, depends on the corresponding type of the dispatcher as well (see the term $\frac{E^N_k(U)}{E^N_k(V^N)}$ in Definition~\ref{defn:prop-sparse}). 
Thus, unlike the homogeneous case where the local queue-length distribution is directly compared to the global queue-length distribution of the system, for the heterogeneous case, we need to define $K$ types of global weighted queue-length distribution (see Definition~\ref{defn:global-queue-length}), where the weights are determined by the asymptotic properties of the graph structure: $(v_m,m\in\mathcal{M})$ and $(p_{k,m},k\in\mathcal{K},m\in\mathcal{M})$. 
Then, we compare the local queue-length distribution of dispatcher $i$ to the global \emph{weighted} queue-length distribution of the corresponding type as defined below.

\begin{defn}\label{defn:global-queue-length}
Consider any fixed $N\in\N$ and $k\in\mathcal{K}$. 
Given the global occupancy $\mathbf{q}^N=(q^N_{m,l},m\in\mathcal{M},l\in\N_0)$ of the $N$-th system, the \textbf{global weighted queue-length distribution} (GWQD) of type k is defined as $\big(x^N_{k,m,l},m\in\mathcal{M},l\in \N_0\big)$, where $$x^N_{k,m,l}=\frac{v_mp_{k,m}}{\delta_k}(q^N_{m,l+1}-q^N_{m,l}).$$
\end{defn}
Also, the local queue-length distribution is defined as follows.
\begin{defn}
Consider any fixed $N\in\N$ and $k\in\mathcal{K}$. Given the state $(X^N_j,j\in V^N)$ of the $N$-th system, the \textbf{local queue-length distribution} (LQD) of dispatcher $i\in W^N_k$ is defined as $(\hat{x}^N_{i,m,l},m\in\mathcal{M},l\in\N_0)$, where 
$$\hat{x}^N_{i,m,l}=\frac{|\{j\in V^N_m: \xi^N_{i,j}=1\text{ and }X^N_j=l\}|}{|\mathcal{N}^N_w(i)|}.$$
\end{defn}
Although the dispatcher following the JSQ($d$) policy selects a target server based on its LQD, if its LQD is close (in suitable sense) to its corresponding GWQD, then the selection can be viewed as if the decision was based on the GWQD. 
The latter case is easier to analyze. 
Hence, if a dispatcher's LQD is close to its corresponding GWQD, we call it a \emph{good dispatcher}:

\begin{defn}[$\varepsilon$-Good Dispatcher]
Consider any fixed $N\in\N$ and an $\varepsilon>0$. Given the state $(X^N_j,j\in V^N)$ of the $N$-th system. A dispatcher $i\in W^N_k$, $k\in\mathcal{K}$, is $\varepsilon$-good if 
\begin{equation}
    \sum_{m\in\mathcal{M}}\sum_{l\in\N_0}|\hat{x}^N_{i,m,l}-x^N_{k,m,l}|\leq \varepsilon.
\end{equation}
Also, a dispatcher is $\varepsilon$-bad if it is not $\varepsilon$-good.
\end{defn}

\subsubsection{Consequences of Clustered Proportional Sparsity}
The proof of Theorem~\ref{thm:general-conv-global-occup} relies on the idea that if the local occupancy of each dispatcher within a particular type evolves similar to the global occupancy of that type, then the process-level limiting behavior should not depend on any specific initial state. 
That is, it will enable us to go beyond the i.i.d.~assumption.
First step, for this approach to work, is to show that almost all dispatchers are $\varepsilon$-good for any $\varepsilon>0$. Here is where we need the property of clustered proportional sparsity.
This is stated in the next proposition.
\begin{prop}\label{prop: num-bad-disp}
Let $\{G^N\}_N$ be a sequence of clustered proportionally sparse graphs. For any $T\geq 0$ and $\varepsilon_1, \varepsilon_2>0$, 
\begin{equation}
    \PP\Big(\sup_{t\in [0,T]}\mathscr{B}^{\varepsilon_1}_N(t)\geq \varepsilon_2 |W^N|\Big)\xrightarrow{N\rightarrow\infty}0,
\end{equation}
where $\mathscr{B}^{\varepsilon_1}_N(t)$ is the number of $\varepsilon_1$-bad dispatchers at time $t$. 
\end{prop}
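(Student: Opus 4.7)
The plan is to invoke clustered proportional sparsity (CPS) with the queue-length level sets as test subsets, and to convert the resulting control on cumulative tails into control on the full $\ell_1$ distance via telescoping, after truncating the sum at some finite level $L$. For each $m \in \mathcal{M}$, $l \in \mathbb{N}_0$ and $t \in [0,T]$, set $U_{m,l}(t) = \{j \in V^N_m : X^N_j(t) \geq l\}$. Direct computation gives $|\mathcal{N}^N_w(i) \cap U_{m,l}(t)|/|\mathcal{N}^N_w(i)| = \sum_{l' \geq l}\hat{x}^N_{i,m,l'}(t)$, and Lemma~\ref{lem:uniformity-degree} applied to $|E^N_k(U_{m,l}(t))| = \sum_{j \in U_{m,l}(t)}\deg^N_v(k,j)$ and $|E^N_k(V^N)| = \sum_{m'}\sum_{j \in V^N_{m'}}\deg^N_v(k,j)$ yields
\begin{equation*}
\frac{|E^N_k(U_{m,l}(t))|}{|E^N_k(V^N)|} = \frac{v_m p_{k,m}}{\delta_k}\,q^N_{m,l}(t) + \eta_N,
\end{equation*}
with $\eta_N \to 0$ uniformly in $(i,k,m,l,t)$. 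Since the right-hand side equals $\sum_{l' \geq l}x^N_{k,m,l'}(t) + \eta_N$, CPS with threshold $\varepsilon'$ guarantees that at most $o(|W^N_k|)$ dispatchers of type $k$ have cumulative-tail deviation exceeding $\varepsilon'$, uniformly over $(m,l,t)$.

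Next I would truncate the infinite sum over $l$ by a pathwise bound: since queue-lengths are dominated by initial values plus arrivals,
\begin{equation*}
\sum_{l \geq 1} q^N_{m,l}(t) \leq \sum_{l \geq 1} q^N_{m,l}(0) + \frac{A^N_m(T)}{|V^N_m|},
\end{equation*}
where $A^N_m(T)$ counts arrivals routed to type-$m$ servers in $[0,T]$, dominated by a Poisson random variable of mean $O(N)$. Combined with $\sum_l q^N_{m,l}(0) = O(1)$ (from $\mathbf{q}^N(0) \to \mathbf{q}^\infty \in \mathcal{S}$), this gives an event $\Omega_N$ with $\PP(\Omega_N) \to 1$ on which $\sup_{t \in [0,T]}\sum_l q^N_{m,l}(t) \leq C$ for every $m$. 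Markov's inequality then yields $\sup_{t \in [0,T]} q^N_{m,L+1}(t) \leq C/L$ on $\Omega_N$ for any fixed $L$.

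To assemble the bound, call $i \in W^N_k$ \emph{regular} at time $t$ if, for every $m \in \mathcal{M}$ and every $l \in \{0,\dots,L+1\}$, the cumulative-tail deviation at level $l$ is at most $\varepsilon'$ in absolute value. Telescoping adjacent levels gives $|\hat{x}^N_{i,m,l}(t) - x^N_{k,m,l}(t)| \leq 2\varepsilon'$ for $0 \leq l \leq L$, and on $\Omega_N$,
\begin{equation*}
\sum_{l > L}|\hat{x}^N_{i,m,l}(t) - x^N_{k,m,l}(t)| \leq \sum_{l > L}\hat{x}^N_{i,m,l}(t) + \sum_{l > L}x^N_{k,m,l}(t) \leq 2C/L + 2\varepsilon'.
\end{equation*}
Choosing $L$ large and then $\varepsilon'$ small so that the total $\ell_1$ deviation is strictly less than $\varepsilon_1$, every regular dispatcher on $\Omega_N$ is $\varepsilon_1$-good. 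A union bound over the $KM(L+2)$ pairs $(k,m,l)$ estimates the count of non-regular dispatchers by $o(|W^N|)$, eventually below $\varepsilon_2|W^N|$. Combining with $\PP(\Omega_N) \to 1$ yields the claim.

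The step I expect to require the most care is choosing the truncation level $L$ uniformly in $t \in [0,T]$: a single $L$ must simultaneously make the tail contribution small at every time and must be picked before applying the CPS union bound, since the number of exceptional $(m,l)$ pairs depends on $L$. The pathwise arrival-dominated inequality is the key tool that allows a deterministic $L$ to suffice on a fixed horizon without invoking any stability assumption. By contrast, uniformity in $t$ of the CPS step itself is automatic, because the supremum over $U$ in the definition of CPS already encompasses every realization of the random level sets $U_{m,l}(t)$.
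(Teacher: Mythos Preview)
Your proposal is correct and follows essentially the same strategy as the paper: truncate at a finite level, apply clustered proportional sparsity to the finitely many queue-length level sets (your $\eta_N$ correction is the content of the paper's Lemma~\ref{lem:appro-x-kml}), and control the tail using the initial state in $\mathcal{S}$ together with a Poisson bound on total arrivals (the paper packages this as Lemma~\ref{lem:sup-q-ml-t}). The only cosmetic differences are that you work with cumulative-tail sets and telescope, whereas the paper applies CPS directly to the exact-level sets, and your tail bound goes through $\sum_l q^N_{m,l}(t)\le C$ plus monotonicity rather than bounding $q^N_{m,\ell}(t)$ directly.
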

The intuition behind Proposition~\ref{prop: num-bad-disp} is that the servers of type $m\in\mathcal{M}$ with queue length $l\in\N_0$ forms a subset $U^N_{m,l}$ of the server set $V^N$. 
If this set is large, then by the clustered proportional sparsity, for any fixed $k\in\mathcal{K}$ and almost all $i\in W^N_k$, the fraction of the dispatcher $i$'s neighbors within $U^N_{m,l}$ is close to $\frac{|E^N_k(U^N_{m,l})|}{|E^N_k(V^N)|}$, which is close to $x^N_{k,m,l}$ for large enough $N$ by Condition~\ref{cond-1}. 
Also, in order to deal with the sum over $l\in \N_0$, we will need to establish uniform bounds of the tail of the occupancy process on any finite time interval. The complete proof is given in Appendix~\ref{app:proof-of-bad-dispatcher}.

\subsubsection{Coupling with an intermediate system}
The main methodology for the proof of Theorem~\ref{thm:general-conv-global-occup} is a stochastic coupling with a sequence $\{G'^N\}_{N\geq 1}$ of carefully constructed systems where the evolution of each system $G'^N$ can be coupled with that of the system $G^N$. 
For each $N$, the system $G'^N$ has the same sets of dispatchers and servers as $G^N$, i.e., $W'^N=W^N$ and $V'^N=V^N$. 
However, the task assignment in $G'^N$ happens differently. 
To describe the task assignment policy, let us introduce the following notations:
Let $X'^N_j(t)$ be the number of tasks (including those in service) in the queue of server $j\in V'^N$ at time $t$. 
Let $\mathbf{q}'^N(t)=\big(q'^N_{m,l}(t),m\in\mathcal{M},l\in\N_0\big)$ be the corresponding global occupancy at time $t$, which is defined in the same way as $\mathbf{q}^N$ for the system $G^N$. 
Then, the system $G'^N$ assigns tasks under the Global Weighted Shortest Queue (GWSQ($d$)) policy as described in Algorithm~\ref{alg:GWQD-JSQ(d)}.
The GWSQ($d$) policy is essentially a variant of the JSQ(d) policy since for each new task, the dispatcher selects a target set of servers of size $d$ according to the global weighted queue-length distribution.

\begin{algorithm}[h]\small
\caption{GWSQ($d$)}\label{alg:GWQD-JSQ(d)}
\While{A new task arrives at dispatcher $i\in W^N_k$, $k\in\mathcal{K}$}{
Get the current global occupancy $\mathbf{q}^N=(q^N_{m,l},m\in\mathcal{M},l\in\N_0)$;\\
Calculate the \textit{global weighted queue-length distribution} $\mathbf{x}^N_k=(x^N_{k,m,l},m\in\mathcal{M},l\in\N_0)$ of type $k$,
$$x^N_{k,m,l}=\frac{v_mp_{k,m}}{\delta_k}(q^N_{m,l+1}-q^N_{m,l});$$\\
Randomly select a set $\sel^N$ with size $d$ as the following:
\begin{itemize}
    \item  Let $
Y^N_{k,m,l}(t)\in \N_0$ be the number of servers of type $m\in\mathcal{M}$ with queue length \\
$l\in \N_0$ in the set $\sel^N$;
\item $(Y^N_{k,m,l}(t),m\in\mathcal{M},l\in N_0)$ satisfies 
$$\sum_{m\in\mathcal{M},l\in\N_0}Y^N_{k,m,l}(t)=d;$$
\item The probability of selecting $(Y^N_{k,m,l}(t),m\in\mathcal{M},l\in N_0)$ is 
$$\PP(Y^N_{k,m,l}(t),m\in\mathcal{M},l\in N_0)=\prod_{m\in\mathcal{M},l\in\N_0}{X^N_{k,m,l}(t)\choose Y^N_{k,m,l}(t)}/{N\choose d};$$
where $X^N_{k,m,l}= N\times x^N_{k,m,l}$.
\end{itemize}
Get $l^*=\min(l\in\N_0: \exists k\in\mathcal{K},m\in\mathcal{M}\text{ such that }Y^N_{k,m,l}>0)$;\\
Assign the task a type $m\in\mathcal{M}$ server with queue length $l^*$ with probability 
$$\frac{Y^N_{k,m,l^*}}{\sum_{m\in\mathcal{M}}Y^N_{k,m,l^*}}.$$

}
\end{algorithm}

Next, we couple the evolution of the system $G'^N$ with that of the system $G^N$ by the \emph{optimal coupling} method. 
The optimal coupling for two stochastic processes is similar to the maximal coupling for two discrete random variables (say, $X$ and $Y$), maximizing the probability $\PP(X=Y)$.

\paragraph{Optimal Coupling.} Fix any $N$. 
In both systems, within the pool of servers of each type, arrange the servers in the non-decreasing order of their queue lengths (ties are broken arbitrarily).
Now, couple the evolution of the system $G^N$ with the system $G'^N$ in the following way:
\begin{itemize}
    \item \textbf{Departure.} For any $m\in\mathcal{M}$ and $n = 1, \ldots, |V^N_m|$, synchronize the departure epochs of the $n^{th}$ ordered servers of type $m$ in the two systems.
    \item \textbf{Arrival.} The coupling of arrivals is the tricky part. 
    For this, first synchronize the arrival epochs at each dispatcher $i$ in both systems $G'^N$ and $G^N$. 
    At an arrival epoch of dispatcher $i\in W^N_k$, let $(\hat{x}^N_{i,m,l},m\in\mathcal{M},l\in\N_0)$ be the \emph{local} empirical distribution of dispatcher $i$ in the system $G^N$ and $(x'^N_{k,m,l},m\in\mathcal{M},l\in\N_0)$ be the weighted \emph{global} empirical distribution of type $k$ dispatchers in the system $G'^N$. 
    Then, in the system $G^N$, probability that the task will be assigned to a server of type $m\in\mathcal{M}$ with queue length $l\in\N_0$ is given by 
    \begin{equation}\label{eq:p-ml}
        \begin{split}
            p^N_{m,l}(i)&:= \frac{\sum_{r=1}^d\sum_{r_1=1}^{r}\frac{r_1}{r}{|\mathcal{N}^N_w(i)|\hat{x}^N_{i,m,l}\choose r_1}{|\mathcal{N}^N_w(i)|\sum_{\mathcal{M}\setminus \{m\}}\hat{x}^N_{i,m,l}\choose r-r_1} {|\mathcal{N}^N_w(i)|\sum_{\mathcal{M}}\sum_{l'\geq l+1}\hat{x}^N_{i,m,l'}\choose d-r}}{{|\mathcal{N}^N_w(i)|\choose d}}.
        \end{split}
    \end{equation}
  In the system $G'^N$, the probability that the task will be assigned to a server of type $m\in\mathcal{M}$ with queue length $l\in\N_0$ is given by
    \begin{equation}\label{eq:p-'-ml}
        \begin{split}
            p'^N_{m,l}(k)&:= \frac{\sum_{r=1}^d\sum_{r_1=1}^{r}\frac{r_1}{r}{ X'^N_{k,m,l}\choose r_1}{ \sum_{\mathcal{M}\setminus\{m\}}X'^N_{k,m,l} \choose r-r_1}{\sum_{\mathcal{M}}\sum_{l'\geq l+1} X'^N_{k,m,l'} \choose d-r}}{{N \choose d}}.
        \end{split}
    \end{equation}
    For convenience, we denote $p^N_{m,l}(i)$ and $p'^N_{m,l}(k)$ as $p^N_{m,l}$ and $p'^N_{m,l}$, respectively.
    Denote $\bar{p}^N_{m,l}=\min(p^N_{m,l},p'^N_{m,l})$ for $m\in\mathcal{M}$ and $l\in \N_0$. 
    
    Now, to couple the task assignment, let us draw a $\mathrm{Uniform}[0,1]$ random variable $U$, independently of any other processes and across various arrival epochs. 
    $U$ is used to generate the random variables $(M^N,L^N)\in\mathcal{M}\times \N_0$ and $(M'^N,L'^N)\in\mathcal{M}\times \N_0$ for the system $G^N$ and the system $G'^N$, respectively. 
    In the system $G^N$, set $(M^N,L^N)=(m,l)\in \mathcal{M}\times \N_0$, if 
    \begin{equation}\label{eq:ML-GN}
        \begin{split}
            U\in&\Big[\sum_{m'=1}^{m-1}\sum_{l'=0}^{\infty}\bar{p}^N_{m',l'}+\sum_{l'=0}^{l-1}\bar{p}^N_{m,l'},\sum_{m'=1}^{m-1}\sum_{l'=0}^{\infty}\bar{p}^N_{m',l'}+\sum_{l'=0}^{l}\bar{p}^N_{m,l'}\big)\\
            &\bigcup \big[\bar{p}^N+\sum_{m'=1}^{m-1}\sum_{l'=0}^{\infty}(p^N_{m',l'}-\bar{p}^N_{m',l'})+\sum_{l'=0}^{l-1}(p^N_{m,l'}-\bar{p}^N_{m,l'}),\\
            &\qquad \bar{p}^N+\sum_{m'=1}^{m-1}\sum_{l'=0}^{\infty}(p^N_{m',l'}-\bar{p}^N_{m',l'})+\sum_{l'=0}^{l}(p^N_{m,l'}-\bar{p}^N_{m,l'})\Big),
        \end{split}
    \end{equation}
    where $\bar{p}^N=\sum_{m'=1}^{M}\sum_{l'=0}^{\infty}\bar{p}^N_{m',l'}$, and assign the task to a server of type $m$ with queue length~$l$. 
    Similarly, in the system $G'^N$, set $(M'^N,L'^N)=(m,l)\in \mathcal{M}\times \N_0$, if 
    \begin{equation}\label{eq:M'L'-GN}
        \begin{split}
            U\in&\Big[\sum_{m'=1}^{m-1}\sum_{l'=0}^{\infty}\bar{p}^N_{m',l'}+\sum_{l'=0}^{l-1}\bar{p}^N_{m,l'},\sum_{m'=1}^{m-1}\sum_{l'=0}^{\infty}\bar{p}^N_{m',l'}+\sum_{l'=0}^{l}\bar{p}^N_{m,l'}\big)\\
            &\bigcup \big[\bar{p}^N+\sum_{m'=1}^{m-1}\sum_{l'=0}^{\infty}(p'^N_{m',l'}-\bar{p}^N_{m',l'})+\sum_{l'=0}^{l-1}(p'^N_{m,l'}-\bar{p}^N_{m,l'}),\\
            &\qquad \bar{p}^N+\sum_{m'=1}^{m-1}\sum_{l'=0}^{\infty}(p'^N_{m',l'}-\bar{p}^N_{m',l'})+\sum_{l'=0}^{l}(p'^N_{m,l'}-\bar{p}^N_{m,l'})\Big),
        \end{split}
    \end{equation}
    and assign the task to a server of type $m$ with queue length $l$. 
\end{itemize}
As alluded to before, the above coupling is constructed in a way that maximizes the probability of the two systems to assign an arriving task to some server with the same queue length.
Next, the difference in the occupancy processes of the two systems, on any finite time interval, can be upper-bounded by the number of times the two systems assign to two different queue lengths. 
This is formalized by the notion of \emph{mismatch} below, which was originally introduced in~\cite{MBLW16-3}.

\begin{defn}[Mismatch]
At an arrival epoch, the system $G^N$ and the system $G'^N$ are said to mismatch if $(M^N,L^N)\neq (M'^N,L'^N)$, that is, the arriving task is not assigned to servers of the same type with the same queue length in the two systems. Denote by $\Delta^N(t)$ the cumulative number of times the systems mismatch in queue length up to time $t$.
\end{defn} 
The next proposition provides a deterministic bound on the difference between the occupancy processes of the two systems in terms of the number of mismatches.
\begin{prop}\label{prop: mismatch}
For any $N\geq 1$, consider the system $G^N$ and the system $G'^N$ coupled as above. Then the following holds almost surely on the coupled probability space: for $t\geq 0$, 
\begin{equation}\label{eq:mismatch-bound}
    \sum_{m\in\mathcal{M}}\sum_{l\in\N_0}|Q^N_{m,l}(t)-Q'^N_{m,l}(t)|\leq 2\Delta^N(t),
\end{equation}
provided the inequality holds at $t=0$. $Q^N_{m,l}(t)$ and $Q'^N_{m,l}(t)$ represent the number of servers of type $m\in\mathcal{M}$ with queue length at least $l\in\N_0$ in the system $G^N$ and the system $G'^N$ at time $t$, respectively.
\end{prop}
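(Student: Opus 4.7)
The plan is to track how each event (synchronized departure, matched arrival, mismatched arrival) affects the quantity $D(t) := \sum_{m,l} |Q^N_{m,l}(t) - Q'^N_{m,l}(t)|$. I will show that departures and matched arrivals leave $D$ non-increasing, while each mismatched arrival raises $D$ by at most $2$. Summing along the sample path then yields $D(t) \leq D(0) + 2\Delta^N(t)$, and since the hypothesis at $t=0$ gives $D(0) \leq 2\Delta^N(0) = 0$, i.e.\ $D(0)=0$, the conclusion follows.

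For arrivals the analysis is direct. When a server of type $m$ has its queue length incremented from $l$ to $l+1$, the only affected count is $Q_{m,l+1}$, which increases by $1$, while all other $Q_{m,l'}$ are unchanged. Hence at a matched arrival to $(m,l)$ both systems increase $Q_{m,l+1}$ by $1$, so $|Q^N_{m,l+1} - Q'^N_{m,l+1}|$ (and therefore $D$) is unchanged. At a mismatched arrival with $(M^N, L^N) = (m_1,l_1) \neq (m_2,l_2) = (M'^N, L'^N)$, only $Q^N_{m_1,l_1+1}$ changes in $G^N$ and only $Q'^N_{m_2,l_2+1}$ changes in $G'^N$, each by $+1$. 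Each such one-sided unit change alters the corresponding summand of $D$ by at most $1$, so $D$ increases by at most $2$.

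For the departure step, the difficulty is that the rank-$n$ servers of type $m$ in $G^N$ and $G'^N$ can carry different queue lengths, so the affected $Q$-coordinates in the two systems are different. The key device is the Wasserstein-type identity
\begin{equation*}
\sum_{l\in\N_0}|Q^N_{m,l}(t) - Q'^N_{m,l}(t)| = \bigl\|a^N_m(t) - a'^N_m(t)\bigr\|_1,
\end{equation*}
where $a^N_m(t)$ and $a'^N_m(t)$ are the type-$m$ queue-length vectors in the two systems sorted in non-decreasing order. Under the rank-synchronized departure coupling, a rank-$n$ event decrements the $n$-th coordinate of both sorted vectors by $1$ whenever positive, leaving all other coordinates unchanged. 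A short case split on the signs of $a^N_{m,n}$ and $a'^N_{m,n}$ shows that $|a^N_{m,n}-a'^N_{m,n}|$ stays the same if both are positive and strictly decreases if exactly one is zero. Hence the $\ell_1$ distance between the modified (now possibly unsorted) vectors does not exceed the pre-event sorted $\ell_1$ distance. The classical $\ell_1$-contraction of sorting, $\|x^{\mathrm{sort}} - y^{\mathrm{sort}}\|_1 \leq \|x-y\|_1$, then transfers the bound to the post-event sorted vectors, so $D_m$ and thus $D$ cannot increase.

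The main obstacle is precisely this departure case: a coordinate-wise inspection of the $|Q^N_{m,l} - Q'^N_{m,l}|$ summands suggests that some individual summands can increase, but their collective contribution is controlled only after rephrasing the sum as an $\ell_1$ distance of sorted queue-length vectors and invoking sort contraction. Given this, $D(t) \leq 2\Delta^N(t)$ follows by integrating event-by-event.
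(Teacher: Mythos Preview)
Your argument is correct and matches the event-by-event inductive scheme the paper cites from \cite{MBLW16-3}: departures and matched arrivals preserve the inequality, mismatches add at most~$2$. Your treatment of the departure step via the identity $\sum_{l}|Q^N_{m,l}-Q'^N_{m,l}|=\|a^N_m-a'^N_m\|_1$ for sorted queue-length vectors, together with the $\ell_1$-contraction of sorting, is a clean way to handle that case; the referenced papers typically do it by a direct sign check (one verifies that when $a_n>a'_n$ the decrement of $Q^N_{m,a_n}$ lands on a coordinate where $Q^N_{m,a_n}>Q'^N_{m,a_n}$, so the change is in the favorable direction), but the two routes are equivalent.
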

Bounds of the form as given in~\eqref{eq:mismatch-bound} was originally established in~\cite[ Proposition~4]{MBLW16-3}, and is later used in various contexts~\cite{MBL17,RM22}.
The proof does not depend on any specific assignment policy and relies on showing inductively that if the inequality in~\eqref{eq:mismatch-bound} holds before an event time epoch, then it is preserved after the event time epoch as well.
The proof of Proposition~\ref{prop: mismatch} can be obtained following the similar arguments. 
We omit the details. 
 
\begin{lemma}\label{cor:Q-x}
Given $\sum_{m\in\mathcal{M}}\sum_{l\in\N_0}|Q^N_{m,l}-Q'^N_{m,l}|\leq 2\Delta^N$. Then, there exist $N_0\in\N_0$ and a positive constant $L$ such that for any $k\in\mathcal{K}$, 
\begin{equation}\label{eq:x-delta}
    \sum_{m\in\mathcal{M}}\sum_{l\in\N_0}|x^N_{k,m,l}-x'^N_{k,m,l}|\leq L \Delta^N/N, \quad \forall N\geq N_0.
\end{equation}
\end{lemma}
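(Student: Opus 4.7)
The proof is essentially a direct computation exploiting the linearity of the GWQD in the global occupancy and the hypothesis on the $\ell_1$-difference of the aggregate counts $Q^N_{m,l}$. Here is the plan.

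First, I would unpack the definitions. Recall that $x^N_{k,m,l}=\frac{v_m p_{k,m}}{\delta_k}(q^N_{m,l+1}-q^N_{m,l})$ and $q^N_{m,l}=Q^N_{m,l}/|V^N_m|$, and analogously for the primed system on the same server set (so $|V'^N_m|=|V^N_m|$). Thus
\begin{equation*}
x^N_{k,m,l}-x'^N_{k,m,l}=\frac{v_m p_{k,m}}{\delta_k\,|V^N_m|}\bigl[(Q^N_{m,l+1}-Q'^N_{m,l+1})-(Q^N_{m,l}-Q'^N_{m,l})\bigr].
\end{equation*}
Applying the triangle inequality and summing over $l\in\N_0$ gives
\begin{equation*}
\sum_{l\in\N_0}|x^N_{k,m,l}-x'^N_{k,m,l}|\le \frac{2\,v_m p_{k,m}}{\delta_k\,|V^N_m|}\sum_{l\in\N_0}|Q^N_{m,l}-Q'^N_{m,l}|,
\end{equation*}
where the factor $2$ absorbs the shift $l\mapsto l+1$.

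Next, I would handle the prefactor using the hypothesis $|V^N_m|/N\to v_m\in(0,1)$. Choose $N_0$ large enough so that $|V^N_m|\ge v_m N/2$ for every $m\in\mathcal{M}$ and all $N\ge N_0$; then
\begin{equation*}
\frac{v_m p_{k,m}}{\delta_k\,|V^N_m|}\le \frac{2\,p_{k,m}}{\delta_k\, N}\le \frac{C}{N},
\end{equation*}
with $C:=2\max_{k,m}p_{k,m}/\min_k \delta_k$, which is finite since $p_{k,m}\le 1$ and, by the stability hypothesis, $\delta_k=\sum_m p_{k,m}v_m>0$ for every $k\in\mathcal{K}$.

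Finally, summing also over $m\in\mathcal{M}$ (a finite set) and invoking the standing assumption $\sum_{m,l}|Q^N_{m,l}-Q'^N_{m,l}|\le 2\Delta^N$ from Proposition~\ref{prop: mismatch} yields
\begin{equation*}
\sum_{m\in\mathcal{M}}\sum_{l\in\N_0}|x^N_{k,m,l}-x'^N_{k,m,l}|\le \frac{2C}{N}\sum_{m,l}|Q^N_{m,l}-Q'^N_{m,l}|\le \frac{4C\,\Delta^N}{N},
\end{equation*}
so the conclusion holds with $L:=4C$, uniformly in $k\in\mathcal{K}$. There is no real obstacle here — the only mild care needed is to pick $N_0$ so that every server pool $V^N_m$ is already of size $\Theta(N)$, ensuring the prefactor $1/|V^N_m|$ behaves as $O(1/N)$; the rest is triangle inequality and finite-sum bookkeeping.
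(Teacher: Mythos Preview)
Your proof is correct and follows essentially the same approach as the paper: expand the definition of $x^N_{k,m,l}$ in terms of $Q^N_{m,l}/|V^N_m|$, bound the prefactor $v_m/|V^N_m|$ by $2/N$ for all large $N$, and invoke the hypothesis $\sum_{m,l}|Q^N_{m,l}-Q'^N_{m,l}|\le 2\Delta^N$. If anything, you are slightly more careful: the paper writes the first step as an equality $\sum_l|x^N_{k,m,l}-x'^N_{k,m,l}|=\frac{v_mp_{k,m}}{\delta_k|V^N_m|}\sum_l|Q^N_{m,l}-Q'^N_{m,l}|$, whereas in fact a triangle-inequality factor of $2$ (from the shift $l\mapsto l+1$) is needed, exactly as you include; this only changes the value of the constant $L$, not the argument.
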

\begin{proof}
By the model assumption, there exists $N_0\in\N_0$ such that for all $N\geq N_0$, $|V^N_m|\geq \frac{1}{2}Nv_m$, $\forall m\in\mathcal{M}$, which gives us that 
\begin{equation}
    \begin{split}
        \sum_{m\in\mathcal{M}}\sum_{l\in\N_0}|x^N_{k,m,l}-x'^N_{k,m,l}|
        & = \sum_{m\in\mathcal{M}}\sum_{l\in\N_0}\frac{v_mp_{k,m}}{\delta_k}|Q^N_{m,l}-Q'^N_{m,l}|/|V^N_m|\\
        & \leq \sum_{m\in\mathcal{M}}\sum_{l\in\N_0}\frac{2p_{k,m}}{\delta_k}|Q^N_{m,l}-Q'^N_{m,l}|/N
         \leq L \Delta^N/N,
    \end{split}
\end{equation}
where $L=4\max_{k\in\mathcal{K},m\in\mathcal{M}}\frac{p_{k,m}}{\delta_k}$.
\end{proof}

The final ingredient that we need is the probability of mismatch in a particular epoch, under the optimal coupling method.
The next lemma bounds this probability in terms of the $\ell_1$-distance between the LQD of the $G_N$ system and the GWQD of the $G'^N$ system.
\begin{lemma}\label{lem:jsq-Lipschitz}
Consider an arrival epoch at dispatcher $i$, and assume that in this epoch, 
the LQD in the system $G^N$ is given by $(\hat{x}^N_{i,m,l},m\in\mathcal{M},l\in\N_0)$  and the GWQD of type-$k$ servers in the system $G'^N$ is given by $(x'^N_{k,m,l},m\in\mathcal{M},l\in\N_0)$. 
Then there exists exists a finite positive constant $L_1$, such that for all large enough $N$,
\begin{equation}
    \PP(\text{Mismatch})\leq L_1 \sum_{m\in\mathcal{M}}\sum_{l\in\N_0}|\hat{x}^N_{i,m,l}-x'^N_{k,m,l}|.
\end{equation}
\end{lemma}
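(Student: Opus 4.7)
The plan is to translate the mismatch probability into a total-variation distance via the optimal coupling, and then to bound that distance by passing through the infinite-population (i.i.d.\ sampling) JSQ$(d)$ assignment kernel.

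From the construction in~\eqref{eq:ML-GN}--\eqref{eq:M'L'-GN}, the two systems agree, i.e.\ $(M^N,L^N)=(M'^N,L'^N)$, precisely when the common uniform variable $U$ falls into the overlap sub-interval of length $\sum_{m,l}\bar p^N_{m,l}$. Consequently,
\[
\PP(\text{Mismatch}) \;=\; 1-\sum_{m,l}\min(p^N_{m,l},p'^N_{m,l}) \;=\; \tfrac{1}{2}\sum_{m,l}\bigl|p^N_{m,l}-p'^N_{m,l}\bigr|,
\]
i.e.\ the total variation between the two assignment distributions. It therefore suffices to bound this $\ell_1$ difference by a constant multiple of $\|\hat{x}^N_{i,\cdot,\cdot}-x'^N_{k,\cdot,\cdot}\|_1$.

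For any probability distribution $\mathbf{x}=(x_{m,l})$ on $\mathcal{M}\times\N_0$, introduce the i.i.d.-sampling JSQ$(d)$ kernel
\[
H_{m,l}(\mathbf{x}) \;:=\; \sum_{r=1}^d\sum_{r_1=1}^r\frac{r_1}{r}\binom{d}{r_1,\,r-r_1,\,d-r}\, x_{m,l}^{r_1}\Bigl(\sum_{m'\neq m}x_{m',l}\Bigr)^{r-r_1}\Bigl(\sum_{m',\,l'>l}x_{m',l'}\Bigr)^{d-r}.
\]
The finite-population formulas $p^N_{m,l}(i)$ and $p'^N_{m,l}(k)$ are exactly $H$ with the multinomial coefficients replaced by multi-hypergeometric ones on populations of size $|\mathcal{N}^N_w(i)|$ and $N$, respectively. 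A classical coupling between sampling with and without replacement shows that the total variation between a multi-hypergeometric distribution with sample size $d$ and population $\nu$ and its multinomial limit is $O(d^2/\nu)$, uniformly in the underlying probabilities. Invoking this with $\nu=|\mathcal{N}^N_w(i)|$ and $\nu=N$, and using $|\mathcal{N}^N_w(i)|\ge c_0 N$ for large $N$ from Lemma~\ref{lem:uniformity-degree}, yields
\[
\sum_{m,l}\bigl|p^N_{m,l}-H_{m,l}(\hat{x}^N_{i,\cdot,\cdot})\bigr| \;+\; \sum_{m,l}\bigl|p'^N_{m,l}-H_{m,l}(x'^N_{k,\cdot,\cdot})\bigr| \;\le\; \frac{C_d}{N}.
\]
For the remaining middle term $\sum_{m,l}|H_{m,l}(\hat{x}^N_i)-H_{m,l}(x'^N_k)|$, observe that $H_{m,l}(\mathbf{x})$ is the probability that a JSQ$(d)$ decision based on $d$ i.i.d.~draws from $\mathbf{x}$ selects a type-$m$ length-$l$ server. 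Optimally coupling the two samples draw-by-draw makes each per-draw mismatch probability equal to $\tfrac{1}{2}\|\hat{x}^N_{i,\cdot,\cdot}-x'^N_{k,\cdot,\cdot}\|_1$, so a union bound over the $d$ draws (after which the JSQ choice is a deterministic function of the samples) gives $\sum_{m,l}|H_{m,l}(\hat{x}^N_i)-H_{m,l}(x'^N_k)|\le d\,\|\hat{x}^N_{i,\cdot,\cdot}-x'^N_{k,\cdot,\cdot}\|_1$. Combining the three bounds via the triangle inequality,
\[
\PP(\text{Mismatch})\;\le\; \tfrac{d}{2}\,\|\hat{x}^N_{i,\cdot,\cdot}-x'^N_{k,\cdot,\cdot}\|_1 \;+\; \frac{C_d}{N},
\]
which for all large enough $N$ is dominated by $L_1\|\hat{x}^N_{i,\cdot,\cdot}-x'^N_{k,\cdot,\cdot}\|_1$ with $L_1$ depending only on $d$ (the $O(1/N)$ additive term being absorbed into the leading term in the parameter regime used downstream in the proof of Theorem~\ref{thm:general-conv-global-occup}).

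The principal subtlety, and the main obstacle, is that the sums run over the infinite support $\mathcal{M}\times\N_0$, so a crude term-by-term comparison between the multi-hypergeometric expressions $p^N_{m,l}$ and $p'^N_{m,l}$ would blow up in $l$. The approach sidesteps this by working in total variation throughout, and by proving the Lipschitz-type bound on $H$ via a draw-by-draw coupling argument rather than through a partial-derivative computation; both estimates then deliver a single $\ell_1$ bound across all $(m,l)$ simultaneously.
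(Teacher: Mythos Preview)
Your proof follows the same structural path as the paper's: identify the mismatch probability with (half) the $\ell_1$ distance between the two assignment distributions, insert the i.i.d.-sampling kernel $H$ (the paper calls it $f_{m,l}$) via the triangle inequality, bound the finite-versus-infinite-population correction, and establish the Lipschitz property of $H$. Where you invoke the classical hypergeometric--multinomial total-variation bound and a draw-by-draw coupling for the Lipschitz step, the paper works by direct algebra---expanding $F^N_{m,l}(\mathbf{x})-f_{m,l}(\mathbf{x})$ term by term and bounding each polynomial difference via $x^k-y^k\le k|x-y|$ on $[0,1]$. Your coupling arguments are more conceptual and give a cleaner Lipschitz constant ($d$ rather than the paper's combinatorial sum), while the paper's route is more self-contained in that it does not cite an external TV estimate.

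One caveat, shared by both arguments: the finite-population correction leaves an additive $o(1)$ term ($C_d/N$ in your version) that does not literally fit under the purely multiplicative bound stated in the lemma when $\|\hat{x}^N_{i,\cdot,\cdot}-x'^N_{k,\cdot,\cdot}\|_1$ is arbitrarily small. The paper glosses over this too, simply writing the final inequality ``for large enough $N$.'' Your parenthetical---that the additive term is absorbed in the downstream application, where the $\ell_1$ distance is bounded below by a fixed $\varepsilon>0$---is the honest remedy, and it is exactly what happens in the proof of Theorem~\ref{thm:general-conv-global-occup}. Strictly speaking, the lemma as stated would be cleaner with an explicit additive $o(1)$ term; your reading of how it is actually used is correct.
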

The key step in the proof of Lemma~\ref{lem:jsq-Lipschitz} is that, given the queue-length distribution $\mathbf{x}=(x_{m,l},m\in\mathcal{M},l\in\N_0)$, the probability $p_{m,l}$ that a task will be assigned to a server of type $m\in\mathcal{M}$ with queue length $l\in\N_0$ can be approximated by 
\begin{equation*}
    p_{m,l}\approx\sum_{r=1}^d\sum_{r_1=1}^{r}\frac{r_1}{r}\frac{d!}{r_1!(r-r_1)!(d-r)!}\big(x_{m,l}\big)^{r_1}\big(\sum_{\mathcal{M}\setminus\{m\}}x_{m,l}\big)^{r-r_1}\big(\sum_{\mathcal{M}}\sum_{l'\geq l+1}x_{m,l'}\big)^{d-r}
\end{equation*} 
and that the function $x^k$ is Lipschitz for $x\in[0,1]$. 
The complete proof is given in Appendix~\ref{app:lipschitz-jsq-d}.

\subsubsection{Proof of Theorem~\ref{thm:general-conv-global-occup}}\label{sec:proof-thm-3.10}
Now we have all the ingredients to prove Theorem~\ref{thm:general-conv-global-occup}.
Let us explain the high-level proof scheme first.

\noindent
\textbf{Step~1.} Using the optimal coupling, we will show that the global occupancy processes $\{\mathbf{q}^N(\cdot)\}_N$ and $\{\mathbf{q}'^N(\cdot)\}_N$ must converge to the same limit process as $N\to\infty$, if their initial states are the same, $X^N_j(0)=X'^N_j(0)$ for all $j$. In other words, with the same initial states,
\begin{equation*}
    \lim_{N\rightarrow\infty}\mathbf{q}^N(\cdot)=\lim_{N\rightarrow\infty}\mathbf{q}'^N(\cdot).
\end{equation*}

\noindent
\textbf{Step~2.} Since there is no graph structure in the system $G'^N$, all servers of the same type in the system $G'^N$ are exchangeable. Hence, $\mathbf{q}'^N(\cdot)$ is Markovian, which implies that given $\mathbf{q}'^N(0)$, its evolution does not depend on how individual $X'^N_j(0)$'s are distributed. 
Denote the system $G'^N$ with i.i.d. assumption as $G'^N_1$, where the i.i.d. assumption refers to that for any $m\in\mathcal{M}$, $X^N_j(0)$, $j\in V^N_m$, are i.i.d.. 
Also, denote the system $G'^N$ without the i.i.d.~assumption as $G'^N_2$. 
Their occupancy processes are $\mathbf{q}'^N_1(\cdot)$ and $\mathbf{q}'^N_2(\cdot)$, respectively. 
Since task assignment policy in $G'^N$ does not distinguish between two servers having same type and queue lengths, 
by a natural coupling, $\mathbf{q}'^N_1(t)=\mathbf{q}'^N_2(t)$ holds for all $t\geq 0$, implying that 
\begin{equation*}
    \lim_{N\rightarrow\infty}\mathbf{q}'^N_1(\cdot)=\lim_{N\rightarrow\infty}\mathbf{q}'^N_2(\cdot).
\end{equation*}

\noindent
\textbf{Step~3.} Denote the system $G^N$ with i.i.d.~assumption as $G^N_1$ and the system $G'^N$ without the i.i.d.~assumption as $G^N_2$ and their occupancy processes by $\mathbf{q}^N_1(\cdot)$ and $\mathbf{q}^N_2(\cdot)$, respectively.
Combining Step~1 and Step~2, the following equation holds. With the same initial global occupancy state, 
\begin{equation*}
    \lim_{N\rightarrow\infty}\mathbf{q}^N_1(\cdot)=\lim_{N\rightarrow\infty}\mathbf{q}'^N_1(\cdot)=\lim_{N\rightarrow\infty}\mathbf{q}'^N_2(\cdot)=\lim_{N\rightarrow\infty}\mathbf{q}^N_2(\cdot),
\end{equation*}
where the first and last equalities are due to Step 1 and the second equality is due to Step~2.

\noindent
\textbf{Step~4.} Use Theorem~\ref{thm:conv-global-occup} to note that when the sequence $\{G^N\}_N$ satisfies the assumption that for each $m\in\mathcal{M}$, $X^N_j(0)$, $j\in V^N_m$, are i.i.d., the scaled global occupancy process $\mathbf{q}^N$ converge weakly to $\mathbf{q}$ described by the system of ODEs in~\eqref{eq:ode-sys-2}.

\noindent
\textbf{Step~5.} By Steps 3 and 4, Theorem~\ref{thm:general-conv-global-occup} holds. \\

In the above proof scheme, observe that all that remains is to show Step~1, which is given below.

\begin{proof}[Proof of Theorem~\ref{thm:general-conv-global-occup}]
For Step~1 described in the proof scheme above, by Proposition~\ref{prop: mismatch}, it is sufficient to show that
for any $\varepsilon^*>0$ and $\delta^*>0$, there exists an $N_0\geq 1$ such that 
\begin{equation}\label{eq:delta-small}
    \PP\big(\sup_{t\in[0,T]}\Delta^N(t)/N\geq \varepsilon^*\big)\leq \delta^*,\quad \forall N\geq N_0.
\end{equation}
Fix an $\varepsilon>0$, which will be chosen later. 
Let $\mathscr{G}^{\varepsilon}_N(t)$ and $\mathscr{B}^{\varepsilon}_N(t)$ be the number of $\varepsilon$-good and $\varepsilon$-bad dispatchers in the system $G^N$ at time $t$, respectively. 
We couple the evolution of the system $G^N$ with that of the system $G'^N$ by the optimal coupling method. 
In system $G^N$, let $(x^N_{k,m,l}(t),m\in\mathcal{M},l\in\N_0)$ be the global weighted queue-length distribution of type $k\in\mathcal{K}$ and $(\hat{x}^N_{i,m,l}(t),m\in\mathcal{M},l\in\N_0)$ be the local queue-length distribution of the dispatcher $i\in W^N_k$, $k\in\mathcal{K}$. 
Also, let $(x'^N_{k,m,l}(t),m\in\mathcal{M},l\in\N_0)$ be the global weighted queue-length distribution of type $k\in\mathcal{K}$ in system $G'^N$. 
Denote $\rho^N_k(t)=\sum_{m\in\mathcal{M}}\sum_{l\in\N_0}|x^N_{k,m,l}(t)-x'^N_{k,m,l}(t)|$. At an arrival epoch $t\geq 0$, if a task arrives at an $\varepsilon$-good dispatcher $i\in W^N_k$, then 
\begin{align}
    &\sum_{m\in\mathcal{M}}\sum_{l\in\N_0}|\hat{x}^N_{i,m,l}(t-)-x'^N_{k,m,l}(t-)|\nonumber\\
    &\leq \sum_{m\in\mathcal{M}}\sum_{l\in\N_0}|\hat{x}^N_{i,m,l}(t-)-x^N_{k,m,l}(t-)|+\sum_{m\in\mathcal{M}}\sum_{l\in\N_0}|x^N_{k,m,l}(t-)-x'^N_{k,m,l}(t-)|
    =\varepsilon+\rho^N_k(t).
\end{align}
Recall the uniform random variable $U$ and $\bar{p}^N_{m,l}$ defined in the description of the optimal coupling method. The probability that the systems have a mismatch at such arrival epoch is bounded by 
\begin{equation}
    \begin{split}
        \PP\Big(U\notin [0,\sum_{m\in\mathcal{M}}\sum_{l\in\N_0}\bar{p}^N_{m,l}]\Big)&=1-\sum_{m\in\mathcal{M}}\sum_{l\in\N_0}\bar{p}^N_{m,l}=\sum_{m\in\mathcal{M}}\sum_{l\in\N_0}p'^N_{m,l}-\sum_{m\in\mathcal{M}}\sum_{l\in\N_0}\bar{p}^N_{m,l}\\
        & \leq  \sum_{m\in\mathcal{M}}\sum_{l\in\N_0}|p'^N_{m,l}-p^N_{m,l}|\leq L_1 \sum_{m\in\mathcal{M}}\sum_{l\in\N_0}|\hat{x}^N_{i,m,l}(t-)-x'^N_{k,m,l}|\\
        & \leq   L_1(\rho^N_k(t)+\varepsilon),
    \end{split}
\end{equation}
where the second inequality is from Lemma~\ref{lem:jsq-Lipschitz}.
At an arrival epoch $t\geq 0$, if a task arrives at an $\varepsilon$-bad dispatcher $i\in W^N_k$, then with probability at most one, the systems have a mismatch. 
Due to the Poisson thinning property, we can construct an independent unit-rate Poisson process $(Z(t))_{t\geq 0}$, so that $\Delta^N(t)$ can be upper bounded by a random time change of $Z$ as the following: for all $t\in[0,T]$,
\begin{equation}\label{eq:delta-bound}
    \begin{split}
        \Delta^N(t)& \leq  Z\Big(\sum_{k\in\mathcal{K}}\sum_{i\in W^N_k}\lambda\int_0^t[\mathds{1}_{(i\in \mathscr{G}^{\varepsilon}_N(s-))}L_1(\rho^N_k(s-)+\varepsilon)+\mathds{1}_{(i\in\mathscr{B}^{\varepsilon}_N(s-))}\cdot 1]ds\Big)\\
        & \leq  Z\Big(\sum_{k\in\mathcal{K}}\sum_{i\in W^N_k}\lambda\int_0^t[\mathds{1}_{(i\in \mathscr{G}^{\varepsilon}_N(s-))}L_1(L\Delta^N(s-)/N+\varepsilon)+\mathds{1}_{(i\in\mathscr{B}^{\varepsilon}_N(s-))}\cdot 1]ds\Big)\\
        &= Z\Big(\lambda\int_0^t[\mathscr{G}^{\varepsilon}_N(s-) L_1(L\Delta^N(s-)/N+\varepsilon)+\mathscr{B}^{\varepsilon}_N(s-)\cdot 1]ds\Big),
    \end{split}
\end{equation}
where the second inequality is due to Lemma~\ref{cor:Q-x}.
By Proposition~\ref{prop: num-bad-disp}, we have that for any $\varepsilon'>0$, there exists an $N(\varepsilon')$ such that for all $N\geq N(\varepsilon')$, 
\begin{equation}\label{eq: bad-set-bound}
    \PP(\sup_{t\in[0,T]}\mathscr{B}^{\varepsilon}_N(t)\geq \varepsilon'|W^N|)\leq \frac{\varepsilon'}{2}.
\end{equation}
Hence, by \eqref{eq:delta-bound}, \eqref{eq: bad-set-bound}, and Tonelli's theorem, we have that for all $N\geq N(\varepsilon')$ and $t\in[0,T]$, \begin{equation}
    \E\Big(\frac{\Delta^N(t)}{N}\Big)\leq \lambda\int_0^t\Big[L_1 \Big(L \frac{W(N)}{N} \frac{\E(\Delta^N(s-))}{N}+\varepsilon\Big)+\frac{W(N)}{N}\frac{3\varepsilon'}{2}\Big]ds.
\end{equation}
Also, by the assumption that $\lim_{N\rightarrow\infty}\frac{W(N)}{N}=\xi$, there exists $N_0$ such that $\frac{W(N)}{N}\leq 2\xi$. Hence, we have that for all $N\geq \max(N(\varepsilon'),N_0)$ and $t\in[0,T]$, \begin{equation}\label{eq:exp-delta-N}
    \E\Big(\frac{\Delta^N(t)}{N}\Big)\leq \lambda\int_0^t\Big[L_1 \Big( 2L\xi \frac{\E(\Delta^N(s-))}{N}+\varepsilon\Big)+3\xi\varepsilon'\Big]ds.
\end{equation}
By applying Grönwall's inequality to \eqref{eq:exp-delta-N}, we have 
\begin{equation}\label{eq:exp-delta-bound}
    \E\Big(\frac{\Delta^N(t)}{N}\Big)\leq \lambda (L_1\varepsilon+3\xi\varepsilon')t\exp(2LL_1\xi\lambda t).
\end{equation}
Since $\Delta^N(t)$ is nonnegative, by Markov's inequality and \eqref{eq:exp-delta-bound}, we have 
\begin{equation}
    \PP(\sup_{t\in[0,T]}\Delta^N(t)/N\geq \varepsilon^*)\leq \frac{1}{\varepsilon^*}\lambda (L_1\varepsilon+3\xi\varepsilon')t\exp(2LL_1\xi\lambda t)
\end{equation}
and we can choose small enough $\varepsilon$ and $\varepsilon'$ such that \eqref{eq:delta-small} holds.
\end{proof}

\section{Proof of Interchange of Limits}\label{sec:interchange-of-limits}
\subsection{Properties of the Limiting System of ODEs}\label{ssec:prop-ode}
First, we define the fixed point of the ODE~\eqref{eq:ode-sys-2}.
Recall $\delta_k=\sum_{m\in\mathcal{M}}p_{k,m}v_m$ and $\tilde{q}_{k,l}(t)=\sum_{m\in\mathcal{M}}\frac{v_mp_{k,m}}{\delta_k}q_{m,l}(t)$. Let $\mathbf{q}^*=(q^*_{m,l}\in \R_+,m\in\mathcal{M},l\in\N_0)$ be a fixed point of the ODE~\eqref{eq:ode-sys-2} if for all $ m\in\mathcal{M},l\in \N$,
\begin{equation}\label{eq:fixed-point}
   u_m(q^*_{m,l}-q^*_{m,l+1})=\lambda\xi(q^*_{m,l-1}-q^*_{m,l})\sum_{k\in\mathcal{K}}\frac{p_{k,m}w_k}{\delta_k}\frac{(\tilde{q}^*_{k,l-1})^d-(\tilde{q}^*_{k,l})^d}{\tilde{q}^*_{k,l-1}-\tilde{q}^*_{k,l}},
\end{equation}
with $q^*_{m,0}=1$, $m\in\mathcal{M}$. The next proposition shows some important properties of the fixed point $\mathbf{q}$ of the ODE \eqref{eq:ode-sys-2}.
\begin{prop}\label{prop:dbly-decay}
If there exists a fixed point $\mathbf{q}^*$ of the ODE \eqref{eq:ode-sys-2} such that for each $m\in\mathcal{M}$, $q_{m,0}=1$ and $q_{m,l}\xrightarrow{l\rightarrow\infty}0$, then 
for each $m\in\mathcal{M}$, the sequence $\{q_{m,l},l\in\N_0\}$ decreases doubly exponentially. 
\end{prop}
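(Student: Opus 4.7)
The plan is to derive a nonlinear recursive inequality $Q^*_l \leq A\,(Q^*_{l-1})^d$ for $Q^*_l := \max_{m\in\mathcal{M}} q^*_{m,l}$, with $A$ a finite constant depending only on the system parameters, and then to iterate this recursion from a suitably large index $l_0$ to extract the claimed double-exponential decay.

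The concrete steps I would carry out are the following. First, apply the elementary identity $(a^d-b^d)/(a-b) = a^{d-1}+a^{d-2}b+\cdots+b^{d-1}\leq d\,a^{d-1}$ (valid for $a\geq b\geq 0$) to the ratio $\frac{(\tilde q^*_{k,l-1})^d - (\tilde q^*_{k,l})^d}{\tilde q^*_{k,l-1}-\tilde q^*_{k,l}}$ on the RHS of \eqref{eq:fixed-point}, upper bounding it by $d\,(\tilde q^*_{k,l-1})^{d-1}$. Second, telescope \eqref{eq:fixed-point} by summing over $l'\geq l$: the LHS collapses to $u_m q^*_{m,l}$ under the hypothesis $q^*_{m,l'}\to 0$, and the RHS factor $(q^*_{m,l'-1}-q^*_{m,l'})$ sums to $q^*_{m,l-1}$ provided the $l'$-dependent factor $(\tilde q^*_{k,l'-1})^{d-1}$ is first pulled out of the sum. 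This is exactly where the monotonicity of $l'\mapsto \tilde q^*_{k,l'-1}$ (inherited from the monotonicity of each $q^*_{m,\cdot}$ in $\mathcal{S}$) is used, giving $(\tilde q^*_{k,l'-1})^{d-1}\leq(\tilde q^*_{k,l-1})^{d-1}$ for all $l'\geq l$. Third, since $\tilde q^*_{k,l-1}=\sum_{m'}\frac{v_{m'}p_{k,m'}}{\delta_k}q^*_{m',l-1}$ is a convex combination with weights summing to $1$, both $\tilde q^*_{k,l-1}$ and $q^*_{m,l-1}$ are bounded by $Q^*_{l-1}$; combining these yields
\[
q^*_{m,l}\;\leq\;\frac{\lambda\xi d}{u_m}\Big(\sum_{k\in\mathcal{K}}\frac{p_{k,m}w_k}{\delta_k}\Big)(Q^*_{l-1})^d,
\]
and taking the maximum over $m$ gives $Q^*_l\leq A\,(Q^*_{l-1})^d$ with $A:=\max_{m}\tfrac{\lambda\xi d}{u_m}\sum_k\tfrac{p_{k,m}w_k}{\delta_k}$. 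Finally, since $\mathcal{M}$ is finite and each $q^*_{m,l}\to 0$, we also have $Q^*_l\to 0$, so I can pick $l_0$ with $a:=A^{1/(d-1)}Q^*_{l_0}<1$; iterating the recursion then gives $Q^*_{l_0+n}\leq A^{-1/(d-1)}a^{d^n}$ for all $n\geq 0$, and reindexing $l=l_0+n$ produces the desired bound $q^*_{m,l}\leq Q^*_l\leq b_m a_m^{d^l}$ with $a_m := a^{1/d^{l_0}}\in(0,1)$ and $b_m := A^{-1/(d-1)}>0$ (both $m$-independent, which is consistent with the statement).

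The main obstacle is the third step: in the heterogeneous setting the fixed-point equation for the $m$th server type on the LHS couples to weighted averages $\tilde q^*_{k,\cdot}$ over all server types on the RHS, rather than only to $q^*_{m,\cdot}$. The twin observations that salvage the argument---monotonicity of $\tilde q^*_{k,\cdot}$ (to extract $(\tilde q^*_{k,l-1})^{d-1}$ from the telescoping sum) and the convexity bound $\tilde q^*_{k,l-1}\leq Q^*_{l-1}$ (to dominate it by a single scalar quantity)---are what decouple the system down to a scalar recursion, essentially reducing the problem to the classical analysis of Vvedenskaya et al.\ at the cost of forgoing precise per-type rates of decay.
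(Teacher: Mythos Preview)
Your argument is correct. The telescoping step, the monotonicity bound $(\tilde q^*_{k,l'-1})^{d-1}\le(\tilde q^*_{k,l-1})^{d-1}$, and the convex-combination estimate $\tilde q^*_{k,l-1}\le Q^*_{l-1}$ are all valid, and the final iteration of $Q^*_l\le A(Q^*_{l-1})^d$ is carried out correctly.

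The paper takes a somewhat different route. Rather than bounding the difference quotient by $d(\tilde q^*_{k,l-1})^{d-1}$ and then telescoping per server type, it first multiplies \eqref{eq:fixed-point} by $v_m$ and sums over $m\in\mathcal{M}$. Because $\sum_{m}\frac{v_m p_{k,m}}{\delta_k}(q^*_{m,l-1}-q^*_{m,l})=\tilde q^*_{k,l-1}-\tilde q^*_{k,l}$, the difference quotient cancels \emph{exactly}, yielding the identity $\sum_{m}v_m u_m q^*_{m,\ell}=\lambda\xi\sum_{k}w_k(\tilde q^*_{k,\ell-1})^d$ after summing in $l\ge\ell$. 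From this the paper extracts $q^*_{m,\ell}\le\frac{\lambda\xi}{v_m u_m}(\tilde q^*_{\ell-1})^d$ with $\tilde q^*_{\ell-1}:=\max_k\tilde q^*_{k,\ell-1}$, and recurses on $\tilde q^*_\ell$ rather than on your $Q^*_\ell$. What this buys is (i) no need for the monotonicity argument inside the telescoping sum, (ii) a constant without the factor $d$, and (iii) an exact equation that also supports the matching lower bound $\min_k\tilde q^*_{k,\ell}\ge(\lambda c_0)^{(d^\ell-1)/(d-1)}$ given in the remark after the proof. Your approach, on the other hand, is more elementary in that it does not rely on spotting the weighted-sum cancellation and would generalize more readily to variants where such an exact identity is unavailable.
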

The proof of Proposition~\ref{prop:dbly-decay} is provided in Appendix~\ref{app:proof-doubly-decay}. The key observation used in the proof is that by \eqref{eq:fixed-point}, $q^*_{m,l}$ can be expressed in terms of $q^*_{m,l-1}$ and $q^*_{m,l-2}$. 
Thus, we can recursively characterize the values of $q^*_{m,l}$, $l\geq 2$, if we know $q^*_{m,0}$ and $q^*_{m,1}$, $m\in\mathcal{M}$.

By Proposition~\ref{prop:dbly-decay}, we know that if $\mathbf{q}^*$ is a fixed point of the ODE~\eqref{eq:ode-sys-2} and, for all $m\in\mathcal{M}$, $q^*_{m,l}\xrightarrow{l\rightarrow\infty}0$, then such $\mathbf{q}^*$ must be in $\mathcal{S}$ so we only need to show that such $\mathbf{q}^*$ exists.
For the proof of the existence of such $\mathbf{q}^*$, we need a technical lemma, which will be used in \eqref{eq:limits-x*}.
\begin{lemma}\label{lem:alpha-m}
Consider a sequence $\{G^N\}_N$ satisfying Condition~\ref{cond-1}. If $\{G^N\}_N$ is proportionally sparse and in the subcritical regime, then for any $(\alpha_1,...,\alpha_M)\in[0,1]^M$ with $\sum_{m\in\mathcal{M}}\alpha_m>0$, the following holds:
\begin{equation}
    \Big(\sum_{m\in\mathcal{M}}\alpha_m v_m u_m\Big)^{-1}\lambda\xi\sum_{k\in\mathcal{K}}w_k\Big(\frac{\sum_{m\in\mathcal{M}} \alpha_m p_{k,m} v_m}{\delta_k}\Big)^d\leq \rho<1.
\end{equation}
\end{lemma}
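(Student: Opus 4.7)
The plan is to directly exhibit a sequence of test subsets $U^N \subseteq V^N$ for which the ratio inside the supremum in \eqref{eq:stability-cond} converges to precisely the left-hand side of the claimed inequality. Since this ratio is bounded by $\rho^N$ for every $N$ and $\rho^N \to \rho$ by assumption, the conclusion will follow by taking limits.

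The concrete choice is: for each $m \in \mathcal{M}$, pick any $U^N_m \subseteq V^N_m$ with $|U^N_m| = \lfloor \alpha_m |V^N_m| \rfloor$, and set $U^N := \bigcup_{m \in \mathcal{M}} U^N_m$. The denominator in \eqref{eq:stability-cond} evaluated at $U^N$, divided by $N$, is $\sum_m |U^N_m| u_m / N$, which converges to $\sum_m \alpha_m v_m u_m$ (positive because $\sum_m \alpha_m > 0$ and $u_m, v_m > 0$). For the numerator, Condition~\ref{cond-1}(b) combined with $|V^N_m|/N \to v_m > 0$ ensures that $\delta^N_i \to \infty$ uniformly in $i$, so for $N$ large the indicator $\mathds{1}_{(\delta^N_i < d)}$ vanishes and $\binom{|U^N \cap \mathcal{N}^N_w(i)|}{d}/\binom{\delta^N_i}{d}$ differs from $(|U^N \cap \mathcal{N}^N_w(i)|/\delta^N_i)^d$ by $o(1)$ uniformly. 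Hence the scaled numerator is well approximated by $\frac{\lambda}{N}\sum_k \sum_{i \in W^N_k}\bigl(|U^N \cap \mathcal{N}^N_w(i)|/\delta^N_i\bigr)^d$.

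Now apply clustered proportional sparsity (Definition~\ref{defn:prop-sparse}) with the test set $U^N$: for each $k \in \mathcal{K}$ and $\varepsilon > 0$, all but $o(|W^N_k|)$ dispatchers $i \in W^N_k$ satisfy $\bigl||U^N \cap \mathcal{N}^N_w(i)|/\delta^N_i - |E^N_k(U^N)|/|E^N_k(V^N)|\bigr| < \varepsilon$. Using Condition~\ref{cond-1}(a) to count edges between $W^N_k$ and $V^N_m$, and the construction of $U^N$, one obtains
\begin{equation*}
    \frac{|E^N_k(U^N)|}{|E^N_k(V^N)|} \xrightarrow{N\to\infty} \frac{\sum_m \alpha_m p_{k,m} v_m}{\delta_k}.
\end{equation*}
The $o(|W^N_k|)$ ``bad'' dispatchers contribute at most $o(|W^N|/N) = o(1)$ to the scaled numerator since each summand is bounded by $1$. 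So, using $|W^N_k|/N \to \xi w_k$, the scaled numerator converges to $\lambda \xi \sum_k w_k \bigl(\sum_m \alpha_m p_{k,m} v_m/\delta_k\bigr)^d$, yielding the desired limit of the ratio.

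The main obstacle is the bookkeeping needed to make the approximations uniform in $i$ and robust to the $\varepsilon$-tolerance: one must check that the error from raising to the $d$-th power is controlled (straightforward since the base values lie in $[0,1]$ and $x \mapsto x^d$ is Lipschitz there), and that the cumulative contribution of the $o(|W^N_k|)$ bad dispatchers is genuinely negligible after dividing by $N$. Once these estimates are assembled, taking $\varepsilon \downarrow 0$ after $N \to \infty$ closes the argument, and the final bound $\leq \rho$ follows because the ratio evaluated at $U^N$ is bounded above by $\rho^N$ for every $N$.
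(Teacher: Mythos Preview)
Your proposal is correct and follows essentially the same approach as the paper: choose a test set $U^N$ with the prescribed asymptotic type-fractions, use clustered proportional sparsity to control the local neighborhood ratios $|U^N\cap\mathcal{N}^N_w(i)|/\delta^N_i$ for all but $o(|W^N_k|)$ dispatchers, compute the resulting limit of the ratio in \eqref{eq:stability-cond}, and conclude via $\rho^N\to\rho$. One small imprecision: to get $|E^N_k(U^N)|/|E^N_k(V^N)|\to\sum_m\alpha_m p_{k,m}v_m/\delta_k$ you need the degree regularity from Condition~\ref{cond-1}(b) (via Lemma~\ref{lem:uniformity-degree}), not just the global edge-density in Condition~\ref{cond-1}(a), since the latter says nothing about edges incident to an arbitrary subset $U^N_m\subsetneq V^N_m$.
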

The proof of Lemma~\ref{lem:alpha-m} is provided in Appendix~\ref{app:proof-lem-alpha-m}.

\begin{proof}[Proof of Theorem~\ref{thm: exists-unique}]
We prove the \textit{existence} of the fixed point first. From \eqref{eq:fixed-point}, we know that if $(q^*_{m,1},m\in\mathcal{M})$ are fixed, then all $(q^*_{m,l},m\in\mathcal{M},l\geq 2)$ are determined as well. Hence, $\mathbf{q}^*$ can be the viewed as the function of $(q^*_{m,1},m\in\mathcal{M})$. Moreover, in the steady state, $\sum_{m\in\mathcal{M}}q^*_{m,1}=\lambda\xi$, which implies that $q^*_{M,1}$ can be decided by the values of $q^*_{m,1}$, $m\in\mathcal{M}\setminus \{M\}$.
Hence, we construct the sequence $\mathbf{q}(\bar{\alpha})=(q_{m,l}(\bar{\alpha}),m\in\mathcal{M},l\in\N_0)$ as functions of the vector $\bar{\alpha}=(\alpha_1,...,\alpha_{M-1})\in (0,1)^{M-1} $ as follows:
\begin{align}
    & q_{m,0}(\bar{\alpha})=1, \forall m\in\mathcal{M}, \notag \\
    & q_{m,1}(\bar{\alpha})=\alpha_m,\quad m\in \mathcal{M}\setminus\{M\}, \quad \mbox{and} \quad q_{M,1}=\frac{\lambda\xi-\sum_{m\in \mathcal{M}\setminus \{
    M\}}\alpha_m v_m u_m}{v_M u_M}, \notag \\
    & u_m(q_{m,l}(\bar{\alpha})-q_{m,l+1}(\bar{\alpha}))=\lambda\xi(q_{m,l-1}(\bar{\alpha})-q_{m,l}(\bar{\alpha}))\sum_{k\in\mathcal{K}}\frac{p_{k,m}w_k}{\delta_k}\frac{(\tilde{q}_{k,l-1}(\bar{\alpha}))^d-(\tilde{q}_{k,l}(\bar{\alpha}))^d}{\tilde{q}_{k,l-1}(\bar{\alpha})-\tilde{q}_{k,l}(\bar{\alpha})}, l \ge 1. \label{eq:recursion-q-alpha}
\end{align}
Since, for all $m\in\mathcal{M}$, $q_{m,1}(\bar{\alpha})$ should be in $(0,1)$, then $\bar{\alpha}=(\alpha_1,...,\alpha_{M-1})$ must lie in the polyhedron $\mathbf{P}_1$ defined as follows:
\begin{equation*}
    \begin{split}
        &\mathbf{P}_1\coloneqq\Big\{\alpha_m\in\big(\max(0,\frac{\lambda\xi-\sum_{m'\in\mathcal{M}\setminus \{m\}}v_{m'}u_{m'}}{v_mu_m}),\min(\frac{\lambda\xi}{v_m u_m},1)\big), \forall m\leq M-1,\\
        &\hspace{6cm}\text{ and }\lambda\xi-v_Mu_M<\sum_{m\leq M-1}\alpha_m v_m u_m<\lambda\xi \Big\}.
    \end{split}
\end{equation*}
For all $\bar{\alpha}\in \mathbf{P}_1$, we have $1=q_{m,0}(\bar{\alpha})>q_{m,1}(\bar{\alpha})>0$, $\forall m\in\mathcal{M}$. Consider $l=2$. By \eqref{eq:recursion-q-alpha}, we have that when $\alpha_m=0$, $m\in\mathcal{M}\setminus \{M\}$,
    $$u_m(0-q_{m,2}(\bar{\alpha}))=\lambda\xi(1-0)\sum_{k\in\mathcal{K}}\frac{p_{k,m}w_k}{\delta_k}\frac{1-(\tilde{q}_{k,1}(\bar{\alpha}))^d}{1-\tilde{q}_{k,1}(\bar{\alpha})}$$
    implying that $q_{m,2}(\bar{\alpha})<0$;
     when $\alpha_m=1$, $m\in\mathcal{M}\setminus \{M\}$,
    $$u_m(1-q_{m,2}(\bar{\alpha}))=0$$
    implying that $q_{m,2}(\bar{\alpha})=1>0$;
    when $\alpha_m=\frac{\lambda\xi}{v_m u_m}$, $m\in\mathcal{M}\setminus \{M\}$,
    $$u_m(\frac{\lambda\xi}{v_m u_m}-q_{m,2}(\bar{\alpha}))=\lambda\xi (1-\frac{\lambda\xi}{v_m u_m})\sum_{k\in\mathcal{K}}\frac{p_{k,m}w_k}{\delta_k}\frac{1-(\tilde{q}_{k,1}(\bar{\alpha}))^d}{1-\tilde{q}_{k,1}(\bar{\alpha})}$$
    implying that 
    \begin{equation*}
        \begin{split}
            q_{m,2}(\bar{\alpha})&=\frac{\lambda\xi}{v_m u_m}-\frac{\lambda\xi}{u_m}(1-\frac{\lambda\xi}{v_m u_m})\sum_{k\in\mathcal{K}}\frac{p_{k,m}w_k}{\delta_k}\frac{1-(\tilde{q}_{k,1}(\bar{\alpha}))^d}{1-\tilde{q}_{k,1}(\bar{\alpha})}\\
            &> \frac{\lambda\xi}{v_m u_m}-\frac{\lambda\xi}{u_m}(1-\frac{\lambda\xi}{v_m u_m})\sum_{k\in\mathcal{K}}\frac{p_{k,m}w_k}{\delta_k}\\
            &> \frac{\lambda\xi}{v_m u_m}-\frac{\lambda\xi}{u_m}(1-\frac{\lambda\xi}{v_m u_m})\sum_{k\in\mathcal{K}}\frac{p_{k,m}w_k}{p_{k,m}v_m}\\
            &= \frac{\lambda\xi}{v_m u_m}-\frac{\lambda\xi}{v_m u_m}(1-\frac{\lambda\xi}{v_m u_m})>0.
        \end{split}
    \end{equation*}
Let $r_{m,1}$, $m\leq M-1$ be the maximum number which satisfies the following:
\begin{enumerate}[(1)]
    \item $r_{m,1}<\min(\frac{\lambda\xi}{v_m u_m},1)$,
    \item $\exists \bar{\alpha}\in \mathbf{P}_1$ with $\alpha_m=r_{m,1}$ such that $q_{m,2}(\bar{\alpha})=0$.
\end{enumerate}
Define $\mathbf{P}'_1\subseteq\mathbf{P}_1$ as the following:
\begin{equation*}
    \begin{split}
        &\mathbf{P}'_1\coloneqq\Big\{\alpha_m\in\big(\max(r_{m,1},\frac{\lambda\xi-\sum_{m\leq M-1}v_{m'}u_{m'}}{v_mu_m}),\min(\frac{\lambda\xi}{v_m u_m},1)\big), \forall m\leq M-1,\\
        &\hspace{6.5cm}\text{ and }\lambda\xi-v_Mu_M<\sum_{m\leq M-1}\alpha_m v_m u_m<\lambda\xi \Big\}.
    \end{split}
\end{equation*}
Again, by using \eqref{eq:recursion-q-alpha}, we get that when $\sum_{m\leq M-1}\alpha_mu_m=\lambda\xi-v_Mu_M$ (i.e., $q_{M,1}(\bar{\alpha})=1$), 
    $$u_M(1-q_{M,2}(\bar{\alpha}))=0$$
    implying that $q_{M,2}(\bar{\alpha})=1>0$; when $\sum_{m\leq M-1}\alpha_mu_m=\lambda\xi$ (i.e., $q_{M,1}(\bar{\alpha})=0$), 
    $$u_M(0-q_{M,2}(\bar{\alpha}))=\lambda\xi(1-0)\sum_{k\in\mathcal{K}}\frac{p_{k,M}w_k}{\delta_k}\frac{1-(\tilde{q}_{k,1}(\bar{\alpha}))^d}{1-\tilde{q}_{k,1}(\bar{\alpha})}$$
    implying that $q_{M,2}(\bar{\alpha})<0$.

Let $r_1$ be the minimum number which satisfies the following:
\begin{enumerate}
    \item $r_1<\lambda\xi$,
    \item There exists $\bar{\alpha}\in\mathbf{P}'_1$ such that $\sum_{m\leq M-1}\alpha_m v_m u_m=r_1$ and $q_{M,2}(\bar{\alpha})=0$.
\end{enumerate}
Define $\mathbf{P}_2\subseteq\mathbf{P}'_1\subseteq\mathbf{P}_1$ as the following:
\begin{equation*}
    \begin{split}
        &\mathbf{P}_2\coloneqq\Big\{\alpha_m\in\big(\max(r_{m,1},\frac{\lambda\xi-\sum_{m'\leq M-1}v_{m'}u_{m'}}{v_mu_m}),\min (\frac{r_1}{v_m u_m},1)\big),\forall m\leq M-1\\
        &\hspace{6cm}\text{ and } \lambda\xi-v_M u_M\leq \sum_{m\leq M-1}\alpha_m v_m u_m\leq r_1\Big\}.
    \end{split}
\end{equation*}
Hence, for all $\bar{\alpha}\in \mathbf{P}_2$, we have $1=q_{m,0}(\bar{\alpha})>q_{m,1}(\bar{\alpha})>q_{m,2}(\bar{\alpha})>0$, $\forall m\in\mathcal{M}$. Continuing this process, we can define a sequence $\{\mathbf{P}_1\supseteq \mathbf{P}_2\supseteq\cdots\}$ of polyhedra such that  for all $\bar{\alpha}\in \mathbf{P}_n$, we have $1=q_{m,0}(\bar{\alpha})>q_{m,1}(\bar{\alpha})>\cdots>q_{m,n}(\bar{\alpha})>0$, $\forall m\in\mathcal{M}$. Thus, we can get decreasing sequences $\{q_{m,l}(\bar{\alpha})\}_{l\in\N_0}$, $m\in\mathcal{M}$ for some $\bar{\alpha}$. Since $q_{m,l}\geq 0$, $\forall m\in\mathcal{M}, l\in\N_0$, then $\forall m\in\mathcal{M}$, $\exists x^*_m$ such that $\lim_{l\rightarrow\infty}q_{m,l}(\bar{\alpha})=x^*_m$. Next, we need to show that $x^*_m=0$, $\forall m\in\mathcal{M}$.
By \eqref{eq:sum-qml}, we have 
\begin{equation}\label{eq:limits-x*}
    \sum_{m\in\mathcal{M}}v_m u_m x^*_m=\lambda\xi\sum_{k\in\mathcal{K}}w_k\big(\sum_{m\in\mathcal{M}}\frac{p_{k,m}v_m}{\delta_k}x^*_m\big)^d.
\end{equation}
Clearly, $x^*_m=0$, $\forall m\in\mathcal{M}$ is a solution of \eqref{eq:limits-x*}. It must be the unique solution, since by Lemma~\ref{lem:alpha-m}, for all $(x^*_m,m\in\mathcal{M})\in[0,1]^M$ with $\sum_{m\in\mathcal{M}}x^*_m>0$, 
$$(\sum_{m\in\mathcal{M}}v_m u_m x^*_m)^{-1}\lambda\xi\sum_{k\in\mathcal{K}}w_k\big(\sum_{m\in\mathcal{M}}\frac{p_{k,m}v_m}{\delta_k}x^*_m\big)^d<1$$ implything that \eqref{eq:limits-x*} does not hold. Now, let $q^*_{m,l}=q_{m,l}(\bar{\alpha})$, $\forall m\in\mathcal{M}, l\in\N_0$. 
\vspace{3mm}

\noindent
Now, we are going to show the \textit{uniqueness}. The proof of the uniqueness is based on a monotonicity property of the system, which is stated in the following claim.
\begin{claim}\label{claim:mono}
If $\mathbf{q}\leq \hat{\mathbf{q}}$ for $\mathbf{q}, \hat{\mathbf{q}}\in\mathcal{S}$, then $\bar{\mathbf{q}}(t,\mathbf{q})\leq \bar{\mathbf{q}}(t,\hat{\mathbf{q}})$ for all $t$.
\end{claim}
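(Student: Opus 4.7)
The plan is to deduce Claim~\ref{claim:mono} from the observation that the RHS of \eqref{eq:ode-sys-2}, viewed as a vector field on $\bar{\mathcal{S}}$, is quasi-monotone (cooperative in the Kamke sense), and then to invoke the standard comparison principle for such systems. The first step is to rewrite the RHS using the identity $\frac{a^d-b^d}{a-b}=\sum_{i=0}^{d-1}a^{i}b^{d-1-i}$ (continuously extended at $a=b$), so that
\begin{equation*}
F_{m,l}(\mathbf{q})\coloneqq u_m\br{q_{m,l+1}-q_{m,l}}+\lambda\xi\br{q_{m,l-1}-q_{m,l}}\sum_{k\in\mathcal{K}}\frac{p_{k,m}w_k}{\delta_k}\sum_{i=0}^{d-1}\tilde{q}_{k,l-1}^{\,i}\,\tilde{q}_{k,l}^{\,d-1-i}
\end{equation*}
is a polynomial depending on only finitely many coordinates, namely $q_{m,l-1},q_{m,l},q_{m,l+1}$ (directly) and $\{q_{m',l-1},q_{m',l}:m'\in\mathcal{M}\}$ (through the $\tilde{q}_{k,\cdot}$'s), and whose partial derivatives are uniformly bounded on the compact set $\bar{\mathcal{S}}$.

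\textbf{Kamke verification.}
Next I would verify that $\partial_{q_{m',l'}}F_{m,l}(\mathbf{q})\ge 0$ for every $(m',l')\neq(m,l)$ and every $\mathbf{q}\in\bar{\mathcal{S}}$. A case check reduces to four nontrivial possibilities: $(m,l+1)$ contributes $u_m\ge 0$; $(m,l-1)$ contributes a direct nonnegative polynomial in the $\tilde{q}_{k,\cdot}$'s plus an indirect term carrying the prefactor $q_{m,l-1}-q_{m,l}$; and $(m'',l-1),(m'',l)$ for $m''\neq m$ contribute only such an indirect term through $\tilde{q}_{k,l-1}$ or $\tilde{q}_{k,l}$. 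In every indirect case the prefactor $q_{m,l-1}-q_{m,l}$ is nonnegative precisely because $\mathbf{q}\in\bar{\mathcal{S}}$ enforces $q_{m,l-1}\ge q_{m,l}$, which together with the chain-rule factor $v_{m'}p_{k,m'}/\delta_k\ge 0$ and a polynomial in the $\tilde{q}_{k,\cdot}$'s with nonnegative coefficients delivers the required sign. All remaining partial derivatives vanish.

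\textbf{Comparison principle and conclusion.}
Set $D_{m,l}(t)\coloneqq \bar{q}_{m,l}(t,\hat{\mathbf{q}}_0)-\bar{q}_{m,l}(t,\mathbf{q}_0)$, which is nonnegative at $t=0$. Because both solutions remain in the convex set $\bar{\mathcal{S}}$ by Lemma~\ref{lem:unique-sol-ode}, the Mean Value Theorem along the segment between $\mathbf{q}(t)$ and $\hat{\mathbf{q}}(t)$ expresses $\dot{D}_{m,l}(t)$ as $\sum_{(m',l')}c^{(m,l)}_{m',l'}(t)\,D_{m',l'}(t)$, with off-diagonal coefficients $c^{(m,l)}_{m',l'}(t)\ge 0$ by the previous step. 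A Gronwall/comparison argument then forces $D(t)\ge 0$ componentwise for all $t\ge 0$. The main obstacle is that the state space is infinite-dimensional, so the classical Kamke comparison theorem does not apply verbatim; I expect to handle this either by (i) truncating to $l\le L$ with a vanishing boundary condition, applying finite-dimensional Kamke, and passing $L\to\infty$ using the continuity-in-initial-condition of the Picard construction behind Lemma~\ref{lem:unique-sol-ode}, or (ii) a perturbation argument with $D^\varepsilon_{m,l}(t)\coloneqq D_{m,l}(t)+\varepsilon e^{Kt}$ for $K$ larger than the uniform Lipschitz constant of $F$, showing that no first hitting time of $0$ can exist before letting $\varepsilon\downarrow 0$.
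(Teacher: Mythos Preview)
Your approach is correct but takes a genuinely different route from the paper. The paper does \emph{not} verify the Kamke condition analytically; instead it argues probabilistically: given $\mathbf{q}\le\hat{\mathbf{q}}$ in $\mathcal{S}$, it constructs two coupled copies of the $N$-th prelimit system with i.i.d.\ initial queue lengths having tail distributions $\mathbf{q}$ and $\hat{\mathbf{q}}$ respectively (stochastic domination in each type allows a quantile coupling with $X^N_j(0)\le \hat{X}^N_j(0)$ for all $j$), observes that the natural pathwise coupling of JSQ($d$) preserves the coordinatewise order $X^N_j(t)\le \hat{X}^N_j(t)$ for all $t$, hence $\mathbf{q}^N(t)\le\hat{\mathbf{q}}^N(t)$, and then lets $N\to\infty$ via Theorem~\ref{thm:conv-global-occup} to conclude $\bar{\mathbf{q}}(t,\mathbf{q})\le\bar{\mathbf{q}}(t,\hat{\mathbf{q}})$.

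What each approach buys: your Kamke verification is self-contained at the ODE level, requires no appeal to the prelimit system or to Theorem~\ref{thm:conv-global-occup}, and would transfer immediately to any system with the same cooperative structure; the price is the infinite-dimensional comparison principle, which you correctly flag as the only nontrivial gap (your perturbation route (ii) does go through here, because each $F_{m,l}$ depends on at most $2M+1$ coordinates with uniformly bounded derivatives, so the ``first hitting time'' argument can be localised). The paper's coupling argument sidesteps all of that analysis by inheriting monotonicity from the particle system, at the cost of relying on the process-level convergence result already established.
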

\begin{claimproof}
Consider any $\mathbf{q}\leq \hat{\mathbf{q}}\in\mathcal{S}$. It is easy to construct two copies of the $N$-th systems with initial states $\{X^N_j(0),j\in V^N\}$ and $\{\hat{X}^N_j(0),j\in V^N\}$ satisfying:
\begin{enumerate}
    \item For all $j\in V^N$, $X^N_j(0)\leq \hat{X}^N_j(0)$;
    \item $\{X^N_j(0),j\in V^N\}$ has the corresponding global occupancy $\mathbf{q}^N(0)=\mathbf{q}\in\mathcal{S}$; similarly, $\{\hat{X}^N_j(0),j\in V^N\}$ has $\hat{\mathbf{q}}^N(0)=\hat{\mathbf{q}}\in\mathcal{S}$.
\end{enumerate}
By a natural coupling, we have that for all $j\in V^N$ and $t\geq 0$, $X^N_j(t)\leq \hat{X}^N_j(t)$ implying that $\mathbf{q}^N(t) \leq \hat{\mathbf{q}}^N(t)$. Since systems are stable, then $\mathbf{q}^N(t),\hat{\mathbf{q}}^N(t)\in\mathcal{S}$ for all $t\geq 0$. Moreover, by Theorem~\ref{thm:conv-global-occup}, the claim follows.
\end{claimproof}
\vspace{1mm}
\noindent
We continue the proof of the uniqueness. Now, it is sufficient to show that $\lim_{t\rightarrow\infty}\bar{\mathbf{q}}(t,\mathbf{q}_0)=\mathbf{q}^*$ which either $\mathbf{q}_0\leq \mathbf{q}^*$ or $\mathbf{q}_0\geq \mathbf{q}^*$ in component-wise, since Claim~\ref{claim:mono} implies that 
$$\bar{\mathbf{q}}(t,\min(\mathbf{q}_0,\mathbf{q}^*))\leq \bar{\mathbf{q}}(t,\mathbf{q}_0)\leq \bar{\mathbf{q}}(t,\max(\mathbf{q}_0,\mathbf{q}^*)),\quad \forall \mathbf{q}_0\in\bar{\mathbf{S}}, t\geq 0.$$
We will prove the case that if $\mathbf{q}_0\leq \mathbf{q}^*$, then 
$$\lim_{t\rightarrow\infty}\bar{\mathbf{q}}(t,\mathbf{q}_0)=\mathbf{q}^*.$$
Since the case that $\mathbf{q}_0\geq \mathbf{q}^*$ is similar.
Also, Note that $q_{m,l}(\infty)$, $\forall m\in\mathcal{M}, l\geq 2$ can be solved recursively by \eqref{eq:fixed-point} when $q_{m,1}(\infty)$, $\forall m\in\mathcal{M}$ are determined so it is sufficient to show that $q_{m,1}(\infty)=q^*_{m,1}$, $\forall m\in\mathcal{M}$. By ODE~\eqref{eq:ode-sys-2}, we have 
$$\frac{d\sum_{m\in\mathcal{M}}v_m q_{m,1}(t)}{dt}=-\sum_{m\in\mathcal{M}}v_m u_m q_{m,1}(t)+\lambda\xi.$$
Since $\mathbf{q}_0\leq \mathbf{q}^*$, then $\bar{\mathbf{q}}(t,\mathbf{q}_0)\leq \bar{\mathbf{q}}(t,\mathbf{q}^*)=\mathbf{q}^*$. Observe that $\sum_{m\in\mathcal{M}}v_m u_m q^*_{m,1}=\lambda\xi$. Hence, if for some $m\in\mathcal{M}$, $q_{m,1}(t)<q^*_{m,1}$, then $\frac{d\sum_{m\in\mathcal{M}}v_m q_{m,1}(t)}{dt}>0$, which implies that $$\lim_{t\rightarrow\infty}\sum_{m\in\mathcal{M}}v_m q_{m,1}(t)=\sum_{m\in\mathcal{M}}v_m q_{m,1}(\infty)=\lambda\xi.$$
Since for all $m\in\mathcal{M}$ and $t\geq 0$, $q_{m,1}(t)\leq q^*_{m,1}$, then $\lim_{t\rightarrow\infty}q_{m,1}(t)=q^*_{m,1}$ must hold for all $m\in\mathcal{M}$.

\end{proof}

\begin{proof}[Proof of Theorem~\ref{thm:double-expo}]
The result holds immediately from Proposition~\ref{prop:dbly-decay} and Theorem~\ref{thm: exists-unique}.
\end{proof}

\subsection{Proof of Tightness and Interchange of Limits}
Next, we are going to prove the tightness of the steady state occupancy processes $\{\mathbf{q}^N(\infty)\}_N$. Let $\bar{q}^N_l(\infty)=\sum_{m\in\mathcal{M}}q^N_{m,l}(\infty)$ and $\bar{\mathbf{q}}^N(\infty)=(\bar{q}^N_l(\infty),l\in\N_0)$. In order to show the tightness of $\{\mathbf{q}^N(\infty)\}_N$, it is sufficient to show that the sequence is $\{\bar{\mathbf{q}}^N(\infty)\}_N$, which is stated in the next proposition. 
For showing the above tightness, we will bound the tail of the expected global occupancy of the stationary state first.
\begin{lemma}\label{lem:tail-bound-expectation-steady}
Let $\{G^N\}_N$ be a sequence of proportionally sparse graphs satisfying Condition~\ref{cond-1}. There exists an $N_0$ such that for all $N\geq N_0$ and $\ell\geq 1$, 
\begin{equation}
    \sum_{l=\ell}^{\infty}\E(\bar{q}^N_{l}(\infty))\leq \frac{(1+\rho)/2}{1-(1+\rho)/2}\E(\bar{q}^N_{\ell-1}(\infty)).
\end{equation}
Furthermore, 
\begin{equation}
    \E(\bar{q}^N_{\ell}(\infty))\leq \Big(\frac{1+\rho}{2}\Big)^{\ell}, \quad \forall \ell\in\N_0.
\end{equation}
\end{lemma}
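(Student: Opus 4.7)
The plan is a stationary drift argument: combining the balance equation for a suitable ``excess mass above level $\ell-1$'' Lyapunov function with the definition of $\rho^N$ in~\eqref{eq:stability-cond} yields a geometric recursion of the form $\E[\bar{q}_\ell^N(\infty)]\le \tfrac{1+\rho}{2}\,\E[\bar{q}_{\ell-1}^N(\infty)]$ valid for all $\ell \ge 1$ and all sufficiently large $N$, from which both assertions of the lemma follow by iteration and by summing a geometric series.

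First I would introduce the functional $T_\ell^N(t):=\sum_{j\in V^N}\bigl(X_j^N(t)-\ell+1\bigr)^+$, which increases by one precisely when a task arrives at a server whose queue length is already at least $\ell-1$, and decreases by one precisely when a task completes at a server whose queue length is at least $\ell$. By Lemma~\ref{lem:stability} the process is ergodic in the subcritical regime, and a standard drift estimate on $\sum_j X_j^N$ gives $\E[T_\ell^N(\infty)]<\infty$. The stationary zero-drift identity for $T_\ell^N$ then reads
\begin{equation*}
\lambda\sum_{i\in W^N}\E\!\Bigg[\frac{\binom{|\mathcal{N}^N_w(i)\cap U^N_{\ell-1}|}{d}}{\binom{\delta^N_i}{d}}\Bigg]\;=\;\sum_{m\in\mathcal{M}} u_m\,|V^N_m|\,\E[q^N_{m,\ell}(\infty)],
\end{equation*}
where $U^N_{\ell-1}:=\{j\in V^N:X_j^N\ge\ell-1\}$. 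This uses that under JSQ($d$) an arriving task is routed into $U^N_{\ell-1}$ if and only if all $d$ sampled servers lie in $U^N_{\ell-1}$, and that Condition~\ref{cond-1} together with Lemma~\ref{lem:uniformity-degree} ensures $\delta^N_i\ge d$ uniformly in $i$ for large $N$.

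Second, applying the definition of $\rho^N$ pathwise with $U=U^N_{\ell-1}$ gives
\begin{equation*}
\lambda\sum_{i\in W^N}\frac{\binom{|\mathcal{N}^N_w(i)\cap U^N_{\ell-1}|}{d}}{\binom{\delta^N_i}{d}}\;\le\;\rho^N\sum_{m\in\mathcal{M}}u_m\,|V^N_m|\,q^N_{m,\ell-1},
\end{equation*}
and taking expectations in combination with the previous balance identity produces the key weighted recursion
\begin{equation*}
\sum_{m\in\mathcal{M}}u_m\,|V^N_m|\,\E[q^N_{m,\ell}(\infty)]\;\le\;\rho^N\sum_{m\in\mathcal{M}}u_m\,|V^N_m|\,\E[q^N_{m,\ell-1}(\infty)].
\end{equation*}
Since $\rho^N\to\rho<1$, one has $\rho^N\le(1+\rho)/2$ for all $N\ge N_0$; together with $|V^N_m|/N\to v_m$ from Condition~\ref{cond-1} and the boundedness of the $u_m$'s, this can be transferred into the unweighted geometric recursion $\E[\bar{q}^N_\ell(\infty)]\le \tfrac{1+\rho}{2}\,\E[\bar{q}^N_{\ell-1}(\infty)]$. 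Iterating this and using the trivial initialization of $\bar{q}^N_0$ yields the second asserted bound, and summing
\begin{equation*}
\sum_{l=\ell}^\infty \E[\bar{q}^N_l(\infty)]\;\le\;\sum_{l=\ell}^\infty \Big(\tfrac{1+\rho}{2}\Big)^{l-\ell+1}\E[\bar{q}^N_{\ell-1}(\infty)]\;=\;\frac{(1+\rho)/2}{1-(1+\rho)/2}\,\E[\bar{q}^N_{\ell-1}(\infty)]
\end{equation*}
recovers the first.

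The main obstacle is the clean execution of the first step: one must rigorously justify the stationary balance identity for $T_\ell^N$, including verifying non-explosion and $\E[T_\ell^N(\infty)]<\infty$ so that no boundary terms appear when setting the drift to zero; and in the second step the passage from the $u_m|V^N_m|$-weighted recursion to the unweighted one involving $\bar{q}^N_\ell$ must not introduce a constant exceeding $(1+\rho)/2$. The latter requirement is exactly what forces the use of the slack $(1+\rho)/2$ rather than the bare bound $\rho^N$, and is where the strict subcriticality $\rho<1$ and the uniformity afforded by Condition~\ref{cond-1} are used together.
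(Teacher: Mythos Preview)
Your plan is the same drift-plus-$\rho^N$ idea the paper uses, and your stationary balance identity for $T_\ell^N=\sum_j (X_j^N-\ell+1)^+$ is set up correctly. The place where your argument breaks is exactly the one you flag: the passage from the $u_m|V^N_m|$-weighted recursion
\[
\sum_{m}u_m|V^N_m|\,\E[q^N_{m,\ell}(\infty)]\ \le\ \rho^N\sum_{m}u_m|V^N_m|\,\E[q^N_{m,\ell-1}(\infty)]
\]
to the unweighted one for $\bar q^N_\ell=\sum_m q^N_{m,\ell}$ does \emph{not} hold with constant $(1+\rho)/2$. All you get is
\[
\E[\bar q^N_\ell(\infty)]\ \le\ \frac{\max_m u_m|V^N_m|}{\min_m u_m|V^N_m|}\,\rho^N\,\E[\bar q^N_{\ell-1}(\infty)],
\]
and the front factor $\max_m u_m v_m/\min_m u_m v_m$ can be arbitrarily large; no slack between $\rho$ and $(1+\rho)/2$ absorbs a fixed multiplicative constant independent of $\ell$. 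So neither display in the lemma follows with the stated constants from your aggregated recursion (you do get geometric decay with \emph{some} rate $<1$, which is enough for tightness, but not what is claimed).

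The paper avoids this conversion altogether by running the drift argument \emph{per server type}. It takes, for each $m\in\mathcal M$, the Lyapunov function $L^N_{m,\ell}=\sum_{i\ge\ell}\sum_{l\ge i}|Q^N_{m,l}|$ and, in the arrival term, applies the definition of $\rho^N$ in~\eqref{eq:stability-cond} to the type-restricted sets $U=Q^N_{m,i-1}\subseteq V^N_m$, whose total service rate is exactly $u_m|Q^N_{m,i-1}|$. This yields, after setting the stationary drift to zero and choosing $\varepsilon$ so that $(1+\varepsilon)\rho\le(1+\rho)/2$,
\[
\sum_{i\ge\ell}\E[q^N_{m,i}(\infty)]\ \le\ \frac{(1+\rho)/2}{1-(1+\rho)/2}\,\E[q^N_{m,\ell-1}(\infty)]\qquad\text{for each }m,
\]
and summing over $m$ gives the first assertion directly; the second follows by iterating the single-level version. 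The point is that by keeping the argument inside one type, the only service rate that appears is $u_m$, so there is never a weighted sum to undo. If you want to match the constants in the lemma, switch from the global $T_\ell^N$ to these per-type functionals.
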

\noindent
The proof of Lemma~\ref{lem:tail-bound-expectation-steady} is similar to \cite[Lemma~3]{RM22}. We define a sequence $\{L^N_{m,\ell}\}_{m\in\mathcal{M},\ell\in\N_0}$ of Lyapunov functions and bound the drift of $L^N_{m,\ell}$ which enables us to bound the tail sum of $\bar{q}^N_l(\infty)$ starting from $\ell$. 
Given the $N$-th system state $X^N=(X^N_j,j\in V^N)$. Let $Q^N_{m,l}(X)$ be the set of servers of type $m\in\mathcal{M}$ with queue length at least $l\in\N_0$. For each $m\in\mathcal{M}$, we define a sequence of Lyapunov functions $L^N_{m,\ell}(X)=\sum_{i=\ell}^{\infty}\sum_{l=i}|Q^N_{m,l}(X)|$, $\ell\in\N_0$. The complete proof is provided in Appendix~\ref{app:proof-tail-bound-expectation}.

The next lemma from \cite{MBL17} gives us the criterion for $\ell_1$-tightness. 

\begin{lemma}[{\cite[Lemma~2]{MBLW16-3}}]\label{lem:criterion-tight}
Let $\{\mathbf{X}^N\}$ be a sequence of random variables in $\zeta$. Then, the following are equivalent:
\begin{enumerate}[{\normalfont (i)}]
    \item $\{\mathbf{X}^N\}$ is tight with respect to the product topology, and for all $\varepsilon>0$,
    \begin{equation}
        \lim_{k\rightarrow\infty}\varlimsup_{N\rightarrow\infty}\PP\big(\sum_{i\geq k}x^N_i> \varepsilon\big)=0.
    \end{equation}
    \item $\{\mathbf{X}^N\}$ is tight with respect to the $\ell_1$-topology.
\end{enumerate}
\end{lemma}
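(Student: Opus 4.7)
The plan is to invoke the Fréchet--Kolmogorov characterization of relative compactness in $\ell_1$: a set $K\subset\ell_1$ is relatively compact if and only if it is norm-bounded and has equi-small tails, i.e., $\sup_{\mathbf{x}\in K}\sum_{i\geq k}|x_i|\to 0$ as $k\to\infty$.

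The direction (ii) $\Rightarrow$ (i) is essentially immediate. Since the $\ell_1$-topology is finer than the product topology, $\ell_1$-tightness directly yields product-topology tightness. For the tail condition, given $\eta>0$, pick an $\ell_1$-compact $K_\eta$ with $\PP(\mathbf{X}^N\notin K_\eta)<\eta$ uniformly in $N$; Fréchet--Kolmogorov applied to $K_\eta$ produces, for each $\varepsilon>0$, a cutoff $k_0$ beyond which every element of $K_\eta$ has tail sum $<\varepsilon$. Hence $\PP(\sum_{i\geq k_0}x_i^N>\varepsilon)\leq \eta$ for all $N$, and sending $k\to\infty$ with $\eta$ arbitrary gives the stated limit.

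For the nontrivial direction (i) $\Rightarrow$ (ii), the plan is to construct, for each $\eta>0$, an $\ell_1$-compact $K$ with $\inf_N \PP(\mathbf{X}^N\in K)\geq 1-\eta$ by intersecting a product-compact envelope with a telescoping family of tail constraints. Product-topology tightness provides a product-compact $K_0$ with $\PP(\mathbf{X}^N\in K_0)\geq 1-\eta/4$. For each $m\geq 1$ the tail hypothesis furnishes $k_m$ and $N_m$ such that $\PP(\sum_{i\geq k_m}x_i^N>1/m)\leq \eta/2^{m+2}$ for all $N\geq N_m$ (WLOG $N_m$ nondecreasing). Set $K_\infty=K_0\cap\bigcap_{m\geq 1}\{\mathbf{x}:\sum_{i\geq k_m}|x_i|\leq 1/m\}$. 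The coordinate bounds from $K_0$ together with the $m=1$ constraint make $K_\infty$ norm-bounded; the nested tail constraints give equi-small tails; hence Fréchet--Kolmogorov yields $\ell_1$-compactness. A union bound then gives $\PP(\mathbf{X}^N\in K_\infty)\geq 1-\eta/2$ for all sufficiently large $N$. The finitely many remaining small-$N$ indices are absorbed using that each single law $\mathcal{L}(\mathbf{X}^N)$ is Radon-tight on the Polish space $\ell_1$: choose one $\ell_1$-compact set per such index and take $K$ to be the finite union with $K_\infty$.

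The main obstacle is verifying that $K_\infty$ is genuinely $\ell_1$-compact, not merely product-closed with small tails. The argument is that any sequence in $K_\infty$ admits a product-convergent subsequence via product compactness of $K_0$, and the equi-tail property upgrades this to $\ell_1$-convergence by truncation: truncating beyond $k_m$ introduces an $\ell_1$-error $\leq 2/m$, while on the first $k_m$ coordinates product convergence coincides with $\ell_1$-convergence. A secondary bookkeeping issue is matching the thresholds $N_m$ against the probability budget $\sum_m \eta/2^{m+2}$; this is resolved by the $\limsup$ in the hypothesis, which guarantees each such $N_m$ exists even if no single $N$ works simultaneously for all $m$.
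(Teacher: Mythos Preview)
The paper does not actually prove this lemma; it is quoted verbatim from \cite[Lemma~2]{MBLW16-3} and used as a black box, so there is no in-paper argument to compare your proposal against. Your overall strategy (Fr\'echet--Kolmogorov plus an envelope built from nested tail constraints) is the standard one, but there is a genuine gap in the (i)$\Rightarrow$(ii) direction.

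The sentence ``a union bound then gives $\PP(\mathbf{X}^N\in K_\infty)\geq 1-\eta/2$ for all sufficiently large $N$'' does not follow. For each $m$ you have a threshold $N_m$ such that $\PP\big(\sum_{i\geq k_m}x_i^N>1/m\big)\leq \eta/2^{m+2}$ whenever $N\geq N_m$. For the union bound over \emph{all} $m$ to succeed at a given $N$, you need $N\geq N_m$ for every $m$, i.e.\ $N\geq\sup_m N_m$. Nothing in the hypothesis forces $\sup_m N_m<\infty$; if $N_m\to\infty$ then for every fixed $N$ there are infinitely many $m$ with $N<N_m$, and for those $m$ the summands $\PP\big(\sum_{i\geq k_m}x_i^N>1/m\big)$ are uncontrolled. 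Thus there may be no ``sufficiently large $N$'' at all, and the subsequent ``absorb finitely many small-$N$ indices'' step never applies.

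The fix is to fold the small-$N$ indices into the choice of cutoffs rather than postponing them. After obtaining $k_m$ and $N_m$ from the $\varlimsup$ hypothesis, use that each individual $\mathbf{X}^N$ is $\ell_1$-valued (so $\sum_{i\geq k}x_i^N\to 0$ a.s.\ and hence in probability as $k\to\infty$) to find, for each of the finitely many $N<N_m$, an integer $k_{m,N}$ with $\PP\big(\sum_{i\geq k_{m,N}}x_i^N>1/m\big)\leq\eta/2^{m+2}$. Replacing $k_m$ by $\tilde k_m=\max\{k_m,k_{m,1},\dots,k_{m,N_m-1}\}$ makes the tail bound hold for \emph{every} $N$, so the union bound is valid uniformly and no separate small-$N$ step is needed.
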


\begin{proof}[Proof of Theorem~\ref{thm:tightness}]
Since, for all $l\in\N_0$, $\bar{q}^N_l\in[0,1]$, then it is easy to check that $\{\bar{\mathbf{q}}^N(\infty)\}$ is tight with respect to the product topology. Hence, it is sufficient to show that for any $\varepsilon>0$,
\begin{equation}\label{eq:l-N-q-bar}
    \lim_{\ell\rightarrow\infty}\varlimsup_{N\rightarrow\infty} \PP\Big(\sum_{l\geq \ell}\bar{q}^N_l(\infty)>\varepsilon\Big)=0. 
\end{equation}
By Markov's inequality and Lemma~\ref{lem:tail-bound-expectation-steady}, we have that for all $N\geq N_0$, 
\begin{equation}
    \begin{split}
        &\PP\Big(\sum_{l\geq \ell}\bar{q}^N_l(\infty)>\varepsilon\Big)\leq \frac{1}{\varepsilon}\E\Big(\sum_{l\geq \ell}\bar{q}^N_l(\infty)\Big)
         \leq  \frac{1}{\varepsilon}\frac{(1+\rho)/2}{1-(1+\rho)/2}\E\Big(\bar{q}^N_{\ell-1}(\infty)\Big)\leq \frac{1}{\varepsilon}\frac{\big((1+\rho)/2\big)^{\ell}}{1-(1+\rho)/2},
    \end{split}
\end{equation}
which implies that \eqref{eq:l-N-q-bar} holds. By Lemma~\ref{lem:criterion-tight}, the desired result holds.
\end{proof}

\begin{proof}[Proof of Theorem~\ref{thm:interchange of limits}]
By Theorem~\ref{thm:tightness}, $\{\mathbf{q}^N(\infty)\}_N$ is tight with respect to the $\ell_1$-topology. Then, any subsequence has a convergent further subsequence. Let $\{\mathbf{q}^{N_n}(\infty)\}_n$ be such convergent subsequence and assume $\mathbf{q}^N(\infty)\dto \mathbf{q}^*$. 
Clearly, $\mathbf{q}^*$ must be in the space $\mathcal{S}$. 
Now, initiate the $N$-th system at its stationarity. Then the system is in steady state at any fixed finite time $t\geq 0$.
That is, we have $\mathbf{q}^{N_n}(t)\sim \mathbf{q}^{N_n}(\infty)$ for all $t\in [0,T]$.
Also by Theorem~\ref{thm:general-conv-global-occup}, 
\begin{equation}
    \mathbf{q}^{N_n}(t)\dto \mathbf{q}(t).
\end{equation}
Thus, for all $t\in[0,T]$,
\begin{equation}
    \mathbf{q}(t)\sim \mathbf{q}^*,
\end{equation}
which implies that $\mathbf{q}^*$ is a stationary point of the limiting system. By Theorem~\ref{thm: exists-unique}, we know that $\mathbf{q}^*$ is unique. Therefore, the desired result holds.
\end{proof}

\section{Numerical Results}\label{sec:numerical-results}
In this section, we will present the simulation to validate the theoretical results. 
Using the insights from the theoretical results, we will also show that systems with carefully designed compatibility structure perform much better than the classical, fully flexible systems.
Throughout this section, we set the system parameters as follows:
\begin{itemize}
    \item $K=2$: two types of dispatchers;
    \item $M=3$: three types of servers;
    \item $d=2$: the system follows the JSQ($2$) policy;
    \item $\boldsymbol\mu=(1,5,10)$, where each $\mu_m$, $m=1,2,3$, is the service rate of type $m$ servers;
    \item $\lambda=3$, which is the arrival rate at each dispatcher is $\lambda$;
    \item $Q=\begin{bmatrix} 0.2 & 0.5 & 0.3\\ 0.5 & 0 & 0.5 \\0.9 & 0.1 & 0  \end{bmatrix}$, where each $q_{m,l}$ is the probability that type $m$, $m=1,2,3$ server's initial queue length is $l$, $l=1,2,3$;
    \item Fraction of types of dispatchers: $\begin{bmatrix}w_1 & w_2\end{bmatrix}=\begin{bmatrix}0.2 & 0.8 \end{bmatrix}$;
    \item Fraction of types of servers: $\begin{bmatrix}v_1 & v_2 & v_3\end{bmatrix}=\begin{bmatrix}0.5 & 0.3 & 0.2 \end{bmatrix}$;
    \item $\xi=1$: the relationship between the number of dispatchers and that of servers in the system.
\end{itemize}
By the above setting, the capacity sufficiency is satisfied, $\lambda\xi=3<\sum_{m\in\mathcal{M}}v_mu_m=4$. 
The first experiment is to compare the performance of the classical, fully flexible system with that of the system with carefully designed compatibility structure.

\paragraph{Complete Bipartite vs.~Designed Compatibility Structure.} The complete bipartite is the case that the compatibility matrix $\mathbf{p}^0= (p^0_{m,k},m\in\mathcal{M},k\in\mathcal{K})$ is a matrix with all elements equal to 1. 
From Lemma~\ref{lem:stability}, we have that an $N$-th system under JSQ(d) is stable if and only if it should satisfies the following: 
\begin{equation*}
    \rho^N=\max_{\substack{U\subseteq V^N\\ U\neq \emptyset}}\Big\{\Big(\sum_{j\in U}\sum_{m\in\mathcal{M}}\mathds{1}_{(j\in V^N_m)}u_m\Big)^{-1}\sum_{i\in W^N}\sum_{\substack{S\subseteq(U\cap \mathcal{N}^N_w(i)):\\|S|=d}}\frac{\lambda}{{\delta^N_i\choose d}}\Big\}<1.
\end{equation*}
By Lemma~\ref{lem:alpha-m}, for the complete bipartite case, we have that 
\begin{equation*}
    \lim_{N\rightarrow\infty}\rho^N \geq \max_{\mathcal{M}'\subseteq\mathcal{M}}\big(\sum_{m\in\mathcal{M}'} v_m u_m\big)^{-1}\lambda\xi\sum_{k\in\mathcal{K}}w_k\Big(\sum_{m\in\mathcal{M}'} v_m\Big)^d\geq (0.5\times 1)^{-1}\times 3\times (0.5)^2>1,  
\end{equation*}
which implies that for large enough $N$, the system under JSQ($2$) is unstable. The bottleneck here is that the type 1 servers with poor performance receive heavy workload. 
By Proposition~\ref{prop:p-matrix-existence}, if the capacity sufficiency is satisfied, then there always exists a compatibility matrix $\mathbf{p}^1\in [0,1]^{K\times M}$ making all large enough systems stable under JSQ($2$). Checking the feasible region defined in Lemma~\ref{lem:relationship-v-u-pkm}, we get one of appropriate matrices $\mathbf{p}^1$ defined as
\begin{equation*}
    \mathbf{p}^1=\begin{bmatrix} 0.05 & 0.6 & 1\\ 0.1 & 0.7 & 1 \end{bmatrix}.
\end{equation*}
The intuition for designing the compatibility matrix, like $\mathbf{p}^1$, is to lower the traffic intensity for type 1 servers by decreasing the fraction of the type 1 servers in the neighborhood of each dispatcher.
For the experiment, we set the number of servers $N=1000$, and consider two systems $S1$ and $S2$. $S1$ is a system with complete bipartite graph structure; $S2$, generated by \irg($\mathbf{p}^1$) (Definition~\ref{def:irg}), is a system with compatibility matrix $\mathbf{p}^1$.
We simulate the evolution of each system for 100 times and plot the mean sample path in Figure~\ref{fig:Complete-Designed}.

 \begin{figure}
    \centering
    \includegraphics[width=0.5\linewidth]{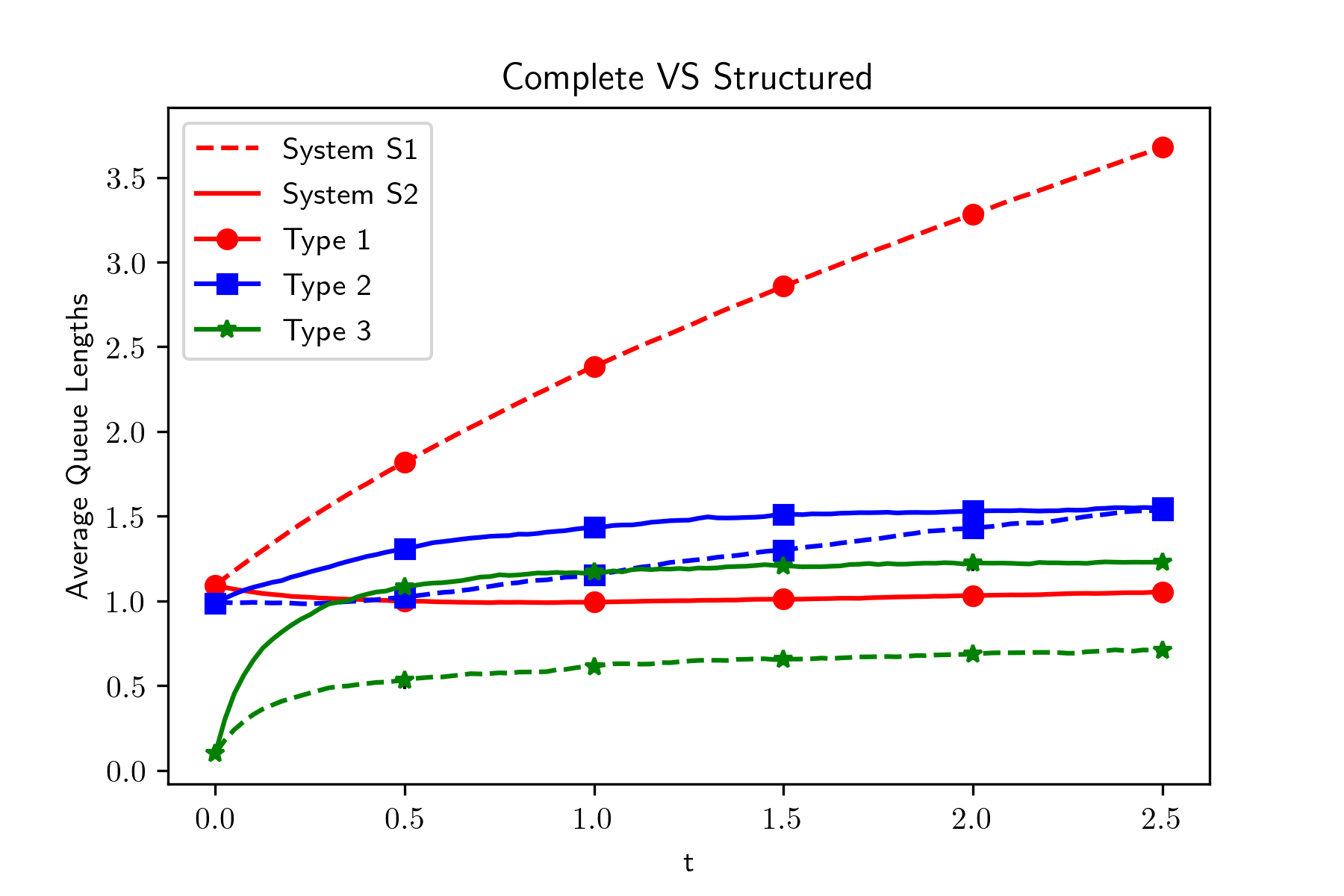}
    \caption{Complete Bipartite vs.\ Appropriate Designed Structure}
    \label{fig:Complete-Designed}
 \end{figure}
Figure~\ref{fig:Complete-Designed} shows that the average queue length of type 1 servers in $S1$ almost monotonically increases as $t$ increases, which implies that the average queue length of type 1 servers in $S1$ is unbounded. 
However, in the system $S2$, the average queue length of each type of servers is bounded. From this numerical result, we observer that with appropriately designed graph structure, the performance of the system can be improved. Although we tried to plot the $95\%$ confidence interval (CI) for each point $t=0.5,1.0,1.5,2.0,2.5$, the CI is narrow and its size is smaller than that of markers in the plot.

\paragraph{Convergence of global occupancy states.} In this experiment, we generate systems by \irg($\mathbf{p}^1$) and simulate the evolution of systems with size $N=100,500,1000$. For each system, we also simulate for 100 times and plot the mean trajectories of  $q^N_{m,1}$ and $q^N_{m,2}$, $m\in\{1,2,3\}$ in Figure~\ref{fig:convergence}. Also, we plot the evolution of $q_{m,1}$ and $q_{m,2}$, $m=1,2,3$,  of the limit system. The simulation results show that the evolution of the global occupancy of the $N$-th system converges to that of the limit system as $N$ goes to infinity. From the simulation result, we find that $q^N_{1,1}$, and especially $q^N_{1,2}$, decrease very fast when their initial value are large. In other words, when the average queue length of type 1 servers is large, it will decrease very fast. The reason is due to our designed compatibility matrix such that compared with other type servers, type 1 servers are sampled much less often.

\begin{figure}
\minipage{0.32\textwidth}
  \includegraphics[width=\linewidth]{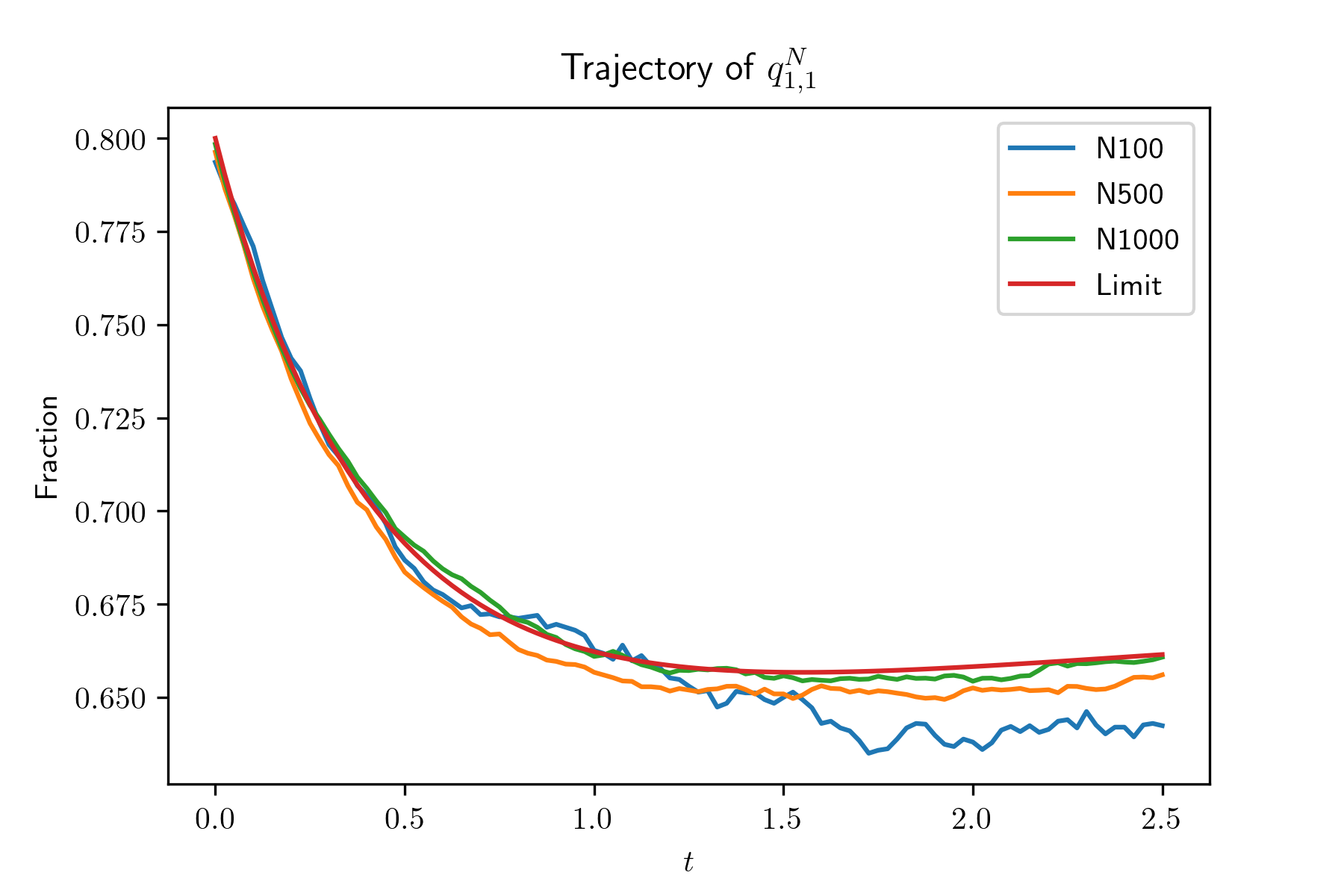}
\endminipage\hfill
\minipage{0.32\textwidth}
  \includegraphics[width=\linewidth]{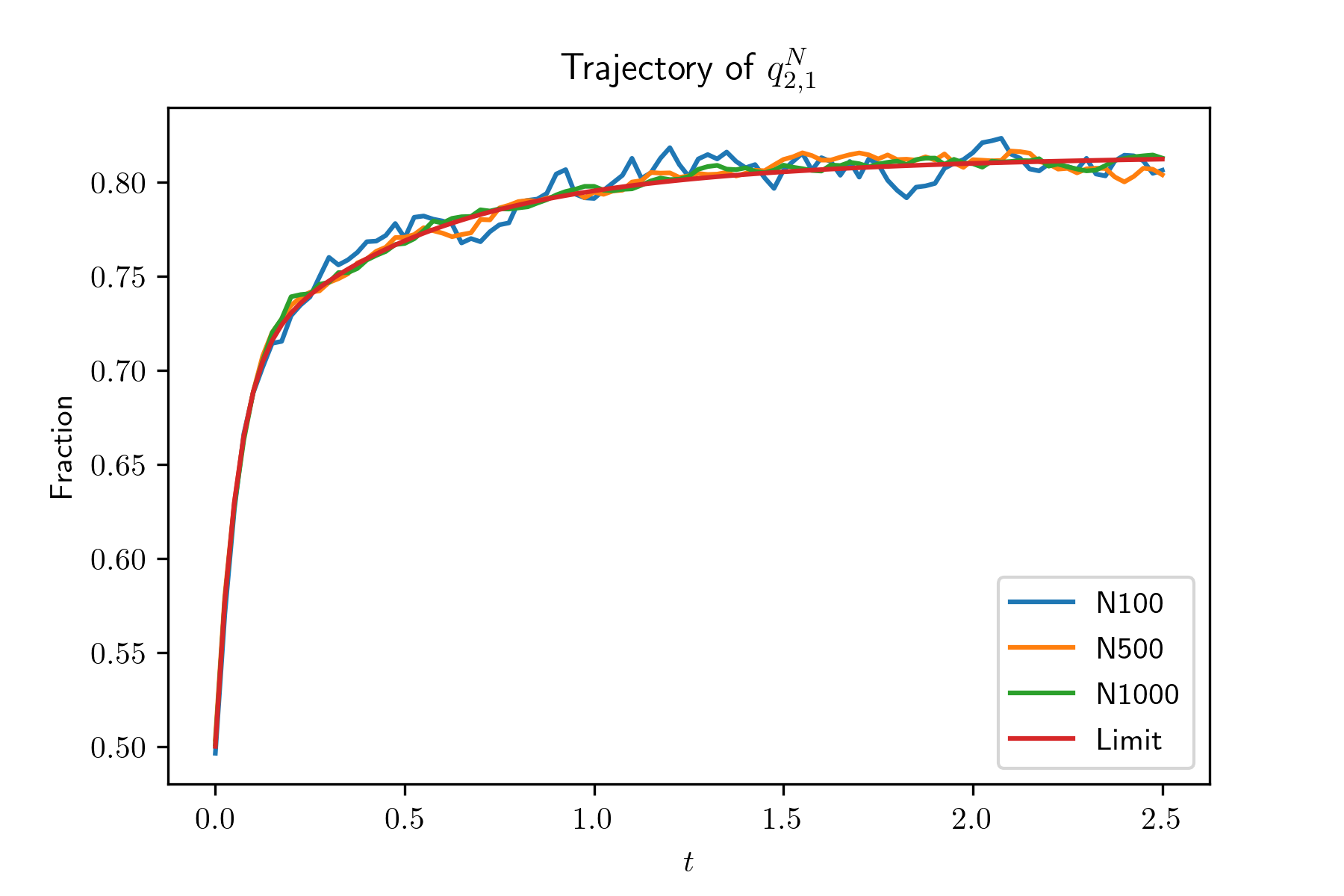}
\endminipage\hfill
\minipage{0.32\textwidth}
  \includegraphics[width=\linewidth]{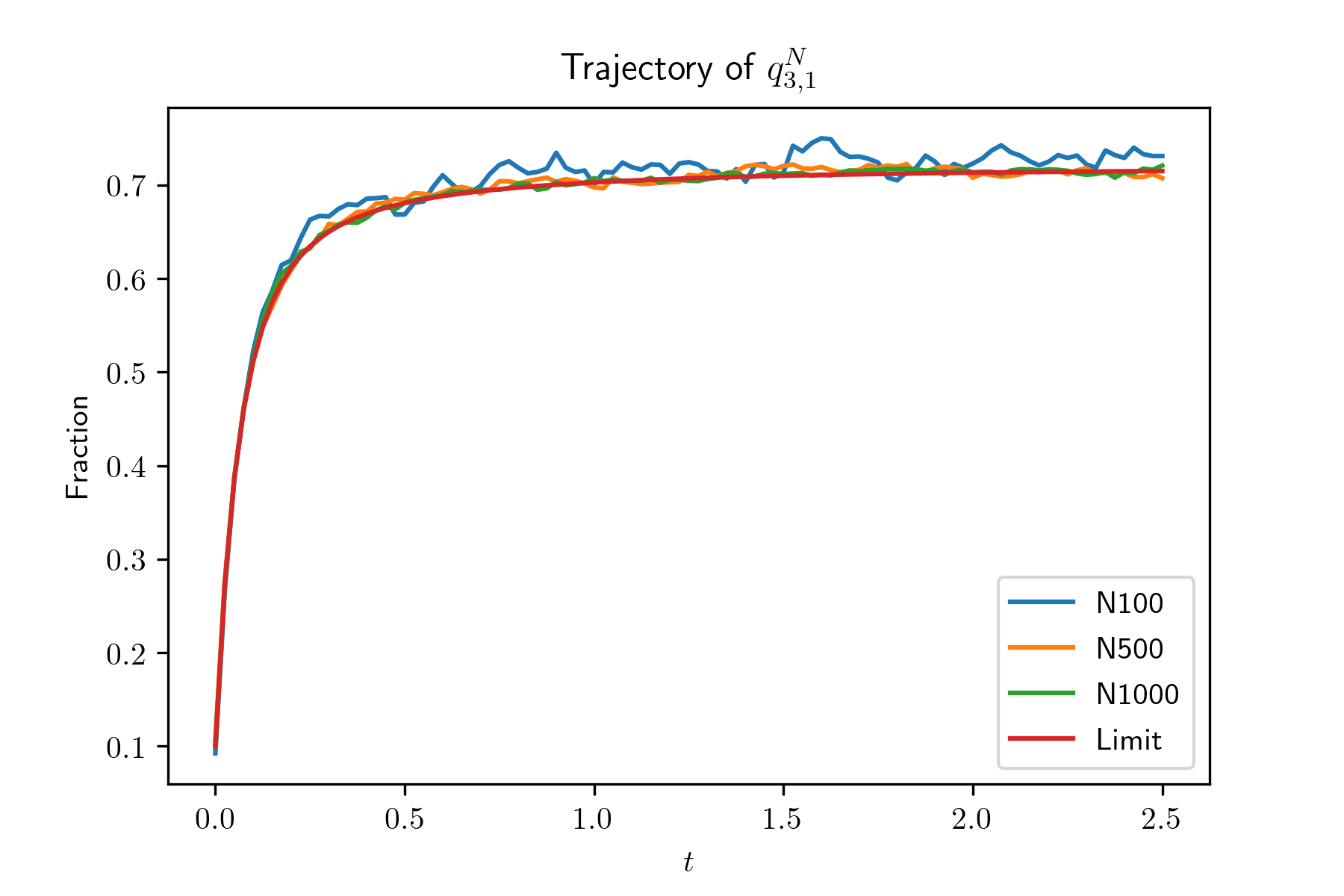}
\endminipage\par
\vskip\floatsep
\minipage{0.32\textwidth}
  \includegraphics[width=\linewidth]{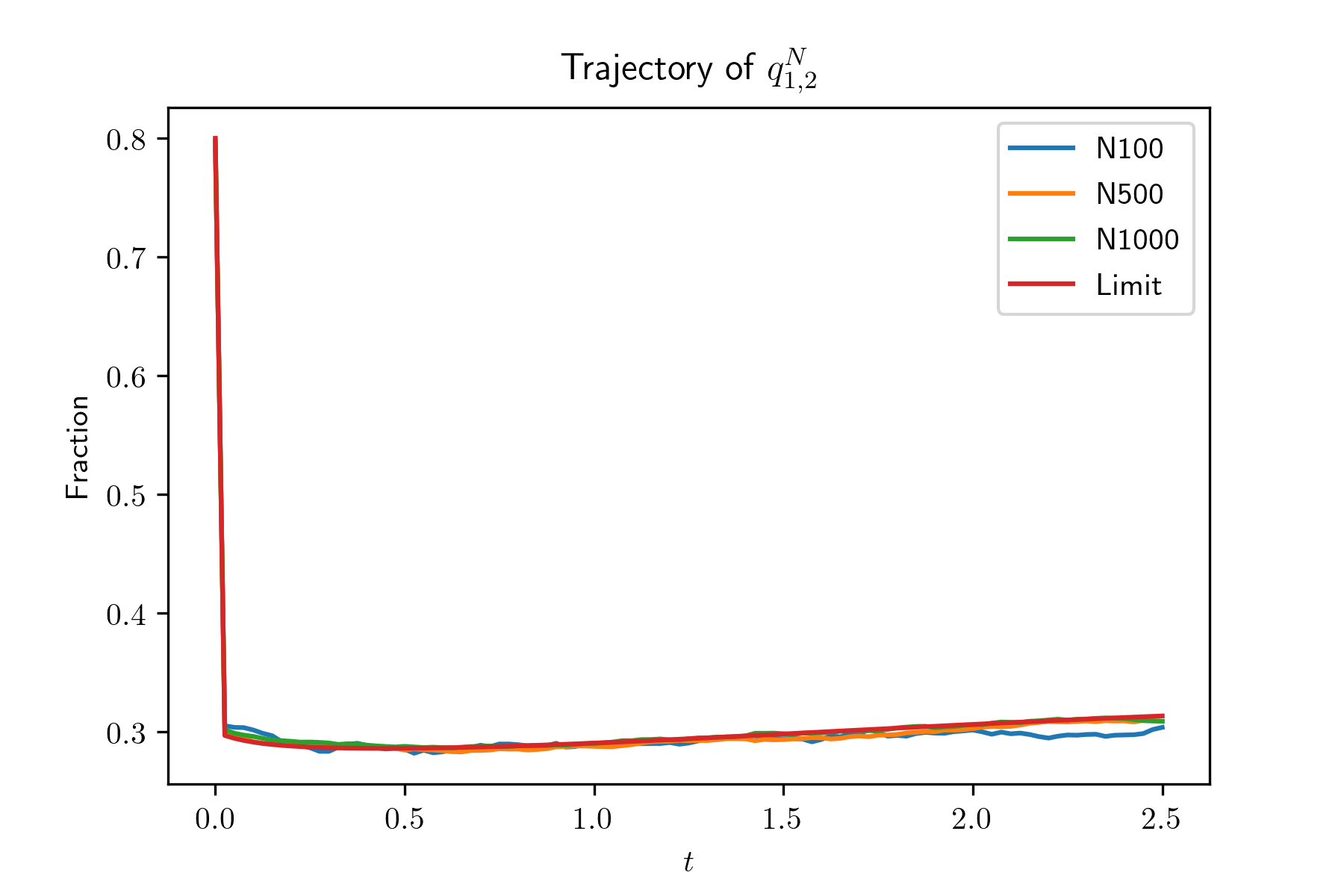}
\endminipage\hfill
\minipage{0.32\textwidth}
  \includegraphics[width=\linewidth]{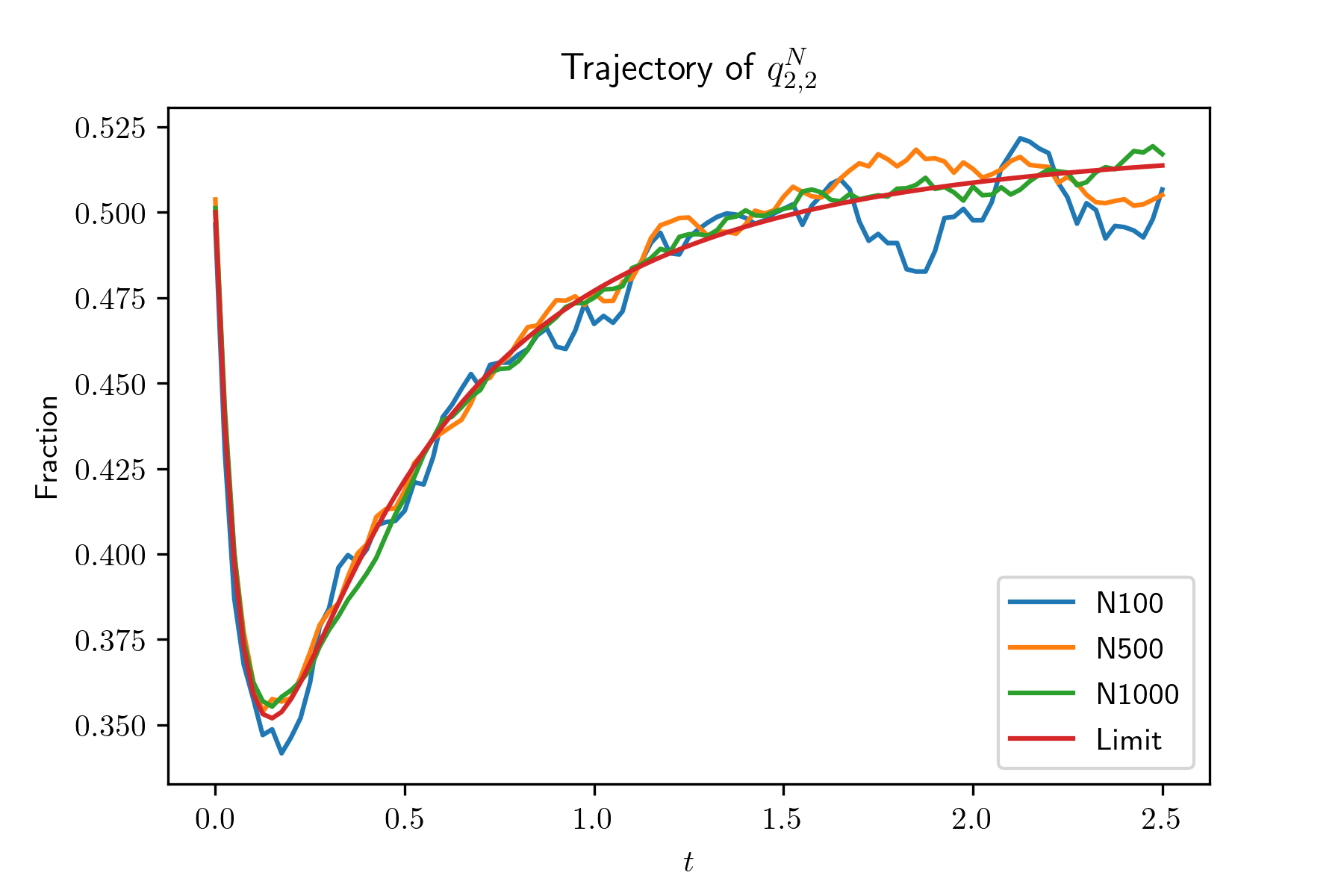}
\endminipage\hfill
\minipage{0.32\textwidth}
  \includegraphics[width=\linewidth]{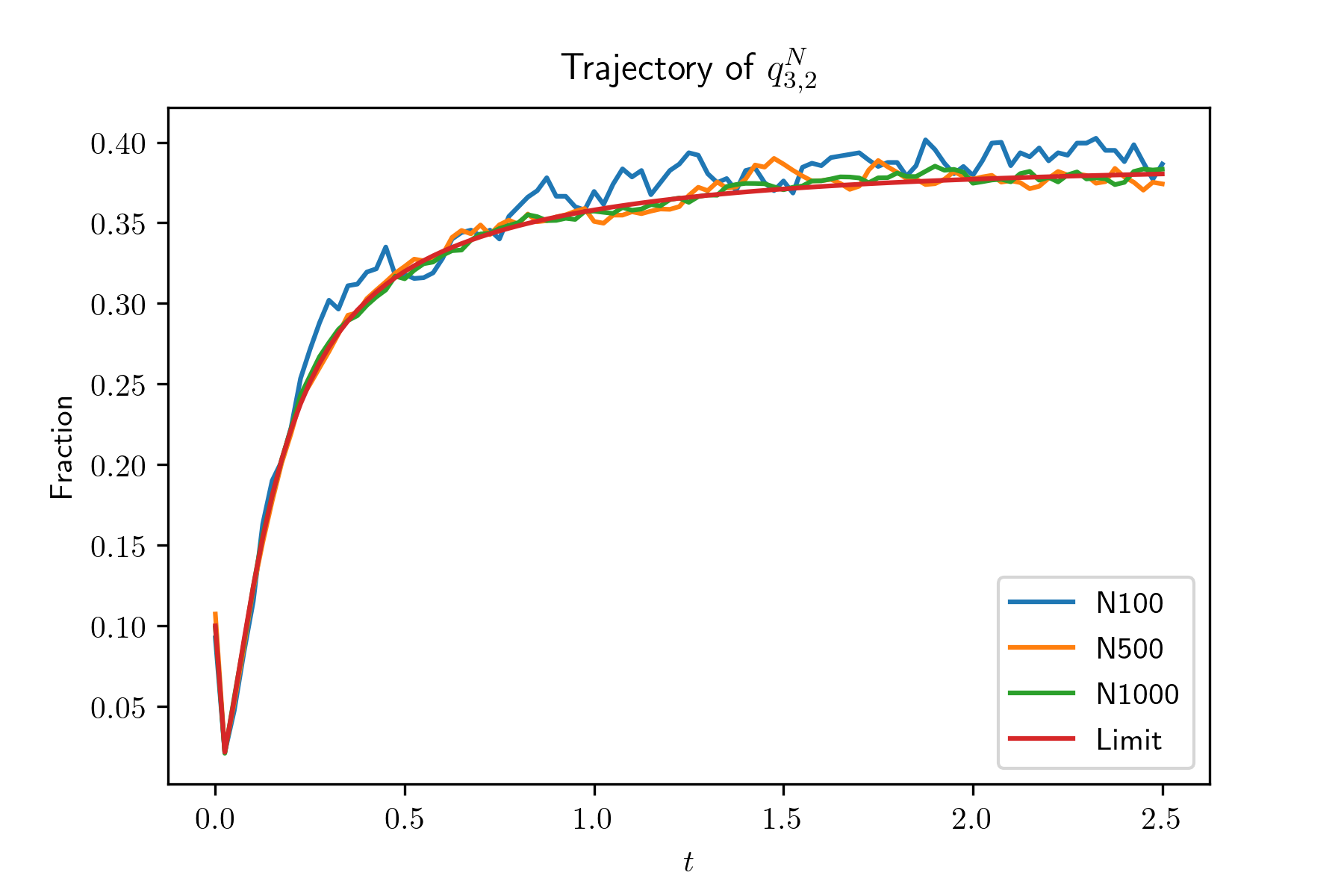}
\endminipage
\caption{The simulated trajectories of $q^N_{m,1}$ and $q^N_{m,2}$, $m=1,2,3$ converging to the solution of the system of ODEs as $N$ increases.} \label{fig:convergence}
\end{figure}

\paragraph{Uniqueness of the fixed point of the limit system.} From Theorem~\ref{thm: exists-unique}, we have that for all $\mathbf{q}\in\mathcal{S}$, $\lim_{\rightarrow\infty}\bar{\mathbf{q}}(t,\mathbf{q}_0)=\mathbf{q}^*$. In order to verify this, we simulation of the evolution of $\bar{\mathbf{q}}(t,\mathbf{q}_0)$ with different $\mathbf{q}_0\in\mathcal{S}$ (i.e., consider different $Q$ mentioned above).
We also simulate the system with $Q_1=\begin{bmatrix} 0.4 & 0.3 & 0.3\\ 0.1 & 0.8 & 0.1 \\0.3 & 0.6 & 0.1  \end{bmatrix}$ and $Q_2=\begin{bmatrix} 0.6 & 0.3 & 0.1\\ 0.8 & 0.1 & 0.1 \\0.7 & 0.2 & 0.1  \end{bmatrix}$. Figure~\ref{fig:unique} shows that with different $\mathbf{q}\in\mathcal{S}$, $\lim_{t\rightarrow\infty}q_{m,1}(t)$, $m=1,2,3$, are the same. If $q_{m,1}$, $m=1,2,3$ are fixed, then the values of all $q_{m,l}$, $l\geq 2$, $m=1,2,3$ are fixed as well by using \eqref{eq:fixed-point}.  Hence, Figure~\ref{fig:unique} verifies the uniqueness of the fixed point.
\begin{figure}
    \minipage{0.32\textwidth}
  \includegraphics[width=\linewidth]{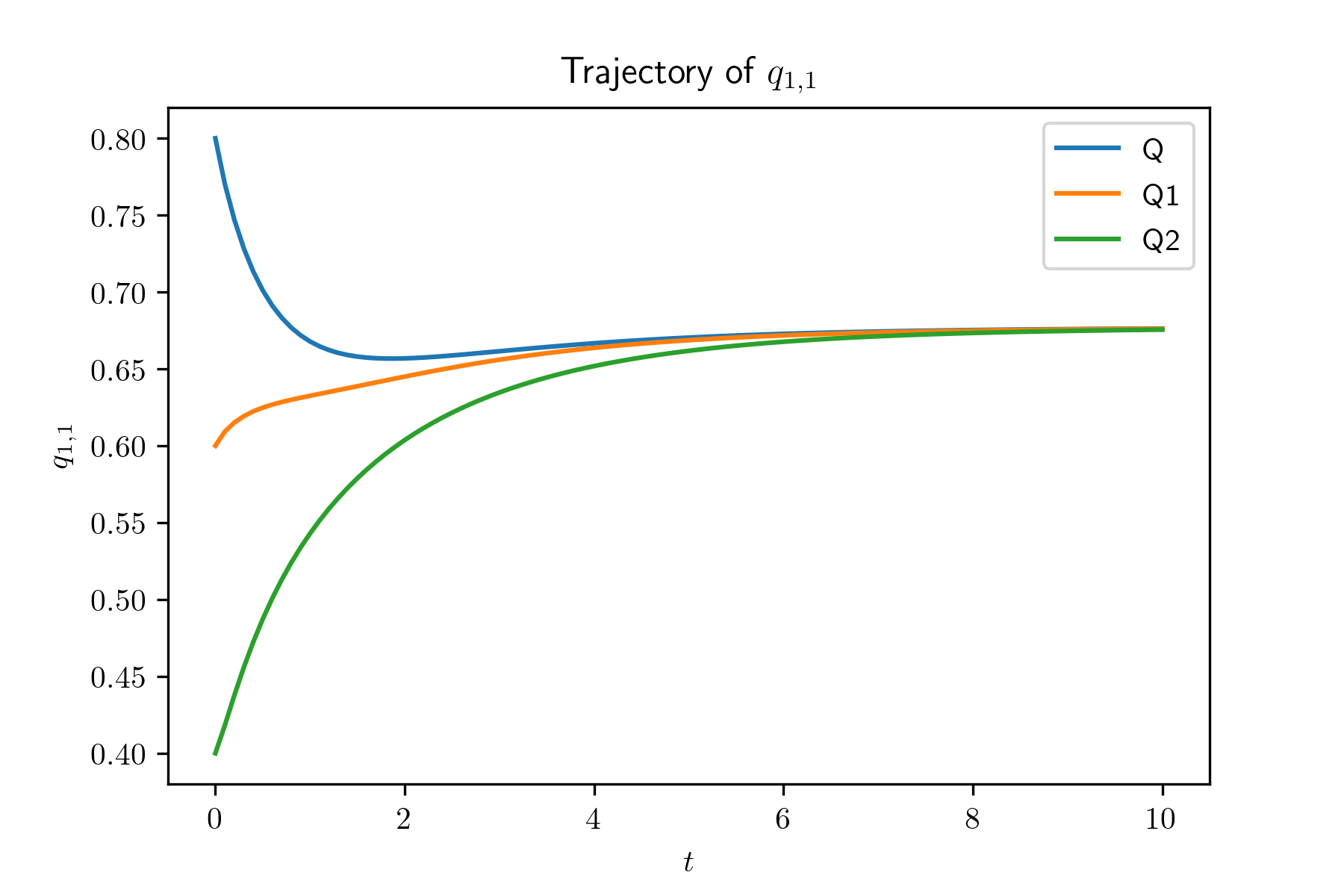}
\endminipage\hfill
\minipage{0.32\textwidth}
  \includegraphics[width=\linewidth]{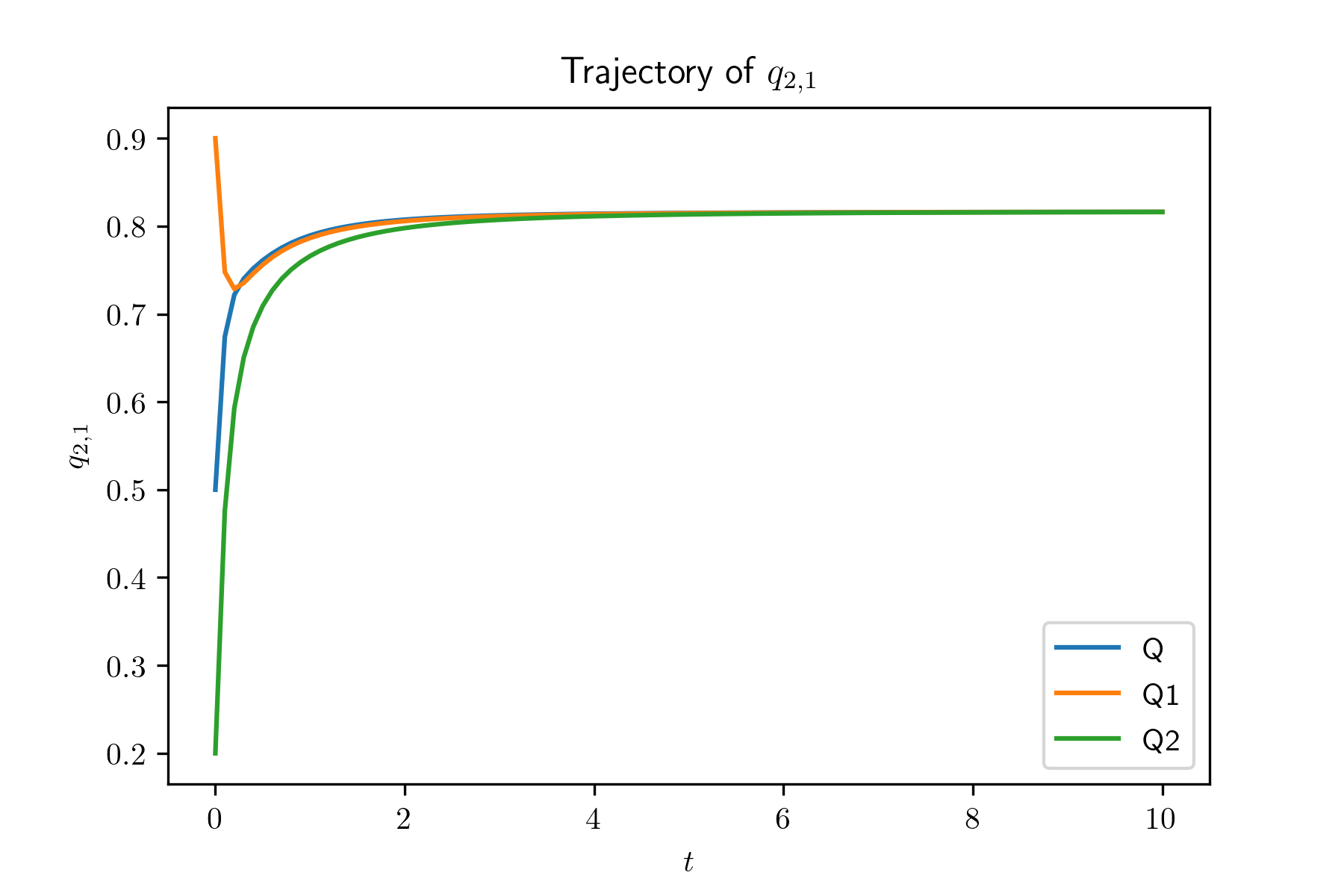}
\endminipage\hfill
\minipage{0.32\textwidth}
  \includegraphics[width=\linewidth]{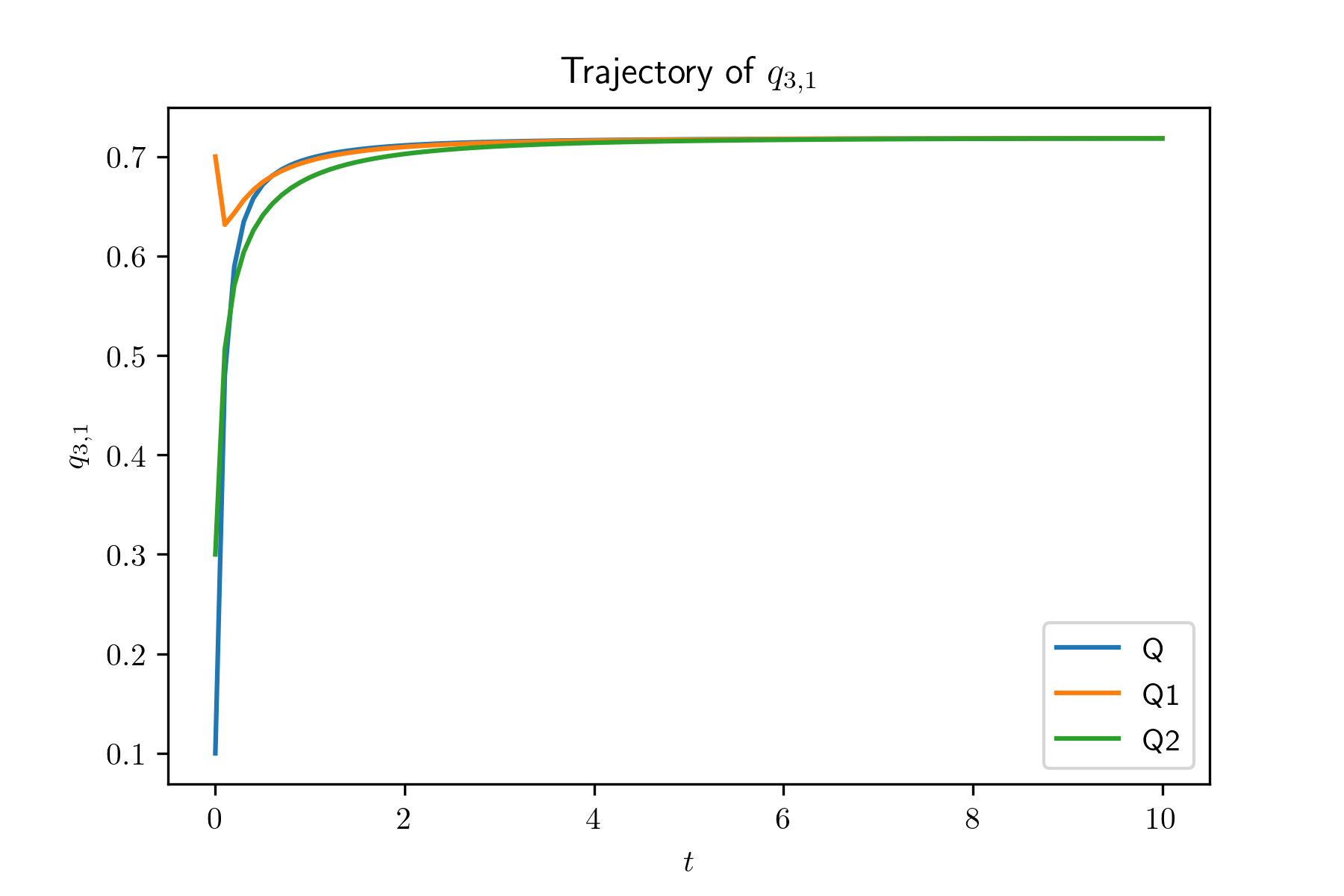}
\endminipage
    \caption{Multiple trajectories of $q_{m,1}$, $m=1,2,3$ in the limit system converging to the fixed point.}
    \label{fig:unique}
\end{figure}

\bibliographystyle{plain}
\bibliography{references,references-debankur,references-Ruoyu}

\appendix

\section{Proofs for Stability Results}\label{app:proof-lem-stability}

The goal of this appendix is to prove Proposition~\ref{prop:p-matrix-existence}. 
We start by proving Lemma~\ref{lem:relationship-v-u-pkm}, for which we need the next technical lemma. 
This lemma will help us to upper bound the probability that a new task will be assigned to a specific subset of servers (in particular, \eqref{eq:subset-rho-2}).

\begin{lemma}\label{lem:opt-prob}
Consider the following optimization problem:
\begin{align*}
    \max \sum_{i=1}^N{x_i\choose d}
   \text{ s.t. } & \sum_{i=1}^Nx_i= C\text{ and }
    x_i\in[0,D]
\end{align*}
where $C$ and $D$ are positive integers. Let $k^*=\lfloor C/D \rfloor$. Then, the optimal value is  $k^*{D\choose d}+{C-Dk^*\choose d}$, if $N> k^*$; otherwise, the optimal value is $N{D\choose d}$.
\end{lemma}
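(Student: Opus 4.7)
The plan is to establish the result via an exchange argument grounded in the discrete convexity of $f(x) := \binom{x}{d}$ on $\N_0$. Using the identity $f(x+1) - f(x) = \binom{x}{d-1}$, which is nondecreasing in $x$, one obtains the two-variable convexity inequality
\begin{equation*}
f(a-1) + f(b+1) \ge f(a) + f(b) \quad \text{whenever } 1 \le a \le b.
\end{equation*}
Since $C$ and $D$ are integers and $f$ is integer-valued and discretely convex in each coordinate with the others fixed, the maximum is attained at an integer vector $(x_1,\ldots,x_N)$, so I would restrict attention to the integer feasible set.

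Next, I would perform a rearrangement: suppose a feasible integer configuration contains two coordinates with $0 < x_i \le x_j < D$. Replacing $(x_i, x_j)$ by $(x_i - 1, x_j + 1)$ preserves both the sum constraint $\sum_\ell x_\ell = C$ and the box constraint $x_\ell \in [0,D]$, and by the convexity inequality above it weakly increases $\sum_\ell f(x_\ell)$. Iterating this exchange terminates — for instance, because $\sum_\ell x_\ell(D - x_\ell)$ is a nonnegative integer that strictly decreases at each step — and leaves an optimal configuration in which at most one coordinate lies strictly inside $(0, D)$, with every other coordinate equal to $0$ or $D$. In such an extremal configuration, let $k$ denote the number of coordinates equal to $D$ and let $r$ denote the value of the unique non-extreme coordinate (setting $r = 0$ if no such coordinate exists). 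The sum constraint forces $kD + r = C$ with $r \in [0, D)$, hence $k = k^* := \lfloor C/D \rfloor$ and $r = C - k^*D$; feasibility additionally requires $N \ge k^* + \mathds{1}_{\{r>0\}}$.

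When $N > k^*$, this configuration is realizable and yields the claimed optimal value $k^*\binom{D}{d} + \binom{C - k^*D}{d}$. When $N \le k^*$, the box constraint gives $C = \sum_\ell x_\ell \le ND \le k^* D \le C$, so every inequality is an equality, forcing $N = k^*$, $C = ND$, and $x_\ell = D$ for all $\ell$, which yields the value $N\binom{D}{d}$. The main step requiring care is the exchange argument — in particular, verifying the discrete convexity of $f$ (immediate from the identity $f(x+1)-f(x) = \binom{x}{d-1}$) and checking that iteration terminates at a configuration with at most one non-extreme coordinate. Once the extremal structure is established, reading off the optimal value and matching the two cases $N > k^*$ and $N \le k^*$ is a matter of bookkeeping.
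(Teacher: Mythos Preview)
Your exchange argument is exactly the paper's approach: both show that any optimizer can have at most one coordinate strictly between $0$ and $D$, via the convexity-type inequality for $\binom{\cdot}{d}$. Your version is in fact more careful than the paper's, supplying an explicit termination potential $\sum_\ell x_\ell(D-x_\ell)$ and spelling out the case split $N>k^*$ versus $N\le k^*$, both of which the paper leaves implicit.

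One point deserves a correction, though. Your sentence ``Since $C$ and $D$ are integers and $f$ is integer-valued and discretely convex in each coordinate with the others fixed, the maximum is attained at an integer vector'' does not stand on its own: discrete convexity on $\N_0$ says nothing about where the maximum over the \emph{continuous} box $[0,D]^N$ is attained. In fact the lemma as literally stated (with $x_i\in[0,D]$ real) can fail---for instance, with $d=4$, $N=2$, $D=3$, $C=3$ one has $k^*=1$ and the claimed value is $\binom{3}{4}+\binom{0}{4}=0$, yet $x_1=x_2=1.5$ gives $2\binom{1.5}{4}=2\cdot\tfrac{1.5\cdot 0.5\cdot(-0.5)\cdot(-1.5)}{24}>0$. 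The paper's proof has the same tacit restriction to integer $x_i$ (it writes $x_j^*,x_k^*\in\{1,\dots,D-1\}$ without comment), and in the only place the lemma is invoked the $x_i$ are cardinalities $|U\cap\mathcal{N}^N_w(i)|$, hence integers. So this is really a defect of the lemma's \emph{statement}; your proof is fine once you take the feasible set to be $\{0,1,\dots,D\}^N$ from the outset rather than attempting to derive it.
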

\begin{proof}
We will prove by contradictions.
Suppose the maximizer $\{x_i^* : i=1,\dotsc,N\}$ contains some $x_j^*,x_k^* \in \{1,\dotsc,D-1\}$ for some $j \ne k$.
Note that
$$\binom{x_j^*}{d} + \binom{x_k^*}{d} < \binom{\tilde{x}_j}{d} + \binom{\tilde{x}_k}{d},$$
where $\tilde{x}_j=\min\{x_j^*+x_k^*,D\}$ and $\tilde{x}_k=x_j^*+x_k^*-\tilde{x}_j$, that is, the pair $(x_j^*,x_k^*)$ gives smaller value than the extremer pair $(\tilde{x}_j, \tilde{x}_k)$.
This contradicts the assumption that $\{x_i^* : i=1,\dotsc,N\}$ is the maximizer.
Therefore the maximizer $\{x_i^* : i=1,\dotsc,N\}$ must contain at most one $x_j^* \in \{1,\dotsc,D-1\}$ with all the other $x_i^*$ being either $0$ or $D$.
This completes the proof.
\end{proof}

\begin{proof}[Proof of Lemma~\ref{lem:relationship-v-u-pkm}]
Suppose that \eqref{eq:max-rho-set} holds. Since $\mathcal{M}$ is finite, then there exists a $\rho\in(0,1)$ such that $\frac{\lambda\xi}{u_m}\sum_{k\in\mathcal{K}}\frac{w_k p_{k,m}}{\delta_k} <\rho$ for all $m\in\mathcal{M}$. Fix any $\varepsilon\in(0,\frac{1-\rho}{1+3\rho})$. Recall $\delta^N_i=|\mathcal{N}^N_w(i)|$. By our model assumption and Condition~\ref{cond-1}, there exists $N_{\varepsilon}\in \N_0$ such that for all $m\in\mathcal{M}$ and $j\in V^N_m$, 
\begin{equation}\label{eq:uniform-deg-server}
     p_{k,m}w_kW(N) (1-\varepsilon) \leq \deg^N_v(k,j)\leq   p_{k,m}w_kW(N)  (1+\varepsilon) ,\quad \forall k\in\mathcal{K},
\end{equation}
and for all $K\in\mathcal{K}$ and $i\in W^N_k$,
\begin{equation}\label{eq:uniform-deg-dispatcher}
    N \delta_k  (1-\varepsilon) \leq \delta^N_i\leq  N \delta_k(1+\varepsilon).
\end{equation}
Consider the $N$-th system. Consider any nonempty subset $U\subseteq V$ of servers. If $|U|\leq C(\lambda,\rho)\coloneqq \frac{\lambda}{\rho \min_{m\in\mathcal{M}}u_m}$, then there exists an $N_1\in\N_0$ such that for all $N\geq (N_{\varepsilon}\vee N_1)$, 
\begin{equation*}
    \Big(\sum_{j\in U}\sum_{m\in\mathcal{M}}\mathds{1}_{(j\in V^N_m)}u_m\Big)^{-1}\sum_{i\in W^N}\sum_{\substack{S\subseteq(U\cap \mathcal{N}^N_w(i)):\\|S|=d}}\frac{\lambda}{{\delta^N_i\choose d}}
    \leq  \frac{1}{|U|\min_{m\in\mathcal{M}}u_m}\sum_{k\in\mathcal{K}}\sum_{i\in W^N_k}\frac{\lambda {|C(\lambda,\rho)|\choose d} }{{\delta^N_i \choose d}}\leq \rho,
\end{equation*}
since for all $i\in W^N$, $\delta^N_i$ goes to infinity as $N\rightarrow\infty$ uniformly by \eqref{eq:uniform-deg-dispatcher}. Next, consider the case $|U|>C(\lambda,\rho)$.
Denote $\alpha_m=|U\cap V^N_m|/|V^N_m|$ for each $m\in\mathcal{M}$. 
Then, 
\begin{equation}\label{eq:subset-rho-1}
    \Big(\sum_{j\in U}\sum_{m\in\mathcal{M}}\mathds{1}_{(j\in V^N_m)}u_m\Big)^{-1}\sum_{i\in W^N}\sum_{\substack{S\subseteq(U\cap \mathcal{N}^N_w(i)):\\|S|=d}}\frac{\lambda}{{\delta^N_w(i)\choose d}}
    \leq \Big(\sum_{m\in\mathcal{M}}\lfloor |V_m^N|\alpha_m \rfloor u_m \Big)^{-1}\sum_{k\in\mathcal{K}}\sum_{i\in W^N_k}\frac{\lambda {|U\cap \mathcal{N}^N_w(i)|\choose d} }{{\delta^N_w(i)\choose d}}.
\end{equation}
By \eqref{eq:uniform-deg-server}, we have that for each $k\in\mathcal{K}$
\begin{equation}\label{eq:edge-U}
    \begin{split}
        &\sum_{i\in W^N_k}|U\cap \mathcal{N}^N_w(i)|=\sum_{j\in U}\deg^N_v(k,j)\leq  \sum_{m\in\mathcal{M}}|V^N_m|\alpha_m p_{k,m}w_kW(N) (1+\varepsilon).
    \end{split}
\end{equation}
By Lemma~\ref{lem:opt-prob} and \eqref{eq:uniform-deg-dispatcher}, \eqref{eq:edge-U}, 
\begin{equation}\label{eq:subset-rho-2}
\begin{split}
    \eqref{eq:subset-rho-1}& \leq  \Big(\sum_{m\in\mathcal{M}}\lfloor |V_m^N|\alpha_m \rfloor u_m \Big)^{-1}\lambda\sum_{k\in\mathcal{K}}\Big(\left\lfloor\frac{\sum_{m\in\mathcal{M}}|V^N_m|\alpha_m p_{k,m}w_kW(N)(1+\varepsilon) }{\delta_k N (1-\varepsilon)}\right\rfloor+1\Big) \frac{{\delta_kN(1+\varepsilon) \choose d}}{{\delta_kN(1-\varepsilon) \choose d}}\\
    & \leq  C_1(N)\Big(\frac{1+\varepsilon}{1-\varepsilon}\Big)^d\Big(\sum_{m\in\mathcal{M}}v_m\alpha_m u_m \Big)^{-1}\lambda\xi\sum_{k\in\mathcal{K}}\Big(w_k\left\lfloor\frac{\sum_{m\in\mathcal{M}}v_m\alpha_m p_{k,m} (1+\varepsilon)}{\delta_k(1-\varepsilon)}\right\rfloor+\frac{1}{N}\Big)\\
\end{split}
\end{equation}
where $C_1(N)$ only depends on $N$ and goes to 1 as $N\rightarrow\infty$. 
Let $$\mathcal{K}'\coloneqq\Big\{k\in\mathcal{K}:\left\lfloor\frac{\sum_{m\in\mathcal{M}}v_m\alpha_m p_{k,m} (1+\varepsilon)}{\delta_k(1-\varepsilon)}\right\rfloor\geq 1\Big\}.$$ 
If $\mathcal{K}'=\emptyset$, then 
\begin{equation}\label{eq:subset-rho-3}
    \begin{split}
        \eqref{eq:subset-rho-2}& \leq  C_1(N)\Big(\frac{1+\varepsilon}{1-\varepsilon}\Big)^d\frac{\lambda \xi}{N\sum_{m\in\mathcal{M}}v_m\alpha_m u_m}\\
        & \leq  C_1(N)\Big(\frac{1+\varepsilon}{1-\varepsilon}\Big)^d\frac{\lambda \xi}{C(\lambda,\rho)\min_{m\in\mathcal{M}}u_m}\leq \rho\Big(\frac{1+\varepsilon}{1-\varepsilon}\Big)^d.
    \end{split}
\end{equation}
Consider the case $\mathcal{K}'\neq \emptyset$. Then, we get 
\begin{equation}\label{eq:subset-rho-4}
    \begin{split}
        \eqref{eq:subset-rho-2}& \leq  C_1(N)\Big(\frac{1+\varepsilon}{1-\varepsilon}\Big)^d\Big(\sum_{m\in\mathcal{M}}v_m\alpha_m u_m \Big)^{-1}\lambda\xi\Big(\sum_{k\in\mathcal{K}}w_k\frac{\sum_{m\in\mathcal{M}}v_m\alpha_m p_{k,m} (1+\varepsilon)}{\delta_k(1-\varepsilon)}+\frac{1}{N}\Big)\\
        & \leq  C_1(N)\Big(\frac{1+\varepsilon}{1-\varepsilon}\Big)^d\Big(\sum_{m\in\mathcal{M}}v_m\alpha_m u_m \Big)^{-1}\lambda\xi\Big(\sum_{k\in\mathcal{K}}w_k\frac{\sum_{m\in\mathcal{M}}v_m\alpha_m p_{k,m} (1+\varepsilon)}{\delta_k(1-\varepsilon)}\Big)\\
        &\quad +C_1(N)\Big(\frac{1+\varepsilon}{1-\varepsilon}\Big)^d\frac{\lambda\xi}{N\sum_{m\in\mathcal{M}}v_m\alpha_mu_m}. 
    \end{split}
\end{equation}
By \eqref{eq:max-rho-set}, we have that for all $m\in\mathcal{M}$ and $\alpha_m\in(0,1)$, 
$$(\alpha_m v_mu_m)^{-1}\lambda\xi\Big(\sum_{k\in\mathcal{K}}w_k\frac{\alpha_m p_{k,m}v_m}{\delta_k}\Big)\frac{1+\varepsilon}{1-\varepsilon}\leq \frac{\rho(1+\varepsilon)}{1-\varepsilon}<\frac{1+\rho}{2},$$
which implies that 
\begin{equation}\label{eq:1-eps-sum-m}
    \lambda \xi \Big(\sum_{k\in\mathcal{K}}w_k\frac{\sum_{m\in\mathcal{M}}v_m\alpha_m p_{k,m}(1+\varepsilon) }{\delta_k(1-\varepsilon)}\Big) < \frac{1+\rho}{2}\Big(\sum_{m\in\mathcal{M}}v_m\alpha_m u_m \Big).
\end{equation}
Since $\mathcal{K}'$ is nonempty, then we assume $k'\in\mathcal{K}$ is in $\mathcal{K}'$, i.e., $\sum_{m\in\mathcal{M}}v_m\alpha_mp_{k',m}(1+\varepsilon)\geq \delta_{k'}(1-\varepsilon)$. Hence, 
\begin{equation}
    \begin{split}
        \frac{\lambda\xi}{N\sum_{m\in\mathcal{M}}v_m\alpha_mu_m}\leq \frac{\lambda\xi (1+\varepsilon)}{N\delta_{k'}(1-\varepsilon)\min_{m\in\mathcal{M}}u_m}\leq \frac{\lambda\xi \rho}{N\delta_{k'}\min_{m\in\mathcal{M}}u_m},
    \end{split}
\end{equation}
which implies that there exists $N_2\in\N_0$ such that for all $N\geq N_2$,
\begin{equation}\label{eq:lambda-xi-N}
    \begin{split}
        \frac{\lambda\xi}{N\sum_{m\in\mathcal{M}}v_m\alpha_mu_m}\leq \frac{\lambda\xi (1+\varepsilon)}{N\delta_{k'}(1-\varepsilon)\min_{m\in\mathcal{M}}u_m}<C_2(N)\xrightarrow{N\rightarrow\infty}0.
    \end{split}
\end{equation}
We choose $\varepsilon$ such that $\Big(\frac{1+\varepsilon}{1-\varepsilon}\Big)^d\frac{1+\rho}{2}<1$.
By \eqref{eq:1-eps-sum-m} and \eqref{eq:lambda-xi-N}, we have that there exists a positive integer $N_3\geq (N_{\varepsilon}\vee N_1 \vee N_2)$ such that for all $N_3\in \N_0$, 
\begin{equation}\label{eq:A-16}
        \Big(\sum_{j\in U}\sum_{m\in\mathcal{M}'}\mathds{1}_{(j\in V^N_m)}u_m\Big)^{-1}\sum_{i\in W^N}\sum_{\substack{S\subseteq(U\cap \mathcal{N}^N_w(i)):\\|S|=d}}\frac{\lambda}{{\delta^N_w(i)\choose d}}< \Big(\frac{1+\varepsilon}{1-\varepsilon}\Big)^d\Big(C_1(N)\frac{1+\rho}{2}+C_2(N)\Big)<1.
\end{equation}
We choose $\varepsilon$ such that $\Big(\frac{1+\varepsilon}{1-\varepsilon}\Big)^d\frac{1+\rho}{2}<1$.
Now, since the subset $U\subseteq V^N$ is arbitrary, then for all $N\geq N_3$, the $N$-th system is stable under JSQ($d$) policy. 
\end{proof}

\begin{proof}[Proof of Proposition~\ref{prop:p-matrix-existence}]
By Lemma~\ref{lem:relationship-v-u-pkm}, 
it is sufficient to show that there exists some $\mathbf{p}$ such that for each $m\in\mathcal{M}$, 
\begin{equation}\label{eq:d=1-1}
    \lambda\xi\sum_{k\in\mathcal{K}}w_k\frac{v_m p_{k,m}}{\delta_k}<v_m u_m.
\end{equation}
Let $x_{m,k}=\frac{v_m p_{k,m}}{\delta_k}\in[0,1]$, $k\in\mathcal{K}$, $m\in\mathcal{M}$ with $\sum_{m\in\mathcal{M}}x_{m,k}=1$. Now, we can formulate a linear optimization problem as the following:
the objective is $ \min\ \rho $ and the 
constraints are 
\begin{equation}\label{eq:constraints}
    \begin{split}
        \lambda\xi \sum_{k\in\mathcal{K}}w_kx_{k,m}\  & \leq \  \rho v_m u_m,\quad \forall m\in\mathcal{M},\\
        \sum_{m\in\mathcal{M}}x_{k,m}\ &= \ 1,\quad \forall k\in\mathcal{K},\\
        x_{k,m}\ \in & \ [0,1],\quad \forall k\in \mathcal{K},m\in\mathcal{M}.
    \end{split}
\end{equation}
\noindent
Next we construct a specific solution $\mathbf{x}'=(x'_{k,m},k\in\mathcal{K},m\in\mathcal{M})$ satisfying the above constraints~\eqref{eq:constraints} with
$\rho_0 = \lambda \xi/\sum_{m\in\mathcal{M}}v_mu_m$. Note that $\rho_0 < 1$ by \eqref{eq:capacity}.
For convenience, we denote $x'_{k,0}=0$ for all $k\in\mathcal{K}$. First, consider $k=1$. 
Let $x'_{1,1}=\frac{\min\big(\rho_0 v_1u_1,  \lambda\xi w_1\big)}{\lambda\xi w_1}$ and for $m\geq 2$, $$x'_{1,m}=\frac{\min\big(\rho_0 v_mu_m,  \lambda\xi w_1(1-\sum_{m'<m}x'_{1,m'})\big)}{\lambda\xi w_1}.$$
Since $\lambda \xi w_1\leq \lambda \xi = \rho_0 \sum_{m\in\mathcal{M}}v_mu_m$, then $\sum_{m\in\mathcal{M}}x'_{1,m}=1$  
and $m_1\coloneqq\min\{m\in\mathcal{M}:\rho_0v_mu_m-x'_{1,m}\lambda\xi w_1>0\}\in\mathcal{M}$. 
Then, consider $k=2$. For all $m<m_1$, let $x'_{2,m}=0$. Let 
$$x'_{2,m_1}=\frac{\min(\rho_0v_{m_1}u_{m_1}-x'_{1,m_1}\lambda\xi w_1, \lambda\xi w_2)}{\lambda\xi w_2},$$
and 
$$x'_{2,m}=\frac{\min\big(\rho_0 v_mu_m,  \lambda\xi w_2(1-\sum_{m'<m}x'_{2,m'})\big)}{\lambda\xi w_2},\quad m>m_1.$$
Again, since $\lambda \xi (w_1+w_2)\leq \lambda \xi \leq \rho_0 \sum_{m\in\mathcal{M}}v_mu_m$, then $\sum_{m\in\mathcal{M}}x'_{2,m}=1$ and $m_2\coloneqq\min\{m\geq m_1:\rho_0v_mu_m-x'_{2,m}\lambda\xi w_2>0\}\in\mathcal{M}$. 
We can construct $x'_{k,m}$, $m\in\mathcal{M}$, $k\geq 3$ by following the steps of the construction of $x'_{2,m}$, $m\in\mathcal{M}$. Hence, we get a specific solution $\mathbf{x}'$ satisfying \eqref{eq:constraints} with $\rho_0<1$. Therefore, $\min\ \rho$ is strictly less than 1 and our desired result holds.
\end{proof}

\section{Approximation of Graph Structure for Large $N$ Systems}\label{app:approx-graph}

\begin{proof}[Proof of Lemma~\ref{lem: LLN-xi}]
Consider any fixed $m\in\mathcal{M}$ and fixed $j\in V_m$. Also, fix any $k\in \mathcal{K}$ and $(M_2,...,M_d)\in \mathcal{M}^{d-1}$. 
\allowdisplaybreaks
\begin{align}
    &\Big|\sum_{i\in W^N_k}\xi^N_{i,j}\sum_{\substack{(j_2,...,j_d)\in \set^N (j)\\ s.t. \quad j_2\in V^N_{M_2},...,j_d\in V^N_{M_d}}}\frac{\xi^N_{i,j_2}\times \cdots \times \xi^N_{i,j_d}}{{\delta^N_i\choose d}(d-1)!}-d\xi\frac{p_{k,m}w_k}{\delta_k}\prod_{h=2}^d\frac{v_{M_h}p_{k,M_h}}{\delta_k}\Big|\notag\\
    & \leq \Big|\sum_{i\in W^N_k}\xi^N_{i,j}\sum_{\substack{(j_2,...,j_d)\in \set^N (j)\\ s.t. \quad j_2\in V^N_{M_2},...,j_d\in V^N_{M_d}}}\frac{\xi^N_{i,j_2}\times \cdots \times \xi^N_{i,j_d}}{{\delta^N_i\choose d}(d-1)!}-\sum_{i\in W^N_k}\xi^N_{i,j}\frac{{\delta^N_i\choose d-1}}{{\delta^N_i\choose d}}\prod_{h=2}^d\frac{v_{M_h}p_{k,M_h}}{\delta_k}\Big| \label{eq:avg-deg-1-1}\\
    &\quad +\Big|\sum_{i\in W^N_k}\xi^N_{i,j}\frac{{\delta^N_i\choose d-1}}{{\delta^N_i\choose d}}\prod_{h=2}^d\frac{v_{M_h}p_{k,M_h}}{\delta_k}-d\xi\frac{p_{k,m}w_k}{\delta_k}\prod_{h=2}^d\frac{v_{M_h}p_{k,M_h}}{\delta_k}\Big| \label{eq:avg-deg-1-2}
\end{align}

First, 
\begin{align}
    &\max_{i\in W^N_k}\Big|\sum_{\substack{(j_2,...,j_d)\in \set^N (j)\\ s.t. \quad j_2\in V^N_{M_2},...,j_d\in V^N_{M_d}}}\frac{\xi^N_{i,j_2}\times \cdots \times \xi^N_{i,j_d}}{{\delta^N_i\choose d-1}(d-1)!}-\prod_{h=2}^d\frac{v_{M_h}p_{k,M_h}}{\delta_k}\Big|\nonumber\\
       \leq  & \max_{i\in W^N_k}\Big|\sum_{\substack{(j_2,...,j_d)\in \set^N (j)\\ s.t. \quad j_2\in V^N_{M_2},...,j_d\in V^N_{M_d}}}\frac{\xi^N_{i,j_2}\times \cdots \times \xi^N_{i,j_d}}{{\delta^N_i\choose d-1}(d-1)!}-\frac{\deg^N_w(i,M_2)\times \cdots \times \deg^N_w(i,M_d)}{{\delta^N_i\choose d-1}(d-1)!}\Big|\nonumber\\
       &+\max_{i\in W^N_k}\Big|\frac{\deg^N_w(i,M_2)\times \cdots \times \deg^N_w(i,M_d)}{{\delta^N_i\choose d-1}(d-1)!} -\prod_{h=2}^d\frac{v_{M_h}p_{k,M_h}}{\delta_k}\Big|.\nonumber
\end{align}

For large enough $N$,
\begin{align}
        &\max_{i\in W^N_k}\Big|\sum_{\substack{(j_2,...,j_d)\in \set^N (j)\\ s.t. \quad j_2\in V^N_{M_2},...,j_d\in V^N_{M_d}}}\frac{\xi^N_{i,j_2}\times \cdots \times \xi^N_{i,j_d}}{{\delta^N_i\choose d-1}(d-1)!}-\frac{\deg^N_w(i,M_2)\times \cdots \times \deg^N_w(i,M_d)}{{\delta^N_i\choose d-1}(d-1)!}\Big| \notag \\
        & \leq  \max_{i\in W^N_k}\frac{d(d-1)}{{\delta^N_i\choose d-1}(d-1)!}\max_{m\in\mathcal{M}}(\deg^N_w(i,m))^{d-2} \notag \\
        & \leq  \frac{d(d-1)}{\min_{i\in W^N_k}{\delta^N_i\choose d-1}(d-1)!}\max_{i\in W^N_k}\max_{m\in\mathcal{M}}(\deg^N_w(i,m))^{d-2} \notag \\
        & \leq  c^N(m,k)d(d-1)\frac{(N\max_{m\in\mathcal{M}}v_mp_{k,m})^{d-2}}{(N\delta_k)^{d-1}}\xrightarrow{N\rightarrow\infty}0, \label{eq:diff-connection}
\end{align}
where $c^N(m,k)$ goes to 1 as $N$ goes to infinity, and only depends on $k$ and $m$ for each $N$.
The last inequality comes from Condition~\ref{cond-1}, Lemma~\ref{lem:uniformity-degree}, and $\frac{\delta^N_i\times\cdots\times (\delta^N_i-d+2)}{(\delta^N_i)^{d-1}}\xrightarrow{N\rightarrow\infty}1$.
Similarly, we have 
\begin{align}
        &\max_{i\in W^N_k}\Big|\frac{\deg^N_w(i,M_2)\times \cdots \times \deg^N_w(i,M_d)}{{\delta^N_i\choose d-1}(d-1)!} -\prod_{h=2}^d\frac{v_{M_h}p_{k,M_h}}{\delta_k}\Big| \notag \\
        & \leq  \max \Big( \prod_{h=2}^d \big(\frac{\max_{i\in W^N_k}\deg^N_w(i,M_h)}{\min_{i\in W^N_k}(\delta^N_i-d)}-\frac{v_{M_h}p_{k,M_h}}{\delta_k}\big), \prod_{h=2}^d \big(\frac{\min_{i\in W^N_k}\deg^N_w(i,M_h)}{\max_{i\in W^N_k}\delta^N_i}-\frac{v_{M_h}p_{k,M_h}}{\delta_k}\big)\Big) \notag \\
        & \leq c^N(m,k,M_2,...,M_d)\xrightarrow{N\rightarrow\infty}0, \label{eq:avg-prob}
\end{align}
where $c^N(m,k,M_2,...,M_d)$ depends on $m,k,M_2,...,M_d$.
By \eqref{eq:diff-connection} and \eqref{eq:avg-prob}, we have 
\begin{equation}\label{eq:c1-m-k-Mh}
        \max_{i\in W^N_k}\Big|\sum_{\substack{(j_2,...,j_d)\in \set^N (j)\\ s.t. \quad j_2\in V^N_{M_2},...,j_d\in V^N_{M_d}}}\frac{\xi^N_{i,j_2}\times \cdots \times \xi^N_{i,j_d}}{{\delta^N_i\choose d-1}(d-1)!}-\prod_{h=2}^d\frac{v_{M_h}p_{k,M_h}}{\delta_k}\Big| 
         \leq  c_1^N(m,k,M_2,...,M_d)\xrightarrow{N\rightarrow\infty}0,
\end{equation}
where $c_1^N(m,k,M_2,...,M_d)$ depends on $m,k,M_2,...,M_d$.
\noindent
By Lemma~\ref{lem:uniformity-degree}, we have  
\begin{align*}
    \lim_{N\rightarrow\infty} \max_{i\in W^N_k}\frac{N{\delta^N_i\choose d-1}}{{\delta^N_i\choose d}} & =\lim_{N\rightarrow\infty}\min_{i\in W^N_k}\frac{N{\delta^N_i\choose d-1}}{{\delta^N_i\choose d}}=\frac{d}{\delta_k}, \\
    \lim_{N\rightarrow\infty} \max_{j\in V^N_m}\frac{\deg^N_v(k,j)}{N} & =\lim_{N\rightarrow\infty}\min_{j\in V^N_m}\frac{\deg^N_v(k,j)}{N}=\xi p_{k,m}w_k.
\end{align*}
Then, 
\begin{align}
    \Big|\sum_{i\in W^N_k}\xi^N_{i,j}\frac{{\delta^N_i\choose d-1}}{{\delta^N_i\choose d}}-d\xi \frac{p_{k,m}w_k}{\delta_k}\Big|
    & \leq  \Big|\sum_{i\in W^N_k}\xi^N_{i,j}\frac{{\delta^N_i\choose d-1}}{{\delta^N_i\choose d}}-\deg^N_v(k,j)\frac{d}{N\delta_k}\Big|+\Big|\deg^N_v(k,j)\frac{d}{N\delta_k}-d\xi \frac{p_{k,m}w_k}{\delta_k}\Big| \notag \\
    & \leq  c^N_1(m,k) \xrightarrow{N\rightarrow\infty}0, \label{eq:c1-m-k}
\end{align}
where $c^N_1(m,k)$ only depends on $m$ and $k$.

\noindent
Consider $\eqref{eq:avg-deg-1-1}$.
\begin{align}
    &\Big|\sum_{i\in W^N_k}\xi^N_{i,j}\sum_{\substack{(j_2,...,j_d)\in \set^N (j)\nonumber\\ s.t. \quad j_2\in V^N_{M_2},...,j_d\in V^N_{M_d}}}\frac{\xi^N_{i,j_2}\times \cdots \times \xi^N_{i,j_d}}{{\delta^N_i\choose d}(d-1)!}-\sum_{i\in W^N_k}\xi^N_{i,j}\frac{{\delta^N_i\choose d-1}}{{\delta^N_i\choose d}}\prod_{h=2}^d\frac{v_{M_h}p_{k,M_h}}{\delta_k}\Big|\nonumber\\
    &=  \Big|\sum_{i\in W^N_k}\xi^N_{i,j}\frac{{\delta^N_i\choose d-1}}{{\delta^N_i\choose d}}\sum_{\substack{(j_2,...,j_d)\in \set^N (j)\\ s.t. \quad j_2\in V^N_{M_2},...,j_d\in V^N_{M_d}}}\frac{\xi^N_{i,j_2}\times \cdots \times \xi^N_{i,j_d}}{{\delta^N_i\choose d-1}(d-1)!}-\sum_{i\in W^N_k}\xi^N_{i,j}\frac{{\delta^N_i\choose d-1}}{{\delta^N_i\choose d}}\prod_{h=2}^d\frac{v_{M_h}p_{k,M_h}}{\delta_k}\Big|\nonumber\\
    & \leq  \sum_{i\in W^N_k}\xi^N_{i,j}\frac{{\delta^N_i\choose d-1}}{{\delta^N_i\choose d}}\Big|\sum_{\substack{(j_2,...,j_d)\in \set^N (j)\nonumber\\ s.t. \quad j_2\in V^N_{M_2},...,j_d\in V^N_{M_d}}}\frac{\xi^N_{i,j_2}\times \cdots \times \xi^N_{i,j_d}}{{\delta^N_i\choose d-1}(d-1)!}-\prod_{h=2}^d\frac{v_{M_h}p_{k,M_h}}{\delta_k}\Big|\nonumber\\
    &\overset{(a)}{\leq} \sum_{i\in W^N_k}\xi^N_{i,j}\frac{{\delta^N_i\choose d-1}}{{\delta^N_i\choose d}}c_2^N(m,k,M_2,...,M_d)\nonumber\\
    &\overset{(b)}{\leq}  c_2^N(m,k,M_2,...,M_d)c^N_2(m,k)d\xi\frac{p_{k,m}w_k}{\delta_k}\xrightarrow{N\rightarrow\infty}0,
\end{align}
where $c_2^N(m,k,M_2,...,M_d)\xrightarrow{N\rightarrow\infty}0$  and $c^N_2(m,k)\xrightarrow{N\rightarrow\infty}1$.
(a) is from \eqref{eq:c1-m-k-Mh} and (b) is from \eqref{eq:c1-m-k}. Hence, \eqref{eq:avg-deg-1-1} goes to $0$ as $N\rightarrow\infty$. Then,
\begin{equation}
    \begin{split}
        &\Big|\sum_{i\in W^N_k}\xi^N_{i,j}\sum_{\substack{(j_2,...,j_d)\in \set^N (j)\\ s.t. \quad j_2\in V^N_{M_2},...,j_d\in V^N_{M_d}}}\frac{\xi^N_{i,j_2}\times \cdots \times \xi^N_{i,j_d}}{{\delta^N_i\choose d}(d-1)!}-d\xi\frac{p_{k,m}w_k}{\delta_k}\prod_{h=2}^d\frac{v_{M_h}p_{k,M_h}}{\delta_k}\Big|\\
        & \leq  c_3^N(m,k,M_2,...,M_d)\xrightarrow{N\rightarrow\infty}0,
    \end{split}
\end{equation}
where $c_3^N(m,k,M_2,...,M_d)$ only depends on $m,k,M_2,...,M_d$.
Since $k\in \mathcal{K}$ and $(M_2,...,M_d)\in \mathcal{M}^{d-1}$ are arbitrary, and $\mathcal{K}$ and $\mathcal{M}^{d-1}$ are finite sets, we have 
\begin{equation}
    \begin{split}
        &\max_{k\in\mathcal{K}}\max_{(M_2,...,M_d)\in \mathcal{M}^{d-1}}\Big|\sum_{i\in W^N_k}\xi^N_{i,j}\sum_{\substack{(j_2,...,j_d)\in \set^N (j)\\ s.t. \quad j_2\in V^N_{M_2},...,j_d\in V^N_{M_d}}}\frac{\xi^N_{i,j_2}\times \cdots \times \xi^N_{i,j_d}}{{\delta^N_i\choose d}(d-1)!}-d\xi\frac{p_{k,m}w_k}{\delta_k}\prod_{h=2}^d\frac{v_{M_h}p_{k,M_h}}{\delta_k}\Big|\\
        & \leq  c^N(m)\xrightarrow{N\rightarrow\infty}0,
    \end{split}
\end{equation}
where $c^N(m)$ only depends on $m$. Since $c^N(m)$ does not depend on $j\in V^N_m$, \eqref{eq:ave-deg} holds.
\end{proof}

\begin{proof}[Proof of Lemma~\ref{lem:bound-sett}]
Fix any $m\in\mathcal{M}$ and $j\in V_m$. Consider \eqref{eq:ave-deg-sett-1}. When $\xi^N_{i,j}=1$, by the definition \eqref{defn:sett-j} of $\sett^N(\cdot)$, 
\begin{equation*}
    \sum_{\sett^N{(j)}} \frac{\xi^N_{i,j}\times\xi^N_{i,j_2}\times\cdots\times \xi^N_{i,j_d}}{{\delta^N_{i}\choose d}(d-1)!} \frac{\xi^N_{i,j}\times\xi^N_{i,j'_2}\times\cdots\times \xi^N_{i,j'_d}}{{\delta^N_{i}\choose d}(d-1)!}=\frac{\Big[(d-1)!{\delta^N_i-1\choose d-1}\Big]^2-(2d-2)!{\delta^N_i-1\choose 2d-2}}{{\delta^N_{i}\choose d}^2\big((d-1)!\big)^2}.
\end{equation*}
Also, by Lemma~\ref{lem:uniformity-degree}, we have that for all $k\in\mathcal{K}$ and $i\in W_k$,
\begin{align*}
        \frac{\Big[(d-1)!{\delta^N_{i}-1\choose d}\Big]^2-(2d-2)!{\delta^N_i-1\choose 2d-2}}{{\delta^N_{i}\choose d}^2\big((d-1)!\big)^2}
        & \leq  \frac{\Big[(d-1)!\max_{i\in W^N_k}{\delta^N_{i}-1\choose d}\Big]^2-(2d-2)!\min_{i\in W^N_k}{\delta^N_i-1\choose 2d-2}}{\min_{i\in W^N_k}{\delta^N_{i}\choose d}^2\big((d-1)!\big)^2}\\
        & \leq  c_1(N) \frac{\Big[(d-1)!{N\delta_k\choose d-1}\Big]^2-(2d-2)!{N\delta_k\choose 2d-2}}{{N\delta_k\choose d}^2\big((d-1)!\big)^2}
\end{align*}
where $c_1(N)$ only depends on $N$ and goes to $1$ as $N\rightarrow\infty$. By Lemma~\ref{lem:uniformity-degree}, we have that for all $k\in\mathcal{K}$,
$\max_{j\in W^N_m}\deg^N_v(k,j)\leq c_2(N,m) |W^N_k|p_{k,m}$ where $c_2(N,m)$ only depends on $N$ and $m$, and goes to $1$ as $N\rightarrow\infty$. Hence,
\begin{align*}
        &\sum_{i\in W^N}\sum_{\sett^N{(j)}}\frac{\xi^N_{i,j}\times\xi^N_{i,j_2}\times\cdots\times \xi^N_{i,j_d}}{{\delta^N_{i}\choose d}(d-1)!} \frac{\xi^N_{i,j}\times\xi^N_{i,j'_2}\times\cdots\times \xi^N_{i,j'_d}}{{\delta^N_{i}\choose d}(d-1)!}\\
        &=\sum_{k\in \mathcal{K}}\sum_{i\in W^N_k}\frac{\Big[(d-1)!{\delta^N_{i}-1\choose d}\Big]^2-(2d-2)!{\delta^N_i-1\choose 2d-2}}{{\delta^N_{i}\choose d}^2\big((d-1)!\big)^2}\\
        &\leq  c_1(N)\sum_{k\in \mathcal{K}}\deg^N_v(k,j)\frac{\Big[(d-1)!{N\delta_k\choose d-1}\Big]^2-(2d-2)!{N\delta_k\choose 2d-2}}{{N\delta_k\choose d}^2\big((d-1)!\big)^2}\\
        & \leq c_1(N)c_2(N,m)\sum_{k\in \mathcal{K}}|W^N_k|p_{k,m}\frac{\Big[(d-1)!{N\delta_k\choose d-1}\Big]^2-(2d-2)!{N\delta_k\choose 2d-2}}{{N\delta_k\choose d}^2\big((d-1)!\big)^2}.
\end{align*}
Let $c_3(N)=\max_{m\in\mathcal{M}}c_1(N)c_2(N,m)$ with $c_3(N)\xrightarrow{N\rightarrow\infty}1$. Then, we have that for large enough $N$,
\begin{equation}
    \begin{split}
        &\sum_{i\in W^N}\sum_{\sett^N{(j)}}\frac{\xi^N_{i,j}\times\xi^N_{i,j_2}\times\cdots\times \xi^N_{i,j_d}}{{\delta^N_{i}\choose d}(d-1)!} \frac{\xi^N_{i,j}\times\xi^N_{i,j'_2}\times\cdots\times \xi^N_{i,j'_d}}{{\delta^N_{i}\choose d}(d-1)!}\\
        & \leq  c_3(N)\sum_{k\in \mathcal{K}}|W^N_k|p_{k,m}\frac{\Big[(d-1)!{N\delta_k\choose d-1}\Big]^2-(2d-2)!{N\delta_k\choose 2d-2}}{{N\delta_k\choose d}^2\big((d-1)!\big)^2}\\
        & \leq  2\sum_{k\in \mathcal{K}}|W^N_k|p_{k,m}\frac{\Big[(d-1)!{N\delta_k\choose d-1}\Big]^2-(2d-2)!{N\delta_k\choose 2d-2}}{{N\delta_k\choose d}^2\big((d-1)!\big)^2}
    \end{split}
\end{equation}
 Since $\lim_{N\rightarrow\infty}|W^N_k|/N=\xi w_k$ and $\Big[(d-1)!{x\choose d-1}\Big]^2-(2d-2)!{x\choose 2d-2}\leq C_3 x^{2d-3}$ for some constant $C_3$, then by choosing $C_1$ appropriately, \eqref{eq:ave-deg-sett-1} holds for all large enough $N$. We can get \eqref{eq:ave-deg-sett-2} in a similar way.
\end{proof}

\section{Unique Solution of ODE~\eqref{eq:ode-sys-2}}\label{app:unique-sol-ode}
\begin{proof}[Proof of Lemma~\ref{lem:unique-sol-ode}]
Recall $\bar{\mathbf{q}}(t,\mathbf{q}_0)$ is a solution of \eqref{eq:ode-sys-2} given the initial point $\mathbf{q}^N(0)=\mathbf{q}_0$. For convenience, we denote $\bar{\mathbf{q}}(t,\mathbf{q}_0)$ as $\bar{\mathbf{q}}(t)$ and write the ODE \eqref{eq:ode-sys-2} as the following:
\begin{equation}\label{eq:thm-ode-1}
    \bar{\mathbf{q}}(0)=\mathbf{q}_0, \quad
    \Dot{\mathbf{q}}(t)=\bar{\mathbf{h}}(\bar{\mathbf{q}}(t)),
\end{equation}
where for all $m\in\mathcal{M}$,
\begin{align}
    \bar{h}_{m,0}(\mathbf{q})&=0, \notag \\
    \bar{h}_{m,l}(\mathbf{q})&=-u_m(q_{m,l}-q_{m,l+1})+\lambda\xi(q_{m,l-1}-q_{m,l})\sum_{k\in\mathcal{K}}\frac{p_{k,m}w_k}{\delta_k}\frac{(\tilde{q}_{k,l-1})^d-(\tilde{q}_{k,l})^d}{\tilde{q}_{k,l-1}-\tilde{q}_{k,l}}, \quad l\geq 1.\label{eq:thm-ode-4}
\end{align}
 observe that under \eqref{eq:thm-ode-4}, if $q_{m,l}(t)=q_{m,l+1}(t)$ for some $m\in\mathcal{M},l\in\N_0,t\geq 0$, then $\bar{h}_{m,l}(\mathbf{q}(t))\geq 0$ and $\bar{h}_{m,l+1}(\mathbf{q}(t))\leq 0$; if $q_{m,l}(t)=0$ for some $m\in\mathcal{M},l\in\N_0,t\geq 0$, then $\bar{h}_{m,l}(\mathbf{q}(t))\geq 0$. Hence, if $\mathbf{q}\in\bar{\mathcal{S}}$, then any solution of \eqref{eq:thm-ode-1}-\eqref{eq:thm-ode-4} remains within $\bar{\mathcal{S}}$. In order to show the existence and the uniqueness, we use the Picard successive approximation method(\cite[Theorem 1(i)]{MS99}). In the rest of the proof, we use the norm:
 $$\norm{\mathbf{q}}=\sup_{m\in\mathcal{M}}\sup_{l\in\N_0}\frac{|q_{m,l}|}{l+1}.$$
 For any $\mathbf{q}$, $\mathbf{q}'\in\bar{\mathcal{S}}$, 
\begin{equation}\label{eq:lip-K2}
    \norm{\bar{\mathbf{h}}(\mathbf{q})} \leq  K_1,\quad
    \norm{\bar{\mathbf{h}}(\mathbf{q})-\bar{\mathbf{h}}(\mathbf{q}')}\leq  K_2\norm{\mathbf{q}-\mathbf{q}'},
\end{equation}
where $K_1\coloneqq\max_{m\in\mathcal{M}} u_m + \lambda\xi$ and $K_2\coloneqq 2 \max_{m\in\mathcal{M}} u_m +2d\lambda\xi$.
For $t\geq 0$, let $\mathbf{q}^{(0)}(t)=\mathbf{q}_0$, and by Picard successive approximation method, let 
\begin{equation*}
    \mathbf{q}^{(n)}(t)=\mathbf{q}_0+\int_0^t\bar{\mathbf{h}}(\mathbf{q}^{(n-1)}(s))ds,\quad n\in\N.
\end{equation*}
By induction, we have that $\mathbf{q}^{(n)}(t)$ is continuous w.r.t. $t$ on $[0,\infty)$ for all $n$, and that 
\begin{equation*}
    \norm{\mathbf{q}^{(n+1)}(t)-\mathbf{q}^{(n)}(t)}\leq \frac{K_1K_2^nt^{n+1}}{(n+1)!},\quad \forall n\in\N,t\geq 0.
\end{equation*}
Hence, for all $t\geq 0$, $\mathbf{q}^{(\infty)}=\lim_{n\rightarrow\infty}\mathbf{q}^{(n)}$ exists uniformly for $s\in[0,t]$. Also, by \eqref{eq:lip-K2} and Dominated Convergence Theorem, the following holds
\begin{equation}
    \mathbf{q}^{(\infty)}(t)=\mathbf{q}_0+\int_0^t\bar{\mathbf{h}}(\mathbf{q}^{(\infty)}(s))ds.
\end{equation}
Next, we show the uniqueness by contradiction. Assume that $\tilde{\mathbf{q}}^{(\infty)}$ also satisfies 
$$\tilde{\mathbf{q}}^{(\infty)}(t)=\mathbf{q}_0+\int_0^t\bar{\mathbf{h}}(\tilde{\mathbf{q}}^{(\infty)}(s))ds.$$
Then, we have 
$$\tilde{\mathbf{q}}^{(\infty)}(t)-\mathbf{q}^{(n)}(t)=\int_0^t\big[\bar{\mathbf{h}}(\tilde{\mathbf{q}}^{(\infty)}(s))-\bar{\mathbf{h}}(\mathbf{q}^{(n-1)}(s))\big]ds.$$
Similarly, we get
$$\norm{\tilde{\mathbf{q}}^{(\infty)}(t)-\mathbf{q}^{(n)}(t)}\leq \frac{K_1K_2^nt^{n+1}}{(n+1)!}$$
which implies that $\tilde{\mathbf{q}}^{(\infty)}(t)=\lim_{n\rightarrow\infty}\mathbf{q}^{(n)}(t)=\mathbf{q}^{(\infty)}$.
\end{proof}

\section{Proof of Proposition~\ref{prop: num-bad-disp}}\label{app:proof-of-bad-dispatcher}
\begin{lemma}\label{lem:sup-q-ml-t}
If $\mathbf{q}^N(0)$ weakly converges to $\mathbf{q}(0)=\mathbf{q}^{\infty}\in\mathcal{S}$, then for any $\varepsilon>0$, $\delta>0$, and $T>0$, there exist $\ell\in\N_0$ and $N_{\ell}\in \N_0$, depending on $\mathbf{q}^{\infty}$, $\varepsilon$, $\delta$, and $T$, such that, for all $N\geq N_1$,
\begin{equation}
    \PP\Big(\sup_{t\in[0,T]}\sup_{m\in\mathcal{M}}q^N_{m,\ell}(t)\geq \varepsilon\Big)<\delta.
\end{equation}
\end{lemma}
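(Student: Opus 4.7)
The plan is to exploit two simple facts: (i) the process $\sum_{l \geq 1} q^N_{m,l}(t)$, which equals the average queue length at type-$m$ servers, can only grow via arrivals, so its supremum over $[0,T]$ is bounded by its initial value plus a Poisson arrival count; and (ii) since $q^N_{m,l}(t)$ is non-increasing in $l$, we have $\ell\, q^N_{m,\ell}(t) \leq \sum_{l=1}^{\ell} q^N_{m,l}(t) \leq \sum_{l \geq 1} q^N_{m,l}(t)$, so controlling the $\ell_1$-mass gives a pointwise tail bound that improves as $\ell \to \infty$.

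Concretely, let $A^N(t)$ denote the total number of arrivals across all dispatchers in $[0,t]$, a Poisson process with rate $\lambda W(N)$. Since any task arriving at a type-$m$ server contributes $+1$ to $\sum_{j \in V_m^N} X_j^N(\cdot)$ and departures only decrease this quantity,
\[
|V_m^N| \sum_{l \geq 1} q^N_{m,l}(t) = \sum_{j \in V_m^N} X_j^N(t) \leq \sum_{j \in V_m^N} X_j^N(0) + A^N(t), \quad t \geq 0.
\]
Dividing by $|V_m^N|$ and invoking fact (ii), for any $\ell \in \N$,
\[
\sup_{t \in [0,T]} q^N_{m,\ell}(t) \leq \frac{1}{\ell}\Big(\sum_{l \geq 1} q^N_{m,l}(0) + \frac{A^N(T)}{|V_m^N|}\Big).
\]

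It remains to control each random quantity on the right. The weak convergence of $\mathbf{q}^N(0)$ to $\mathbf{q}^\infty \in \mathcal{S}$ in the $\ell_1$-topology (continuity of summation in $\ell_1$) ensures that $\sum_{l \geq 1} q^N_{m,l}(0)$ converges in distribution, and hence in probability, to the finite deterministic constant $\sum_{l \geq 1} q^\infty_{m,l}$ (finite by the definition of $\mathcal{S}$). By standard Poisson concentration combined with $W(N)/N \to \xi$ and $|V_m^N|/N \to v_m$, the quantity $A^N(T)/|V_m^N|$ is bounded above by $2\lambda \xi T/v_m$ with probability tending to $1$. Combining these via a union bound, for all $N$ large enough and all $m \in \mathcal{M}$ simultaneously, the right-hand side is at most $C(T, \mathbf{q}^\infty)/\ell$ on an event of probability at least $1 - \delta/(2M)$. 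Choosing $\ell$ so that $C(T, \mathbf{q}^\infty)/\ell < \varepsilon$ and taking a final union bound over the finitely many $m \in \mathcal{M}$ yields the claim.

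The only subtle point is the handling of the randomness of $\sum_{l \geq 1} q^N_{m,l}(0)$: we are given only weak convergence of $\mathbf{q}^N(0)$, not almost sure convergence, so we cannot directly replace the initial sum by its deterministic limit. However, because the limit is deterministic and summation is a continuous functional on $\mathcal{S}$ with the $\ell_1$-topology, convergence in distribution automatically upgrades to convergence in probability, which is all we need. No deeper obstruction arises; the argument is essentially a conservation-of-mass bound combined with Markov's inequality at the level of the $\ell_1$-norm.
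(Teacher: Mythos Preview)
Your argument is correct and, in fact, cleaner than the paper's own proof. Both proofs ultimately rely on the observation that queue lengths can only grow via arrivals, whose total count over $[0,T]$ is a Poisson variable of order $N$; the difference lies in how this observation is deployed.

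The paper takes a two-stage route: it first fixes a level $\ell_1$ at which the \emph{limiting} occupancy $q^\infty_{m,\ell_1}$ is already below $\varepsilon/4$, uses the $\ell_1$-weak convergence to ensure $q^N_{m,\ell_1}(0)<\varepsilon/2$ with high probability, and then sets $\ell=\ell_1+\Theta(1/\varepsilon)$ and argues that pushing $\varepsilon|V^N_m|/2$ additional servers from level $<\ell_1$ up to level $\geq\ell$ would require order-$N$ arrivals beyond the Poisson budget. You instead go directly through the $\ell_1$-mass $\sum_{l\geq1}q^N_{m,l}(t)$: bound it above by its initial value plus $A^N(T)/|V^N_m|$, invoke monotonicity in $l$ to get $q^N_{m,\ell}\leq(\text{mass})/\ell$, and then use continuity of the summation functional in the $\ell_1$-topology together with the deterministic limit to control the initial mass. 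Your route avoids the gap-counting step and the somewhat loose bookkeeping in the paper's argument; the paper's route has the minor conceptual advantage of only needing pointwise control of the initial occupancy at a single level $\ell_1$ rather than full $\ell_1$-tightness, but since the hypothesis already grants $\ell_1$-convergence this buys nothing here.
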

\begin{proof}
Fix any $\varepsilon>0$ and $\delta>0$. Since $\mathbf{q}^{\infty}\in\mathcal{S}$, then there exists $\ell_1\in\N_0$ such that $\sup_{m\in\mathcal{M}}q^{\infty}_{m,\ell_1}\leq \varepsilon/4$. By the weak convergence $\mathbf{q}^N(0)\Rightarrow \mathbf{q}^{\infty}$, there exists $N_1\in\N_0$ such that for all $N\geq N_1$, 
\begin{equation}\label{eq:5.9-1}
    \PP\big(q^N_{m,\ell_1}(0)\leq \varepsilon/2\big)\leq \PP\Big(\norm{\mathbf{q}^N(0)-\mathbf{q}^{\infty}}_1\geq \varepsilon/4\Big)<\frac{\delta}{2}.
\end{equation}
Let $\ell=\ell_1+\sup_{m\in\mathcal{M}}\lceil \frac{4 \xi \lambda T}{v_m \varepsilon}  \rceil$. Hence, 
\begin{equation}\label{eq:5.9-3}
\begin{split}
    &\PP\Big(\sup_{t\in[0,T]}\sup_{m\in\mathcal{M}}q^N_{m,\ell}(t)\geq \varepsilon\Big)\\
    & \leq \PP\Big(\sup_{t\in[0,T]}\sup_{m\in\mathcal{M}}q^N_{m,\ell}(t)\geq \varepsilon| \sup_{m\in\mathcal{M}}q^N_{m,\ell_1}(0)<\varepsilon/2\Big)+\PP\big(q^N_{m,\ell_1}(0)\leq \varepsilon/2\big).
\end{split}
\end{equation}
Since given $\sup_{m\in\mathcal{M}}q^N_{m,\ell_1}(0)<\varepsilon/2$, i.e., for all $m\in\mathcal{M}$, $q^N_{m,\ell_1}(0) |V^N_m|<\varepsilon/2|V^N_m|$, then if for some $t\in[0,T]$ and $m\in\mathcal{M}$, $q^N_{m,\ell}(t)\geq \varepsilon$, i.e., $q^N_{m,\ell}(t)|V^N_m|\geq \varepsilon|V^N_m|$, there must be at least $\inf_{m\in\mathcal{M}}|V^N_m|\varepsilon(\ell-\ell_1)/2$ tasks arriving in the system. 
By using the standard concentration inequality for Poisson random variables (see \cite[Theorem 2.3(b)]{MC98}), we have
\begin{equation}\label{eq:poi-bound-tail}
    \begin{split}
        & \PP\Big(\sup_{t\in[0,T]}\sup_{m\in\mathcal{M}}q^N_{m,\ell}(t)\geq \varepsilon| \sup_{m\in\mathcal{M}}q^N_{m,\ell_1}(0)<\varepsilon/2\Big)
         \leq \PP\big(\mathrm{Po}(W(N)\lambda)\geq\inf_{m\in\mathcal{M}}|V^N_m|\varepsilon(\ell-\ell_1)/2 \big)\\
        & \leq \PP\big(\mathrm{Po}(N \xi \lambda T)\geq 2 C(N) N \xi \lambda T  \big)
         \leq \exp \Big(-\frac{\big((2C(N)-1)N\xi\lambda T\big)^2}{2\big(N\xi \lambda T+((2C(N)-1)N\xi\lambda T)/3\big)}\Big)\xrightarrow{N\rightarrow\infty}0,
    \end{split}
\end{equation}
where $\mathrm{Po}(\cdot)$ is a unit-rate Poisson random variable, and $C(N)$ is a positive constant only dependent on $N$ and goes to $1$ as $N$ goes to infinity. The second inequality comes from the assumption that $W(N)/N\rightarrow\xi$ and $|V^N_m|/N\rightarrow v_m$, $\forall m\in\mathcal{M}$. By \eqref{eq:poi-bound-tail}, there exists $N_2\in\N_0$ such that for all $N\geq N_2$,
\begin{equation}\label{eq:5.9-2}
    \PP\Big(\sup_{t\in[0,T]}\sup_{m\in\mathcal{M}}q^N_{m,\ell}(t)\geq \varepsilon| \sup_{m\in\mathcal{M}}q^N_{m,\ell_1}(0)<\varepsilon/2\Big)< \frac{\delta}{2}.
\end{equation} 
Let $N_0=\max(N_1,N_2)$. By \eqref{eq:5.9-1}, \eqref{eq:5.9-3} and \eqref{eq:5.9-2}, 
\begin{equation*}
\PP\Big(\sup_{t\in[0,T]}\sup_{m\in\mathcal{M}}q^N_{m,\ell}(t)\geq \varepsilon\Big)<\delta.
\end{equation*}
\end{proof}

\begin{lemma}\label{lem:appro-x-kml}
For each $m\in\mathcal{M}$ and $k\in\mathcal{K}$,
\begin{equation}
    \sup_{U\subseteq V^N_m} \Big|\frac{|E^N_k(U)|}{|E^N_k(V^N)|}-\frac{v_mp_{k,m}}{\delta_k}\frac{|U|}{|V^N_m|}\Big|\rightarrow0 \text{ as }N\rightarrow\infty.
\end{equation}
\end{lemma}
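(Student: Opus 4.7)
The plan is to reduce the supremum bound to the pointwise degree concentration already provided by Lemma~\ref{lem:uniformity-degree}. The key observation is that whenever $U\subseteq V^N_m$, every term contributing to $|E^N_k(U)|=\sum_{j\in U}\deg^N_v(k,j)$ involves only servers of a single type, so each such term lies in the narrow interval $[\min_{j\in V^N_m}\deg^N_v(k,j),\ \max_{j\in V^N_m}\deg^N_v(k,j)]$. Similarly, $|E^N_k(V^N)|=\sum_{m'\in\mathcal{M}}\sum_{j\in V^N_{m'}}\deg^N_v(k,j)$ is sandwiched between $\sum_{m'}|V^N_{m'}|\min_{j\in V^N_{m'}}\deg^N_v(k,j)$ and the analogous maximum. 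This reduces everything to four quantities whose asymptotics are pinned down by \eqref{eq:rem-5} and the assumption $|V^N_{m'}|/N\to v_{m'}$.

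Next, I would fix an arbitrary $\varepsilon>0$ and choose $N$ large enough so that simultaneously, for every $m'\in\mathcal{M}$,
\begin{equation*}
(1-\varepsilon)p_{k,m'}|W^N_k|\ \leq\ \deg^N_v(k,j)\ \leq\ (1+\varepsilon)p_{k,m'}|W^N_k|,\quad j\in V^N_{m'},
\end{equation*}
and $(1-\varepsilon)v_{m'}N\leq |V^N_{m'}|\leq (1+\varepsilon)v_{m'}N$. Plugging these into the numerator and denominator yields, uniformly in $U\subseteq V^N_m$,
\begin{equation*}
\frac{|U|\,p_{k,m}}{N\delta_k}\cdot\frac{1-\varepsilon}{(1+\varepsilon)^2}\ \leq\ \frac{|E^N_k(U)|}{|E^N_k(V^N)|}\ \leq\ \frac{|U|\,p_{k,m}}{N\delta_k}\cdot\frac{1+\varepsilon}{(1-\varepsilon)^2},
\end{equation*}
and similarly
\begin{equation*}
\frac{p_{k,m}|U|}{\delta_k N(1+\varepsilon)}\ \leq\ \frac{v_mp_{k,m}}{\delta_k}\frac{|U|}{|V^N_m|}\ \leq\ \frac{p_{k,m}|U|}{\delta_k N(1-\varepsilon)}.
\end{equation*}
Subtracting these two-sided bounds and using $|U|/N\leq |V^N_m|/N\leq (1+\varepsilon)v_m$, the absolute difference is at most $C(\varepsilon)\cdot v_m\cdot p_{k,m}/\delta_k$, where $C(\varepsilon)\to 0$ as $\varepsilon\to 0$. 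Taking the supremum over $U$ preserves this bound because the right-hand side is independent of $U$, and sending $N\to\infty$ then $\varepsilon\to 0$ gives the claim.

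There is essentially no main obstacle; the only care needed is to write the degree concentration bounds so that both the numerator (restricted to $U\subseteq V^N_m$) and the denominator (a full sum over $V^N$) can be controlled by the \emph{same} $\varepsilon$-envelope, and then to verify that the resulting error factor multiplying $|U|/N$ stays bounded uniformly in $U$. The uniformity is automatic once one notes that $|U|/N$ is itself bounded by a constant, so the supremum over $U$ introduces no further loss.
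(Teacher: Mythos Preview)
Your proposal is correct and follows essentially the same approach as the paper: both arguments sandwich $|E^N_k(U)|$ and $|E^N_k(V^N)|$ using the uniform degree bounds from Lemma~\ref{lem:uniformity-degree} with an $\varepsilon$-envelope, then observe that the resulting error factor is independent of $U$ so the supremum causes no loss. The only cosmetic difference is that the paper first isolates the ratio $\frac{|E^N_k(U)||V^N_m|}{|E^N_k(V^N)||U|}$ and compares it to $\frac{v_mp_{k,m}}{\delta_k}$ before multiplying back by $|U|/|V^N_m|\le 1$, whereas you bound both terms in the original difference directly; the substance is identical.
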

\begin{proof}
Fix any $\varepsilon>0$. By Condition~\ref{cond-1} and Lemma~\ref{lem:uniformity-degree}, there exist $N(\varepsilon)\in\N_0$ such that for all $N\geq N(\varepsilon)$, 
\begin{equation}
    (1-\varepsilon)p_{k,m}|W^N_k||U|\leq |E^N_k(U)| \leq (1+\varepsilon)p_{k,m}|W^N_k||U|,\quad \forall U\subseteq V^N_m,
\end{equation}
and
\begin{equation}
    (1-\varepsilon)\sum_{m\in\mathcal{M}}p_{k,m}|W^N_k||V^N_m|\leq |E^N_k(V^N)|\leq (1+\varepsilon)\sum_{m\in\mathcal{M}}p_{k,m}|W^N_k||V^N_m|.
\end{equation}
Hence, for all $N\geq N(\varepsilon)$,
\begin{equation}
    \sup_{U\subseteq V^N_m} \Big|\frac{|E^N_k(U)||V^N_m|}{|E^N_k(V^N)||U|}-\frac{v_mp_{k,m}}{\delta_k}\Big|\leq \max\Big\{\varepsilon_1(\varepsilon,N), \varepsilon_2(\varepsilon,N)\Big\},
\end{equation}
where $\varepsilon_1(\varepsilon,N)=\Big|\frac{(1-\varepsilon)p_{k,m}|W^N_k||V^N_m|}{(1+\varepsilon)\sum_{m\in\mathcal{M}}p_{k,m}|W^N_k||V^N_m|}-\frac{v_mp_{k,m}}{\delta_k}\Big|$ and $\varepsilon_2(\varepsilon,N)=\Big|\frac{(1+\varepsilon)p_{k,m}|W^N_k||V^N_m|}{(1-\varepsilon)\sum_{m\in\mathcal{M}}p_{k,m}|W^N_k||V^N_m|}-\frac{v_mp_{k,m}}{\delta_k}\Big|$.
Again, by Condition~\ref{cond-1} and Lemma~\ref{lem:uniformity-degree}, 
\begin{equation}\label{eq:5.53}
    \begin{split}
        & \lim_{N\rightarrow\infty}\sup_{U\subseteq V^N_m} \Big|\frac{|E^N_k(U)||V^N_m|}{|E^N_k(V^N)||U|}-\frac{v_mp_{k,m}}{\delta_k}\Big|\\
        & \leq \lim_{N\rightarrow\infty} \max\Big\{\varepsilon_1(\varepsilon,N), \varepsilon_2(\varepsilon,N)\Big\}\\
        & = \max\Big\{\Big|\frac{(1-\varepsilon)v_mp_{k,m}}{(1+\varepsilon)\delta_k}-\frac{v_mp_{k,m}}{\delta_k}\Big|, \Big|\frac{(1+\varepsilon)v_mp_{k,m}}{(1-\varepsilon)\delta_k}-\frac{v_mp_{k,m}}{\delta_k}\Big|\Big\}
    \end{split}
\end{equation}
Since \eqref{eq:5.53} holds for any $\varepsilon>0$, we have 
\begin{equation}\label{eq:5.54}
    \begin{split}
        & \lim_{N\rightarrow\infty}\sup_{U\subseteq V^N_m} \Big|\frac{|E^N_k(U)||V^N_m|}{|E^N_k(V^N)||U|}-\frac{v_mp_{k,m}}{\delta_k}\Big|\\
        & \leq \lim_{\varepsilon\downarrow 0}\max\Big\{\Big|\frac{(1-\varepsilon)v_mp_{k,m}}{(1+\varepsilon)\delta_k}-\frac{v_mp_{k,m}}{\delta_k}\Big|, \Big|\frac{(1+\varepsilon)v_mp_{k,m}}{(1-\varepsilon)\delta_k}-\frac{v_mp_{k,m}}{\delta_k}\Big|\Big\}=0.
    \end{split}
\end{equation}
\end{proof}

\begin{proof}[Proof of Proposition~\ref{prop: num-bad-disp}] 
Consider any fixed $k\in\mathcal{K}$. Also, fix $\varepsilon_1>0$ and $\varepsilon_2>0$. By the triangle inequality, we have
\allowdisplaybreaks
\begin{align}\label{eq:prop-5.9-1}
    & \PP\Big(\sup_{t\in[0,T]}\Big|\Big\{i\in W^N_k:\sum_{m\in\mathcal{M}}\sum_{l\in\N_0}|\hat{x}^N_{i,m,l}(t)-x^N_{k,m,l}(t)|>\varepsilon_1\Big\}\Big|\geq \varepsilon_2 M(N)/K\Big)\nonumber\\
    & \leq \PP\Big(\sup_{t\in[0,T]}\Big|\Big\{i\in W^N_k:\sum_{m\in\mathcal{M}}\sum_{0\leq l\leq \ell-1}|\hat{x}^N_{i,m,l}(t)-x^N_{k,m,l}(t)|>\varepsilon_1/4\Big\}\Big|\geq \varepsilon_2 M(N)/(4K)\Big)\nonumber\\
    &\quad  +\PP\Big(\sup_{t\in[0,T]}\Big|\Big\{i\in W^N_k:\sum_{m\in\mathcal{M}}\sum_{l\geq \ell}\hat{x}^N_{i,m,l}(t)>\varepsilon_1/2\Big\}\Big|\geq \varepsilon_2 M(N)/(2K)\Big)\nonumber\\
    &\quad  +\PP\Big(\sup_{t\in[0,T]}\Big|\Big\{i\in W^N_k:\sum_{m\in\mathcal{M}}\sum_{l\geq \ell}x^N_{k,m,l}(t)>\varepsilon_1/4\Big\}\Big|\geq \varepsilon_2 M(N)/(4K)\Big)\nonumber\\
    & \leq \sum_{0\leq l\leq \ell-1}\PP\Big(\sup_{t\in[0,T]}\Big|\Big\{i\in W^N_k:\sum_{m\in\mathcal{M}}|\hat{x}^N_{i,m,l}(t)-x^N_{k,m,l}(t)|>\varepsilon_1/(4\ell)\Big\}\Big|\geq \varepsilon_2 M(N)/(4\ell K)\Big)\nonumber\\
    &\quad + \PP\Big(\sup_{t\in[0,T]}\Big|\Big\{i\in W^N_k:\sum_{m\in\mathcal{M}}\big|\sum_{l\geq \ell}(\hat{x}^N_{i,m,l}(t)-x^N_{k,m,l}(t))\big|>\varepsilon_1/4\Big\}\Big|\geq \varepsilon_2 M(N)/(4K)\Big)\nonumber\\
    &\quad +2\PP\Big(\sup_{t\in[0,T]}\Big|\Big\{i\in W^N_k:\sum_{m\in\mathcal{M}}\sum_{l\geq \ell}x^N_{k,m,l}(t)>\varepsilon_1/4\Big\}\Big|\geq \varepsilon_2 M(N)/(4K)\Big)
\end{align}
By the triangle inequality and Markov's inequality, 
\allowdisplaybreaks
\begin{align}
        &\sum_{0\leq l\leq \ell-1}\PP\Big(\sup_{t\in[0,T]}\Big|\Big\{i\in W^N_k:\sum_{m\in\mathcal{M}}|\hat{x}^N_{i,m,l}(t)-x^N_{k,m,l}(t)|>\varepsilon_1/(4\ell)\Big\}\Big|\geq \varepsilon_2 M(N)/(4\ell K)\Big)\nonumber\\
        & \leq \sum_{0\leq l\leq \ell-1} \Big(\PP\Big(\sup_{t\in[0,T]}\Big|\Big\{i\in W^N_k:\sum_{m\in\mathcal{M}}\big|\hat{x}^N_{i,m,l}(t)-\frac{|E^N_k(U^N_{m,l}(t))|}{|E^N_k(V^N)|}\big|>\varepsilon_1/(8\ell)\Big\}\Big|\geq \varepsilon_2 M(N)/(4\ell K)\Big)\nonumber\\
        &\hspace{1.5cm} + \PP\Big(\sup_{t\in[0,T]}\sum_{m\in\mathcal{M}}\Big|\frac{|E^N_k(U^N_{m,l}(t))|}{|E^N_k(V^N)|}-x^N_{k,m,l}\Big|>\varepsilon_1/(8\ell)\Big)\Big)\nonumber\\
        & \leq \frac{4\ell K}{\varepsilon_2 M(N)}\sum_{0\leq l\leq \ell-1}\E \Big(\sup_{t\in[0,T]}\Big|\Big\{i\in W^N_k:\sum_{m\in\mathcal{M}}|\hat{x}^N_{i,m,l}(t)-\frac{|E^N_k(U^N_{m,l}(t))|}{|E^N_k(V^N)|}|>\varepsilon_1/(8\ell)\Big\}\Big|\Big)\nonumber\\
        &\hspace{1.5cm} +\sum_{0\leq l\leq \ell-1}\PP\Big(\sum_{m\in\mathcal{M}}\sup_{U\in V^N_m}\Big|\frac{|E^N_k(U)|}{|E^N_k(V^N)|}-\frac{v_m}{\delta_k}\frac{|U|}{|V^N_m|}\Big|>\varepsilon_1/(8\ell)\Big)\nonumber\\
        & \leq \frac{4\ell K}{\varepsilon_2 M(N)}\sum_{0\leq l\leq \ell-1}\sum_{m\in\mathcal{M}}\E \Big(\sup_{t\in[0,T]}\Big|\Big\{i\in W^N_k:|\hat{x}^N_{i,m,l}(t)-\frac{|E^N_k(U^N_{m,l}(t))|}{|E^N_k(V^N)|}|>\varepsilon_1/(8M\ell)\Big\}\Big|\Big)\nonumber\\
        &\hspace{1.5cm} +\sum_{0\leq l\leq \ell-1}\sum_{m\in\mathcal{M}}\PP\Big(\sup_{U\in V^N_m}\Big|\frac{|E^N_k(U)|}{|E^N_k(V^N)|}-\frac{v_mp_{k,m}}{\delta_k}\frac{|U|}{|V^N_m|}\Big|>\varepsilon_1/(8M\ell)\Big)\nonumber\\
        & \leq \frac{4\ell K}{\varepsilon_2 M(N)}\sum_{0\leq l\leq \ell-1}\sum_{m\in\mathcal{M}}\sup_{U\in V^N_m}\Big|\Big\{i\in W^N_k:\big|\frac{|\mathcal{N}^N_w(i)\cap U|}{|\mathcal{N}^N_w(i)|}-\frac{|E^N_k(U)|}{|W^N_k(V^N)|}\big|>\varepsilon_1/(8M\ell)\Big\}\Big|\nonumber\\
        &\hspace{1.5cm} +\sum_{0\leq l\leq \ell-1}\sum_{m\in\mathcal{M}}\PP\Big(\sup_{U\in V^N_m}\Big|\frac{|E^N_k(U)|}{|E^N_k(V^N)|}-\frac{v_mp_{k,m}}{\delta_k}\frac{|U|}{|V^N_m|}\Big|>\varepsilon_1/(8M\ell)\Big)\nonumber\\
        & \leq \frac{4\ell^2 K M}{\varepsilon_2 M(N)}\sup_{U\in V^N}\Big|\Big\{i\in W^N_k:\big|\frac{|\mathcal{N}^N_w(i)\cap U|}{|\mathcal{N}^N_w(i)|}-\frac{|E^N_k(U)|}{|W^N_k(V^N)|}\big|>\varepsilon_1/(8M\ell)\Big\}\Big|\nonumber\\
        &\hspace{1.5cm} +\sum_{0\leq l\leq \ell-1}\sum_{m\in\mathcal{M}}\PP\Big(\sup_{U\in V^N_m}\Big|\frac{|E^N_k(U)|}{|E^N_k(V^N)|}-\frac{v_mp_{k,m}}{\delta_k}\frac{|U|}{|V^N_m|}\Big|>\varepsilon_1/(8M\ell)\Big)
\end{align}
By Lemma~\ref{lem:appro-x-kml}, there exists $N_1\in\N_0$ such that for all $N\geq N_1$, 
\begin{equation}
    \sup_{m\in\mathcal{M}}\sup_{U\subseteq V^N_m}\Big|\frac{|E^N_k(U)|}{|E^N_k(V^N)|}-\frac{v_mp_{k,m}}{\delta_k}\frac{|U|}{|V^N_m|}\Big|\leq \varepsilon_1/(8M\ell),
\end{equation}
implying
\begin{equation}\label{eq:using-proportion-1}
    \begin{split}
        &\sum_{0\leq l\leq \ell-1}\PP\Big(\sup_{t\in[0,T]}\Big|\Big\{i\in W^N_k:\sum_{m\in\mathcal{M}}|\hat{x}^N_{i,m,l}(t)-x^N_{k,m,l}(t)|>\varepsilon_1/(4\ell)\Big\}\Big|\geq \varepsilon_2 M(N)/(4\ell K)\Big)\\
        & \leq \frac{4\ell^2 K M}{\varepsilon_2 M(N)}\sup_{U\in V^N}\Big|\Big\{i\in W^N_k:\big|\frac{|\mathcal{N}^N_w(i)\cap U|}{|\mathcal{N}^N_w(i)|}-\frac{|E^N_k(U)|}{|W^N_k(V^N)|}\big|>\varepsilon_1/(8M\ell)\Big\}\Big|
    \end{split}
\end{equation}
\noindent
Similarly, we have that there exists $N_2\in\N_0$ such that $N\geq N_2$,
\allowdisplaybreaks
\begin{align}\label{eq:using-proportion-2}
    & \PP\Big(\sup_{t\in[0,T]}\Big|\Big\{i\in W^N_k:\sum_{m\in\mathcal{M}}\big|\sum_{l\geq \ell}(\hat{x}^N_{i,m,l}(t)-x^N_{k,m,l}(t))\big|>\varepsilon_1/4\Big\}\Big|\geq \varepsilon_2 M(N)/(4K)\Big)\nonumber\\
    & \leq \frac{4K}{\varepsilon_2 M(N)} \sup_{U\in V^N}\Big|\Big\{i\in W^N_k:\big|\frac{|\mathcal{N}^N_w(i)\cap U|}{|\mathcal{N}^N_w(i)|}-\frac{|E^N_k(U)|}{|W^N_k(V^N)|}\big|>\varepsilon_1/4\Big\}\Big|.
\end{align}
By \eqref{eq:prop-5.9-1},\eqref{eq:using-proportion-1} and \eqref{eq:using-proportion-2}, there exists $N_3=\max(N_1,N_2)$ such that for all $N\geq N_3$, 
\begin{equation}
\begin{split}
    & \PP\Big(\sup_{t\in[0,T]}\Big|\Big\{i\in W^N_k:\sum_{m\in\mathcal{M}}\sum_{l\in\N_0}|\hat{x}^N_{i,m,l}(t)-x^N_{k,m,l}(t)|>\varepsilon_1\Big\}\Big|\geq \varepsilon_2 M(N)/K\Big)\\
    & \leq \frac{8\ell^2 KM}{\varepsilon_2 M(N)} \sup_{U\in V^N}\Big|\Big\{i\in W^N_k:\big|\frac{|\mathcal{N}^N_w(i)\cap U|}{|\mathcal{N}^N_w(i)|}-\frac{|E^N_k(U)|}{|W^N_k(V^N)|}\big|>\varepsilon_1/4\Big\}\Big|\\
    &\quad + 2\PP\Big(\sup_{t\in[0,T]}\Big|\Big\{i\in W^N_k:\sum_{m\in\mathcal{M}}\sum_{l\geq \ell}x^N_{k,m,l}(t)>\varepsilon_1/4\Big\}\Big|\geq \varepsilon_2 M(N)/(4K)\Big).
\end{split}
\end{equation}
Fix any $\varepsilon_3>0$
By Definition~\ref{defn:prop-sparse}, there exists $N_4\in \N_0$ such that for all $N\geq N_4$, 
\begin{equation}
    \sup_{U\in V^N}\Big|\Big\{i\in W^N_k:\big|\frac{|\mathcal{N}^N_w(i)\cap U|}{|\mathcal{N}^N_w(i)|}-\frac{|E^N_k(U)|}{|W^N_k(V^N)|}\big|>\varepsilon_1/4\Big\}\Big|\leq \frac{\varepsilon_2 M(N)\varepsilon_3}{16\ell^2 KM}.
\end{equation}
By Lemma~\ref{lem:sup-q-ml-t}, there exists $N_5\in\N_0$ such that for all $N\geq N_5$,
\begin{equation}
    \PP\Big(\sup_{t\in[0,T]}\Big|\Big\{i\in W^N_k:\sum_{m\in\mathcal{M}}\sum_{l\geq \ell}x^N_{k,m,l}(t)>\varepsilon_1/4\Big\}\Big|\geq \varepsilon_2 M(N)/(4K)\Big)\leq \frac{\varepsilon_3}{4}.
\end{equation}
Hence, \begin{equation}
    \PP\Big(\sup_{t\in[0,T]}\Big|\Big\{i\in W^N_k:\sum_{m\in\mathcal{M}}\sum_{l\in\N_0}|\hat{x}^N_{i,m,l}(t)-x^N_{k,m,l}(t)|>\varepsilon_1\Big\}\Big|\geq \varepsilon_2 M(N)/K\Big)\leq \varepsilon_3.
\end{equation}
Since $\varepsilon_3>0$ are arbitrary, then the desired result holds.
\end{proof}

\section{Bound the Mismatch}\label{app:lipschitz-jsq-d}
\begin{proof}
Define a function $F^N_{m,l}(\cdot):\mathcal{S}\rightarrow [0,1]$ as: for $\mathbf{x}=(x_{m,l},m\in\mathcal{M},l\in\N_0)\in\mathcal{S}$,
\begin{equation}
    F^N_{m,l}(\mathbf{x})=\frac{\sum_{r=1}^d\sum_{r_1=1}^{r}\frac{r_1}{r}{ N x_{m,l}\choose r_1}{ N\sum_{\mathcal{M}\setminus \{m\}}x_{m,l}\choose r-r_1} { N\sum_{\mathcal{M}}\sum_{l'\geq l+1}x_{m,l}\choose d-r}}{{N \choose d}}.
\end{equation}
Also, define a function $f_{m,l}(\cdot)$ as: for $\mathbf{x}\in\mathcal{S}$,
\begin{equation}
    f_{m,l}(\mathbf{x})=\sum_{r=1}^d\sum_{r_1=1}^{r}\frac{r_1}{r}\frac{d!}{r_1!(r-r_1)!(d-r)!}\big(x_{m,l}\big)^{r_1}\big(\sum_{\mathcal{M}\setminus\{m\}}x_{m,l}\big)^{r-r_1}\big(\sum_{\mathcal{M}}\sum_{l'\geq l+1}x_{m,l'}\big)^{d-r}
\end{equation}
Note that for any $0\leq y\leq x\leq 1$ and $1\leq k\leq d$,  $x^k-(x-y)^k\leq kxy\leq ky$. Then, we have
\allowdisplaybreaks
\begin{align}\label{eq:FN-f}
    &\sum_{m\in\mathcal{M}}\sum_{l\in\N_0}|F^N_{m,l}(\mathbf{x})-f_{m,l}(\mathbf{x})|\nonumber\\
    &\leq \sum_{m\in\mathcal{M}}\sum_{l\in\N_0}\sum_{r=1}^d\sum_{r_1=1}^{r}\frac{r_1}{r}\frac{d!}{r_1!(r-r_1)!(d-r)!}\Big(\big(x_{m,l}\big)^{r_1}\big(\sum_{\mathcal{M}\setminus\{m\}}x_{m,l}\big)^{r-r_1}\big(\sum_{\mathcal{M}}\sum_{l'\geq l+1}x_{m,l'}\big)^{d-r}\nonumber\\
    &\hspace{3.5cm}-\big(x_{m,l}-\frac{r_1}{N}\big)^{r_1}\big(\sum_{\mathcal{M}\setminus\{m\}}x_{m,l}-\frac{r-r_1}{N}\big)^{r-r_1}\big(\sum_{\mathcal{M}}\sum_{l'\geq l+1}x_{m,l'}-\frac{d-r}{n}\big)^{d-r}\Big)\nonumber\\
    &\leq \sum_{m\in\mathcal{M}}\sum_{l\in\N_0}\sum_{r=1}^d\sum_{r_1=1}^{r}\frac{r_1}{r}\frac{d! r_1(r-r_1)(d-r)}{r_1!(r-r_1)!(d-r)!}x_{m,l}(\sum_{\mathcal{M}\setminus\{m\}}x_{m,l})(\sum_{\mathcal{M}}\sum_{l'\geq l+1}x_{m,l'})\big(\frac{d}{N}\big)^d\nonumber\\
    &\leq \sum_{m\in\mathcal{M}}\sum_{l\in\N_0}\sum_{r=1}^d\sum_{r_1=1}^{r}\frac{r_1}{r}\frac{d! r_1(r-r_1)(d-r)}{r_1!(r-r_1)!(d-r)!}x_{m,l}\big(\frac{d}{N}\big)^d\nonumber\\
    &= \sum_{r=1}^d\sum_{r_1=1}^{r}\frac{r_1}{r}\frac{d! r_1(r-r_1)(d-r)}{r_1!(r-r_1)!(d-r)!}\big(\frac{d}{N}\big)^d\rightarrow0\text{ as }0.
\end{align}
Let $\hat{\mathbf{x}}^N_i=(\hat{x}^N_{i,m,l},m\in\mathcal{M},l\in\N_0)$ and $\mathbf{x}'^N_k=(x'^N_{k,m,l},m\in\mathcal{M},l\in\N_0)$. By \eqref{eq:p-ml} and \eqref{eq:p-'-ml}, $p^N_{m,l}(i)=F^N_{m,l}(\hat{\mathbf{x}}^N_i)$ and $p'^N_{m,l}(k)=F^N_{m,l}(\mathbf{x}'^N_k)$.
By the Optimal Coupling, we have 
\begin{align}\label{eq:mismatch-1}
    \PP(\textit{Mismatch})&\leq \sum_{m\in\mathcal{M}}\sum_{l\in\N_0}|F^N_{m,l}(\hat{\mathbf{x}}^N_i)-F^N_{m,l}(\mathbf{x}'^N_k)|\nonumber\\
    &\leq \sum_{m\in\mathcal{M}}\sum_{l\in\N_0}|F^N_{m,l}(\hat{\mathbf{x}}^N_i)-f_{m,l}(\hat{\mathbf{x}}^N_i)|+\sum_{m\in\mathcal{M}}\sum_{l\in\N_0}|F^N_{m,l}(\mathbf{x}'^N_k)-f_{m,l}(\mathbf{x}'^N_k)|\nonumber\\
    &\quad +\sum_{m\in\mathcal{M}}\sum_{l\in\N_0}|f_{m,l}(\hat{\mathbf{x}}^N_i)-f_{m,l}(\mathbf{x}'^N_k)|
\end{align}
Next, we are going to show that $f(\cdot)$ is Lipschitz continuous for $\mathbf{x}\in\mathcal{S}$.
\allowdisplaybreaks
\begin{align}\label{eq:mistmatch-2}
    &\sum_{m\in\mathcal{M}}\sum_{l\in\N_0}|f_{m,l}(\hat{\mathbf{x}}^N_i)-f_{m,l}(\mathbf{x}'^N_k)|\nonumber\\
    &\leq \sum_{m\in\mathcal{M}}\sum_{l\in\N_0}\sum_{r=1}^d\sum_{r_1=1}^{r}\frac{r_1}{r}\frac{d!}{r_1!(r-r_1)!(d-r)!}\Big|\big(\hat{x}^N_{i,m,l}\big)^{r_1}\big(\sum_{\mathcal{M}\setminus\{m\}}\hat{x}^N_{i,m,l}\big)^{r-r_1}\big(\sum_{\mathcal{M}}\sum_{l'\geq l+1}\hat{x}^N_{i,m,l'}\big)^{d-r}\nonumber\\
    &\hspace{3.5cm}-\big(x'^N_{k,m,l}\big)^{r_1}\big(\sum_{\mathcal{M}\setminus\{m\}}x'^N_{k,m,l}\big)^{r-r_1}\big(\sum_{\mathcal{M}}\sum_{l'\geq l+1}x'^N_{k,m,l'}\big)^{d-r}\Big|\nonumber\\
    &\leq  \sum_{m\in\mathcal{M}}\sum_{l\in\N_0}\sum_{r=1}^d\sum_{r_1=1}^{r}\frac{r_1}{r}\frac{d!r_1(r-r_1)(d-r)}{r_1!(r-r_1)!(d-r)!}\Big|\big(\hat{x}^N_{i,m,l}-x'^N_{k,m,l}\big)\nonumber\\
    &\hspace{3.5cm}\big(\sum_{\mathcal{M}\setminus\{m\}}\hat{x}^N_{i,m,l}-\sum_{\mathcal{M}\setminus\{m\}}x'^N_{k,m,l}\big)\big(\sum_{\mathcal{M}}\sum_{l'\geq l+1}\hat{x}^N_{i,m,l'}-\sum_{\mathcal{M}}\sum_{l'\geq l+1}x'^N_{k,m,l'}\big)\Big|\nonumber\\
    &\leq  \sum_{m\in\mathcal{M}}\sum_{l\in\N_0}\sum_{r=1}^d\sum_{r_1=1}^{r}\frac{r_1}{r}\frac{d!r_1(r-r_1)(d-r)}{r_1!(r-r_1)!(d-r)!}\Big|\big(\hat{x}^N_{i,m,l}-x'^N_{k,m,l}\big)\Big|\nonumber\\
    =& \sum_{r=1}^d\sum_{r_1=1}^{r}\frac{r_1}{r}\frac{d!r_1(r-r_1)(d-r)}{r_1!(r-r_1)!(d-r)!}\norm{\hat{\mathbf{x}}^N_{i}-\mathbf{x}^N_{k}}_1.
\end{align}
Let $L=2\sum_{r=1}^d\sum_{r_1=1}^{r}\frac{r_1}{r}\frac{d!r_1(r-r_1)(d-r)}{r_1!(r-r_1)!(d-r)!}$. By \eqref{eq:FN-f}, \eqref{eq:mismatch-1} and \eqref{eq:mistmatch-2}, we have that for large enough $N$,
\begin{equation}
    \PP(\textit{Mismatch})\leq L \norm{\hat{\mathbf{x}}^N_{i}-\mathbf{x}^N_{k}}_1.
\end{equation}

\end{proof}

\section{Doubly Exponential Decay}\label{app:proof-doubly-decay}

\begin{proof}[Proof of Proposition~\ref{prop:dbly-decay}]
Since $\mathbf{q}$ is a fixed point of \eqref{eq:ode-sys-2}, then we have 
\begin{equation*}
   u_m(q_{m,l}-q_{m,l+1})=\lambda\xi(q_{m,l-1}-q_{m,l})\sum_{k\in\mathcal{K}}\frac{p_{k,m}w_k}{\delta_k}\frac{(\tilde{q}_{k,l-1})^d-(\tilde{q}_{k,l})^d}{\tilde{q}_{k,l-1}-\tilde{q}_{k,l}}.
\end{equation*}
Multiplying both sides by $v_m$ and summing over $m \in \mathcal{M}$ gives,
\begin{equation}
    \sum_{m\in\mathcal{M}}v_mu_m(q_{m,l}-q_{m,l+1})=\lambda\xi\sum_{k\in\mathcal{K}}w_k\big((\tilde{q}_{k,l-1})^d-(\tilde{q}_{k,l})^d\big).
\end{equation}
Also, since $q_{m,l}\xrightarrow{l\rightarrow\infty}0$, $\forall m\in\mathcal{M}$, then for $\ell\geq 1$, by adding $l\geq \ell$, we have
\begin{equation}\label{eq:sum-qml}
    \sum_{m\in\mathcal{M}}v_mu_mq_{m,\ell}=\lambda\xi \sum_{k\in\mathcal{K}}w_k\big(\tilde{q}_{k,\ell-1}\big)^d.
\end{equation}
From \eqref{eq:sum-qml} and $\sum_{k\in\mathcal{K}}w_k=1$, we have 
$$ \sum_{m\in\mathcal{M}}v_mu_mq_{m,\ell}\leq \lambda\xi (\tilde{q}_{\ell-1}^*)^d$$
where $\tilde{q}_{\ell-1}^*=\max_{k\in\mathcal{K}}\tilde{q}_{k,\ell-1}$. Hence, for all $m\in\mathcal{M}$, 
$$q_{m,\ell}\leq \frac{\lambda\xi}{v_mu_m}(\tilde{q}_{\ell-1}^*)^d\leq c^*(m,\ell-1)\tilde{q}_{\ell-1}^*,$$ 
where $c^*(m,\ell-1)=(\tilde{q}_{\ell-1}^*)^{d-1}\max_{m\in\mathcal{M}}\lambda\xi/(v_mu_m)$. Since we assume that $q_{m,\ell}\xrightarrow{\ell\rightarrow\infty}0$ for all $m\in\mathcal{M}$, then we can choose a large enough $\ell$ such that $c^*(m,\ell-1)<1$. By definition, for each $k\in\mathcal{K}$,
$$\tilde{q}_{k,\ell}=\sum_{m\in\mathcal{M}}\frac{v_mp_{k,m}}{\delta_k}q_{m,\ell}\leq c^*(m,\ell-1)(\tilde{q}^*_{\ell-1})^{d-1}$$ 
which implies that 
$\tilde{q}_{\ell}^*\leq c^*(m,\ell-1)\tilde{q}^*_{\ell-1}$ and 
$$q_{m,\ell+1}\leq \frac{\lambda\xi}{v_mu_m}(\tilde{q}^*_{\ell})^d\leq (c^*(m,\ell-1)\tilde{q}_{\ell-1}^*)^d\max_{m\in\mathcal{M}}\lambda\xi/(v_mu_m)=(c^*(m,\ell-1))^{d+1}\tilde{q}_{\ell-1}^*.$$ By induction, we obtain that for $n\in\N_0$, 
\begin{equation}\label{eq:dbly-expo}
    q_{m,\ell+n}\leq (c^*(m,\ell-1))^{e(n)}\tilde{q}^*_{\ell-1}\leq (c^*(m,\ell-1))^{d^n}\tilde{q}^*_{\ell-1}
\end{equation}
where $e(n)=\sum_{i=0}^nd^{i}$. \eqref{eq:dbly-expo} implies that $\{q_{m,l},l\in\N_0\}$ decreases doubly exponentially.
\end{proof}

\begin{remark}
Recall $\tilde{q}_{k,l}=\sum_{m\in\mathcal{M}}\frac{v_mp_{k,m}}{\delta_k}q_{m,l}$. From Proposition~\ref{prop:dbly-decay}, we know $\{\tilde{q}_{k,l},l\in\N_0\}$ decreases doubly exponentially. 
In fact, they do not decay further faster.
To see this, let $c_0 = \min_{k \in \mathcal{K}} \min_{m \in \mathcal{M}} \frac{p_{k,m}}{\delta_k} \in (0,1].$
Then
$\tilde{q}_{k,l}=\sum_{m=1}^M \frac{v_{m}p_{k,m}}{\delta_k}q_{m,l} \ge c_0 \sum_{m\in\mathcal{M}}v_mq_{m,l}.$
It then follows from \eqref{eq:sum-qml} that
\begin{equation*}
 \min_{k \in \mathcal{K}} \tilde{q}_{k,\ell} \ge c_0 \sum_{m\in\mathcal{M}}v_mq_{m,\ell} = \lambda c_0 \sum_{k\in\mathcal{K}}w_k\big(\tilde{q}_{k,\ell-1}\big)^d \ge \lambda c_0 (\min_{k \in \mathcal{K}} \tilde{q}_{k,\ell-1})^d.
\end{equation*}
So
$$(\lambda c_0)^{\frac{1}{d-1}} \min_{k \in \mathcal{K}} \tilde{q}_{k,\ell} \ge ((\lambda c_0)^{\frac{1}{d-1}} \min_{k \in \mathcal{K}} \tilde{q}_{k,\ell-1})^d \ge \dotsb \ge ((\lambda c_0)^{\frac{1}{d-1}} \min_{k \in \mathcal{K}} \tilde{q}_{k,0})^{d^\ell}$$
and hence
$\min_{k \in \mathcal{K}} \tilde{q}_{k,\ell} \ge (\lambda c_0)^{\frac{d^\ell-1}{d-1}}.$
\end{remark}

\section{Proof of Lemma~\ref{lem:alpha-m}}\label{app:proof-lem-alpha-m}
\begin{proof}[Proof of Lemma~\ref{lem:alpha-m}]
Fix any $(\alpha_1,...,\alpha_M)\in(0,1)^M$ with $\sum_{m\in\mathcal{M}}\alpha_m>0$. Consider any sequence $\{U^N\}_N$ of subsets with $U^N\subseteq V^N$ and $\lim_{N\rightarrow\infty}\frac{|U^N\cap V^N_m|}{|V^N_m|}=\alpha_m$ for all $m\in\mathcal{M}$. By Condition~\ref{cond-1}, we have that for all $k\in\mathcal{K}$ and $m\in\mathcal{M}$,
\begin{equation}\label{eq:edge-density}
    \lim_{N\rightarrow\infty}\frac{|E^N_k(U^N\cap V^N_m)|}{|E^N_k(V^N_m)|}=\alpha_m v_m.
\end{equation}
Fix any $\varepsilon>0$ which will be chosen later. Let $\mathcal{G}^N_{k,\varepsilon}=\Big\{i\in W^N_k:\Big|\frac{|\mathcal{N}^N_w(i)\cap v|}{|\mathcal{N}^N_w(i)|}-\frac{|E^N_k(v)|}{|E^N_k(V^N)|}\Big|\geq \varepsilon\Big\}$ and $\mathcal{B}^N_{k,\varepsilon}=W^N_k\setminus \mathcal{G}^N_{k,\varepsilon}$.
By \eqref{eq:edge-density}, for all large enough $N$ and $i\in \mathcal{G}^N_{k,\varepsilon}$,
\begin{equation}
    N (1-2\varepsilon)\sum_{m\in\mathcal{M}}\alpha_m v_m p_{k,m}\leq |\mathcal{N}^N_w(i)\cap U^N|\leq N (1+2\varepsilon)\sum_{m\in\mathcal{M}}\alpha_m v_m p_{k,m}.
\end{equation}
Also, by Condition~\ref{cond-1}, for all large enough $N$, 
\begin{equation}
    N \delta_k  (1-\varepsilon) \leq \delta^N_i\leq  N \delta_k(1+\varepsilon).
\end{equation}
Since the sequence $\{G^N\}_N$ is in the subcritical, then for large enough $N$,
\begin{equation}
    \begin{split}
        \rho\geq \rho^N\geq &\Big(\sum_{j\in U^N}\sum_{m\in\mathcal{M}'}\mathds{1}_{(j\in V^N_m)}u_m\Big)^{-1}\sum_{i\in W^N}\sum_{\substack{S\subseteq(U^N\cap \mathcal{N}^N_w(i)):\\|S|=d}}\frac{\lambda}{{|\mathcal{N}^N_w(i)|\choose d}}\\
        \geq &c'(N)\Big(\sum_{m\in\mathcal{M}'} Nv_m\alpha_m  u_m \Big)^{-1}\sum_{k\in\mathcal{K}}\sum_{i\in W^N_k}\frac{\lambda {|U^N\cap \mathcal{N}^N_w(i)|\choose d} }{{|\mathcal{N}^N_w(i)|\choose d}}\\
        \geq & \Big(\sum_{m\in\mathcal{M}'} Nv_m\alpha_m u_m \Big)^{-1}\sum_{k\in\mathcal{K}}\frac{\lambda |\mathcal{G}^N_{k,\varepsilon}| {N(1-2\varepsilon)\sum_{m\in\mathcal{M}}\alpha_m v_m p_{k,m}\choose d} }{{N\delta_k(1+\varepsilon)\choose d}},
    \end{split}
\end{equation}
where $c'(N)$ is a constant only depending on $N$ with $c'(N)\xrightarrow{N\rightarrow\infty}1$.
Since the sequence $\{G^N\}$ is proportionally sparse, then $\lim_{N\rightarrow\infty}\frac{|\mathcal{G}^N_{k,\varepsilon}|}{|W^N_k|}=1$. Then, we have 
\begin{equation}\label{eq:alpha-m-varepsilon}
    \rho\geq \Big(\sum_{m\in\mathcal{M}}v_m\alpha_m p_{k,m}\Big)^{-1}\lambda \xi\sum_{m\in\mathcal{M}}w_k\Big(\frac{(1-2\varepsilon)\sum_{m\in\mathcal{M}}\alpha_mv_mp_{k,m}}{\delta_k(1+\varepsilon)}\Big)^d.
\end{equation}
Since \eqref{eq:alpha-m-varepsilon} holds for all $\varepsilon>0$, then 
\begin{equation}
    \rho\geq \Big(\sum_{m\in\mathcal{M}}v_m\alpha_m p_{k,m}\Big)^{-1}\lambda \xi\sum_{m\in\mathcal{M}}w_k\Big(\frac{\sum_{m\in\mathcal{M}}\alpha_mv_mp_{k,m}}{\delta_k}\Big)^d.
\end{equation}

\end{proof}

\section{Proof of Lemma~\ref{lem:tail-bound-expectation-steady}}\label{app:proof-tail-bound-expectation}

\begin{proof}[Proof of Lemma~\ref{lem:tail-bound-expectation-steady}]
Given the system state $X^N$. When a task arrives at the system, by the Poisson thinning property, the probability that the task will be assigned to a server in the set $Q^N_{m,l}(X^N)$ is 
\begin{equation}
    \PP(\mathcal{E}(Q^N_{m,l}))=\frac{1}{W(N)}\sum_{i\in W^N}\sum_{\substack{U\subseteq(Q^N_{m,l}\cap \mathcal{N}^N_w(i))\\ |U|=d}}\frac{1}{{\mathcal{N}^N_w(i)\choose d}}
\end{equation}
where $\mathcal{E}(Q^N_{m,l})\coloneqq\text{ the event that the new task will be assigned to }Q^N_{m,l}(X^N)$. Fix any $\varepsilon>0$. Since the sequence $\{G^N\}$ is subcritical, then for large enough $N$, we have that
\begin{equation}\begin{split}
    \PP(\mathcal{E}(Q^N_{m,l}))&\leq  \frac{N}{W(N)}\frac{\rho^N}{\lambda}\frac{|Q^N_{m,l}(X^N)|u_m}{N}
     \leq  \frac{\rho}{\lambda \xi}q^N_{m,l}u_m(1+\varepsilon).
\end{split}
\end{equation}
We consider the system state at event times $t_0=0<t_1<t_2<...<t_i<...$; for all $i$, $t_i$ can be an arrival or a potential departure epoch.
Define the drift $\Delta L^N_{m,\ell}(X^N)$ as 
\begin{equation}
    \Delta L^N_{m,\ell}(X^N)=\E\Big(L^N_{m,\ell}(X^N(t_1))-L^N_{m,\ell}(X^N)|X^N(t_0)=X^N\Big).
\end{equation}
Again, by the Poisson thinning property, we have that for all large $N$,
\begin{align}\label{eq:drift-L}
    \Delta L^N_{m,\ell}(X^N)&=\sum_{i=\ell}^{\infty}\Big(\frac{\lambda W(N)}{\lambda W(N)+\sum_{m\in\mathcal{M}}|V^N_m|u_m}\PP(\mathcal{E}(Q^N_{m,i-1}))-\frac{\sum_{m\in\mathcal{M}}|V^N_m|u_m}{\lambda W(N)+\sum_{m\in\mathcal{M}}|V^N_m|u_m}\frac{|Q^N_{m,i}|u_m}{\sum_{m\in\mathcal{M}}|V^N_m|u_m}\Big)\nonumber\\
    & \leq  \sum_{i=\ell}^{\infty}\Big(\frac{\rho q^N_{m,i-1}u_m(1+\varepsilon)}{\lambda\xi+\sum_{m\in\mathcal{M}}v_mu_m}-\frac{q^N_{m,i}u_m}{\lambda\xi+\sum_{m\in\mathcal{M}}v_mu_m}\Big)\nonumber\\
    &=\frac{\rho q^N_{m,\ell-1}u_m(1+\varepsilon)}{\lambda\xi+\sum_{m\in\mathcal{M}}v_mu_m}-\frac{1-(1+\varepsilon)\rho}{\lambda\xi+\sum_{m\in\mathcal{M}}v_mu_m}\sum_{i=\ell}^{\infty}q^N_{m,i}u_m
    \end{align}
By the definition of the steady state, $\E(\Delta L^N_{m,\ell}X^N(\infty))=0$. Choosing $\varepsilon$ such that $(1+\varepsilon)\rho\leq (1+\rho)/2<1$, we have 
\begin{equation}
    \sum_{i=\ell}^{\infty}\E(q^N_{m,i}(\infty))\leq \frac{(1+\rho)/2}{1-(1+\rho)/2}  \E(q^N_{m,\ell-1}).
\end{equation}
Finally, summing over $m\in\mathcal{M}$, we get the desired result.
\end{proof}

\section{Proof for the Sequence of Random Graphs}\label{app:random-graph}

\begin{proof}[Proof of Proposition~\ref{prop:random-graph}]
First to show that the sequence $\{G^N\}_N$ satisfies Condition~\ref{cond-1}.
Consider any fixed $k\in\mathcal{K}$ and $m\in\mathcal{M}$.
Let $e_{i,j}$ be a Bernoulli random variable with probability $p_{k,m}$ for each $i\in W^N_k$ and $j\in V^N_m$. Then, $E^N(k,m)=\sum_{(i,j)\in W^N_k\times V^N_m}e_{i,j}$, and by the L.L.N., we have that $$\lim_{N\rightarrow\infty}\frac{E^N(k,m)}{|W^N_k|\times |V^N_m|}=p_{k,m},$$
which implies that Condition~\ref{cond-1} $(a)$ holds.
Next, we prove that Condition~\ref{cond-1} $(b)$ holds. Based on the definition $\deg_w^N(i)$, we have $\deg_w^N(i)=\sum_{j\in V^N_m}e_{i,j}$ which is a binomial random variable $\mathrm{Binomial}(|V^N_m|,p_{k,m})$. By the Chernoff bound (\cite[Theorem~2.4]{CY05}), it follows that for $i\in W^N_k$,
\begin{equation*}
    \PP\Big(\big|\deg^N_w(i)-\E(\deg^N_w(i))\big|\geq x\Big)\leq 2\exp\Big(-\frac{x^2}{2\E(\deg^N_w(i))+2x/3}\Big).
\end{equation*}
Let $X(N)=p_{k,m}N^{3/4}\big(\ln (N)\big)^{1/4}$. Then, for some $c_1\in(0,\infty)$, 
\begin{equation}\label{eq:A-1}
    \begin{split}
        \PP\Big(\big|\deg^N_w(i)-|V^N_m|p_{k,m}\big|\geq X(N)\Big)& \leq  c_1\exp\Big(-c_1 p_{k,m}N^{1/2}(\ln (N))^{1/2}/v_m\Big),
    \end{split}
\end{equation}
for sufficiently large $N$. Also by $\lim_{N\rightarrow\infty}\frac{W^N_k}{W(N)}=w_k$, $\lim_{N\rightarrow\infty}\frac{W(N)}{N}=\xi$, and the union bound, we have that there exists $c_2\in(0,\infty)$ such that for large enough $N$, 
\begin{equation}\label{eq:A-2}
    \PP\Big(\cup_{i\in W^N_k}\big|\deg^N_w(i)-|V^N_m|p_{k,m}\big|\geq X(N)\Big)\leq c_2 w_k \xi N \exp\Big(-c_1 p_{k,m}N^{1/2}(\ln (N))^{1/2}/v_m\Big)
\end{equation}
Then, the RHS of \eqref{eq:A-2} is summable over $N$. From the Borel-Cantelli lemma, we get that a.s., for all large enough $N$, 
$$\big|\deg^N_w(i)-|V^N_m|p_{k,m}\big|\leq  X(N),\quad i\in W^N_k,$$
which implies that the following equation holds
$$1\leq \lim_{N\rightarrow\infty}\frac{\max_{i\in W^N_k}\deg^N_w(i)}{\min_{i\in W^N_k} \deg^N_w(i)}\leq \lim_{N\rightarrow\infty}\frac{|V^N_m|p_{k,m}+X(N)}{|V^N_m|p_{k,m}-X(N)}=1,\quad \text{a.s..}$$
Thus, Condition~\ref{cond-1} $(b)$ holds. 

Now, we show that the sequence $\{G^N\}_N$ is clustered proportionally sparse. Fix any $k\in\mathcal{K}$, $i\in W^N_k$, $\varepsilon>0$, and $U\subseteq V^N$. Let $B_i(U)$ be the event that the dispatcher $i$ is bad w.r.t. the set $U$, i.e., 
\begin{equation}
    B_i(U)\coloneqq\Big\{\Big|\frac{\mathcal{N}^N_w(i)\cap U}{\mathcal{N}^N_w(i)}-\frac{E^N_k(U)}{E^N_k(V^N)} \Big|\geq \varepsilon \Big\} .
\end{equation}
Define $\alpha_m\coloneqq\frac{|U\cap V^N_m|}{|V^N|}$ for each $m\in\mathcal{M}$. By the union bound, we have that 
\begin{equation}\label{eq:A-3}
    \begin{split}
        \PP\big(B_i(U)\big)& \leq  \PP\Big(B_i(U),\ \Big||\mathcal{N}^N_w(i)\cap U|-\sum_{m\in\mathcal{M}}|V^N_m\cap U|p_{k,m}\Big|<\varepsilon_1 \sum_{m\in\mathcal{M}}|V^N_m|p_{k,m},\\
        &\qquad \big|\frac{E^N_k(U)}{E^N_k(V^N)}-\frac{\sum_{m\in\mathcal{M}}\alpha_m p_{k,m}}{\sum_{v_m p_{k,m}}}\big|<\varepsilon_2,
        \text{ and }\Big|\mathcal{N}^N_w(i)-\sum_{m\in\mathcal{M}}|V^N_m|p_{k,m}\Big|<\varepsilon_3\sum_{m\in\mathcal{M}}|V^N_m|p_{k,m}\Big)\\
        & +\PP\Big(\Big||\mathcal{N}^N_w(i)\cap U|-\sum_{m\in\mathcal{M}}|V^N_m\cap U|p_{k,m}\Big|\geq \varepsilon_1 \sum_{m\in\mathcal{M}}|V^N_m|p_{k,m}\Big)\\
        &+\PP\Big(\Big|\mathcal{N}^N_w(i)-\sum_{m\in\mathcal{M}}|V^N_m|p_{k,m}\Big|\geq \varepsilon_2\sum_{m\in\mathcal{M}}|V^N_m|p_{k,m}\Big)
    \end{split}
\end{equation}
We will bound each term of the RHS of \eqref{eq:A-3}. By choosing 
$\varepsilon_1$, $\varepsilon_2$ and $\varepsilon_3$ satisfying 
\begin{equation}\label{eq:A-4}
    \begin{split}
        \frac{\varepsilon_3\sum_{m\in\mathcal{M}}\alpha_m p_{k,m}+\varepsilon_1\sum_{m\in\mathcal{M}}v_mp_{k,m}}{(1-\varepsilon_3)\sum_{m\in\mathcal{M}}v_mp_{k,m}}+\varepsilon_2<\varepsilon,
    \end{split}
\end{equation}
we have that \begin{equation}\label{eq:A-5}
    \begin{split}
        \frac{\mathcal{N}^N_w(i)\cap U}{\mathcal{N}^N_w(i)}-\frac{E^N_k(U)}{E^N_k(V^N)} &= \frac{\mathcal{N}^N_w(i)\cap U}{\mathcal{N}^N_w(i)}-\frac{\sum_{m\in\mathcal{M}}\alpha_m p_{k,m}}{\sum_{v_m p_{k,m}}}+\frac{\sum_{m\in\mathcal{M}}\alpha_m p_{k,m}}{\sum_{v_m p_{k,m}}}-\frac{E^N_k(U)}{E^N_k(V^N)}\\
        <&\frac{\varepsilon_3\sum_{m\in\mathcal{M}}\alpha_m p_{k,m}+\varepsilon_1\sum_{m\in\mathcal{M}}v_mp_{k,m}}{(1-\varepsilon_3)\sum_{m\in\mathcal{M}}v_mp_{k,m}}+\varepsilon_2<\varepsilon,
    \end{split}
\end{equation}
and 
\begin{equation}\label{eq:A-6}
    \begin{split}
        \frac{\mathcal{N}^N_w(i)\cap U}{\mathcal{N}^N_w(i)}-\frac{E^N_k(U)}{E^N_k(V^N)} &= \frac{\mathcal{N}^N_w(i)\cap U}{\mathcal{N}^N_w(i)}-\frac{\sum_{m\in\mathcal{M}}\alpha_m p_{k,m}}{\sum_{v_m p_{k,m}}}+\frac{\sum_{m\in\mathcal{M}}\alpha_m p_{k,m}}{\sum_{v_m p_{k,m}}}-\frac{E^N_k(U)}{E^N_k(V^N)}\\
        >&-\frac{\varepsilon_3\sum_{m\in\mathcal{M}}\alpha_m p_{k,m}+\varepsilon_1\sum_{m\in\mathcal{M}}v_mp_{k,m}}{(1+\varepsilon_3)\sum_{m\in\mathcal{M}}v_mp_{k,m}}-\varepsilon_2>-\varepsilon,
    \end{split}
\end{equation}
which implies that the first term is equal to 0 with $\varepsilon_1$, $\varepsilon_2$ and $\varepsilon_3$.
Using the Chernoff bound again, we can bound the second term and the third term as follows: for some $c_3\in(0,\infty)$ and large enough $N$,
\begin{equation}\label{eq:A-7}
        \PP\Big(\Big||\mathcal{N}^N_w(i)\cap U|-\sum_{m\in\mathcal{M}}|V^N_m\cap U|p_{k,m}\Big|\geq \varepsilon_1 \sum_{m\in\mathcal{M}}|V^N_m|p_{k,m}\Big)\\
        \leq  c_3\exp\Big(-c_3N\sum_{m\in\mathcal{M}}v_mp_{k,m}\Big),
\end{equation}
and 
\begin{equation}\label{eq:A-8}
    \begin{split}
        &\PP\Big(\Big|\mathcal{N}^N_w(i)-\sum_{m\in\mathcal{M}}|V^N_m|p_{k,m}\Big|\geq \varepsilon_2\sum_{m\in\mathcal{M}}|V^N_m|p_{k,m}\Big)
         \leq  c_3\exp\Big(-c_3N\sum_{m\in\mathcal{M}}v_mp_{k,m}\Big).
    \end{split}
\end{equation}
Therefore, for large enough $N$, we have 
\begin{equation}
    \PP(B_i(U))\leq 2c_3\exp\Big(-c_3N\sum_{m\in\mathcal{M}}v_mp_{k,m}\Big),
\end{equation}
and 
\begin{equation}
    \PP(\cup_{i\in W^N_k}B_i(U))\leq 2c_3|W^N_k|\exp\Big(-c_3N\sum_{m\in\mathcal{M}}v_mp_{k,m}\Big).
\end{equation}
Moreover, for some $c_4\in(0,\infty)$ and large enough $N$,
\begin{equation}\label{eq:A-9}
    \PP\big(\sup_{U\subseteq V^N}\cup_{i\in W^N_k}B_i(U)\big)\leq \exp(-c_4N).
\end{equation}
The RHS of \eqref{eq:A-9} is summable over $N$ and the set $\mathcal{K}$ is finite, so by the Borel Cantelli lemma, the sequence is clustered proportionally sparse. 

If $\mathbf{p}$ satisfies \eqref{eq:max-rho-set}, by Lemma~\ref{lem:relationship-v-u-pkm}, there exists an $N_0\in\N_0$ such that for all $N\geq N_0$, the queue length process $\big(X_j^N(t)\big)_{j\in V^N}$ under the local JSQ($d$) policy is ergodic, which implies that all assumptions of Theorem~\ref{thm:interchange of limits} hold.
\end{proof}

\end{document}